\tikzstyle{vecArrow} = [thick, decoration={markings,mark=at position
\tikzstyle{innerWhite} = [semithick, white,line width=1.4pt, shorten >= 4.5pt]
\newtheorem{thm}{Theorem}[section]
\newtheorem{theorem}[thm]{Theorem}
\newtheorem{cor}[thm]{Corollary}
\newtheorem{lemma}[thm]{Lemma}
\newtheorem{def/lemma}[thm]{Definition/Lemma}
\newtheorem{prop}[thm]{Proposition}
\newtheorem{claim}[thm]{Claim}
\theoremstyle{definition}
\newtheorem{example}[thm]{Example}
\newtheorem{definition}[thm]{Definition}
\newtheorem{remark}[thm]{Remark}
 \newtheorem{prop/def}[thm]{Proposition/Definition}
\newcommand{\cC}{\mathcal{C}}
\newcommand{\cD}{\mathcal{D}}
\newcommand{\cE}{\mathcal{E}}
\newcommand{\cF}{\mathcal{F}}
\newcommand{\cG}{\mathcal{G}}
\newcommand{\cM}{\mathcal{M}}
\newcommand{\cN}{\mathcal{N}}
\newcommand{\cO}{\mathcal{O}}
\newcommand{\cP}{\mathcal{P}}
\newcommand{\cR}{\mathcal{R}}
\newcommand{\cS}{\mathcal{S}}
\newcommand{\cT}{\mathcal{T}}
\newcommand{\cU}{\mathcal{U}}
\newcommand{\cV}{\mathcal{V}}
\newcommand{\cZ}{\mathcal{Z}}
\newcommand{\Slch}{\mathrm{S}_{\mathrm{LCH}}}
\newcommand{\Spc}{\mathcal{S}\mathrm{pc}}
\newcommand{\OneCat}{1\text{-}\mathcal{C}\mathrm{at}}
\newcommand{\bOneCat}{\mathbf{1}\text{-}\mathcal{C}\mathbf{at}}
\newcommand{\TwoCat}{2\text{-}\mathcal{C}\mathrm{at}}
\newcommand{\SSet}{\mathcal{S}\mathrm{et}_{\Delta}}
\newcommand{\bC}{\mathbf{C}}
\newcommand{\bD}{\mathbf{D}}
\newcommand{\bL}{\mathbf{L}}
\newcommand{\bN}{\mathbf{N}}
\newcommand{\bP}{\mathbf{P}}
\newcommand{\bR}{\mathbf{R}}
\newcommand{\bS}{\mathbf{S}}
\newcommand{\bX}{\mathbf{X}}
\newcommand{\bZ}{\mathbf{Z}}
\newcommand{\bc}{\mathbf{c}}
\newcommand{\bbD}{\mathbb{D}}
\newcommand{\bT}{\mathbb{T}}
\newcommand{\Mod}{\mathrm{Mod}}
\newcommand{\Fun}{\mathrm{Fun}}
\newcommand{\Map}{\mathrm{Map}}
\newcommand{\Hom}{\mathrm{Hom}}
\newcommand{\Maps}{\mathrm{Maps}}
\newcommand{\cMod}{\mathcal{M}\mathrm{od}}
\newcommand{\Pic}{\mathrm{Pic}}
\newcommand{\st}{\mathit{st}}
\newcommand{\Fin}{\mathrm{Fin}}
\newcommand{\Corr}{\mathrm{Corr}}
\newcommand{\bCorr}{\mathbf{Corr}}
\newcommand{\Seq}{\mathrm{Seq}}
\newcommand{\Grid}{\mathrm{Grid}}
\newcommand{\dgnl}{\mathrm{dgnl}}
\newcommand{\all}{\mathsf{all}}
\newcommand{\isom}{\mathsf{isom}}
\newcommand{\Sq}{\mathrm{Sq}}
\newcommand{\ord}{\mathrm{ord}}
\newcommand{\Cart}{\mathrm{Cart}}
\newcommand{\rightlax}{\mathrm{R}\text{-}\mathrm{lax}}
\newcommand{\leftlax}{\mathrm{L}\text{-}\mathrm{lax}}
\newcommand{\colim}{\mathrm{colim }}
\newcommand{\Factor}{\mathrm{Factor}}
\newcommand{\dense}{\mathrm{dense}}
\newcommand{\Seg}{\mathrm{Seg}}
\newcommand{\Comm}{\mathrm{Comm}}
\newcommand{\ShvSp}{\mathrm{ShvSp}}
\renewcommand{\SS}{\mathit{SS}}
\newcommand{\Vect}{\mathrm{Vect}}
\newcommand{\Brane}{\mathrm{Brane}}
\newcommand{\Alg}{\mathrm{Alg}}
\numberwithin{equation}{subsection}
\newcommand{\pt}{\mathit{pt}}
\newcommand{\Loc}{\mathrm{Loc}}
\newcommand{\bk}{\mathbf{k}}
\newcommand{\Sp}{\mathrm{Sp}}
\newcommand{\Shv}{\mathrm{Shv}}
\newcommand{\Cone}{\mathrm{Cone}}
\newcommand{\proj}{\mathrm{proj}}
\newcommand{\fri}{\mathfrak{i}}
\newcommand{\bV}{\mathbf{V}}
\newcommand{\cW}{\mathcal{W}}
\newcommand{\cL}{\mathcal{L}}
\newcommand{\bq}{\mathbf{q}}
\newcommand{\bp}{\mathbf{p}}
\newcommand{\PrL}{\mathcal{P}\mathrm{r}^{\mathrm{L}}}
\newcommand{\Open}{\mathrm{Open}}
\newcommand{\Gray}{\text{\circled{$\star$}}}
\newcommand{\bMaps}{{\cM\bf\text{aps}}}
\newcommand{\PrstL}{\mathcal{P}\mathrm{r}_{\mathrm{st}}^{\mathrm{L}}}
\newcommand{\PrstR}{\mathcal{P}\mathrm{r}_{\mathrm{st}}^{\mathrm{R}}}
\newcommand{\bPrstR}{\mathbf{Pr}_{\mathrm{st}}^{\mathrm{R}}}
\newcommand{\fS}{\mathfrak{S}}
\newcommand{\fL}{\mathfrak{L}}
\newcommand{\CAlg}{\mathrm{CAlg}}
\newcommand{\bPrstL}{\mathbf{Pr}_{\mathrm{st}}^{\mathrm{L}}}
\newcommand{\propfib}{\mathsf{propfib}}
\newcommand{\Sets}{\mathrm{Sets}}
\newcommand{\fib}{\mathsf{fib}}
\newcommand{\Graph}{\mathrm{graph}}
\newcommand{\po}{\ar@{}[dr]|{\text{\pigpenfont R}}}
\newcommand{\pb}{\ar@{}[dr]|{\text{\pigpenfont J}}}
\newcommand{\dagg}{\dagger}
\newcommand{\lng}{\langle}
\newcommand{\rng}{\rangle}
\newcommand{\const}{\mathrm{const}}
\newcommand{\propmap}{\mathsf{prop}}
\newcommand{\open}{\mathsf{open}}
\newcommand{\Pair}{\mathrm{Pair}}
\newcommand{\defGrid}{\mathrm{defGrid}}
\newcommand{\STop}{\mathrm{S}_{\mathrm{Top}}}
\newcommand{\module}{\mathrm{module}}
\newcommand{\Assoc}{\mathrm{Assoc}}
\newcommand{\Top}{\mathrm{Top}}
\newcommand{\Gpd}{\mathrm{Gpd}}
\newcommand{\inert}{\mathsf{inert}}
\newcommand{\torsors}{\text{-}\mathsf{torsors}}
\newcommand{\act}{\mathsf{active}}
\newcommand{\oneop}{1\text{-}op}
\newcommand{\twoop}{2\text{-}op}
\newcommand{\onetwoop}{1\& 2\text{-}\mathrm{op}}
\newcommand{\Diff}{\mathrm{Diff}}
\newcommand{\VSlch}{\mathrm{V}\Slch}
\newcommand{\inv}{\mathrm{invt}}
\newcommand{\VTop}{\mathrm{VTop}}
\newcommand{\Fib}{\mathrm{Fib}}
\newcommand{\Cof}{\mathrm{Cof}}
\newcommand{\ordn}{\mathrm{ordn}}
\newcommand{\vertop}{\mathrm{vert}\text{-}\mathrm{op}}
\newcommand{\OblvSubcat}{\mathrm{OblvSubcat}}
\newcommand{\Trpl}{\mathrm{Trpl}}
\newcommand{\hpf}{\mathsf{hpf}}
\newcommand{\CcoAlg}{\mathrm{CcoAlg}}
\newcommand{\perf}{\mathsf{perf}}
\newcommand{\QHam}{\mathsf{QHam}}
\newcommand*\circled[1]{\tikz[baseline=(char.base)]{
            \node[shape=circle,draw,inner sep=0.05pt] (char) {#1};}}
\begin{document}

\title[A Hamiltonian $\coprod\limits_n BO(n)$-action]{A Hamiltonian $\coprod\limits_n BO(n)$-action, stratified Morse theory and the $J$-homomorphism}
\author[X. Jin]{Xin Jin}
\email{xin.jin@bc.edu}
\address{Math Department,
Boston College,
Chestnut Hill, MA 02467, USA.}

\classification{35A27 (primary), 55P42, 18N10 (secondary)}
\keywords{microlocal sheaf theory, sheaves of spectra, Lagrangian submanifolds, J-homomorphism, correspondences}

\begin{abstract}
We use sheaves of spectra to quantize a Hamiltonian $\coprod\limits_n BO(n)$-action on $\varinjlim\limits_{N}T^*\mathbf{R}^N$ that naturally arises from Bott periodicity. We employ the category of correspondences developed in \cite{Nick} to give an enrichment of stratified Morse theory by the $J$-homomorphism. This provides a key step in the following work \cite{J-J} on the proof of a claim in \cite{JT}: the classifying map of the local system of brane structures on an (immersed) exact Lagrangian submanifold $L\subset T^*\mathbf{R}^N$ is given by the composition of the stable Gauss map $L\rightarrow U/O$ and the delooping of the $J$-homomorphism $U/O\rightarrow B\mathrm{Pic}(\mathbf{S})$. 

We put special emphasis on the functoriality and (symmetric) monoidal structures of the categories involved, and as a byproduct, we produce several concrete constructions of (commutative) algebra/module objects and (right-lax) morphisms between them in the (symmetric) monoidal $(\infty, 2)$-category of correspondences, generalizing the construction out of Segal objects in \cite{Nick}, which might be of interest by its own. 
\end{abstract}

\maketitle


\section{Introduction}
Let $X$ be a smooth manifold whose cotangent bundle $T^*X$ is equipped with the standard Liouville 1-form $\alpha=-\bp d\bq$.  Let $L$ be any (immersed) Lagrangian submanifold in $T^*X$ exact with respect to $\alpha$, i.e. $-\alpha|_L$ is an exact 1-form. Let $\bS$ denote the sphere spectrum. In joint work with D. Treumann \cite{JT},  we use microlocal sheaf theory to construct a local system of stable $\infty$-categories on $L$ with fiber equivalent to the $\infty$-category of spectra, which we call the \emph{sheaf of brane structures} and denote by $\Brane_L$. The sheaf of brane structures admits a classifying map $L\rightarrow B\Pic(\bS)$, where the target is the delooping of the $E_\infty$-groupoid of (suspensions of) $\bS$-lines. In \cite{JT}, we make the following claim (without proof):

\begin{claim}\label{claim: 1}
The classifying map of $\Brane_L$ factors as
\begin{align*}
L\overset{\gamma}{\rightarrow} U/O\overset{BJ}{\rightarrow} B\Pic(\bS),
\end{align*}
in which $\gamma$ is the stable Gauss map\footnote{More precisely, one should compose $\gamma$ with the canonical involution on $U/O$.} into the stable Lagrangian Grassmannian $U/O$ and $BJ$ is the delooping of the $J$-homomorphism 
\begin{align*}
J:\bZ\times BO\rightarrow \Pic(\bS)\simeq \bZ\times BGL_1(\bS)
\end{align*}
as an $E_\infty$-map. 
\end{claim}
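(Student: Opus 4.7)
The plan is to exhibit $\Brane_L$ as the pullback along $\gamma$ of a universal local system of $\bS$-lines on $U/O$ whose classifying map is $BJ$. The strategy made possible by the constructions of this paper is to quantize the geometry of the stable Lagrangian Grassmannian as a sheaf of spectra and to show, via the enrichment of stratified Morse theory by the $J$-homomorphism, that $\Brane_L$ is functorially extracted from this quantization.

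First, I would reduce to the case $X=\mathbf{R}^N$ with $N$ large. Any (immersed) exact Lagrangian $L\subset T^*X$ lifts, after a Whitney-type embedding $X\hookrightarrow\mathbf{R}^N$ and a controlled perturbation, to an immersed exact Lagrangian in $T^*\mathbf{R}^N$, and both $\Brane_L$ and the stable Gauss map $\gamma:L\to U/O$ are compatible with stabilization in $N$. This places us in the ambient setting where the Hamiltonian $\coprod_n BO(n)$-action on $\varinjlim_N T^*\mathbf{R}^N$ constructed in the body of the paper is available.

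Next, I would invoke the quantization of this Hamiltonian action: it produces a $\coprod_n BO(n)$-equivariant sheaf of spectra on $\varinjlim_N T^*\mathbf{R}^N$ carrying a structured $E_\infty$-action. Passing to the associated local system of invertible modules and delooping, one obtains a universal local system of $\bS$-lines over $\coprod_n BO(n)$; stabilization together with Bott periodicity then upgrade this to a local system over $U/O$ whose classifying map is precisely $BJ$. The symmetric-monoidal functoriality of the correspondence category $\mathbf{Corr}$ is crucial here, because only an honestly $E_\infty$-equivariant quantization can produce an $E_\infty$-classifying map that deloops the $J$-homomorphism.

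Finally, using the enrichment of stratified Morse theory by the $J$-homomorphism, I would identify $\Brane_L$ with the pullback of the above universal local system along $\gamma$. Microlocally, the fiber of $\Brane_L$ at $p\in L$ depends only on the tangent Lagrangian $T_pL$ modulo the $O$-action rotating the zero section, that is, on $\gamma(p)\in U/O$, so the pointwise match is clear; the main obstacle is to promote this into a functorial equivalence of $B\Pic(\bS)$-torsors in the homotopy-coherent sense. This is where the new constructions of commutative algebra/module objects and right-lax morphisms in the symmetric monoidal $(\infty,2)$-category of correspondences, advertised in the abstract, become indispensable: they are what allow the final identification to be carried out at the level of $E_\infty$-classifying maps rather than merely at the level of underlying objects.
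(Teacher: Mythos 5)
Your high-level ingredients — quantizing the Hamiltonian $G$-action for $G=\coprod_n BO(n)$, and invoking the $J$-enrichment of stratified Morse theory — are the right ones, but the mechanism you propose for the key identification diverges from the paper's and, more importantly, leaves the central step unexplained.

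The paper's sketch does not construct a universal local system on $U/O$ and pull it back along $\gamma$. Instead, it covers $L$ by a good cover $\{\Omega_i\}$ and observes that for each nonempty finite intersection, the associated microlocal sheaf category — a priori only \emph{non-canonically} equivalent to $\Sp$ — becomes \emph{canonically} equivalent to $\Loc(M;\Sp)^J$, the category of $J$-equivariant local systems of spectra on the space $M$ of stabilized Morse transformations for that piece, which is a $G$-torsor (this is Theorem \ref{thm: intro SMT}). Assembling over the nerve of the cover gives the factorization
\begin{align*}
\check{C}_\bullet L\longrightarrow G\torsors\overset{\Loc(-;\Sp)^J}{\longrightarrow} B\Pic(\bS),
\end{align*}
and one must then show that the first map realizes $\gamma$ after geometric realization and that $\Loc(-;\Sp)^J$ models $BJ$. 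Your proposal never introduces the cover, never introduces the $G$-torsor of Morse transformations, and never explains how the non-canonical local trivializations of $\Brane_L$ are rigidified. The sentence about "the fiber of $\Brane_L$ at $p$ depending only on $T_pL$" misses the point: the fiber is always $\Sp$; what carries the content is the gluing, and the torsor of Morse transformations is precisely the device that makes the gluing canonical. Citing "the enrichment of stratified Morse theory by the $J$-homomorphism" to "identify $\Brane_L$ with the pullback" names the theorem without supplying the argument.

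Two further imprecisions. First, what the quantization actually produces is the commutative algebra $\varpi_{VG}$ in $\Loc(VG;\Sp)^{\otimes_c}$, and $J$ arises as a character $G\to\Pic(\bS)$; passing from this to a map $U/O\to B\Pic(\bS)$ is not a formal "upgrade by Bott periodicity" but goes through the $E_\infty$-groupoid $G\torsors$ and the functor $\Loc(-;\Sp)^J$, one categorical level up and over a different space. Second, $G$ is not grouplike, so $G\torsors$ is not simply $BG$; the paper flags this explicitly as requiring a finer analysis of $U/O$ and of compatibility of Morse transformations under restriction (Subsection \ref{subsection: compatibility A_S}), deferred to \cite{J-J}. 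Your sketch tacitly assumes the grouplike case, which is exactly the simplification the paper says one cannot make.
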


The goal of this paper is three-fold. First, it will combine with the following work \cite{J-J}  to give a proof of the claim. Secondly, it enhances the Main theorem of stratified Morse theory with the rich structures of the $J$-homomorphism.  Lastly but not least, this work employs the category of correspondences developed in \cite{Nick}, and we have produced several concrete constructions of (commutative) algebra/module objects and (right-lax) morphisms between them 
in the symmetric monoidal $(\infty,2)$-category of correspondences, which we hope could lead to more applications. 

In the following, we summarize the key ingredients and main results. We also sketch the remaining steps in  \cite{J-J} to complete the proof of the above claim. 

\subsection{A Hamiltonian $\coprod\limits_n BO(n)$-action and Bott periodicity}\label{sec: Ham action}
In $T^*\bR^N$, for any quadratic form $A$ on $\bR^N$, we consider the quadratic Hamiltonian function $F_A(\bq,\bp)=A(\bp)$.  Here we identify $(\bR^N)^*$ with $\bR^N$ using the standard inner product on $\bR^N$. The time-1 flow $\varphi_A^1$ of the Hamiltonian vector field $X_{F_A}$ of $F_A$, defined as $\omega(X_{F_A},-)=dF_A$, sends $(\bq,\bp)$ to $(\bq+\partial_{\bp}A(\bp), \bp)$. In the following, we assume that $A$ as a symmetric matrix is idempotent, i.e. there exists a subspace $E_A\subset \bR^N$ such that $A(\bp)=|\proj_{E_A}\bp|^2$. If we start with the linear Lagrangian $L_0$ that is the graph of the differential of $-\frac{1}{2}|\bq|^2$, and do $\varphi_A^1$ to it, then the resulting Lagrangian $L_A$ is the graph of the differential of $-\frac{1}{2}|\bq|^2+A(\bq)$. Moreover, the Maslov index of the loop starting from  $L_0$, going towards $L_A$ via the Hamiltonian flow of $F_A$ and finally going back to $L_0$ through the path by decreasing the eigenvalue of $A$ from $1$ to $0$ is exactly the rank of $A$ (up to a negative sign). We can conjugate this loop by the path connecting the zero-section to $L_0$ through the graphs of differentials of $-\frac{1}{2}t|\bq|^2, t\in [0,1]$ so then the base point is always at the zero-section. 

If we take the stabilization $\varinjlim\limits_NT^*\bR^N$, then the above picture tells us a way to assign a based loop in the stable Lagrangian Grassmannian $U/O$ from an element in $\coprod\limits_{n}Gr(n,\bR^\infty)=\coprod\limits_{n}BO(n)$, which is exactly the content of (a part of) Bott periodicity that the latter after taking group completion is the based loop space of the former.
Now it is natural to assemble the Hamiltonian map for different choices of $A$ as a Hamiltonian $\coprod\limits_{n}BO(n)$-action on $\varinjlim\limits_NT^*\bR^N$. Here we take the standard commutative topological monoid structure on $\coprod\limits_{n}BO(n)$ as in \cite{Harris}, where the addition operation is only defined for two perpendicular finite dimensional subspaces in $\varinjlim\limits_N\bR^N$.

\subsection{Quantization of the Hamiltonian $\coprod\limits_n BO(n)$-action and a natural ``module"}
We use microlocal sheaf theory to study the Hamiltonian $\coprod\limits_n BO(n)$-action described above. 
The standard way to do this is to find a sheaf over prescribed coefficient with singular support in a conic Lagrangian lifting of the moment Lagrangian of the Hamiltonian action, called \emph{sheaf quantization} (cf. \cite{GKS}, \cite{Tamarkin2}). Here we take the coefficient to be the universal one, the sphere spectrum $\bS$, so sheaves are taking values in the stable $\infty$-category of spectra $\Sp$ (see \cite{JT} for basic properties of microlocal sheaves of spectra). The commutative monoid structure on $\coprod\limits_n BO(n)$ endows the associated sheaf category with a symmetric monoidal convolution structure (we will make this precise by using the category of correspondence developed in \cite{Nick}), and it is natural to ask the sought-for sheaf to be a commutative algebra object in it. Moreover, to establish a connection of the  Hamiltonian action with Maslov index and Bott periodicity, etc., we need to quantize a ``module" of it generated by (the stabilization of) $L_0$ as well. We will make this more precise below. 

For any smooth manifold $X$, let $T^{*,<0}(X\times \bR_t)$ be the negative half of $T^*(X\times\bR_t)$,  where every covector has strictly negative component in $dt$. 
For any exact Lagrangian submanifold $L$ in $T^*X$, a \emph{conic lifting} $\bL$ of $L$ in $T^{*,<0}(X\times \bR_t)$ (with the Liouville 1-form $-\tau dt+\alpha$) is any conic Lagrangian, i.e. invariant under the $\bR_+$-scaling on the fibers, determined by the properties that 
\begin{itemize}
\item the projection to $T^*X$ sends $\bL\cap \{\tau=-1\}$ isomorphically\footnote{We use projects ``isomorphically" rather than just ``onto" to make sure there are no multiple components that differ by some (locally) constant translations in the $t$-direction. If $L$ and $\bL$ are both connected, then there is no such distinction between the two conditions.} to $L$. 
\end{itemize}
By the conic property, the 1-form $-\tau dt+\alpha$ vanishes on $\bL$, which is equivalent to saying that $\bL\cap \{\tau=-1\}$ is a Legendrian submanifold in the contact hypersurface $\left(\{\tau=-1\}\cong T^*X\times \bR_t, dt+\alpha\right)$ that projects to $L$ isomorphically. The latter is usually referred as a Legendrian lifting of $L$. 
In other words, by writing $-\alpha|_{L}=df$, its Legendrian lifting (for the choice of $f$) is just the graph of $t=f(\bq, \bp)$ with $(\bq, \bp)\in L$, and $\bL$ is its conification. For more details about Legendrian liftings and their conification, we refer the reader to \cite[\S 3.1-3.5]{JT} (note in \cite{JT} there is some sign difference: we used $\alpha=\bp d\bq$ instead of $-\bp d\bq$).    
For any exact Lagrangian in general position,  $\bL$ is completely determined by the front projection of $\bL$ in $X\times \bR_t$, i.e. its image under the projection $T^{*,<0}(X\times \bR_t)\rightarrow X\times \bR_t$ to the base. Moreover, if $L$ is closed and the front projection of $\bL$ is a smooth submanifold, then $\bL$ is just half of the conormal bundle of the submanifold with negative $dt$-component, and we say it is the \emph{negative conormal bundle} of the smooth submanifold.

A (stabilized) conic Lagrangian lifting of the graph of the Hamiltonian action has front projection in $\coprod\limits_n BO(n)\times \varinjlim\limits_M\bR^M\times \varinjlim\limits_M\bR^M\times\bR_t$ consisting of points
\begin{align}\label{eq: point in graph}
&(A, \bq, \bq+\partial_{\bp}A(\bp), t=A(\bp)), \bq,\bp\in \varinjlim\limits_M\bR^M,
\end{align}
which is a smooth subvariety. 
Here and after, by some abuse of notation, we will use $A$ to denote its eigenspace $E_A$ of $1$ as well, so then $A$ also represents an element in $\coprod\limits_n BO(n)$. 

Note that we can equally represent a point in (\ref{eq: point in graph}) by
\begin{align*}
(A,\bq, \bq+2\bp, t=|\bp|^2), \bq\in \varinjlim\limits_M\bR^M, \bp\in A.
\end{align*}
Then modulo the roles of $\bq$ and $t$, which contribute trivially to the topology, the front projection is just the tautological vector bundle on $\coprod\limits_n BO(n)$.  

Let 
\begin{align}\label{eq: intro G_N, VG_N}
\nonumber&G_N=\coprod\limits_n Gr(n,\bR^N),\ VG_N=\text{the tautological vector bundle on } G_N,\\
&G=\varinjlim\limits_{N}G_N=\coprod\limits_n BO(n),\ VG=\varinjlim\limits_{N}VG_N. 
\end{align}
A quantization of the Hamiltonian $G$-action is then given by a commutative algebra object in 
\begin{align}\label{eq: intro LocVG}
&\Loc(VG;\Sp):=\varprojlim\limits_{N}\Loc(VG_{N};\Sp)
\end{align}
equipped with the convolution symmetric monoidal structure, whose restriction to the unit $(A=0, \bp=0)$ of $VG$ is the constant sheaf $\bS$ on a point. Here and after $\Loc(X;\Sp)$ consists of locally constant \emph{cosheaves}, and the limit of above is taken for the $!$-pullback along the natural embeddings 
$VG_{N}\hookrightarrow VG_{N'}$ for $N\leq N'$ (cf. Remark \ref{remark:locconstcoshv} below). A canonical candidate of such a quantization is the dualizing sheaf $\varpi_{VG}$.

Now what do we mean by to quantize a ``module" of the Hamiltonian action generated by the stabilization of $L_0$? 
First, we can quantize $L_0\subset T^*\bR^M$ by the $*$-extension of a local system on
\begin{align*}
Q^0_M:=\{s<-\frac{1}{2}|\bq|^2\}\subset \bR^M_\bq\times\bR_s
\end{align*}
to $ \bR^M_\bq\times\bR_s$, whose boundary has negative conormal bundle equal to a conic lifting of $L_0$. Second, we can assemble the images of (the stabilization of) $L_0$ under the Hamiltonian $G$-action, i.e. $L_A=\varphi_A^1(L_0)$, into a single Lagrangian $\widehat{L}$ in the stable sense,  and we can present a conic lifting of it as the stabilization of the negative conormal bundle of the boundary of the following domain
\begin{align}\label{eq: intro half-space}
&\widehat{Q}_{N,M}:=\{s<-\frac{1}{2}|\bq|^2+A(\bq),\ A\in G_N\}\subset G_N\times \bR^M_{\bq}\times \bR_s
\end{align}
for $M\geq N$.

Now the point is that given any quantization of $L_0$, represented by an object in $\Loc(Q^0;\Sp):= \varprojlim\limits_M \Loc(Q_M^0;\Sp)$, and the canonical quantization of the Hamiltonian $G$-action $\varpi_{VG}$,  we should automatically get a quantization of 
$\widehat{L}$, which is represented by an object in 
\begin{align}\label{eq: intro, LochatQ}
\Loc(\widehat{Q};\Sp):=\varprojlim\limits_{N,M} \Loc(\widehat{Q}_{N,M};\Sp).
\end{align}
This can be made precise using correspondences for sheaves. If we view $\widehat{L}$ as a ``module" of the Hamiltonian action generated by the stabilization of $L_0$,  then the resulting quantizations of $\widehat{L}$ as above are the ones that we are interested in. A main question that we will answer is a characterization of these quantizations of $\widehat{L}$, for example, what are the monodromies of the corresponding local systems on $\widehat{Q}$. 

\subsection{Stratified Morse theory}\label{subsec: SMT}
Let $X$ be a smooth manifold and $\fS=\{S_\alpha\}$ be a Whitney stratification of $X$. Let 
\begin{align*}
\Lambda_\fS=\bigcup\limits_{S_\alpha\in\fS}T^*_{S_\alpha}X
\end{align*}
be the union of the conormal bundles of the strata, which is a closed conic Lagrangian in $T^*X$. Now given any $\fS$-constructible sheaf $\cF$  and a covector $(x,p)$ in the \emph{smooth} locus of $\Lambda_\fS$, denoted by $\Lambda_\fS^{sm}$, one can define the \emph{local Morse group} or \emph{microlocal stalk} of $\cF$ at $(x,p)$. The definition depends on a choice of a local function $f$ near $x$ whose differential at $x$ is $p$, and which is a Morse function restricted to the stratum containing $x$. Such a function is called a \emph{local stratified Morse function}. Roughly speaking, the microlocal stalk measures how the local sections of $\cF$ change when we move across $x$ in the way directed by $f$. One uses the microlocal stalks to define the \emph{singular support} of $\cF$, denoted by $\SS(\cF)$, which is a closed conic Lagrangian contained in $\Lambda_\fS$. The microlocal stalks and singular support play an essential role in microlocal sheaf theory and the theory of perverse sheaves. For example, a central question in microlocal sheaf theory is to calculate the microlocal sheaf category associated to a conic Lagrangian (or equivalently a Legendrian), which is roughly speaking a localization of the category of sheaves whose singular support are contained in the conic Lagrangian.

The Main theorem in stratified Morse theory \cite{stratified-Morse-theory} states (in part) that the microlocal stalk defined above is independent of the choice of the local stratified Morse functions $f$, in the sense that the microlocal stalks at $(x,p)$ for two different $f_1, f_2$ are isomorphic up to a shift of degree by the difference of their Morse indices along the stratum containing $x$. 

We will enhance the main theorem in stratified Morse theory by exhibiting the (stabilized) monodromies of the microlocal stalks along the space of all choices of local stratified Morse functions $f$ (cf. Theorem \ref{thm: intro SMT} below). This is an application of the quantization of the Hamiltonian $G$-action and its ``module" described above.

\subsection{The $J$-homomorphism}
The $J$-homomorphism is an $E_\infty$-map
\begin{align*}
J: \coprod\limits_{n}BO(n)\rightarrow \Pic(\bS)\simeq \bZ\times BGL_1(\bS),
\end{align*}
where $\Pic(\bS)$ is the classifying space of stable sphere bundles. If one takes the group completion of $\coprod\limits_{n}BO(n)$, $\bZ\times BO$, then one can uniquely extend $J$ (up to a contractible space of choices) to an $\Omega^\infty$-map from $\bZ\times BO$ to $\Pic(\bS)$. By the Thom construction, every vector bundle gives rise to a stable sphere bundle, and $J$ is the map between the classifying space of stable vector bundles to the classifying space of stable sphere bundles.  

There are several equivalent models of $J$ to express its $E_\infty$-structure. A model of $J$ as an infinite loop space map from $G$ to $\Pic(\bS)$ is given as follows (cf. \cite{Lurie-circle} for the complex $J$-homomorphism). Let $\Vect_{\bR}^{\simeq}$ be the topologically enriched category of real finite dimensional vector spaces, with morphisms being linear isomorphisms. The direct sum of vector spaces makes $N(\Vect_{\bR}^{\simeq})$ into a symmetric monoidal $\infty$-groupoid. For any $V\in \Vect_{\bR}^{\simeq}$, let $V^c$ be the one-point compactification of $V$ (with the marked point at $\infty$). The functor $V\mapsto \Sigma^\infty V^c$ of forming $\bS$-lines determines the symmetric monoidal functor 
\begin{align}\label{eq: def J}
J: N(\Vect_{\bR}^{\simeq})^\otimes\rightarrow \Pic(\bS)^\otimes. 
\end{align}

Here we give an equivalent model of $J$ using correspondences for sheaves. Let 
\begin{align*}
\pi_{VG,N}: VG_N\rightarrow G_N
\end{align*} 
be the vector bundle projection, where $G_N, VG_N$ are defined in (\ref{eq: intro G_N, VG_N}). Since $G$ and $VG$ are commutative topological monoids, their local system categories $\Loc(G;\Sp)$ and $\Loc(VG;\Sp)$ are equipped with the convolution symmetric monoidal structures 
from the symmetric monoidal functor 
\begin{align*}
\Fun(-, \Sp): \Spc^{op}&\to \PrstL\\
X&\mapsto \Fun(X,\Sp)\simeq \Loc(X,\Sp). 
\end{align*}
This will agree with the symmetric monoidal structure defined in 
Subsection \ref{subsec: intro, application, correspondences} using correspondences in the 1-category of locally compact Hausdorff spaces and then taking inverse limits (\ref{eq: intro LocVG}). Then $J$ can be viewed as a commutative algebra object in $\Loc(G;\Sp)$ whose costalks are isomorphic to suspensions of the sphere spectrum. We have the following result proved in Subsection \ref{Appendix subsec: proof of J}. 

\begin{prop}[(See Proposition \ref{prop: equiv model of J})]
The functor
\begin{align*}
(\pi_{VG})_*=\varprojlim\limits_{N}(\pi_{VG,N})_*: \Loc(VG;\Sp)\rightarrow \Loc(G;\Sp)
\end{align*}
is symmetric monoidal, and the local system $(\pi_{VG})_*\varpi_{VG}$ is the commutative algebra object in $\Loc(G;\Sp)$ classified by $J$. 
\end{prop}

Here by the dualizing sheaf on a locally compact Hausdorff space $X$, we mean the constant cosheaf with cofiber $\bS$, through Lurie's Verdier duality for sheaves of spectra (cf. \cite[Theorem 5.5.5.1]{higher-algebra})
\begin{equation*}
\bbD: \Shv(X;\Sp)\overset{\sim}{\to} \Shv(X;\Sp^{op})^{op}. 
\end{equation*}
Then $\varpi_{VG}$ is the object corresponding to $\varpi_{VG_N}, N\geq 0$, under the inverse limit. 
We emphasize that the functor $(\pi_{VG})_*$ is not well defined if we regard $\pi_{VG}$ as a morphism in the $\infty$-category of spaces $\Spc$, for it is not invariant under homotopies (note that $(\pi_{VG})_!$ is well defined but gives trivial information). This is a place where we need to work in the ordinary 1-category of locally compact Hausdorff spaces, and the validity of $(\pi_{VG})_*$ (which follows from base change) and its symmetric monoidal structure leads us to employ the category of correspondences. 

\subsection{Statement of the main results}
Now we are ready to state our main results. The space $\widehat{Q}=\varinjlim\limits_{N,M}\widehat{Q}_{N,M}$ (see (\ref{eq: intro half-space}) for the definition of $\widehat{Q}_{N,M}$) is naturally homotopy equivalent to $G$, so we can view $\widehat{Q}$ as a $G$-torsor in $\Spc$ (i.e. a free $G$-module generated by a point), and $\Loc(\widehat{Q};\Sp)\simeq \Loc(G;\Sp)$ is naturally a module of  $\Loc(G;\Sp)$. Let $\Loc(\widehat{Q};\Sp)^J$ be the stable $\infty$-category of \emph{$J$-equivariant local systems} on $\widehat{Q}$, which is a full subcategory of $J$-modules in $\Loc(\widehat{Q};\Sp)$. Here we use $J$ to represent the local system on $G\simeq \widehat{Q}$ that it classifies.  Since $\widehat{Q}$ is a $G$-torsor, $\Loc(\widehat{Q};\Sp)^J$ consists of objects $J\underset{\bS}{\otimes}\cR$, $\cR\in \Sp$, and the mapping spectrum from $J\underset{\bS}{\otimes}\cR_1$ to $J\underset{\bS}{\otimes}\cR_2$ is isomorphic to the mapping spectrum from $\cR_1$ to $\cR_2$ (see Lemma \ref{lemma: chi, fully faithful}). There is a general definition of $\Loc(Y;\Sp)^\chi$ for any commutative topological monoid $H$ acting on a space $Y$ and $\chi: H\rightarrow \Pic(\bS)$ an $E_\infty$-map called a \emph{character} (see Definition \ref{def: chi-equivariant}).

Consider the map, which is a $VG_N$-module map (cf. Subsection \ref{subsubsec: VG module widehatQ})
\begin{align*}
\psi_{N,M}: &VG_N\times Q^0_M\rightarrow \widehat{Q}_{N,M},\\
&(A,\bp; \bq,s)\mapsto (A,\bq+2\bp,s+A(\bp)),\ \bp\in A,
\end{align*}
whose stabilization represents the Hamiltonian $G$-action that sends $L_0$ to $L_A, A\in G$. 

\begin{thm}\label{thm: intro 1}
The family of correspondences 
\begin{align*}
\xymatrix{&VG_N\times Q_M^0\ar[dr]^{\pi_{Q_M^0}}\ar[dl]_{\psi_{N,M}}&\\
\widehat{Q}_{N,M}&&Q^0_M
}
\end{align*}
induces a canonical equivalence 
\begin{align*}
\psi_*\pi^!_{Q^0}:=\varprojlim\limits_{N,M}(\psi_{N,M})_*\pi^!_{Q_M^0}:\Loc(Q^0;\Sp)\overset{\sim}{\longrightarrow} \Loc(\widehat{Q};\Sp)^{J}
\end{align*}
\end{thm}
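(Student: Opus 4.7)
The plan is to split the claim into two separate verifications: that both $\Loc(Q^0;\Sp)$ and $\Loc(\widehat{Q};\Sp)^{J}$ are canonically equivalent to $\Sp$, and that the functor $\psi_*\pi^!_{Q^0}$ realizes this equivalence by sending the monoidal generator $\bS_{Q^0}$ to a local system representing $J$. First I would observe that $Q^0=\varinjlim_M Q^0_M$ is contractible, so $\Loc(Q^0;\Sp)\simeq \Sp$ with generator $\bS_{Q^0}$. On the other side, the projection $(A,\bq,s)\mapsto A$ exhibits $\widehat{Q}$ as a fibration over $G$ with convex (hence contractible) fibers $\{(\bq,s):s<-\frac{1}{2}|\bq|^2+A(\bq)\}$, so $\widehat{Q}\simeq G$ and $\Loc(\widehat{Q};\Sp)\simeq \Loc(G;\Sp)$. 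The Hamiltonian action identifies $\widehat{Q}$ as a free $G$-torsor generated by the origin of $Q^0$, so $\Loc(\widehat{Q};\Sp)^{J}\simeq \Sp$ with compact generator corresponding, under $\widehat{Q}\simeq G$, to the local system $J$ on $G$.

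Next I would verify that $\psi_*\pi^!_{Q^0}$ factors through $\Loc(\widehat{Q};\Sp)^{J}$. This is the step where the $(\infty,2)$-category of correspondences from \cite{Nick} enters essentially. The correspondence in the statement fits into a compatible Segal/bar-type diagram with the $G$-action correspondence on $\widehat{Q}$, realizing $\widehat{Q}\leftarrow VG\times Q^0\to Q^0$ as a morphism of module objects. Applying the symmetric monoidal ``$!$-pullback then $*$-pushforward'' functor on correspondences converts this into a module action of the algebra $(\pi_{VG})_*\varpi_{VG}\simeq J$ on every $\psi_*\pi^!_{Q^0}(\cF)$. Since $Q^0$ carries the trivial $\Loc(G;\Sp)$-module structure (being contractible), the induced module on the $\widehat{Q}$-side is precisely a $J$-equivariant local system.

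Then I would compute the image of the unit. The explicit section
\[
\sigma_{N,M}(A,\bq',s')=\bigl(A,\ \proj_A\bq',\ \bq'-2\proj_A\bq',\ s'-|\proj_A\bq'|^2\bigr)
\]
of $\psi_{N,M}$ exhibits each fiber $\psi_{N,M}^{-1}(A,\bq',s')$ as an open ball in $A\subset\bR^M$ centered at $\proj_A\bq'$ with radius $\sqrt{-\tfrac{1}{2}|\bq'|^2+|\proj_A\bq'|^2-s'}$, positive precisely when $(A,\bq',s')\in\widehat{Q}_{N,M}$. Since each such fiber deformation retracts onto the vector space $A$, base change identifies the stalk of $\psi_*\pi^!_{Q^0}(\bS_{Q^0})$ at $(A,\bq',s')$ with the stalk of $(\pi_{VG,N})_*\varpi_{VG_N}$ at $A$. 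Patching this over $\widehat{Q}\simeq G$ and passing to the limit in $N,M$ gives a canonical identification of $\psi_*\pi^!_{Q^0}(\bS_{Q^0})$ with the local system $J$, compatible with the $J$-module structure constructed in the previous step.

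Finally, both $\Loc(Q^0;\Sp)$ and $\Loc(\widehat{Q};\Sp)^{J}$ are $\Sp$-linear, compactly generated by a single object whose endomorphism spectrum is $\bS$; since $\psi_*\pi^!_{Q^0}$ is $\Sp$-linear and sends the generator to the generator, it is an equivalence. \textbf{The main obstacle} is the middle step: constructing the module-of-correspondences diagram rigorously in the framework of \cite{Nick} and matching the resulting $J$-action with the abstract one on $\Loc(\widehat{Q};\Sp)$. The stalk computation in step three is essentially the definition of $J$ from the preceding Proposition/Definition, relativized over the contractible base $Q^0$, and the final equivalence in step four is then formal.
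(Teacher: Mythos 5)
Your proposal follows essentially the same route as the paper, and you have correctly identified the crux: matching the $\varpi_{VG}$-module structure that the correspondence framework automatically supplies on $\psi_*\pi^!_{Q^0}(\cF)$ with the abstract $J$-module structure defining $\Loc(\widehat{Q};\Sp)^{J}$. The paper resolves this obstacle by building the equivalence $\Phi_Q$ as a chain of $\cMod^{N(\Fin_*)}(\PrstL)$-isomorphisms through the ambient space $G\times\varinjlim_M\bR^M\times\bR_s$ and proving, via the linear interpolation $\widetilde{\Psi}_{N,M}$, that $\Phi_Q$ restricts to the identity on the underlying module category; your explicit ball-fiber computation and identification of the generator's image with $J$ are then precisely Lemma~\ref{lemma: disc fiber} and the Proposition/Definition identifying $p_*\varpi_{VG}$ with $J$ (one small correction: the fiber is an open ball, hence contractible, but it does not literally deformation retract \emph{onto} the vector space $A$; the costalk computation only needs that both the ball and $A$ are contractible).
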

Intuitively, the theorem says that the $\varpi_{VG}$-modules in $\Loc(\widehat{Q};\Sp)$ (as a module of $\Loc(VG;\Sp)$) that are freely generated by local systems on $Q^0$ are characterized precisely as the $J$-equivariant local systems on $\widehat{Q}$ (cf. Corollary \ref{cor: equiv J-equivariant} for more details). 

Now we state an application of the above theorem to stratified Morse theory. Although stratified Morse theory is aimed at constructible sheaves over ordinary rings, there is a direct generalization of it over ring spectra (cf. \cite{JT}). Let $\Shv_\fS(X;\Sp)$ be the full subcategory of $\Shv(X;\Sp)$ consisting of sheaves whose restriction to each stratum $S_\alpha$ is locally constant. The notion of microlocal stalk can be directly generalized to take values in spectra. Following the notations in Subsection \ref{subsec: SMT}, we can stabilize $X$ and $\fS$ as $X\times \bR_{y_1}\times \bR_{y_2}\times \cdots$, $\{S_\alpha\times\bR_{y_1}\times\cdots\}$, and the space of local stratified Morse functions associated to $(x,p)$, where $x\in S_\beta$ for some $\beta$, can be stabilized by adding $-\frac{1}{2}(y_1^2+y_2^2+\cdots)$ in the newly added variables $y_1,y_2,\cdots$. Then the space of stabilized local stratified Morse functions for $(x,p)$ is canonically homotopy equivalent to $G$, through the map taking $f$ to the sum of the positive eigenspaces of the Hessian of $f|_{S_\beta\times\bR\times\cdots}$ at $x$ in $T_xS_\beta\times\bR\times\cdots$. We can view this space as a $G$-torsor in $\Spc$. 

\begin{thm}\label{thm: intro SMT}
For any $\cF\in \Shv_{\fS}(X;\Sp)$, the process of taking microlocal stalks at  $(x,p)\in\Lambda_{\fS}^{sm}$ under stabilization canonically gives rise to a $J$-equivariant local system on the space of stabilized local stratified Morse functions associated to $(x,p)$. 

Moreover, this builds a canonical equivalence between the microlocal sheaf category associated to a small conic open neighborhood of $(x,p)$ in $\Lambda_\fS^{sm}$ and the category of $J$-equivariant local systems of spectra on the space of stabilized local stratified Morse functions associated to $(x,p)$.
\end{thm}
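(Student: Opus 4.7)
The plan is to reduce to Theorem \ref{thm: intro 1} by presenting $\cF$ in a local normal form near a smooth point of its singular support, and then identifying the family of microlocal stalks with the image of this local model under the equivalence of Theorem \ref{thm: intro 1}. First, I would identify the parameter spaces: given $x\in S_\beta$ and $(x,p)\in\SS(\cF)^{sm}$, a stabilized local stratified Morse function $f$ with $df(x)=p$ is determined, up to higher-order terms that do not affect the microlocal stalk, by the Hessian of $f|_{S_\beta\times\bR^\infty}$ at $x$; assigning to $f$ the positive eigenspace of this Hessian yields the canonical homotopy equivalence of the space of stabilized local stratified Morse functions with $G$. Viewing $f$ instead as prescribing the Lagrangian $L_A=\varphi_A^1(L_0)$ in a normal slice to the Legendrian germ of $(x,p)$, with front projection $\{s<-\tfrac12|\bq|^2+A(\bq)\}$, refines this into an identification with the $G$-torsor $\widehat{Q}$.

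Next, I would reduce the microlocal stalk computation to the universal local model. Near a smooth point of $\SS(\cF)$, I would apply non-characteristic deformation (Kashiwara--Schapira, in the spectral version established in \cite{JT}) to present $\cF$ as the $*$-extension of a local system on the half-space model $Q^0$; this assigns to $(x,p)$ a well-defined \emph{microlocal germ} in $\Loc(Q^0;\Sp)\simeq\Sp$. The microlocal stalk with respect to a specific $f$ is then computed as the cofiber of a restriction map across the front projection of the corresponding $L_A$, and this computation, as $f$ varies over $\widehat{Q}$, coincides with the family-version output of the correspondence functor $\psi_*\pi^!_{Q^0}$ from Theorem \ref{thm: intro 1}, applied to the microlocal germ. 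The key geometric input here is that varying $f$ through the space of stabilized stratified Morse functions realizes exactly the Hamiltonian $G$-action described in Subsection \ref{sec: Ham action}.

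Combining the two stages, the family of microlocal stalks is the image of the microlocal germ under the equivalence
\[
\psi_*\pi^!_{Q^0}:\Loc(Q^0;\Sp)\overset{\sim}{\longrightarrow}\Loc(\widehat{Q};\Sp)^{J}
\]
of Theorem \ref{thm: intro 1}, which immediately yields the first assertion (the target is precisely $J$-equivariant local systems). For the second assertion, the microlocal sheaf category attached to a small open subset of $\SS(\cF)$ around $(x,p)$ is canonically $\Loc(Q^0;\Sp)$, since at a smooth Legendrian point the local model has contractible base $Q^0$; Theorem \ref{thm: intro 1} then promotes this to a canonical equivalence with $\Loc(\widehat{Q};\Sp)^{J}$.

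The crux of the argument is the second stage: verifying that the sheaf-theoretic microlocal stalk functor, as $f$ varies, is coherently computed by the base-change/convolution functor along the correspondence $\psi_{N,M}$. Concretely, one must show that the canonical quantization $\varpi_{VG}$ of the Hamiltonian $G$-action, acting on the quantization of $L_0$ via its module structure, reproduces the entire family of microlocal stalks simultaneously. The difficulty is categorical: one must upgrade the Guillermou--Kashiwara--Schapira quantization of a single Hamiltonian isotopy to a coherent family parametrized by the whole of $G$, which is precisely what the correspondence-category framework behind Theorem \ref{thm: intro 1} is designed to handle.
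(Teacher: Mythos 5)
Your outline matches the paper's strategy (stabilized Morse functions $\leftrightarrow G$ via the Hessian, local normal form, transfer through Theorem~\ref{thm: intro 1}), and you correctly name the crux: showing that the microlocal stalk functor, as $f$ varies over the space of Morse functions, coincides with $\psi_*\pi^!_{Q^0}$. But the proposal leaves that crux entirely unexecuted, and this is where essentially all of Section~5 lives. The paper's argument is not a mere invocation of the correspondence framework: it introduces Morse transformations (Subsection~\ref{subsec: Morse transformations}), proves Proposition~\ref{prop: classification contact} classifying them by quadratic forms $A_S$ and---crucially, in part~(c)---showing that two choices of $A_S$ differ by a quadratic Hamiltonian flow (this is what links the Hessian classification of Morse functions to the $G$-action), forms the composite correspondence~(\ref{diagram: composite corr}) combining the baseline Morse transformation $\bL_{01}$ with the quantization correspondence $p_{0,VG_N}, p_{1,VG_N}$, and proves in Lemma~\ref{lemma: tilde pi N,M} that this composite collapses to a single hypersurface correspondence $H_{A_S,G_N}$ representing the universal family of Morse transformations. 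It then stabilizes over $(N,M)$---carefully, because the diagrams such as~(\ref{diagram: deficiency Cart 1}) are not Cartesian and only become invertible after localization---to arrive at Theorem~\ref{thm: L, J-equivariant}. Without Proposition~\ref{prop: classification contact}(c) and Lemma~\ref{lemma: tilde pi N,M}, your assertion that the family of microlocal stalks ``coincides'' with the output of $\psi_*\pi^!_{Q^0}$ is exactly the statement to be proved, not a computation.

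Two smaller inaccuracies are worth noting: the normalization tool is the contact/Morse transformation in the sense of \cite{KS}, not non-characteristic deformation---it acts on the microlocal germ via a correspondence between localized sheaf categories, rather than by literally presenting $\cF$ as a $*$-extension on $Q^0$; and your claim that the microlocal germ in $\Loc(Q^0;\Sp)$ is ``well-defined'' is circular as stated, since that germ depends on the choice of Morse transformation, and establishing independence (up to coherent $J$-equivariance) is the content of the theorem. The paper handles this by first fixing the baseline choice $A_S - A_{L_{(0,0)}} = -\tfrac{1}{2}|\bq_0|^2$ and only afterward proving compatibility with other choices (Subsection~\ref{subsection: compatibility A_S}).
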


The process of taking microlocal stalks has a natural translation to correspondences via the notion of \emph{contact transformations} defined in \cite[A.2]{KS}. A contact transformation is just a local conic symplectomorphism that transforms a local piece of conic Lagrangian to another piece of conic Lagrangian, and the local stratified Morse functions correspond to a generic class among them (modulo some obvious equivalences that act trivially on the microlocal stalks), which we call \emph{Morse transformations}.\footnote{In \cite{JT}, we called this generic class of contact transformations simply contact transformations.} These relations are established in Subsection \ref{subsec: Morse transformations}, \ref{subsec: relationtoStratifiedMorse} and \ref{subsec: relevantLocalization}. This enables us to reveal the interesting structures underlying the ``local system of microlocal stalks'' on the space 
of stabilized local stratified Morse functions, by applying the category of correspondences and using Theorem \ref{thm: intro 1}. In particular, Theorem \ref{thm: intro SMT} directly follows from Theorem \ref{thm: L, J-equivariant}. Moreover, there is a generalization of Theorem \ref{thm: intro SMT} from $(x,p)\in \Lambda_{\fS}^{sm}$ to the case that $(x,p)\in \Lambda^{sm}$ for \emph{any} conic Lagrangian $\Lambda$ (see Remark \ref{remark: Morse transf, stratified Morse}).

\subsection{Expected strategy of the proof of Claim \ref{claim: 1}}

The idea of the proof is easier to express if $G=\coprod\limits_nBO(n)$ were a group. Let us first assume that this were the case. Let $L$ be an (immersed) exact Lagrangian in $T^*\bR^N$ and let $\bL$ be a conic lifting of $L$ in $T^{*,<0}(\bR^N\times\bR_t)$. The argument holds for $L$ being non-exact and can be generalized to the case when $\bR^N$ is replaced by a smooth manifold via a standard treatment  as in \cite{AbouzaidKragh}. Now we cover $L$ by small open sets $\{\Omega_i\}_{i\in I}$ whose finite intersections are all contractible if not empty (this is called a good cover), which gives a sufficiently fine covering of the front projection of $\bL$. In \cite{JT}, we show that for each finite intersection $\bigcap\limits_{s=1}^k\Omega_{i_s}$, the associated microlocal sheaf category with coefficient $\bS$ is (non-canonically) equivalent to the $\infty$-category of spectra, which defines a local system of stable $\infty$-categories on $L$ with fiber equivalent to $\Sp$ denoted by $\Brane_L$. Such an equivalence of categories follows from the correspondence given by a choice of Morse transformation, and Theorem \ref{thm: intro SMT} implies that if we consider the Morse transformations altogether (again modulo some obvious equivalences), then the microlocal sheaf category associated to $\bigcap\limits_{s=1}^k\Omega_i$ is canonically equivalent to the category of $J$-equivariant local systems of spectra on the space of  Morse transformations associated to $\bigcap\limits_{s=1}^k\Omega_i$ which is a $G$-torsor. 

Let $\check{C}_\bullet L$ be the $\check{\text{C}}$ech nerve of the covering $\{\Omega_i\}_{i\in I}$, and let $G\torsors$ denote the $E_\infty$-groupoid of $G$-torsors in $\Spc$, which is equivalent to $BG$ (assuming $G$ were a group). 
The above implies that the classifying map for $\Brane_L$ factors as 
\begin{align*}
\check{C}_\bullet L\overset{\check{\gamma}}{\longrightarrow} G\torsors\overset{\Loc(-;\Sp)^J}{\longrightarrow} B\Pic(\bS)
\end{align*}
where the first map $\check{\gamma}$ is taking the space of  Morse transformations and the latter is taking a $G$-torsor $M$ to $\Loc(M;\Sp)^J$. Here we use a compatibility result of Morse transformations under the restriction from a small open set to a smaller one in the $\check{\text{C}}$ech cover. One can show that $\check{\gamma}$ is homotopic to the stable Gauss map after passing to the geometric realization $|\check{C}_\bullet L|\simeq L$, and $\Loc(-;\Sp)^J$ exactly models the delooping of the $J$-homomorphism (up to the canonical involution on $BG$). This proves Claim \ref{claim: 1} under the assumption that $G$ were a group. 
 
For the actual situation in which $G=\coprod\limits_nBO(n)$ is not a group, some more work needs to be done, which involves a more detailed study of the space $U/O$ and the compatibility of Morse  transformations under restrictions (see Subsection \ref{subsection: compatibility A_S}). For example, one technical step in \cite{J-J} is to replace the universal principal $\bZ\times BO$-bundle $E(\bZ\times BO)\to U/O$ by an $\infty$-category of quadruples $(\cU, Q_\flat, \leftindex^{\backprime}{G}, M)$, denoted by $\QHam(U/O)$, over $\Open(U/O)^{op}$, where $\cU$ is an open subset of $U/O$, $Q_\flat$ is a (stabilized) quadratic form that plays a similar role as $A_S$ in (\ref{diagram: composite corr}) with $T_{(0,0)}L$ replaced by $\ell\in \cU$, $\leftindex^{\backprime}{G}$ is a commutative algebra object in $\Corr(\Slch)$ (see Subsection \ref{subsec: intro correspondences} below) that plays a similar role as $G$ (or $G^{A^\perp}$ (\ref{eq: G_N, Aperp, n}) depending on the choice of $A_S$), and $M$ is a free module of $\leftindex^{\backprime}{G}$ generated by $Q_\flat$ (parametrizing a space of Morse transformations that work for all $\ell\in \cU$).  A morphism between two objects $(\cU_i, Q^{(i)}_\flat, \leftindex^{\backprime}{G}_{(i)}, M_{(i)})$ is roughly given by $\cU_2\hookrightarrow \cU_1$, an inclusion $\leftindex^{\backprime}{G}_{(1)}\hookrightarrow \leftindex^{\backprime}{G}_{(2)}$ that is a commutative algebra homomorphism, and a $\leftindex^{\backprime}{G}_{(1)}$-module map $M_{(1)}\hookrightarrow M_{(2)}$ that sends $Q_\flat^{(1)}$ to some appropriate element in $M_{(2)}$. The key point of introducing $\QHam(U/O)$ is that the quantization results contained in this paper will be assembled into a natural functor $F: \QHam(U/O)\to \Fun(\Delta^1, \PrstL)$ that sends $(\cU, Q_\flat, \leftindex^{\backprime}{G}, M)$ to an equivalence of categories analogous to that in Theorem \ref{thm: intro 1} 
(more precisely see \cite[Proposition 4.13]{J-J}), and it further induces a functor $\Open(U/O)^{op}\to \Fun(\Delta^1, \PrstL)$ by right Kan extension. 
To give an appropriate definition of $\QHam(U/O)$ that works for our purposes, we must allow $Q_\flat$ and $\leftindex^{\backprime}{G}$ to be more flexible than $A_S$ and $G^{A^\perp}$ that are considered in Subsection \ref{subsection: compatibility A_S}. For example, since our $\ell$ is moving in $\cU$, we cannot expect (\ref{diagram: composite corr}) to hold for all $\ell\in \cU$. There are other more subtle reasons that impose both flexibility and technical conditions on $Q_\flat$, $\leftindex^{\backprime}{G}$ and the morphisms between the objects in $\QHam(U/O)$  (cf. \cite[\S 4.1]{J-J}).

In the following, we give a brief overview of the category of correspondences, the results we have obtained and how we apply these results to the quantization process.

\subsection{The category of correspondences and the functor of taking sheaves of spectra}\label{subsec: intro correspondences}
For any ordinary 1-category $\cC$ that admits fiber products, one can define an ordinary 2-category $\bCorr(\cC)$ (cf. \cite{Nick}) with the same objects as $\cC$ and whose set of morphisms from an object $X$ to $Y$ consists of correspondences
\begin{align}\label{eq: intro corr diagram}
\xymatrix{W\ar[r]^f\ar[d]_g&X\\
Y&}.
\end{align}
A 2-morphism between 1-morphisms is presented by the left one of the following commutative diagrams  (\ref{diagram: 2-morphism}), and a composition of two 1-morphisms is presented by the outer correspondence on the right. 
\begin{align}\label{diagram: 2-morphism}
\begin{tikzpicture}
\draw[->] (0.2,0) node[left] {$W_1$}--(2,0) node[right] {$X$};
\draw[->] (0.15,-0.15)--(0.5,-0.5) node[right] {$W_2$};
\draw[->] (1,-0.5)--(2,-0.1);
\draw[->] (0,-0.2)--(0,-2) node[below]{$Y$};
\draw[->] (0.7, -0.7)--(0.1, -2);
\end{tikzpicture}
\hspace{1in}
\begin{tikzpicture}
\draw[->] (0.2,0) node[left] {$W_1\underset{Y}{\times}W_2$}--(1,0) node[right] {$W_1$};
\draw[->] (1.6,0)--(2.4,0) node[right] {$X$};
\draw[->] (1.3,-0.1)--(1.3,-1) node[below] {$Y$};
\draw[->] (-0.3,-0.2)--(-0.3,-1) node[below] {$W_2$};
\draw[->] (0.2,-1.2)--(1,-1.2);
\draw[->] (-0.3,-1.5)--(-0.3,-2.4) node[below] {$Z$};
\end{tikzpicture}
\end{align}

There is a higher categorical analogue of this where $\cC$ is an $\infty$-category and $\bCorr(\cC)$ is an $(\infty,2)$-category. One can also forget the non-invertible 2-morphisms, and get an $\infty$-category $\Corr(\cC)$.  If $\cC$ is (symmetric) monoidal then $\bCorr(\cC)$ is (symmetric) monoidal, and so is $\Corr(\cC)$. 

In \cite{Nick}, the authors introduce the $(\infty,2)$-category of correspondences and use it to give a systematic treatment of the six-functor formalism for dg-categories of quasi/ind-coherent sheaves. A similar treatment without using $(\infty,2)$-categories has been taken in \cite{Yifeng2}. The approach of \cite{Nick} can be adapted to the case of sheaves of spectra, given the foundations on $\infty$-topoi \cite{higher-topoi} and stable $\infty$-categories \cite{higher-algebra}. Let $\Slch$ be the ordinary 1-category of locally compact Hausdorff spaces, equipped with the Cartesian symmetric monoidal structure, i.e. the tensor product is just the Cartesian product. The starting point is the symmetric monoidal functor (cf. \cite{higher-topoi})
\begin{align*}
\Shv\cS:& (\Slch)^{op}\rightarrow \PrL\\
&X\mapsto \Shv(X;\cS)\\
&(f: X\rightarrow Y)\mapsto (f^*: \Shv(Y;\cS)\rightarrow\Shv(X;\cS)),
\end{align*}
where $\Shv(X;\cS)$ denotes the (presentable) $\infty$-category of sheaves of spaces on $X$ and $\PrL$ denotes the $\infty$-category of presentable $\infty$-categories with continuous functors. 
Taking stabilizations of $\Shv(X;\cS)$ to $\Shv(X;\Sp)$ and running the approach in \cite{Nick}, one gets all six functors, and one has the following statement that is crucial for our purposes mentioned above:
\begin{thm}\label{thm: ShvSp}
There is a canonical symmetric monoidal functor
\begin{align*}
\ShvSp: \Corr(\Slch)\rightarrow \PrstL
\end{align*}
sending $X$ to $\Shv(X;\Sp)$ and any correspondence (\ref{eq: intro corr diagram}) to the functor $g_!f^*: \Shv(X;\Sp)\rightarrow \Shv(Y;\Sp)$. 
\end{thm}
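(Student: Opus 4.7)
The plan is to follow the strategy of \cite{Nick}, replacing ind-coherent sheaves on DG-schemes by sheaves of spectra on locally compact Hausdorff spaces. The starting input is the symmetric monoidal functor
\[\Shv\cS : (\Slch)^{\op} \to \PrL\]
supplied by \cite{higher-topoi}. Pointwise stabilization, which is symmetric monoidal by \cite{higher-algebra}, promotes this to a symmetric monoidal functor $(\Slch)^{\op} \to \PrstL$ sending $X \mapsto \Shv(X;\Sp)$ and $f \mapsto f^{*}$. What remains is to enhance this $*$-pullback functoriality to a functoriality for all correspondences that also encodes $g_{!}$.

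To construct $f_{!}$ for each $f : X \to Y$ in $\Slch$, I would treat the two basic classes separately: for an open embedding $j$, define $j_{!}$ as the left adjoint of $j^{*}$, which exists in $\PrstL$ by adjoint functor considerations; for a proper map $p$, set $p_{!} := p_{*}$. A general $f$ factors as $\bar f \circ j$ with $j$ an open embedding into a compactification built from the graph of $f$ inside a one-point compactification of $Y$, and $\bar f$ proper over $Y$; then set $f_{!} := \bar f_{*} \circ j_{!}$. Independence of this definition from the factorization, up to coherent homotopy, is formally equivalent to proper base change for $\Sp$-valued sheaves.

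The real content is the Beck--Chevalley identification $f^{*} g_{!} \simeq g'_{!} f'^{*}$ for every Cartesian square in $\Slch$. Via the factorization above this reduces to (a) the trivial base change for open embeddings and (b) proper base change. Proper base change for $\Sp$-valued sheaves is inherited from its $\cS$-valued counterpart by applying the stabilization functor $\Shv(-;\cS) \to \Shv(-;\Sp)$, which intertwines $p^{*}$ and $p_{*}$ for proper $p$. Once base change is in hand, the extension theorem of \cite{Nick} — an $\infty$-categorical universal property of $\Corr$ with respect to Cartesian squares — produces the desired $\ShvSp : \Corr(\Slch) \to \PrstL$, sending $(X \xleftarrow{f} W \xrightarrow{g} Y) \mapsto g_{!} f^{*}$. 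The symmetric monoidal enhancement runs the same construction operadically: the existing symmetric monoidal structure on $\Shv\cS$ for the Cartesian tensor on $\Slch$ yields Künneth $\Shv(X \times Y;\Sp) \simeq \Shv(X;\Sp) \otimes \Shv(Y;\Sp)$ in $\PrstL$, and compatibility of $f_{!}$ with products is an instance of base change along the fold map. The $(\infty,2)$-categorical symmetric monoidal extension theorem from \cite{Nick} then upgrades the preceding construction.

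The main obstacle is not any isolated step but the coherent bookkeeping needed to combine them into a single $\infty$-functor. Explicitly, one must simultaneously track compatibility of base change with composition of maps (towers of Cartesian rectangles) and with monoidal products; this is exactly what the auxiliary $(\infty,2)$-categorical machinery of \cite{Nick} is designed to handle. The only genuinely new input in the LCH/spectra setting is verifying that proper maps and open embeddings in $\Slch$ are stable under base change, and that the six-functor formalism from \cite{KS} lifts from the abelian setting to $\Sp$-valued sheaves — both routine but needed to legitimately plug into the Gaitsgory--Rozenblyum engine.
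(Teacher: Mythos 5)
Your overall strategy tracks the paper's: stabilize $\Shv(-;\cS)$, check proper base change for $\Sp$-valued sheaves, feed the result into the Gaitsgory--Rozenblyum extension machinery, and then re-run the symmetric-monoidal version. There is, however, one genuine divergence and one understated ingredient.

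The divergence is in how the input functor $\ShvSp_! : \Slch \to \PrstL$, $f \mapsto f_!$, is produced. You build $f_!$ by hand from the open/proper factorization through a compactification of the graph, appealing to proper base change for independence of the choice. The paper explicitly names this option and declines it, precisely because making the patched assignment into a \emph{coherent} $\infty$-functor is not a consequence of the pointwise base-change identifications alone --- you would be reproving by hand what the GR machinery is designed to supply. Instead, the paper invokes the $\infty$-categorical Verdier duality of \cite[Theorem 5.5.5.1]{higher-algebra}, which makes each $\Shv(X;\Sp)$ self-dual in $\PrstL$, and defines $\ShvSp_!$ in one stroke as the composition of $\Shv^*$, stabilization, and the dualization functor on dualizable objects of $\PrstL$. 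This gives the covariant functor immediately with no coherence left to check, and then GR's theorems take over as in your plan. Both routes reach the same destination; the paper's is shorter because duality front-loads all the coherence, while yours would have to re-derive it.

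The understated ingredient: applying \cite[Chapter 7, Theorem 5.2.4]{Nick} requires more than proper base change. You must also verify that for every morphism $\alpha$ in $\Slch$ the category $\Factor(\alpha)$ of open-then-proper factorizations is \emph{contractible}, not just nonempty. This is a geometric fact about locally compact Hausdorff spaces; the paper proves it by passing to the full subcategory of dense factorizations and exhibiting finite products there. Your sentence that independence of the factorization ``is formally equivalent to proper base change'' elides this: the base-change square handles any \emph{two} factorizations, but coherence over the whole homotopy type of $\Factor(\alpha)$ is a separate lemma and needs its own proof. Likewise you would still need to check the condition of \cite[Chapter 7, 5.2.2]{Nick}, the compatibility $p_1^* j_Y^! \simeq j_X^! p_2^*$ for $j$'s open and $p$'s proper, though in this setting it is immediate since $j^! \simeq j^*$ for open $j$.
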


Here $\PrstL$ is the $\infty$-category of presentable stable $\infty$-categories. The theorem will be proved in Section \ref{sec: ShvSp}. Now to define a symmetric monoidal structure on $\Shv(X;\Sp)$ for some $X\in \Slch$, it suffices to endow $X$ with the structure of a commutative algebra object in $\Corr(\Slch)$. This is exactly the motivation for us to develop general and concrete constructions of (commutative) algebra objects in Section \ref{section: background}, together with their modules, (right-lax) homomorphisms between algebras, etc. in $\bCorr(\cC)$, for a Cartesian symmetric monoidal $\infty$-category $\cC$. 

Let  $\PrstR\simeq (\PrstL)^{op}$ be the $\infty$-category of presentable stable $\infty$-categories with limit preserving functors. There is a canonical identification $\Corr(\Slch)\simeq \Corr(\Slch)^{op}$ by flipping the correspondences (cf. \cite[Chapter 9, 2.2]{Nick}). Hence by taking opposite categories on the source and target of $\ShvSp$, we get
\begin{cor}\label{cor: ShvSp!*}
There is a canonical symmetric monoidal functor
\begin{align*}
\ShvSp^!_*: \Corr(\Slch)\rightarrow \PrstR
\end{align*}
sending $X$ to $\Shv(X;\Sp)$ and any correspondence (\ref{eq: intro corr diagram}) to the functor $g_*f^!: \Shv(X;\Sp)\rightarrow \Shv(Y;\Sp)$. 
\end{cor}

\begin{remark}\label{remark:locconstcoshv}
\begin{itemize}
\item[(i)]
For any $X\in \Slch$, let $\Loc(X;\Sp)$ be the category of locally constant \emph{cosheaves} on $X$ (also referred as local systems). Let $\Loc'(X;\Sp)$ be the category of locally constant sheaves on $X$. The former is identified, via the Verdier duality functor, with the full subcategory of $\Shv(X;\Sp)$ consisting of $\cF$ with $\Gamma_c(-,\cF)$ locally constant. For a smooth manifold $X$, $\Loc(X;\Sp)$ is identified with $\Loc'(X;\Sp)$. Throughout the paper, the spaces in $\Slch$ that we are dealing with are smooth, e.g. $G_N, VG_N$, so we will not distinguish $\Loc$ and $\Loc'$ for them\footnote{On the other hand, for any locally contractible topological space $X$, one can identify $\Fun(X, \Sp)$ with both $\Loc(X;\Sp)$ and $\Loc'(X;\Sp)$. In particular, a locally constant cosheaf $F$ on $X$ satisfies $\Gamma_c(U, F)\cong \Sigma^\infty_+U\otimes_{\bS} F_{x}^!$, where $F_x^!$ is the costalk of $F$ at $x$ and $U\ni x$ is an open subset contained in a contractible neighborhood of $x$. However, this identification between locally constant cosheaves and sheaves is, in general, completely different from the Verdier duality functor.}. 

\item[(ii)] The preference for locally constant cosheaves over locally constant sheaves emerges naturally. First, in Chapter \ref{sec: ShvSp} and a good portion of Chapter \ref{sec: quantization}, we are dealing with sheaf categories (not just local system categories), and we later focus on $\Shv(VG;\Sp)$ and $\Shv(G;\Sp)$ (and a few others) which are defined, in the standard way, as the colimit categories of $\Shv(VG_N;\Sp)$ and $\Shv(G_N;\Sp)$ over $N\in \bZ_{\geq 0}$ in $\PrstL$, respectively, through $!$-pushforward (equivalently $*$-pushforward since all inclusions are proper) for the sequence of inclusions (\ref{eq: intro G_N, VG_N}). It is well known that these colimit categories are equivalent to the 
limit category $\varprojlim_{N}\Shv(VG_N;\Sp)$ 
and $\varprojlim_{N}\Shv(G_N;\Sp)$ in $\PrstR$, respectively, through $!$-pullbacks (by taking the right adjoints), and the latter is much easier to calculate since the limits are equivalently taken in $\OneCat$. Under this setting, we have the dualizing sheaf $\varpi_{VG}$ (resp. $\varpi_G$), rather than the constant sheaf, as a well defined object in $\Shv(VG;\Sp)$ (resp. $\Shv(G;\Sp)$) and as the monoidal unit of $\overset{!}{\otimes}$ on $\Shv(VG;\Sp)$ (resp. $\Shv(G;\Sp)$). Second, it is convenient to characterize the dualizing sheaf $\varpi_X$ as the constant cosheaf with costalk $\bS$. In particular, the $!$-pullback functors on sheaves are quite explicit using the cosheaf perspective.   

\item[(iii)] In summary, the six functors that we consider are all for sheaves (we don't consider operations separately for cosheaves). On the other hand, the cosheaf perspective gives a convenient way to describe certain sheaves in $\Shv(VG;\Sp)$, e.g. $\varpi_{VG}$, and makes the $!$-pullback functors much more explicit. Furthermore, this offers us a convenient transition from viewing $!$-pullback between $\Loc(X;\Sp)$ for maps inside $\Slch$ to $!$-pullback between $\Loc(X;\Sp)$ for the corresponding homotopy classes of maps inside $\Spc$\footnote{To avoid any technical issues, when we make such a transition (to further identify $\Loc(X;\Sp)$ with $\Fun(X, \Sp)$), we always assume that $X$ is both in $\Slch$ and is a CW-complex. This holds for all the explicit $X$ considered in this paper, e.g. $VG_N$ and $G_N$.} (which is, for example, needed for our formulation of the $J$-homomorphism). The latter makes sense by identifying $\Loc(X;\Sp)$ with $\Fun(X, \Sp)$ (so then it becomes the standard pullback functor), and it has a left adjoint, denoted as $!$-pushforward between local systems.  

\end{itemize}
\end{remark}

Now let 
\begin{align}\label{eq: Loc_*up!}
(\Loc)^!_*: \Corr(\Slch)_{\fib, \all}&\rightarrow \PrstL,\\
\nonumber X&\mapsto \Loc(X;\Sp)
\end{align}
be the functor induced from $\ShvSp^!_*$, which restricts the vertical arrows in correspondences to locally trivial fibrations and takes each $X$ to the full subcategory of locally constant cosheaves. 
Note that here we can write the target as $\PrstL$ instead of $\PrstR$, since all the functors involved are continuous. 
In our applications, we will mainly use the functor $\Loc^!_*$, which is symmetric monoidal. Then (as mentioned before), the limit (\ref{eq: intro LocVG}) and (\ref{eq: intro, LochatQ}) are taken with respect to $!$-pullbacks.

\subsection{Applications of the category of correspondences to quantizations}\label{subsec: intro, application, correspondences}
For simplicity, we only explain the application of correspondences to define the symmetric monoidal convolution structure on $\Loc(VG;\Sp)$ and the module structure on $\Loc(\widehat{Q};\Sp)$. Other applications follow from a similar line of ideas. In order to apply the functor $\Loc_*^!$ whose source is $\Corr(\Slch)_{\fib,\all}$, we need to first work with $VG_N$ and $\widehat{Q}_{N,M}$ and then show the compatibility of the structures with taking limit over $N$ and $(N,M)$ respectively. In fact, using our approach we can show that the symmetric monoidal structure and module structure already exist at finite levels, i.e. on $\Loc(VG_N;\Sp)$ and $\Loc(\widehat{Q}_{N,M};\Sp)$ respectively.  

The convolution structures at finite levels come from the following correspondences
\begin{align}\label{eq: intro convolution}
\xymatrix{VG_{N}^{\lng 2\rng}\ar@{^{(}->}[r]^{\iota_N\ \ \ \ }\ar[d]_{a_N}&VG_{N}\times VG_{N}\\
VG_{N}&
},\ \xymatrix{\widehat{Q}_{N,M}^{\lng 1\rng_\dagg}\ar@{^{(}->}[r]^{\jmath_{N,M}\ \ \ \ }\ar[d]_{a_{N,M}}&VG_{N}\times \widehat{Q}_{N,M}\\
\widehat{Q}_{N,M}&
},
\end{align} 
where (i) $\iota_{N}$ is the embedding of the subvariety of pairs $(A_1,\bp_1;A_2,\bp_2), \bp_i\in A_i$  where $A_1\perp A_2$, and $a_{N}$ is the natural addition $(A_1\oplus A_2, \bp_1+\bp_2)$; (ii) $\jmath_{N,M}$ is defined similarly as $\iota_N$ and 
\begin{align*}
a_{N,M}: (A_1,\bp_1; A_0,\bq, s)\mapsto (A_1\oplus A_0, \bq+2\bp_1,s+A_1(\bp_1)), \bp_1\in A_1, A_1\perp A_0.
\end{align*}
The structure functors are given by 
\begin{align*}
&(a_{N})_*\iota_{N}^!\circ\boxtimes:\Loc(VG_N;\Sp)\otimes \Loc(VG_N;\Sp)\rightarrow \Loc(VG_N;\Sp), \\
&(a_{N,M})_*\jmath_{N,M}^!\circ\boxtimes: \Loc(VG_N;\Sp)\otimes \Loc(\widehat{Q}_{N,M})\rightarrow \Loc(\widehat{Q}_{N,M}).
\end{align*}
However, to give a precise definition of the symmetric monoidal structure on $\Loc(VG;\Sp)$ and the module structure on $\Loc(\widehat{Q};\Sp)$, it is not enough to just list the functors above: there are coherent homotopies that encode the commutativity and associativity properties and their compatibility with taking limit, which is an infinite amount of data. 

The category of correspondences is an ideal tool to resolve the above issues. The upshot is that all the homotopy coherence data can be packaged inside the category of correspondences where things can be checked on the nose. For example, to present the desired symmetric monoidal structure on $\Loc(VG_N;\Sp)$, we can construct a commutative algebra object in $\Corr(\Slch)_{\fib,\all}$ whose underlying object is $VG_N$ and whose multiplication rule is given by (\ref{eq: intro convolution}). To show that the inclusion $\iota_{N,N'}: VG_N\hookrightarrow VG_{N'}$, $N'\geq N$ induces a symmetric monoidal functor $\iota_{N,N'}^!: \Loc(VG_{N'};\Sp)\rightarrow \Loc(VG_N;\Sp)$, we just need to show that $\iota_{N,N'}$ can be upgraded to an algebra homomorphism from  $VG_{N'}$ to  $VG_N$ as commutative algebra objects. 
In view of the following theorem (stated informally), these can be encoded by discrete data.

Let $\Fin_*$ denote the ordinary 1-category of pointed finite sets, where any morphism between two pointed sets preserves the marked point. 
\begin{thm}
Let $\cC$ be an $\infty$-category endowed with the Cartesian symmetric monoidal structure. 
\item[(a)] Any simplicial object (resp. $\Fin_*$-object) $C^\bullet$ in $\cC$ naturally determines an associative (resp. commutative) algebra object in $\Corr(\cC)$ if a prescribed class of diagrams are Cartesian in $\cC$. 

\item[(b)] Let $C^\bullet, W^\bullet, D^\bullet$ be simplicial objects (resp. $\Fin_*$-objects) in $\cC$ that satisfy the condition in (a). Any correspondence 
\begin{align*}
\xymatrix{W^\bullet\ar[r]\ar[d] &C^\bullet\\
D^\bullet&
}
\end{align*}
naturally induces a (right-lax) homomorphism from the associative (resp. commutative) algebra object corresponding to $C^\bullet$ to that corresponding to $D^\bullet$ in $\bCorr(\cC)$ if a prescribed class of diagrams are Cartesian in $\cC$ respectively. 
\end{thm}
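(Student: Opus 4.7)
An associative (resp.\ commutative) algebra object in $\Corr(\cC)$ is equivalently a Segal functor $\Delta^{op} \to \Corr(\cC)$ (resp.\ $\Fin_* \to \Corr(\cC)$). My plan is to construct such a functor $F_{C^\bullet}$ canonically from the given $C^\bullet$, generalizing the Segal object construction of \cite{Nick}. The key idea is to set $F_{C^\bullet}(\lng n\rng) = (C^{\lng 1\rng})^{\times n}$ on objects -- so that the Segal condition in $\Corr(\cC)$ is built in by construction -- and to encode the data of the higher $C^{\lng n\rng}$'s into the definition of $F_{C^\bullet}$ on morphisms via non-trivial correspondences.

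In more detail, using the inert-active factorization system in $\Fin_*$ (resp.\ $\Delta$), inert morphisms are sent to the trivial correspondences given by the induced projections between Cartesian products, while an active morphism $a\colon \lng n\rng \to \lng k\rng$, which partitions $\lng n\rng^\circ$ into $k$ non-empty subsets of sizes $n_1, \dots, n_k$, is sent to the correspondence
\[
(C^{\lng 1\rng})^{\times n_1} \times \cdots \times (C^{\lng 1\rng})^{\times n_k} \xleftarrow{\prod_i \sigma_i} C^{\lng n_1\rng} \times \cdots \times C^{\lng n_k\rng} \xrightarrow{\prod_i a_i} (C^{\lng 1\rng})^{\times k},
\]
where $\sigma_i\colon C^{\lng n_i\rng} \to (C^{\lng 1\rng})^{\times n_i}$ is the Segal spine map of $C^\bullet$ (not assumed to be an equivalence) and $a_i\colon C^{\lng n_i\rng} \to C^{\lng 1\rng}$ is the image under $C^\bullet$ of the active morphism $\lng n_i\rng \to \lng 1\rng$. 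A general morphism is then sent to the composite of these two correspondences. Functoriality demands that composing two such correspondences in $\Corr(\cC)$ -- which involves pulling back over intermediate Cartesian products in $\cC$ -- matches the correspondence assigned to the composite morphism; unraveling this, one obtains an explicit list of pullback squares in $\cC$ that must be Cartesian. These are exactly the \emph{prescribed Cartesian diagrams} of the statement, indexed by composable pairs of morphisms crossing the inert-active boundary.

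For part (b), given a correspondence $C^\bullet \xleftarrow{f^\bullet} W^\bullet \xrightarrow{g^\bullet} D^\bullet$ of simplicial (resp.\ $\Fin_*$) objects, I apply the construction of part (a) to each of $C^\bullet, W^\bullet, D^\bullet$ and build the desired (right-lax) algebra homomorphism $F_{C^\bullet} \to F_{D^\bullet}$ in $\bCorr(\cC)$ as the natural transformation whose component at $\lng n\rng$ is the product correspondence $(C^{\lng 1\rng})^{\times n} \leftarrow (W^{\lng 1\rng})^{\times n} \to (D^{\lng 1\rng})^{\times n}$. Right-laxness reflects that naturality squares with respect to active morphisms commute only up to a canonical 2-morphism in $\bCorr(\cC)$, furnished by the levelwise correspondences $C^{\lng n\rng} \leftarrow W^{\lng n\rng} \to D^{\lng n\rng}$; an analogous list of Cartesian conditions ensures these 2-morphisms are defined canonically and compose correctly. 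The hard part throughout will be the rigorous $\infty$-categorical bookkeeping of higher coherences: a direct check of functor axioms is not feasible, and one must instead construct an appropriate (co)Cartesian fibration over $\Delta$ or $\Fin_*$ using the machinery of \cite{Nick}, the content of the theorem being that the homotopy-coherent data reduces to an explicit, combinatorial list of Cartesian conditions in the underlying $\infty$-category $\cC$.
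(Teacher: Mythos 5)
Your proposal correctly identifies the low-dimensional data of the sought-for functor -- $F_{C^\bullet}(\langle n\rangle) = (C^{\langle 1\rangle})^{\times n}$, inerts to coCartesian/projection correspondences, actives to the correspondences through the Segal spine maps -- and your derivation of the Cartesian conditions from composability of correspondences does land on the same diagrams (\ref{thm: diagram Cartesian}) and (\ref{thm: diagram Cartesian 2}) that the paper imposes: composing $F(a)$ and $F(b)$ for active $a,b$ requires exactly that $C^{\langle n\rangle} \to (\prod_i C^{\langle n_i\rangle}) \times_{(C^{\langle 1\rangle})^{\times k}} C^{\langle k\rangle}$ be an equivalence. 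This reverse-engineering is sound.

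But the proposal stops at exactly the point where the theorem's content begins. You write that ``one must instead construct an appropriate (co)Cartesian fibration over $\Delta$ or $\Fin_*$'' and that this coherence bookkeeping is the hard part -- and then you do not do it. An algebra object in $\bCorr(\cC)^{\otimes,\Fin_*}$ is a \emph{section} of $\bCorr(\cC)^{\otimes,\Fin_*} \to N(\Fin_*)$ sending inerts to coCartesian edges; specifying its value on objects and generating 1-morphisms, together with the observation that compositions force certain pullbacks, does not define such a section, because $N(\Fin_*)$ has nontrivial 2-simplices and higher, and the target is an $(\infty,2)$-category. The paper's actual proof resolves this by introducing an auxiliary strict construction that collapses all the coherence into a single 1-categorical check: it builds the ordinary category $\cT$ (a Cartesian fibration over $I$, the Grothendieck construction of $\Seq_\bullet(N(\Delta)^{op})$), constructs an \emph{explicit functor of ordinary 1-categories} $F_{\cT,\Delta^{op}}\colon \cT \to N(\Delta)^{op}$ -- whose compatibility with composition is verified by hand -- and invokes Lemma~\ref{lemma: Z^X} (the relative nerve comparison) to identify $\Maps_I(\cT^\natural, (\cC\times I)^\natural)$ with maps into $f_{\Delta^\bullet}^\cC$ over $\Seq_\bullet(N(\Delta)^{op})$. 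The desired algebra object is then $C^\bullet \circ F_{\cT,\Delta^{op}}$, and the condition (Obj) on grid diagrams unwinds to exactly the Cartesian conditions you found. Without an analogue of $\cT$ and the relative-nerve lemma, your proposal has no mechanism to produce the functor.

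The situation is the same for part (b). You describe the component at $\langle n\rangle$ of the would-be right-lax homomorphism as the product correspondence $(C^{\langle 1\rangle})^{\times n} \leftarrow (W^{\langle 1\rangle})^{\times n} \to (D^{\langle 1\rangle})^{\times n}$, which is correct as a description, but you do not identify the Cartesian conditions on $W^\bullet$ that the paper isolates -- namely that $W^\bullet$ itself satisfies (a), that (\ref{lemma: diagram Cart}) (the ``$W$-over-$D$'' spine squares) are Cartesian, and, for a non-lax homomorphism, additionally (\ref{lemma: diagram Cart 2}) -- nor do you give the Gray-tensor-product formalization of right-lax homomorphism (functors out of $N(\Delta)^{op}\,\Gray\,[1]$), the auxiliary fibration $\cT_+ \to I_+$, the contraction $C_+$, and the right Kan extension along $p_{\cT_+}$ that together make the 2-morphism data canonical. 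In short: the plan is a faithful description of what the paper's construction \emph{does}, but it is not a proof, because the content of the theorem is precisely the machinery that the plan defers.
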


Indeed, we construct $\Fin_*$-objects $VG_N^\bullet$ and morphisms $VG_N^\bullet\hookrightarrow VG_{N'}^\bullet$ and apply the above theorem to show the symmetric monoidal structure on $\Loc(VG;\Sp)$. The module structure on $\Loc(\widehat{Q})$ can be obtained appealing to a similar theorem as above for modules. We make a couple of remarks. First, one can also construct a simplicial object $VG_N^\bullet$ that represents an associative algebra object in $\Corr(\Slch)_{\fib,\all}$, but it is not a Segal object (it is not even a Segal object in $\Spc$), so the above theorem strictly generalizes the construction from Segal objects in \cite{Nick}.  Second, the vertical map $a_{N,M}$ of the right correspondence in (\ref{eq: intro convolution}) is not proper, so one cannot regard the correspondence as in $\Spc$.

\subsection{Related results and future work}
Claim \ref{claim: 1} in \cite{JT} was motivated by the Nadler-Zaslow theorem (\cite{NZ}, \cite{Nadler-selecta}) and the notion of brane structures on an exact Lagrangian submanifold $L\subset T^*X$ in Floer theory (cf. \cite{Seidel}), whose obstruction classes are given by the Maslov class in $H^1(L,\bZ)$ and the second (relative) Stiefel-Whitney class in $H^2(L;\bZ/2\bZ)$. The obstruction classes are exactly the obstruction to the triviality of the composite map
\begin{align*}
L\overset{\gamma}{\rightarrow}U/O\overset{BJ}{\rightarrow} B\Pic(\bS)\overset{\varphi_{\bZ}}{\rightarrow} B\Pic(\bZ)\simeq S^1\times B^2(\bZ/2\bZ),
\end{align*}
where $\varphi_{\bZ}$ is induced from the canonical commutative ring spectrum homomorphism $\bS\rightarrow\bZ$.

For $L$ a compact exact Lagrangian submanifold in the cotangent bundle of a compact manifold, it is a prediction from Arnold's nearby Lagrangian conjecture that the classifying map $L\rightarrow B\Pic(\bS)$ is trivial.  This is confirmed in \cite{AbouzaidKragh} whose approach is based on Floer (homotopy) theory (cf. \cite{CJS}). 
Using sheaf quantization over $\bZ$, it is proved in \cite{Guillermou} that this is the case when $\bS$ is replaced by $\bZ$. The sheaf of brane structures in our setting is called the Kashiwara-Schapira stack there. In the subsequent work \cite{J-J}, we prove the triviality of the classifying map using microlocal sheaf theory over $\bS$. 

The $E_\infty$-structure of $BJ$ in Claim \ref{claim: 1} is used when the Gauss map $\gamma$ has an $E_\infty$-structure or more generally $E_n$-structure. An example that is crucial to the sheaf-theoretic approach to symplectic topology is the $E_\infty$-map $U\rightarrow U/O$ that comes from taking stabilization of the linear symplectic group actions on $T^*\bR^N$. In \cite{Lurie-circle}, it is conjectured that the obstruction to define a Fukaya category over $\bS$ on a symplectic $2n$-manifold $M$ is the composition 
\begin{align}\label{eq: Lurie obstruction}
M\longrightarrow BU\overset{B^2J_{\bC}}{\longrightarrow} B^2\Pic(\bS),
\end{align}
where the first map is the stabilization of the classifying map for the tangent bundle as an $U(n)$-bundle, and $J_{\bC}: \bZ\times BU\rightarrow \Pic(\bS)$ is the complex $J$-homomorphism.  By the compatibility of the complex $J$-homomorphism with the real $J$-homomorphism, one has $B^2J_{\bC}$ is homotopic to the delooping of the classifying map of brane structures $U\rightarrow U/O\rightarrow B\Pic(\bS)$. 

The analogue of the Fukaya category in sheaf theory for a Weinstein manifold $M$ with a choice of Lagrangian skeleton $\Lambda$ is the microlocal sheaf category along $\Lambda$. We will show in a future work that the obstruction to defining a microlocal sheaf category along $\Lambda$ is exactly the obstruction given by (\ref{eq: Lurie obstruction}) and this is based on the $E_1$-structure on the classifying map $U\rightarrow B\Pic(\bS)$. There is a different approach to this using $h$-principle given in  \cite{Shende, Nadler-Shende}.

\subsection{Acknowledgement}
I would like to thank David Nadler, Dima Tamarkin, David Treumann and Eric Zaslow for their interest and helpful discussions related to this paper. I am also grateful to John Francis, Yifeng Liu, Jacob Lurie and Nick Rozenblyum for answering several questions on higher category theory and for help with references. I am in particular grateful to an anonymous referee for useful comments and suggestions which improved the paper significantly. 

This work was done under the support of an NSF grant DMS-1854232.

\section{Background and results on the $(\infty,2)$-categories of correspondences}\label{section: background}
We recall the definition of the $(\infty,2)$-categories of correspondences for an $\infty$-category $\cC$ and its (symmetric) monoidal structure when $\cC$ is (symmetric) monoidal. Then we present several concrete constructions of (commutative) algebra objects, their modules and (right-lax) homomorphisms when $\cC$ has the Cartesian symmetric monoidal structure. 

\subsection{The definition of the $(\infty,2)$-categories of correspondences}
The materials in this subsection are following \cite[Chapter A.1 and Chapter V.1]{Nick}. Let $\OneCat$ denote the $(\infty,1)$-category of (small) $(\infty, 1)$-categories. Let $\TwoCat$ be the $(\infty,1)$-category of $(\infty,2)$-categories, defined as the full subcategory of complete Segal objects in $(\OneCat)^{\Delta^{op}}$. The tautological full embedding is denoted by 
\begin{equation*}
\Seq_\bullet: \TwoCat\longrightarrow (\OneCat)^{\Delta^{op}}.
\end{equation*}
We use the convention that a category in bold face means an $(\infty,2)$-category. For example, $\bPrstL$ means the $(\infty,2)$-category of presentable stable $\infty$-categories with 2-morphisms given by natural transformations.

 Let $\cC$ be an $(\infty,1)$-category which admits finite limits. Let $([n]\times [n]^{op})^{\geq \dgnl}$ be the full subcategory of $[n]\times [n]^{op}$, consisting of objects $(i,j), j\geq i$, pictorially given by 

\begin{equation*}
\xymatrix{
(0,n)\ar[r]\ar[d]&(0,n-1)\ar[d]\ar[r]&\cdots\ar[r]\ar[d]&(0,0)\\
(1,n)\ar[r]\ar[d]&\cdots\ar[r]\ar[d]&(1,1)\\
\vdots\ar[r]\ar[d]&\vdots\\
(n,n)
}.
\end{equation*}
Let $'\Grid_n^{\geq\dgnl}(\cC)$ be the 1-full subcategory of $\Fun(([n]\times [n]^{op})^{\geq \dgnl},\cC)$ whose objects, viewed as diagrams in $\cC$, have each square a Cartesian square in $\cC$, and whose morphisms $\eta: F_1\rightarrow F_2$ satisfy that the induced morphisms $F_1(i,i)\rightarrow F_2(i,i)$ are isomorphisms for $0\leq i\leq n$. Let $''\Grid_n^{\geq\dgnl}(\cC)$ be the maximal Kan subcomplex of $'\Grid_n^{\geq\dgnl}(\cC)$, i.e. the 1-morphisms $\eta: F_1\rightarrow F_2$ in $''\Grid_n^{\geq\dgnl}(\cC)$ further satisfy that the morphism at each entry $F_1(i,j)\rightarrow F_2(i,j)$ is an isomorphism for all $0\leq i\leq j\leq n$. 

Define $\bCorr(\cC)$ to be the $(\infty,2)$-category, considered as an object in $(\OneCat)^{\Delta^{op}}$,  given by 
\begin{equation*}
\Seq_n(\bCorr(\cC))=\leftidx{'}{\Grid_n^{\geq\dgnl}(\cC)}{}.
\end{equation*}
Let $\Corr(\cC)$ be the $(\infty,1)$-category given by 
\begin{equation*}
\Seq_n(\Corr(\cC))=\leftidx{''}{\Grid_n^{\geq\dgnl}(\cC)}{}.
\end{equation*}
More generally, if we fix three classes of morphisms in $\cC$ denoted by $vert,horiz,adm$ satisfying some natural conditions (see \cite[Chapter 7, 1.1.1]{Nick} for the precise conditions), then we can define $\Corr(\cC)_{vert, horiz}^{adm}$ to be the 2-full subcategory of $\bCorr(\cC)$ with the vertical (resp. horizontal) arrow in a 1-morphism belonging to $vert$ (resp. $horiz$), and the 2-morphisms lying in $adm$. If $adm=\isom$, where $\isom$ is the class of isomorphisms, then we will also denote $\Corr(\cC)_{vert, horiz}^{\isom}$ by $\Corr(\cC)_{vert, horiz}$.

\subsection{(Symmetric) monoidal structure on $\bCorr(\cC)$}\label{subsec: monoidal Corr}

Following \cite[Chapter 9, 4.8.1]{Nick}, one can give a description of a 2-coCartesian fibration\footnote{The construction in \emph{loc. cit.} only gives a coCartesian fibration for the $(\infty,1)$-category $\Corr(\cC)$. Our construction of a 2-coCartesian fibration for $\bCorr(\cC)$ is a slight modification of it in the $(\infty,2)$-setting.} $\bCorr(\cC)^{\otimes, \Delta^{op}}\rightarrow N(\Delta)^{op}$ (resp. $\bCorr(\cC)^{\otimes, \Fin_*}\rightarrow N(\Fin_*)$) for the (symmetric) monoidal structure on $\bCorr(\cC)$ inherited from a (symmetric) monoidal structure $\cC^{\otimes}$ as follows (See \cite{Yifeng2} for a similar construction in the $(\infty,1)$-categorical setting). By the straightening theorem for $(\infty, 2)$-categories \cite[Chapter 11, Theorem 1.1.8, Corollary 1.3.3]{Nick}, 2-coCartesian fibrations over an $(\infty,2)$-category $\mathbb{D}$ are equivalent to functors from $\mathbb{D}$ to the $(\infty,2)$-category of $(\infty,2)$-categories.

Let $p: \cC^{\otimes,\Delta}\rightarrow N(\Delta)$ (resp. $p: \cC^{\otimes,(\Fin_*)^{op}}\rightarrow N(\Fin_*)^{op}$ be the \emph{Cartesian} fibration associated to the (symmetric) monoidal structure $\cC^{\otimes}$. Then we have 
\begin{align}
\label{eq: Seq_kbCorr, otimes}&\Seq_k(\bCorr(\cC)^{\otimes,\Delta^{op}}):=\Fun''(([k]\times[k]^{op})^{\geq \dgnl}, \cC^{\otimes, \Delta})\\
\nonumber&\Seq_k(\bCorr(\cC)^{\otimes,\Fin_*}):=\Fun''(([k]\times[k]^{op})^{\geq \dgnl}, \cC^{\otimes, (\Fin_*)^{op}}),
\end{align}
where $\Fun''(([k]\times[k]^{op})^{\geq \dgnl}, \cC^{\otimes, \Delta})$ is defined as follows. First, let $\Maps'(([n]\times[n]^{op})^{\geq\dgnl}, N(\Delta))$ be the full subcategory of $\Maps(([n]\times[n]^{op})^{\geq\dgnl}, N(\Delta))$ consisting of functors that map vertical arrows in $([n]\times[n]^{op})^{\geq\dgnl}$ to identity morphisms (more precisely, the contractible component of each self-morphism space containing the identity morphism) in $N(\Delta)$, and we have $\Fun'(([n]\times[n]^{op})^{\geq\dgnl}, \cC^{\otimes,\Delta})$ defined as the pullback 
\begin{equation}\label{eq: diagram Fun'}
\xymatrix{\Fun'(([n]\times[n]^{op})^{\geq\dgnl}, \cC^{\otimes,\Delta})\ar[r]\ar[d]&\Fun(([n]\times[n]^{op})^{\geq\dgnl}, \cC^{\otimes,\Delta})\ar[d]\\
\Maps'(([n]\times[n]^{op})^{\geq\dgnl}, N(\Delta))\ar[r] &\Fun(([n]\times[n]^{op})^{\geq\dgnl}, N(\Delta))
}.
\end{equation}

Then $\Fun''(([n]\times[n]^{op})^{\geq \dgnl}, \cC^{\otimes, \Delta})$ is  the 1-full subcategory of $\Fun'([n]\times[n]^{op})^{\geq\dgnl}, \cC^{\otimes,\Delta})$ whose objects $F$ and morphisms $q: F\rightarrow F'$ are those satisfying (Obj) and (Mor) below respectively:
\begin{itemize}
\item[(Obj)] For every square $\Delta^{1}_{vert}\times \Delta^{1}_{hor}$ in $([n]\times[n]^{op})^{\geq \dgnl}$, its image under $F$ is an edge $(f: x\rightarrow y)\in \Fun(\Delta_{vert}^1,\cC^{\otimes, \Delta})$, where $p(x)=id_{[k]}, p(y)=id_{[m]}$. Let $h$ be a Cartesian edge ending at $y$ of the Cartesian fibration $\widetilde{p}:\Fun(\Delta^1_{vert}, \cC^{\otimes, \Delta})\rightarrow \Fun(\Delta^1_{vert}, N(\Delta))$ over $\widetilde{p}(f)$. Then there exists a (contractible space of) morphism(s) $g$ in $\Fun(\Delta^1_{vert}, \cC^{\otimes, \Delta})$ together with a homotopy $f\simeq h\circ g$. We require that the square in $\cC^{\otimes, \Delta}_{[k]}\simeq (\cC)^{\times k}$ determined by $g$ is a Cartesian square.  

\item[(Mor)] $q((i,i)): F((i,i))\rightarrow F'((i,i))$ is an isomorphism for all $0\leq i\leq k$. 
\end{itemize}
The $\infty$-category $\Fun''(([n]\times[n]^{op})^{\geq \dgnl}, \cC^{\otimes, (\Fin_*)^{op}})$ is defined in the same way. We first prove that the natural functor $p_{\bCorr(\cC)}: \bCorr(\cC)^{\otimes, \Delta^{op}}\rightarrow N(\Delta)^{op}$ (resp. $p_{\bCorr(\cC)}^{\Comm}: \bCorr(\cC)^{\otimes, \Fin_*}\rightarrow N(\Fin_*)$) represents a monoidal (resp. symmetric monoidal) structure on $\bCorr(\cC)$. 

\begin{prop}\label{prop: 2-coCart monoidal}
The functor $p_{\bCorr(\cC)}: \bCorr(\cC)^{\otimes, \Delta^{op}}\rightarrow N(\Delta)^{op}$ (resp. $p_{\bCorr(\cC)}^{\Comm}: \bCorr(\cC)^{\otimes, \Fin_*}\rightarrow N(\Fin_*)$) is a 2-coCartesian fibration in the sense of \cite[Chapter 11, Definition 1.1.2]{Nick}. 
\end{prop}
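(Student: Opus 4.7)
The plan is to exhibit 2-coCartesian lifts explicitly using the fact that $p:\cC^{\otimes,\Delta}\to N(\Delta)$ (resp.\ $p:\cC^{\otimes,(\Fin_*)^{\op}}\to N(\Fin_*)^{\op}$) is a Cartesian fibration, and then reduce the remaining 2-categorical condition to triviality via the fact that the base is the nerve of an ordinary 1-category. First I would unpack objects: by (Obj), an object of $\Seq_0(\bCorr(\cC)^{\otimes,\Delta^{\op}})$ lying over $[k]$ is simply an object $x\in \cC^{\otimes,\Delta}_{[k]}\simeq \cC^{\times k}$. Given a morphism in $N(\Delta)^{\op}$ from $[k]$ to $[m]$, represented by $\alpha:[m]\to [k]$ in $\Delta$, the Cartesian fibration $p$ supplies a $p$-Cartesian edge $\widetilde{\alpha}:\alpha^*x\to x$. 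My candidate coCartesian lift is the correspondence
\begin{equation*}
\xymatrix{\alpha^*x\ar[r]^{\widetilde{\alpha}}\ar[d]_{\mathrm{id}}&x\\ \alpha^*x&}
\end{equation*}
viewed as an object of $\Seq_1(\bCorr(\cC)^{\otimes,\Delta^{\op}})$: the vertical leg is an identity so maps to $\mathrm{id}_{[m]}$ in $N(\Delta)$, and the Cartesian-square condition in (Obj) is trivially satisfied.

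Next I would verify this lift is 1-coCartesian. Unwinding $\Seq_n$ in terms of $\Fun''(([n]\times[n]^{\op})^{\geq \dgnl},\cC^{\otimes,\Delta})$, the relevant lifting problem for $n\geq 2$ decomposes into finding fillers for each grid square in an appropriate fiber $\cC^{\otimes,\Delta}_{[\ell]}\simeq \cC^{\times\ell}$. The $p$-Cartesian property of $\widetilde{\alpha}$ converts any extension factoring through $x$ into an essentially unique extension confined to the single fiber over $[m]$, after which the required fiber-wise Cartesian square is produced from the (Obj) condition on the target simplex together with existence of fiber products in $\cC$. For the 2-categorical upgrade, $N(\Delta)^{\op}$ (resp.\ $N(\Fin_*)$) is the nerve of an ordinary 1-category, so all 2-morphisms in the base are identities and the extra clause in \cite[Chapter 11, Definition 1.1.2]{Nick} degenerates to a compatibility of the candidate lifts with (necessarily invertible) 2-morphisms in $\bCorr(\cC)^{\otimes,\Delta^{\op}}$; this is immediate from the fact that 2-morphisms in $\bCorr(\cC)$ act on the interior of a correspondence while fixing its legs, and hence interact trivially with a lift whose vertical leg is an identity. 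The symmetric monoidal case proceeds identically upon substituting $\Fin_*^{\op}$ for $\Delta$.

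The main obstacle is bookkeeping the nested structure: keeping straight which arrows in the grid must project to identities in $N(\Delta)$ versus those free to lie over non-identity morphisms, and verifying that the interplay between $p$-Cartesian lifts and the fiber-wise Cartesian-square condition is coherent across all simplicial degrees rather than one at a time. In particular, one must confirm that the candidate lifts compose correctly along composition in $N(\Delta)^{\op}$, which reduces to the standard fact that a composite of $p$-Cartesian edges is $p$-Cartesian; however this must be tracked carefully through the $\Fun''$-construction and the (Mor) condition, because the 1-fullness built into $\Fun''$ prevents one from invoking the universal property of Cartesian edges naively.
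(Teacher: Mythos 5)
Your candidate 2-coCartesian lift is essentially the one the paper uses: a correspondence with identity vertical leg and $p$-Cartesian horizontal leg. This is the right object. The issues lie in the verification.

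The main gap is in how you propose to check 2-coCartesianness. In \cite[Chapter 11, Definition 1.1.2]{Nick}, a 2-coCartesian morphism $f:x\to y$ is defined by requiring
\begin{equation*}
\bMaps_{\cD}(y,z)\longrightarrow \bMaps_{\cD}(x,z)\underset{\bMaps_{\cE}(F(x), F(z))}{\times} \bMaps_{\cE}(F(y), F(z))
\end{equation*}
to be an \emph{equivalence of $\infty$-categories}, not merely of underlying $\infty$-groupoids. Your plan of first checking a ``1-coCartesian'' condition (phrased in terms of finding fillers across grid squares degree-by-degree) and then handling a ``2-categorical upgrade'' does not actually correspond to any natural decomposition of this single condition, and you do not say what ``1-coCartesian'' means here. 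The paper instead computes both sides of the displayed map explicitly: $\bMaps_{\bCorr(\cC)^{\otimes,\Delta^{op}}}(y,z)\simeq (\cC^{\otimes,\Delta}_{[n]})_{/z}\times_{\cC^{\otimes,\Delta}}\cC^{\otimes,\Delta}_{/y}$, and the Cartesianness of $f_{hor}$ supplies exactly the identification $\cC^{\otimes,\Delta}_{/f_{hor}}\simeq \cC^{\otimes,\Delta}_{/x}\times_{N(\Delta)_{/[\ell]}}N(\Delta)_{/p_{\bCorr(\cC)}(f)^{op}}$ that makes the whole functor an equivalence. Your sketch of ``the $p$-Cartesian property converts any extension factoring through $x$ into an essentially unique extension confined to a single fiber'' is too imprecise to certify this equivalence of $\infty$-categories.

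Your remarks on the rest of the definition are also off target in two places. First, you characterize the remaining clauses as ``compatibility of the candidate lifts with (necessarily invertible) 2-morphisms,'' but conditions (iia) and (iib) of the definition concern the fibrancy of $\bMaps_{\cD}(x,y)\to\bMaps_{\cE}(F(x),F(y))$ and preservation of coCartesian edges in these mapping categories---they are not about the candidate lifts at all. They do trivialize here, but the correct reason is that $N(\Delta)$ has discrete mapping spaces, so $\bMaps_{\cD}(x,y)\to\bMaps_{\cE}(F(x),F(y))$ is automatically a coCartesian fibration whose coCartesian edges are exactly the isomorphisms, and (iib) then reduces to functors preserving isomorphisms. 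Second, your worry about ``confirming that the candidate lifts compose correctly along composition in $N(\Delta)^{op}$'' is not part of the definition and need not be checked separately; once 2-coCartesianness holds for individual morphisms, closure under composition is automatic.
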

Let's first state the definition of a 2-coCartesian fibration (see \cite[Chapter 11, Definition 1.1.2, \S 1.3.1]{Nick}).

\begin{definition}\label{def: 2-coCart arrow}
Let $F: \cD\rightarrow \cE$ be a functor of $(\infty,2)$-categories. We say $f: x\rightarrow y$, a $1$-morphism in $\cD$, is a \emph{2-coCartesian morphism}  with respect to $F$ if the induced functor
\begin{equation}\label{eq: 2-coCart morphism}
\bMaps_{\cD}(y,z)\longrightarrow \bMaps_{\cD}(x,z)\times_{\bMaps_{\cE}(F(x), F(z))} \bMaps_{\cE}(F(y), F(z))
\end{equation}
is an equivalence of $\infty$-categories for every $z\in \cD$. 
\end{definition}

\begin{definition}\label{def: 2-coCart fib}
Let $F: \cD\rightarrow \cE$ be a functor of $(\infty,2)$-categories. We say $F$ is a \emph{2-coCartesian fibration} if 
\begin{itemize}
\item[(i)] For any morphism $f: s\rightarrow t$ in $\cE$ and any object $x$ in $\cD$ over $s$, there exists a 2-coCartesian 1-morphism in $\cD$ emitting from $x$ that covers $f$;
\item[(iia)] For every objects $x,y$ in $\cD$, one has $\bMaps_{\cD}(x,y)\rightarrow \bMaps_{\cE}(F(x), F(y))$ a Cartesian fibration;
\item[(iib)] For any $1$-morphisms $f: x\rightarrow y$ and $g: z\rightarrow w$, the induced functors
\begin{equation*}
\bMaps_{\cD}(y,z)\rightarrow \bMaps_{\cD}(x,z), \ \bMaps_{\cD}(y,z)\rightarrow \bMaps_{\cD}(y,w)
\end{equation*}
sends each Cartesian morphism over $\bMaps_{\cE}(F(y), F(z))$ to a Cartesian morphism over $\bMaps_{\cE}(F(x),F(z))$ and $\bMaps_{\cE}(F(y),F(w))$,  respectively. 
\end{itemize}
\end{definition}

\begin{proof}[Proof of Proposition \ref{prop: 2-coCart monoidal}]
We claim that a 1-morphism in $\bCorr(\cC)^{\otimes,\Delta^{op}}$ represented by any element in $f\in\Fun''(([1]\times [1]^{op})^{\geq \dgnl}, \cC^{\otimes,\Delta})$ is 2-coCartesian if  both its horizontal and vertical arrows go to Cartesian arrows in $\cC^{\otimes, \Delta}$. Once this is proved, (i) becomes an immediate consequence. To see the claim, for any $f:x\rightarrow y$ and $z$ in $\bCorr(\cC)^{\otimes, \Delta^{op}}$ over $p_{\bCorr(\cC)}(f)^{op}:[m]\rightarrow [\ell]$ and $[n]$ respectively, 
\begin{equation*}
\xymatrix{&U\ar[r]\ar[d] &x\\
W\ar[r]\ar[d]&y &\\
z& & \\
[n]\ar[r]&[m]\ar[r]&[\ell]
}
\end{equation*}
where the vertical arrow in the diagram representing $f$ is an isomorphism, we have 
\begin{equation*}
\bMaps_{\bCorr(\cC)^{\otimes, \Delta^{op}}}(y,z)\simeq (\cC^{\otimes, \Delta}_{[n]})_{/z}\times_{\cC^{\otimes,\Delta}} \cC^{\otimes, \Delta}_{/y}
\end{equation*}
and the right-hand-side of (\ref{eq: 2-coCart morphism}) equivalent to
\begin{equation*}
(\cC^{\otimes, \Delta}_{[n]})_{/z}\times_{\cC^{\otimes,\Delta}} (\cC^{\otimes, \Delta}_{/x}\times_{N(\Delta)_{/[\ell]}} N(\Delta)_{/p_{\bCorr(C)}(f)^{op}}). 
\end{equation*}
The functor (\ref{eq: 2-coCart morphism}) is homotopic to the composition
\begin{align*}
&(\cC^{\otimes, \Delta}_{[n]})_{/z}\times_{\cC^{\otimes,\Delta}} \cC^{\otimes, \Delta}_{/y}\overset{\sim}{\rightarrow} (\cC^{\otimes, \Delta}_{[n]})_{/z}\times_{\cC^{\otimes,\Delta}} \cC^{\otimes, \Delta}_{/f_{hor}}\\
&\rightarrow (\cC^{\otimes, \Delta}_{[n]})_{/z}\times_{\cC^{\otimes,\Delta}} (\cC^{\otimes, \Delta}_{/x}\times_{N(\Delta)_{/[\ell]}} N(\Delta)_{/p_{\bCorr(\cC)}(f)^{op}}),
\end{align*}
where $f_{hor}$ is the horizontal arrow that represents $f$. 
Now the horizontal arrow in the graph representing $f$ is Cartesian in $\cC^{\otimes, \Delta}$ means that 
\begin{equation*}
\cC^{\otimes, \Delta}_{/f_{hor}}\simeq \cC^{\otimes, \Delta}_{/x}\times_{N(\Delta)_{/[\ell]}} N(\Delta)_{/p_{\bCorr(C)}(f)^{op}},
\end{equation*}
which implies that $f$ is a 2-coCartesian morphism.

It's clear that $p_{\bCorr(\cC)}$ satisfies (iia) and (iib), since the mapping space of any two objects in $N(\Delta)$ is discrete and then a Cartesian morphism in $\bMaps_{\bCorr(\cC)}(y,z)$ over $\bMaps_{N(\Delta)^{op}}(F(y), F(z))$ is the same as an isomorphism. 

A similar proof works for $p_{\bCorr(\cC)}^{\Comm}: \bCorr(\cC)^{\otimes, \Fin_*}\rightarrow N(\Fin_*)$.
\end{proof}

\begin{prop}
The 2-coCartesian fibration $p_{\bCorr(\cC)}: \bCorr(\cC)^{\otimes, \Delta^{op}}\rightarrow N(\Delta)^{op}$ (resp. $p_{\bCorr(\cC)}^{\Comm}: \bCorr(\cC)^{\otimes, \Fin_*}\rightarrow N(\Fin_*)$) represents a monoidal (resp. symmetric monoidal) structure on $ \bCorr(\cC)$.
\end{prop}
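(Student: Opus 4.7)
The plan is to recall that specifying a (symmetric) monoidal structure on an $(\infty,2)$-category $\cD$ amounts to producing a 2-coCartesian fibration $\cD^\otimes \to N(\Delta)^{op}$ (resp.\ $\cD^\otimes\to N(\Fin_*)$) whose fibers satisfy the Segal condition: the inert inclusions $\{i{-}1,i\}\hookrightarrow [n]$ (resp.\ the Segal projections $\rho_i:\langle n\rangle\to\langle 1\rangle$) induce an equivalence $\cD^\otimes_{[n]}\simeq (\cD^\otimes_{[1]})^{\times n}$, and the fiber over $[0]$ (resp.\ $\langle 0\rangle$) is terminal. Proposition~\ref{prop: 2-coCart monoidal} has already supplied the 2-coCartesian fibration, so everything reduces to checking the Segal condition on fibers and the triviality of the fiber over the terminal object in $N(\Delta)^{op}$ (resp.\ over $\langle 0\rangle$).

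First I would identify the fiber of $p_{\bCorr(\cC)}$ over $[n]$ level-by-level. Since the defining pullback diagram~(\ref{eq: diagram Fun'}) forces all vertical arrows of a functor $F:([k]\times[k]^{op})^{\geq\dgnl}\to\cC^{\otimes,\Delta}$ in the fiber over $[n]$ to be identities on $[n]$, one gets
\begin{equation*}
\Seq_k\bigl(\bCorr(\cC)^{\otimes,\Delta^{op}}\bigr)_{[n]}\;\simeq\;\leftidx{'}{\Grid_k^{\geq\dgnl}(\cC^{\otimes,\Delta}_{[n]})}{},
\end{equation*}
because the Cartesianness condition (Obj) and the isomorphism-on-the-diagonal condition (Mor) both take place inside the single fiber $\cC^{\otimes,\Delta}_{[n]}$.

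Next, I would use that $p:\cC^{\otimes,\Delta}\to N(\Delta)$ represents the monoidal structure $\cC^\otimes$, so the Segal maps give an equivalence $\cC^{\otimes,\Delta}_{[n]}\simeq \cC^{\times n}$, and this equivalence is natural in the Segal inclusions. Because limits in a product $\infty$-category are computed componentwise, Cartesianness of a square in $\cC^{\times n}$ is equivalent to Cartesianness of its $n$ factor-squares; hence the above description rewrites as
\begin{equation*}
\leftidx{'}{\Grid_k^{\geq\dgnl}(\cC^{\times n})}{}\;\simeq\;\bigl(\leftidx{'}{\Grid_k^{\geq\dgnl}(\cC)}{}\bigr)^{\times n}\;=\;\Seq_k(\bCorr(\cC))^{\times n}.
\end{equation*}
These equivalences are natural in $[k]\in\Delta^{op}$ and are compatible with the Segal maps, giving the required equivalence of $(\infty,2)$-categories $\bCorr(\cC)^{\otimes,\Delta^{op}}_{[n]}\simeq \bCorr(\cC)^{\times n}$. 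The fiber over $[0]$ is $\leftidx{'}{\Grid_k^{\geq\dgnl}(\cC^{\otimes,\Delta}_{[0]})}{}$, and since $\cC^{\otimes,\Delta}_{[0]}\simeq *$, this gives the terminal $(\infty,2)$-category. The symmetric monoidal case is entirely parallel, replacing $\Delta$ by $\Fin_*$ and Segal inclusions by inert maps $\rho_i:\langle n\rangle\to\langle 1\rangle$, and using $\cC^{\otimes,(\Fin_*)^{op}}_{\langle n\rangle}\simeq \cC^{\times n}$.

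The only genuinely nontrivial step I anticipate is bookkeeping: keeping the opposite conventions (Cartesian vs.\ coCartesian, $N(\Delta)$ vs.\ $N(\Delta)^{op}$) consistent throughout, and verifying that the composition of Segal maps in $\bCorr(\cC)^{\otimes,\Delta^{op}}$ really is induced by the Segal maps of $\cC^{\otimes,\Delta}$ under the identification above. Once this compatibility is in hand, the Segal condition is an immediate consequence of the corresponding property of $p$ together with the componentwise nature of Cartesian squares in a product.
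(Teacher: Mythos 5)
Your proposal is correct and follows essentially the same route as the paper: identify the fiber over $[n]$ as $\bCorr(\cC^{\otimes,\Delta}_{[n]})$ level-by-level, invoke the Segal equivalence $\cC^{\otimes,\Delta}_{[n]}\simeq\cC^{\times n}$, note that $\bCorr$ commutes with finite products because Cartesian squares in a product are detected componentwise, and observe that the fiber over $[0]$ is trivial. The paper phrases the product-compatibility step as ``clear from unwinding the definitions,'' while you make the componentwise-limits observation explicit, which is a reasonable elaboration rather than a different argument.
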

\begin{proof}
Firstly, the fiber of $p_{\bCorr(\cC)}$ at $[1]$, denoted as $\bCorr(\cC)^{\otimes, \Delta^{op}}_{[1]}$,
has $\Seq_k(\bCorr(\cC)^{\otimes, \Delta^{op}}_{[1]})$ the full subcategory of $\Fun''(([k]\times[k]^{op})^{\geq \dgnl}, \cC^{\otimes,\Delta})$ consisting of objects whose image in $\Map'(([k]\times[k]^{op})^{\geq \dgnl}, N(\Delta))$ are the constant functor to $[1]\in N(\Delta)$. Therefore, $\bCorr(\cC)^{\otimes, \Delta^{op}}_{[1]}$ is equivalent to $\bCorr(\cC^{\otimes,\Delta}_{[1]})\simeq \bCorr(\cC)$. 

Secondly, it is clear from unwinding the definitions that the natural functor induced from all the convex morphisms $[1]\rightarrow [n]$ in $N(\Delta)$,
\begin{equation*}
\Seq_k(\bCorr(\cC)_{[n]}^{\otimes,\Delta^{op}})\longrightarrow (\Seq_k(\bCorr(\cC))_{[1]}^{\otimes,\Delta^{op}})^{\times n}
\end{equation*}
is an equivalence for all $k>0$. It is also clear that the fiber $\bCorr(\cC)_{[0]}^{\otimes,\Delta^{op}}\simeq *$. Hence the proposition follows. 

A similar argument works for $p_{\bCorr(\cC)}^{\Comm}$.
\end{proof}

\subsubsection{A construction of Cartesian fibration $\cC^{\times,\Delta}\rightarrow N(\Delta)$ (resp. $\cC^{\times,(\Fin_*)^{op}}\rightarrow N(\Fin_*)^{op}$) for a Cartesian monoidal structure}\label{subsec: CtimesDelta}
We give a model of $\cC^{\times,\Delta}\rightarrow N(\Delta)$ representing a Cartesian monoidal structure on $\cC$ that is a modification of the one constructed in \cite[Notation 1.2.7]{DAGII}), which will be later convenient for us to construct algebra objects in $\Corr(\cC)^{\otimes, \Delta^{op}}$ from 
a collection of concise and discrete data. 

We define ordinary 1-categories $T$ and $T^1$ over $N(\Delta)$ as follows. The category $T^1$ 
consists of objects $([n], i\leq j)$ with any morphism $([n],i\leq j)\rightarrow ([m],i'\leq j')$ given by a morphism $f:[n]\rightarrow [m]$ in $\Delta$ satisfying $[i',j']\subset [f(i), f(j)]$ (note the slight difference from  $\Delta^\times$ in \cite[Notation 1.2.5]{DAGII}). Let $T$ be the full subcategory of $T^1$ consisting only of subintervals of length 1, i.e. $([n], i\leq i+1)$. It is clear that $T^1\rightarrow N(\Delta)$ is a coCartesian fibration, with a morphism $f: ([n],i\leq j)\rightarrow ([m],i'\leq j')$ being coCartesian if and only if $[i',j']=[f(i), f(j)]$. Under straightening, it corresponds to the functor that sends $[k]\in N(\Delta)$ to the poset of its nonempty convex subsets (with the opposite poset structure of inclusion). 

By the exactly same construction and argument as in \cite[Notation 1.2.7, Proposition 1.2.8]{DAGII}, for an $\infty$-category $\cC$ admitting finite products, one can define $\widetilde{\cC}^{\times, \Delta, 1}$ to be the simplicial set over $N(\Delta)$ determined by the property that for any simplicial set $K$ over $N(\Delta)$
\begin{align*}
&\Hom_{N(\Delta)}(K, \widetilde{\cC}^{\times, \Delta, 1})\simeq \Hom(K\times_{N(\Delta)}T^1, \cC),
\end{align*}
which is a Cartesian fibration over $N(\Delta)$, and define $\cC^{\times, \Delta, 1}$ to be the full subcategory of $\widetilde{\cC}^{\times, \Delta, 1}$ consisting of $F\in \widetilde{\cC}^{\times, \Delta, 1}_{[n]},[n]\in \Delta,$ such that the natural morphism
\begin{equation}\label{eq: condition C}
F([n],i\leq j)\rightarrow F([n], i\leq i+1)\times\cdots\times F([n], j-1\leq j)
\end{equation}
is an isomorphism for all $0\leq i\leq j\leq n$, which is a Cartesian fibration over $N(\Delta)$ as well. A morphism $p: F\rightarrow F'$ over $f: [n]\rightarrow [m]$ is a Cartesian morphism if and only if 
\begin{equation}
F([n], i\leq j)\rightarrow F'([m], f(i)\leq f(j))
\end{equation}
is an isomorphism for all $0\leq i\leq j\leq n$. 

Now we define $\cC^{\times,\Delta}$ over $N(\Delta)$ as the simplicial set characterized by the property that for any simplicial set $K$ over $N(\Delta)$
\begin{equation*}
\Hom_{N(\Delta)}(K, \cC^{\times,\Delta})\simeq \Hom(K\times_{N(\Delta)} T, \cC).
\end{equation*} 

\begin{prop}\label{prop: Cartesian equiv}
There is a natural equivalence $R:\cC^{\times,\Delta,1}\rightarrow \cC^{\times,\Delta}$ over $N(\Delta)$, which is surjective on vertices of $\cC^{\times,\Delta}$. Thus $\cC^{\times, \Delta}$ is a Cartesian fibration over $N(\Delta)$ and $p: F\rightarrow F'$ is a Cartesian morphism over $f:[n]\rightarrow [m]$ if and only if 
\begin{equation}\label{lemma eq: Cartesian condition}
F([n], i\leq i+1)\rightarrow \prod\limits_{[j,j+1]\subset [f(i), f(i+1)]}F'([m], j\leq j+1)
\end{equation}
is an isomorphism for all $0\leq i< n$. 
\end{prop}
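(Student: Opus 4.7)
The plan is to take $R$ to be restriction along the fully faithful inclusion $T \hookrightarrow T^1$ of ordinary $1$-categories over $N(\Delta)$, show this is an equivalence of simplicial sets over $N(\Delta)$ by appealing to the universal property of products on each fiber, and then transport the Cartesian fibration structure from $\cC^{\times, \Delta, 1}$ to $\cC^{\times, \Delta}$.

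First I would check that $T \hookrightarrow T^1$ induces, via the defining universal properties of $\cC^{\times,\Delta}$ and $\cC^{\times,\Delta,1}$, a natural functor $R: \cC^{\times, \Delta, 1} \to \cC^{\times, \Delta}$ over $N(\Delta)$. On the fiber over $[n]$, the target is simply $\cC^n$ since $T_{[n]}$ is discrete of cardinality $n$, while the source is the full subcategory of $\Fun(T^1_{[n]}, \cC)$ cut out by the Segal-type condition (\ref{eq: condition C}). The universal property of products produces an explicit section of $R$ on vertices by $(c_0, \dots, c_{n-1}) \mapsto \tilde F$ with $\tilde F([n], i \leq j) := c_i \times \cdots \times c_{j-1}$ together with the canonical product projections; this simultaneously proves surjectivity on vertices and, upgraded to morphisms by the universal property, shows that $R$ is fiberwise an equivalence of $\infty$-categories.

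Second, I would upgrade the fiberwise statement to an equivalence of simplicial sets over $N(\Delta)$. This amounts to showing that for every $f:[n]\to[m]$ in $\Delta$, the ``horizontal'' data in $T^1$ (morphisms $([n], i\leq j)\to ([m], i'\leq j')$ with $[i',j']\subset [f(i), f(j)]$) is determined, up to contractible choice, by the length-$1$ restriction. This is again the universal property of products, used in combination with the condition (\ref{eq: condition C}) that forces each $\tilde F([n], i\leq j)$ to be a product of length-$1$ values, so that any map out of it is determined by its components.

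Finally, since $R$ is an equivalence over $N(\Delta)$ and $\cC^{\times,\Delta,1}\to N(\Delta)$ is a Cartesian fibration, the composition $\cC^{\times,\Delta}\to N(\Delta)$ inherits a Cartesian fibration structure. A $1$-simplex $p: F\to F'$ over $f:[n]\to [m]$ is Cartesian in $\cC^{\times,\Delta}$ iff any (equivalently, every) lift $\tilde p:\tilde F\to \tilde F'$ to $\cC^{\times,\Delta,1}$ is Cartesian, i.e.\ iff $\tilde F([n], i\leq j)\to \tilde F'([m], f(i)\leq f(j))$ is an isomorphism for every $0\leq i\leq j\leq n$. By the Segal condition on $\tilde F$ and $\tilde F'$ and the universal property of products, this reduces to requiring the isomorphism only on length-$1$ intervals on the source side, and writing $\tilde F'([m], f(i)\leq f(i+1))$ as $\prod_{[j, j+1]\subset [f(i), f(i+1)]} \tilde F'([m], j\leq j+1)$ yields exactly (\ref{lemma eq: Cartesian condition}). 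The main obstacle I anticipate is the second step: verifying full faithfulness of $R$ at the level of all higher simplices, not merely on fibers; concretely, this is the formal but somewhat tedious unwinding that the Segal condition makes $T^1$-diagrams Kan-extended from their length-$1$ restrictions.
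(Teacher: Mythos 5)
Your proposal is essentially the same argument as the paper's: both take $R$ to be restriction along $T \hookrightarrow T^1$, both reduce to the claim that the Segal condition (\ref{eq: condition C}) characterizes functors on $T^1_{[n]}$ that are right Kan extended from their restrictions to $T_{[n]}$, and both deduce the Cartesian edge characterization by lifting along the surjection on vertices. The only real difference is in how the ``tedious step'' you flag is carried out: the paper does not proceed fiberwise and then glue, but instead shows in one stroke that for \emph{every} $\infty$-category $\cD$ over $N(\Delta)$ the restriction map $\Maps'(\cD\times_{N(\Delta)}T^1,\cC)\to\Maps(\cD\times_{N(\Delta)}T,\cC)$ is a trivial Kan fibration, by invoking \cite[4.3.2.15]{higher-topoi} after verifying the Kan extension exists (the undercategory $(\cD\times_{N(\Delta)}T)_{(x,([n_x],i\le j))/}$ decomposes as a disjoint union of coslices with initial objects, so the relevant limit is the product $\prod_{k} F(x,([n_x],k\le k+1))$). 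That uniform mapping-space argument is exactly the rigorous replacement for your informal ``full faithfulness at all higher simplices,'' and it gives for free both the categorical equivalence and the surjectivity on vertices without needing to separately construct an explicit section. Your version would work, but the fiberwise step and the ``upgrade to all simplices'' step are really a single right-Kan-extension statement once packaged as the paper does it.
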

\begin{proof}
The functor $R$ is induced from the compatible system of restriction morphisms
\begin{equation*}
\Hom(K\times_{N(\Delta)}T^1,\cC)\longrightarrow \Hom(K\times_{N(\Delta)}T,\cC),
\end{equation*}
for all $K$ over $N(\Delta)$. For each $\infty$-category $\cD$ over $N(\Delta)$, we have a map 
\begin{equation}\label{proof eq: Maps}
\Maps_{N(\Delta)}(\cD, \cC^{\times,\Delta,1})\simeq \Maps'(\cD\times_{N(\Delta)}T^1, \cC) \rightarrow 
\Maps(\cD\times_{N(\Delta)}T, \cC)\simeq\Maps_{N(\Delta)}(\cD, \cC^{\times,\Delta}),
\end{equation}
where $\Maps'(\cD\times_{N(\Delta)}T^1,\cC)$ is the full subcategory of $\Maps(\cD\times_{N(\Delta)}T^1,\cC)$ spanned by the functors $F$ satisfying an analogue of (\ref{eq: condition C}), i.e. for every object $x\in \cD$, let $[n_x]$ denote $p(x)$ where $p: \cD\rightarrow N(\Delta)$ is the projection, then the natural morphism 
\begin{equation*}
F(x, ([n_x],i\leq j))\rightarrow F(x, ([n_x],i\leq i+1))\times\cdots\times F(x, ([n_x],j-1\leq j))
\end{equation*}
is an isomorphism for all $0\leq i\leq j\leq n_x$.  We want to show that the middle arrow  in (\ref{proof eq: Maps}) is a trivial Kan fibration, then this proves $\cC^{\times,\Delta, 1}\overset{\sim}{\rightarrow }\cC^{\times,\Delta}$ and it is surjective on vertices.

We will apply \cite[4.3.2.15]{higher-topoi} by showing that every functor $F: \cD\times_{N(\Delta)}T\rightarrow \cC$ has a right Kan extension to $\cD\times_{N(\Delta)}T^1$, and $\Maps'(\cD\times_{N(\Delta)}T^1,\cC)$ is the full subcategory of $\Maps(\cD\times_{N(\Delta)}T^1,\cC)$ spanned by right Kan extensions of functors $F: \cD\times_{N(\Delta)}T\rightarrow \cC$. For any object $(x, ([n_x],i\leq j))$ in $\cD\times_{N(\Delta)}T^1$ with $j\geq i+2$, we have 
\begin{equation*}
(\cD\times_{N(\Delta)}T)_{(x, ([n_x],i\leq j))/}\simeq \coprod\limits_{i\leq k\leq j-1}(\cD\times_{N(
\Delta)}T)_{(x, ([n_x],k\leq k+1))/},
\end{equation*}
for every arrow $(x, ([n_x],i\leq j))\rightarrow (y, ([n_y],s\leq s+1))$ has a unique factorization 
\begin{equation*}
(x, ([n_x],i\leq j))\rightarrow (x, ([n_x],k\leq k+1))\rightarrow (y, ([n_y],s\leq s+1))
\end{equation*}
for a unique $k\in [i,j-1]$, where the first map is defined by the identity on $x$ and the inclusion $[k,k+1]\subset [i,j]$. Since  $(x, ([n_x],k\leq k+1))$ is the initial object in $(\cD\times_{N(
\Delta)}T)_{(x, ([n_x],k\leq k+1))/}$, for any $F: \cD\times_{N(\Delta)}T\rightarrow \cC$, the limit of the induced functor $F_{(x, ([n_x],i\leq j))}:(\cD\times_{N(\Delta)}T)_{(x, ([n_x],i\leq j))/}\rightarrow \cC$ is isomorphic to $\prod\limits_{i\leq k\leq j-1}F(x, ([n_x],k\leq k+1))$, it follows that the middle map in (\ref{proof eq: Maps}) is a trivial Kan fibration. 

Lastly, since $R$ is surjective on vertices, a morphism $p: F\rightarrow F'$ is Cartesian in $\cC^{\times,\Delta}$ if and only if it is the image of a Cartesian morphism through $R$. This is clearly the condition (\ref{lemma eq: Cartesian condition}). 
\end{proof}

One can define $\cC^{\times,(\Fin_*)^{op}}\rightarrow N(\Fin_*)^{op}$ in a similar way. Let $T^{1,\Comm}$ be the ordinary category consisting of objects $(\langle n\rangle,S), S\subset \langle n\rangle^{\circ}$, and morphisms $\alpha^{op}: (\langle n\rangle,S)\rightarrow (\langle m\rangle,S')$ given by a morphism $\langle m\rangle\rightarrow \langle n\rangle$ in $\Fin_*$ such that $\alpha^{-1}(S)\supset S'$. The projection $T^{1,\Comm}\rightarrow N(\Fin_*)^{op}$ is a coCartesian fibration with a morphism $\alpha^{op}: (\langle n\rangle,S)\rightarrow (\langle m\rangle,S')$ being coCartesian if and only if $S'=\alpha^{-1}(S)$. Let $T^{\Comm}$ be the full subcategory of $T^{1,\Comm}$ consisting of $(\langle n\rangle, S)$ where $S$ contains exactly one element.

Define $\cC^{\times,(\Fin_*)^{op}}$ over $N(\Fin_*)^{op}$ to be the simplicial set characterized by that for any simplicial set $K$ over $N(\Fin_*)^{op}$, we have 
\begin{equation}\label{eq: cCtimes, Finop}
\Hom_{N(\Fin_*)^{op}}(K, \cC^{\times, (\Fin_*)^{op}})\simeq \Hom(K\times_{N(\Fin_*)^{op}}T^{\Comm},\cC).
\end{equation}
Then we have the commutative analogue of Proposition \ref{prop: Cartesian equiv}.

\begin{prop}\label{prop: Cartesian equiv comm}
The projection $\cC^{\times,(\Fin_*)^{op}}\rightarrow N(\Fin_*)^{op}$ is a Cartesian fibration over $N(\Fin_*)^{op}$ representing the Cartesian symmetric monoidal structure on $\cC$.  A morphism $p: F\rightarrow F'$ is a Cartesian morphism over $\alpha^{op}:\langle n\rangle\rightarrow \langle m\rangle$ if and only if 
\begin{equation}\label{lemma eq: Cartesian condition comm}
F(\langle n\rangle,\{i\})\rightarrow \prod\limits_{j\in\alpha^{-1}(i)}F'(\langle m\rangle, \{j\})
\end{equation}
is an isomorphism for all $i\in \langle n\rangle^\circ$. 
\end{prop}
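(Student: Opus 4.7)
The plan is to mimic the proof of Proposition \ref{prop: Cartesian equiv} verbatim, replacing the simplicial bookkeeping with the pointed-finite-set bookkeeping. First I would introduce an intermediate object $\cC^{\times, (\Fin_*)^{op}, 1}$ over $N(\Fin_*)^{op}$ characterized by
\begin{equation*}
\Hom_{N(\Fin_*)^{op}}(K, \cC^{\times, (\Fin_*)^{op}, 1}) \simeq \Hom(K \times_{N(\Fin_*)^{op}} T^{1, \Comm}, \cC)
\end{equation*}
for any simplicial set $K$ over $N(\Fin_*)^{op}$, together with its full subcategory $\cC^{\times, (\Fin_*)^{op}, 1}$ spanned by those $F$ for which the natural map $F(\langle n\rangle, S) \to \prod_{i \in S} F(\langle n\rangle, \{i\})$ is an isomorphism. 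By the standard argument (as in \cite[Notation 1.2.7, Proposition 1.2.8]{DAGII}), this is a Cartesian fibration representing the Cartesian symmetric monoidal structure on $\cC$, and a morphism is Cartesian over $\alpha^{op}: \langle n\rangle \to \langle m\rangle$ iff $F(\langle n\rangle, S) \to F'(\langle m\rangle, \alpha^{-1}(S))$ is an isomorphism for all $S \subset \langle n\rangle^{\circ}$.

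Next I would produce the restriction functor $R^{\Comm}: \cC^{\times, (\Fin_*)^{op}, 1} \to \cC^{\times, (\Fin_*)^{op}}$ induced from the fully faithful inclusion $T^{\Comm} \hookrightarrow T^{1, \Comm}$, and prove it is an equivalence surjective on vertices by invoking \cite[4.3.2.15]{higher-topoi}. Concretely, for any $\infty$-category $\cD$ over $N(\Fin_*)^{op}$ the induced map
\begin{equation*}
\Maps'(\cD \times_{N(\Fin_*)^{op}} T^{1, \Comm}, \cC) \longrightarrow \Maps(\cD \times_{N(\Fin_*)^{op}} T^{\Comm}, \cC)
\end{equation*}
should be a trivial Kan fibration, where the left-hand side is spanned by those $F$ satisfying the Segal-type product decomposition on each fiber. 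The key verification is that every $F$ on the right side admits a right Kan extension to the left side, and that these right Kan extensions are exactly the functors satisfying the product condition.

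The heart of the argument, and the main technical point to check, is the decomposition of slice categories
\begin{equation*}
(\cD \times_{N(\Fin_*)^{op}} T^{\Comm})_{(x, (\langle n_x\rangle, S))/} \simeq \coprod_{i \in S} (\cD \times_{N(\Fin_*)^{op}} T^{\Comm})_{(x, (\langle n_x\rangle, \{i\}))/}
\end{equation*}
for any object $(x, (\langle n_x\rangle, S))$ in $\cD \times_{N(\Fin_*)^{op}} T^{1, \Comm}$. This will follow once I observe that any morphism $(x, (\langle n_x\rangle, S)) \to (y, (\langle n_y\rangle, \{s\}))$ is given by some $\alpha^{op}: \langle n_x\rangle \to \langle n_y\rangle$ (covering $x \to y$) with $\alpha^{-1}(S) \ni s$, hence $\alpha(s) \in S$ is uniquely determined, and the morphism factors uniquely through $(x, (\langle n_x\rangle, \{\alpha(s)\}))$. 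Since each $(x, (\langle n_x\rangle, \{i\}))$ is initial in its component of the slice, the limit of the induced diagram is $\prod_{i \in S} F(x, (\langle n_x\rangle, \{i\}))$, which is exactly the Segal product condition defining $\cC^{\times, (\Fin_*)^{op}, 1}$ inside the larger mapping space.

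Finally, the characterization of Cartesian morphisms in $\cC^{\times, (\Fin_*)^{op}}$ falls out of the equivalence $R^{\Comm}$ and the vertex-surjectivity: a morphism is Cartesian in $\cC^{\times, (\Fin_*)^{op}}$ iff it is the image of a Cartesian morphism upstairs, and the Segal condition translates the upstairs Cartesian condition $F(\langle n\rangle, S) \overset{\sim}{\to} F'(\langle m\rangle, \alpha^{-1}(S))$ into the claimed condition (\ref{lemma eq: Cartesian condition comm}) on singletons. The main obstacle is not conceptual but notational: keeping the contravariance $\alpha^{op}$ and the direction of the preimage condition $\alpha^{-1}(S) \supset S'$ straight when writing down the slice category decomposition; once that is done, the proof is a direct translation of the argument in the simplicial case.
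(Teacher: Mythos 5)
Your proposal is correct and takes exactly the route the paper intends: the paper omits the proof of this proposition, stating only that it is the "commutative analogue" of Proposition~\ref{prop: Cartesian equiv}, and your argument is the faithful translation — introducing $\widetilde{\cC}^{\times,(\Fin_*)^{op},1}$ via $T^{1,\Comm}$, isolating the Segal-condition subcategory, proving the restriction map is a trivial Kan fibration via \cite[4.3.2.15]{higher-topoi}, and reading off the Cartesian-edge criterion on singletons. The slice decomposition $(\cD\times_{N(\Fin_*)^{op}}T^{\Comm})_{(x,(\langle n_x\rangle,S))/}\simeq\coprod_{i\in S}(\cD\times_{N(\Fin_*)^{op}}T^{\Comm})_{(x,(\langle n_x\rangle,\{i\}))/}$ is the correct replacement for the interval decomposition, and your check of the unique factorization through $(x,(\langle n_x\rangle,\{\alpha(s)\}))$ handles the variance correctly.
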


\subsection{Construction of (commutative) algebra objects of $\Corr(\cC^\times)$}\label{subsubsec: algebra objects}

From now on, we assume that $\cC$ has the Cartesian (symmetric) monoidal structure, and $\Corr(\cC)$ is endowed with the induced (symmetric) monoidal structure.  Note that the latter is \emph{not} Cartesian in general, for the final object in $\Corr(\cC)$ is the initial object in $\cC$. 

It is shown in \cite[Corollary 4.4.5, Chapter V.3]{Nick} that every Segal object $C^\bullet$ of $\cC$ gives an algebra object in $\Corr(\cC)$. Here we give a more general construction of (commutative) algebra objects in $\Corr(\cC)$ out of simplicial or $\Fin_*$-objects in $\cC$ (see Theorem \ref{thm: algebra objs}). Moreover, we provide a concise condition for defining a (right-lax) homomorphism between two (commutative) algebra objects constructed in this way (see Theorem \ref{thm: right-lax}). We expect that this construction gives ``all" the (commutative) algebra objects and (right-lax) homomorphisms between them in the sense of the equivalences in Remark \ref{alg composition}. We will address this point in a separate note. 

Let $\pi_I: I\rightarrow N(\Delta)^{op}$ (resp. $\pi_I^\Comm: I^\Comm\rightarrow N(\Fin_*)$) be the coCartesian fibration (actually a left fibration) from the Grothendieck construction for $\Seq_\bullet(N(\Delta)^{op}):N(\Delta)^{op}\rightarrow\text{Sets}$ (resp. $\Seq_\bullet(N(\Fin_*)):N(\Delta)^{op}\rightarrow\text{Sets}$). More explicitly, the set of objects of $I$ is the union of discrete sets $\bigsqcup\limits_{n}\Seq_n(N(\Delta)^{op})$, and the morphisms are induced from the maps $\Seq_{m}(N(\Delta)^{op})\rightarrow \Seq_{n}(N(\Delta)^{op})$, for $[n]\rightarrow [m]$ in $\Delta$.

The assignment 
\begin{align*}
f_I: I^{op}&\longrightarrow \OneCat^{\ord}, \\
 ([n],s)&\mapsto ([n]\times[n]^{op})^{\geq\dgnl}\times_{N(\Delta)}T,
\end{align*}
in which the map $([n]\times[n]^{op})^{\geq\dgnl}\rightarrow N(\Delta)$ is the composition of the projection of $([n]\times[n]^{op})^{\geq\dgnl}$ to the second factor with $s^{op}: (\Delta^n)^{op}\rightarrow N(\Delta)$ and $f_I(\alpha)$ is the identity on the factors $N(\Delta)$ and $T$ for any $(\alpha: ([n],s)\rightarrow ([m], t))$ in $I^{op}$, 
gives a Cartesian fibration 
\begin{align*}
\pi_\cT: \cT\rightarrow I
\end{align*}
from the Grothendieck construction. Similarly, we have a Cartesian fibration 
\begin{align*}
\pi_{\cT^\Comm}: \cT^\Comm\rightarrow I^{\Comm}. 
\end{align*}

Consider the functors
\begin{align}
\label{eq: f_I^C} f_I^\cC: I&\longrightarrow \OneCat, \\
\nonumber([n],s)&\mapsto \Fun (([n]\times[n]^{op})^{\geq\dgnl}\times_{N(\Delta)}T,\cC)
\end{align}
which is a composition of $f_I^{op}$ with $\Fun(-, \cC): (\OneCat^{\ord})^{op}\rightarrow\OneCat$, and 
\begin{align}
\label{eq: f_Delta^C}f_{\Delta^\bullet}^\cC: N(\Delta)^{op}&\longrightarrow\OneCat\\
\nonumber [n]&\mapsto \coprod\limits_{s\in \Seq_n(N(\Delta)^{op})}\Fun(([n]\times[n]^{op})^{\geq\dgnl}\times_{N(\Delta)}T,\cC)\simeq \Fun'([n]\times[n]^{op})^{\geq\dgnl}, \cC^{\times,\Delta}),
\end{align}
where $\Fun'([n]\times[n]^{op})^{\geq\dgnl}, \cC^{\times,\Delta})$ was defined above (\ref{eq: diagram Fun'}). 
It is clear that $f_\Delta^\cC$ is a left Kan extension of $f_I^\cC$ through the projection $\pi_I$. 

We state a lemma before proceeding further on the constructions. Assume $S=N(\cD)$ is the nerve of a small ordinary category $\cD$. Let $f: \cD^{op}\rightarrow \SSet^+$ be a fibrant object in $(\SSet^+)^{\cD^{op}}$. Let $N_\bullet^{+,op}(\cD): (\SSet^+)^{\cD^{op}}\rightarrow (\SSet^+)_{/S}$ and  $N_\bullet^{+}(\cD): (\SSet^+)^{\cD}\rightarrow (\SSet^+)_{/S}$ be the relative nerve functor as in \cite[\S 3.2.5]{higher-topoi}. Let $X^\natural\longrightarrow S$ denote $N_f^{+,op}(\cD)\in (\SSet^+)_{/S}$, and let $\cZ^\natural\in \SSet^+$ be an $\infty$-category. Here we use the notation following \cite[Definition 3.1.1.8]{higher-topoi}, that if $X\rightarrow S$ is a (co)Cartesian fibration, then $X^\natural$ is the marked simplicial set with all its (co)Cartesian edges being marked.  In particular, for an $\infty$-category $\cZ$, an edge is marked in $\cZ^\natural$ if and only if it is an isomorphism. Let $g: \cD\rightarrow \SSet^+$ denote the composition $\cD\overset{f^{op}}{\rightarrow} (\SSet^+)^{op}\overset{(Z^{\natural})^{(-)}}{\rightarrow} \SSet^+$. We define 
\begin{equation}\label{eq: Z^X}
(\cZ^\natural)^{X^\natural}=N_g^{+}(\cD).
\end{equation}
Following \cite[Corollary 3.2.2.12]{higher-topoi}, let $T_{\cZ}^X$ be the simplicial set defined by the property
\begin{equation*}
\Hom_S(K, T_{\cZ}^X):=\Hom_S(K\times_S X, \cZ\times S), \text{ for any }K\in (\SSet)_{/S},
\end{equation*}
then $T_{\cZ}^X\rightarrow S$ is a coCartesian fibration and $(T_{\cZ}^{X})^{\natural}\in (\SSet^+)_{/S}$ can be characterized by the property
\begin{equation*}
\Hom_S(K, (T_{\cZ}^{X})^{\natural})=\Hom_S(K\times_S X^\natural, (\cZ\times S)^\natural), \text{ for any }K\in (\SSet^+)_{/S}. 
\end{equation*}

\begin{lemma}\label{lemma: Z^X}
Under the above assumptions, there is a weak equivalence  $(\cZ^\natural)^{X^\natural}\longrightarrow (T_{\cZ}^X)^\natural$ with respect to the coCartesian model structure on $(\SSet^+)_{/S}$. 
\end{lemma}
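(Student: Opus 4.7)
The plan is to construct a natural comparison map $\Phi: (\cZ^\natural)^{X^\natural} \to (T_{\cZ}^X)^\natural$ in $(\SSet^+)_{/S}$ and verify that (i) both sides are coCartesian fibrations over $S$, (ii) $\Phi$ is a fiberwise categorical equivalence, and (iii) $\Phi$ preserves coCartesian edges. Then by \cite[Proposition 3.3.1.5]{higher-topoi}, $\Phi$ is a coCartesian equivalence in $(\SSet^+)_{/S}$. The fact that $(T_{\cZ}^X)^\natural \to S$ is a coCartesian fibration is the content of \cite[Corollary 3.2.2.12]{higher-topoi} applied to the Cartesian fibration $X \to S$; the fact that $N_g^+(\cD) \to S$ is a coCartesian fibration (with marking given by coCartesian edges) is the standard property of the relative nerve, cf.\ \cite[Proposition 3.2.5.18]{higher-topoi}.

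To construct $\Phi$, I use the defining universal property of $T_{\cZ}^X$: giving a map $\Phi: (\cZ^\natural)^{X^\natural} \to (T_{\cZ}^X)^\natural$ over $S$ is equivalent to providing an evaluation map
\[
\ev: (\cZ^\natural)^{X^\natural} \times_S X^\natural \longrightarrow (\cZ\times S)^\natural.
\]
Such an evaluation arises tautologically from the formula $g(d) = (\cZ^\natural)^{f(d)}$: for each object $d$ of $\cD$ there is a canonical evaluation $(\cZ^\natural)^{f(d)} \times f(d) \to \cZ^\natural$, and these assemble into the desired map of simplicial sets over $S$ via the functoriality of $N^+(\cD)$ and the natural transformation of $\cD$-diagrams given by evaluation.

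For the fiberwise check, over each vertex $d \in S$ the fiber of $(\cZ^\natural)^{X^\natural}$ is $g(d) = (\cZ^\natural)^{f(d)}$ by construction of the relative nerve. The fiber of $T_{\cZ}^X$ over $d$ is the simplicial set $K \mapsto \Hom(K \times X_d, \cZ) = \Hom(K, \cZ^{X_d})$, hence $\cZ^{f(d)}$, and the induced map on fibers is visibly the identity. It remains to verify that $\Phi$ preserves coCartesian edges. By \cite[Proposition 3.2.5.18]{higher-topoi}, the coCartesian transport of $N_g^+(\cD)$ over a morphism $\alpha: d \to d'$ in $\cD$ is the functor $g(\alpha): (\cZ^\natural)^{f(d)} \to (\cZ^\natural)^{f(d')}$, which by our definition of $g$ is precisely precomposition with $f(\alpha^{\op}): f(d') \to f(d)$. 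On the other side, since $X^\natural \to S$ is a Cartesian fibration, every $x' \in X_{d'}$ admits a Cartesian edge $\alpha^\ast x' \to x'$ over $\alpha$, and the standard analysis of $T_{\cZ}^X$ as a coCartesian fibration identifies its coCartesian transport over $\alpha$ with precomposition along Cartesian transport $\alpha^\ast: X_{d'} \to X_d = f(d)$. Since the Cartesian transport of $X^\natural$ is by construction $f(\alpha^{\op})$, the two transports coincide, so $\Phi$ sends coCartesian edges to coCartesian edges; marked edges in fibers (equivalences in $\cZ^{f(d)}$) are sent to equivalences in $\cZ^{f(d)}$ and hence to marked edges on the right.

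The main obstacle I expect is the careful matching in the last step: identifying the coCartesian transport of $N_g^+(\cD)$ with that of $T_{\cZ}^X$. Both descriptions are indirect — one through the combinatorics of the relative nerve and the other through the universal property defining $T_{\cZ}^X$ — and reconciling them requires tracing through the isomorphism $X^\natural \simeq N_f^{+,\op}(\cD)$ to see that the Cartesian transport of $X^\natural$ is computed by $f$ itself. Modulo this bookkeeping, the proof is a direct application of \cite[Proposition 3.3.1.5]{higher-topoi}.
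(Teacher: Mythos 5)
Your proof is correct and follows essentially the same route as the paper: construct $\Phi$ from the tautological evaluation maps $(\cZ^\natural)^{f(d)}\times f(d)\to \cZ^\natural$ via the universal property of $(T_\cZ^X)^\natural$, observe that both sides are fibrant in $(\SSet^+)_{/S}$, and conclude by the fiberwise criterion for coCartesian equivalences between fibrant objects. The only difference is that your separate verification that $\Phi$ preserves coCartesian edges is redundant---since the universal property you invoked is stated in $(\SSet^+)_{/S}$, the map $\Phi$ it produces is automatically a morphism of \emph{marked} simplicial sets and hence sends coCartesian edges to coCartesian edges, so only the fiberwise check is actually needed.
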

\begin{proof}
We have a natural map in $(\SSet^+)_{/S}$ 
\begin{equation*}
N^+_g(\cD)\times_S X^\natural\longrightarrow (\cZ\times S)^\natural
\end{equation*}
induced from the evaluation maps
\begin{equation*}
(Z^\natural)^{f(j)}\times f(j)\longrightarrow \cZ^\natural
\end{equation*}
for $j\in \cD$. By the definition of $(T_\cZ^X)^\natural$, this corresponds to a map 
\begin{equation}\label{eq: N_g, Z}
N_g^+(\cD)\longrightarrow (T_\cZ^X)^\natural
\end{equation}
in $(\SSet^+)_{/S}$. Since both sides of (\ref{eq: N_g, Z}) are fibrant objects in $(\SSet^+)_{/S}$ and the map induces  isomorphisms on the fibers, (\ref{eq: N_g, Z}) is a weak equivalence with respect to the coCartesian model structure on $(\SSet^+)_{/S}$. 
\end{proof}

Now coming back to the construction of algebra objects, we let $(\cC^\natural)^{\cT^\natural}\rightarrow I$ denote the coCartesian fibration defined by $N_{f_I^\cC}^+(I)$. Then Lemma \ref{lemma: Z^X} implies that 
\begin{align}\label{eq: Maps(T,C)}
& \Maps_{I}(\cT^\natural, (\cC\times I)^\natural)\simeq\Maps_{I}(I^\natural,(\cC^{\natural})^{{\cT}^\natural}) \simeq \Maps_{(\pi_I)_!I^\natural}((\pi_I)_!I^\natural, (\pi_I)_!(\cC^\natural)^{\cT^\natural})\\
\nonumber&\simeq \Maps_{(\OneCat)^{\Delta^{op}}/_{\Seq_\bullet(N(\Delta)^{op})}}(\Seq_\bullet(N(\Delta)^{op}), f_{\Delta^\bullet}^\cC).
\end{align}

There is a totally analogous construction of $f_I^\Comm, f_I^{\cC, \Comm}$ and $f_{\Delta}^{\cC, \Comm}, \cT^\Comm$ in which one does the following replacement
\begin{align*}
&I\rightsquigarrow I^\Comm,\ N(\Delta)\rightsquigarrow N(\Fin_*)^{op},\ T\rightsquigarrow T^\Comm, \cC^{\times,\Delta}\rightsquigarrow \cC^{\times,(\Fin_*)^{op}},\\
&f_{\Delta^\bullet}^\cC\rightsquigarrow f_{\Delta^\bullet}^{\cC,\Comm}.  
\end{align*}

Recall that a morphism $f: [n]\rightarrow [m]$ in $N(\Delta)$ (resp. $f: \lng m\rng\rightarrow \lng n\rng$ in $N(\Fin_*)$) is called \emph{active} if $[f(0), f(n)]=[0,m]$ (resp. $f^{-1}(*)=\{*\}$). A morphism $f: [n]\rightarrow [m]$ in $N(\Delta)$ (resp. $f: \lng m\rng\rightarrow \lng n\rng$ in $N(\Fin_*)$) is called \emph{inert} if $f$ is of the form $f(i)=i+k, 0\leq i\leq n$ for a fixed $k$  (resp. $f^{-1}(i)$ has exactly one element for every $i\in \lng n\rng^\circ$). It is clear that every inert morphism $f: [n]\rightarrow [m]$ in $N(\Delta)$ (resp. $f: \lng m\rng\rightarrow\lng n\rng$ in $N(\Fin_*)$) corresponds to a unique embedding $[n]\hookrightarrow [m]$ (resp. $\lng n\rng^\circ\hookrightarrow\lng m\rng^\circ$). 

\begin{theorem}\label{thm: algebra objs}
\item[(i)] Given any simplicial object $C^\bullet$ in $\cC$ such that for any \emph{active} $f: [n]\rightarrow [m]$ in $N(\Delta)$ and $0\leq j\leq n-1$, the following square
\begin{equation}\label{thm: diagram Cartesian}
\xymatrix{C^{[m]}\ar[r]\ar[d]&\prod\limits_{[i,i+1]\subset [n]}C^{[f(i),f(i+1)]}\ar[d]\\
C^{[n]}\ar[r]&\prod\limits_{[i,i+1]\subset [n]}C^{[i,i+1]}
}
\end{equation}
is Cartesian in $\cC$, where the vertical morphisms are induced by $f$ and the horizontal morphisms are induced from the inert morphisms $[i,i+1]\hookrightarrow [n]$, $[f(i),f(i+1)]\hookrightarrow [m]$, then $C^\bullet$ naturally induces an associative algebra object in $\Corr(\cC^\times)$ via (\ref{eq: Maps(T,C)}). \\

\item[(ii)] Given any functor $C^\bullet: N(\Fin_*)\longrightarrow \cC$ such that for any \emph{active} $f: \langle m\rangle\longrightarrow \langle n\rangle$, the following diagram
\begin{equation}\label{thm: diagram Cartesian 2}
\xymatrix{C^{\lng m\rng}\ar[r]\ar[d]&\prod\limits_{i\in \langle n\rangle^\circ} C^{(f^{-1}(i)\sqcup\{*\})}\ar[d]\\
C^{\langle n\rangle}\ar[r]&\prod\limits_{i\in \langle n\rangle^\circ}C^{(\{i,*\})}
}
\end{equation}
is Cartesian in $\cC$, where the vertical morphisms are induced by $f$ and the horizontal morphisms are induced from the inert morphisms corresponding to the embeddings $\{i\}\hookrightarrow\lng n\rng$, $f^{-1}(i)\hookrightarrow \lng m\rng$, then $C^\bullet$ naturally induces a commutative algebra object in $\Corr(\cC^\times)$ via the commutative version of (\ref{eq: Maps(T,C)}).
\end{theorem}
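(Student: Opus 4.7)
The plan is to exploit the adjunction equivalence (\ref{eq: Maps(T,C)}), which identifies sections of $p_{\bCorr(\cC)}:\bCorr(\cC)^{\otimes,\Delta^{op}}\to N(\Delta)^{op}$ landing in the Cartesian-square subcategory $\Fun''$ with functors $\widetilde C:\cT\to\cC$ over $I$ whose induced grids satisfy the $\Fun''$ condition; the additional requirement that inert morphisms in $N(\Delta)^{op}$ be sent to 2-coCartesian 1-morphisms then pins down the algebra objects. So I will exhibit such a $\widetilde C$ directly from $C^\bullet$ and verify the two conditions.

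To construct $\widetilde C$, observe that an object of $\cT$ over $([n],s)\in I$ unpacks to a triple $((i,j),(k,k+1))$ with $0\le i\le j\le n$ and $[k,k+1]$ a subinterval of $s^{op}(j)$. Writing $s$ as the chain represented by morphisms $\beta_j:[k_{j+1}]\to[k_j]$ in $\Delta$ for $0\le j<n$, and letting $\beta_{i,j}=\beta_i\circ\cdots\circ\beta_{j-1}:[k_j]\to[k_i]$ (with $\beta_{i,i}=\mathrm{id}$), define
\begin{align*}
\widetilde C(([n],s),(i,j),(k,k+1)) \ := \ C^{[\beta_{i,j}(k),\,\beta_{i,j}(k+1)]}\in\cC,
\end{align*}
the value of $C^\bullet$ on the indicated subinterval of $[k_i]$. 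On morphisms of $\cT$ use the corresponding simplicial structure maps of $C^\bullet$; functoriality is immediate from that of $C^\bullet$.

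Under (\ref{eq: Maps(T,C)}), $\widetilde C$ corresponds to a simplicial map $A:\Seq_\bullet(N(\Delta)^{op})\to f^{\cC}_{\Delta^\bullet}$. To see $A$ factors through $\Fun''$, I check that every elementary square in $([n]\times[n]^{op})^{\ge\dgnl}$, with corners $(i,j+1),(i,j),(i+1,j+1),(i+1,j)$, maps---after the Cartesian-lift factorization from the definition of $\Fun''$---to a Cartesian square in the fiber $\cC^{\times,\Delta}_{[k_{j+1}]}\simeq\cC^{k_{j+1}}$. Using Proposition \ref{prop: Cartesian equiv} to compute the Cartesian lifts and abbreviating $p=\beta_j(a)$, $q=\beta_j(a+1)$, $P=\beta_{i+1,j}(p)$, $Q=\beta_{i+1,j}(q)$, $\gamma=\beta_{i,i+1}$, the component at $a$ takes the form
\begin{align*}
\xymatrix{
C^{[\gamma(P),\gamma(Q)]}\ar[r]\ar[d] & C^{[P,Q]}\ar[d]\\
\prod\limits_{B=p}^{q-1} C^{[\gamma(\beta_{i+1,j}(B)),\gamma(\beta_{i+1,j}(B+1))]}\ar[r] & \prod\limits_{B=p}^{q-1} C^{[\beta_{i+1,j}(B),\beta_{i+1,j}(B+1)]}
}
\end{align*}
with vertical arrows given by inert decomposition and horizontal arrows by the structure map of $C^\bullet$ on $\gamma$. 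Cartesianness of this square follows by combining (\ref{thm: diagram Cartesian}) applied to the three active maps $\beta_{i+1,j}|_{[p,q]}:[p,q]\to[P,Q]$, $\gamma\circ\beta_{i+1,j}|_{[p,q]}:[p,q]\to[\gamma(P),\gamma(Q)]$, and $\gamma|_{[\beta_{i+1,j}(B),\beta_{i+1,j}(B+1)]}$ for each $B$, via a standard fiber-product pasting. The inert-to-coCartesian condition then follows from Proposition \ref{prop: 2-coCart monoidal}: for an inert morphism in $N(\Delta)^{op}$, the associated correspondence has an identity vertical arrow, and its horizontal arrow is Cartesian in $\cC^{\times,\Delta}$ by the criterion (\ref{lemma eq: Cartesian condition}), since inert maps preserve the decomposition into unit-length subintervals.

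Part (ii) is proved by the directly parallel argument, with $(I,\cT,N(\Delta)^{op},T,\text{Proposition }\ref{prop: Cartesian equiv})$ replaced by $(I^\Comm,\cT^{\Comm},N(\Fin_*),T^{\Comm},\text{Proposition }\ref{prop: Cartesian equiv comm})$ and the hypothesis (\ref{thm: diagram Cartesian}) replaced by (\ref{thm: diagram Cartesian 2}); the bookkeeping is combinatorially identical once subintervals of $[n]$ are replaced by preimages of basepoints in $\Fin_*$. The main technical obstacle in both parts is the fiber-product pasting in the Cartesian-square check above: each individual application of the hypothesis is straightforward, but combining them correctly requires careful tracking of how composed active maps decompose under inert restrictions. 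This combinatorics is the core content of the theorem and explains why a bare Segal-type hypothesis, weaker than the full Segal condition, already suffices to produce the algebra object.
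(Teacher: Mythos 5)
Your proposal follows the paper's proof in all essentials: you invoke the adjunction (\ref{eq: Maps(T,C)}) to reduce to constructing a functor $\cT\to\cC$, define it on objects by the same formula $C^{[\beta_{i,j}(k),\beta_{i,j}(k+1)]}$, and verify the two conditions (Obj-Cartesianness and inert-to-2-coCartesian). Your Cartesian check differs cosmetically — you verify elementary squares and paste from two transposed instances of the hypothesis (for $\beta_{i+1,j}|_{[p,q]}$ and $\gamma\beta_{i+1,j}|_{[p,q]}$), whereas the paper reduces the general square $(i_1,j_1,i_2,j_2)$ to the case $i_2=j_2$ — but both reductions are correct and rely on the same pullback-pasting lemma.

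The one place you genuinely cut a corner is the assertion that ``on morphisms of $\cT$ use the corresponding simplicial structure maps of $C^\bullet$; functoriality is immediate from that of $C^\bullet$.'' The functoriality of $C^\bullet$ is not the issue; what requires work is exhibiting the assignment $\cT\to N(\Delta)^{op}$ (the paper's $F_{\cT,\Delta^{op}}$) as a functor in the first place — that is, specifying what morphism in $N(\Delta)^{op}$ a morphism $\hat\alpha$ of $\cT$ gets sent to, and verifying compatibility with composition. The morphisms in $\cT$ encode a morphism $\alpha^{op}$ in $N(\Delta)$, an inclusion of grid positions, and a containment of subintervals, and the recipe that turns this into a morphism of subintervals (the composition of an inclusion with $s_{\alpha^{op}(i_2),i_1}$ in (\ref{eq: morphism T_2})) has to be spelled out and its compatibility with composition verified; the paper devotes the first half of the proof to exactly this. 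Relatedly, you do not verify that $\widetilde C$ sends Cartesian morphisms of $\cT$ over $I$ to isomorphisms in $\cC$, which is required for $\widetilde C$ to land in $\Hom_I(\cT^\natural,(\cC\times I)^\natural)$ and hence for the adjunction (\ref{eq: Maps(T,C)}) to apply. Neither step would fail, but neither is ``immediate,'' and without them the construction is not pinned down.
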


\begin{proof}
(i) In view of (\ref{eq: Maps(T,C)}), we just need to construct an element in $\Hom_I(\cT^\natural, \cC^\natural\times I)$, and check that its image in the rightmost term gives a section of $\bCorr(\cC)^{\otimes,\Delta^{op}}\rightarrow N(\Delta)^{op}$ satisfying the properties required for an algebra object.

First, there is a natural functor $F_{\cT,\Delta^{op}}: \cT\longrightarrow N(\Delta)^{op}$ given by 
\begin{align}
\label{eq: object T}&([n],s,(i,j), u\leq u+1)\mapsto [s_{j,i}(u),s_{j,i}(u+1)]\\
\label{eq: morphism T}&(\alpha^{op}: [n]\leftarrow [m], s\rightarrow t, (i_1,j_1)\rightarrow (\alpha^{op}(i_2),\alpha^{op}(j_2)), u_2\leq u_2+1\subset [s_{j_1, \alpha^{op}(j_2)}(u_1)\leq s_{j_1, \alpha^{op}(j_2)}(u_1+1)])\\
\label{eq: morphism T_2}&\mapsto ([s_{j_1,i_1}(u_1), s_{j_1,i_1}(u_1+1)]\rightarrow [t_{j_2,i_2}(u_2), t_{j_2,i_2}(u_2+1)]),
\end{align}
in which $s_{j,i}: s(j)\rightarrow s(i), j\geq i$ is the morphism in $N(\Delta)$ determined by $s$, (\ref{eq: morphism T}) represents a morphism $\hat{\alpha}:([n],s,(i_1,j_1), u_1\leq u_1+1)\rightarrow ([m],t,(i_2,j_2), u_2\leq u_2+1)$ in $\cT$ consisting of the data of a morphism $\alpha^{op}$ in $N(\Delta)$, and the relations $t(j)=s(\alpha^{op}(j))$, $t_{a,b}=s_{\alpha^{op}(a), \alpha^{op}(b)}$, $i_1\leq \alpha^{op}(i_2)\leq \alpha^{op}(j_2)\leq j_1$ and $[u_2,u_2+1]\subset [s_{j_1,\alpha^{op}(j_2)}(u_1),s_{j_1,\alpha^{op}(j_2)}(u_1+1)]$. The morphism (\ref{eq: morphism T_2}) in $N(\Delta)^{op}$ comes from the composition
\begin{align*}
&[s_{\alpha^{op}(j_2),\alpha^{op}(i_2)}(u_2),s_{\alpha^{op}(j_2),\alpha^{op}(i_2)}(u_2+1)]\hookrightarrow [s_{j_1,\alpha^{op}(i_2)}(u_1),s_{j_1,\alpha^{op}(i_2)}(u_1+1)]\\
&\overset{s_{\alpha^{op}(i_2),i_1}}{\longrightarrow}  [s_{j_1,i_1}(u_1),s_{j_1,i_1}(u_1+1)]
\end{align*}
in $N(\Delta)$. 

It is not hard to check that the assignment (\ref{eq: morphism T_2}) is compatible with compositions. If we have two morphisms
\begin{equation*}
([n], s,(i_1,j_1), u_1\leq u_1+1)\overset{\hat{\alpha}}{\longrightarrow}([m], t,(i_2,j_2), u_2\leq u_2+1)\overset{\hat{\beta}}{\longrightarrow}([\ell], r,(i_3,j_3), u_3\leq u_3+1),
\end{equation*}
then we have the relations
\begin{align*}
&[u_3,u_3+1]\subset [t_{j_2,\beta^{op}(j_3)}(u_2),t_{j_2,\beta^{op}(j_3)}(u_2+1)]\subset [s_{j_1,(\beta\alpha)^{op}(j_3)}(u_1),s_{j_1,(\beta\alpha)^{op}(j_3)}(u_1+1)]. 
\end{align*}
Then $(F_{\cT,\Delta^{op}}(\hat{\beta})\circ F_{\cT,\Delta^{op}}(\hat{\alpha}))^{op}$ in $N(\Delta)$ is the same as the composition
\begin{align*}
&[t_{\beta^{op}(j_3),\beta^{op}(i_3)}(u_3), t_{\beta^{op}(j_3),\beta^{op}(i_3)}(u_3+1)]\hookrightarrow [t_{j_2,\beta^{op}(i_3)}(u_2),t_{j_2,\beta^{op}(i_3)}(u_2+1)]\\
&\overset{t_{\beta^{op}(i_3), i_2}}{\longrightarrow}[t_{j_2,i_2}(u_2),t_{j_2,i_2}(u_2+1)]=[s_{\alpha^{op}(j_2)\alpha^{op}(i_2)}(u_2),s_{\alpha^{op}(j_2),\alpha^{op}(i_2)}(u_2+1)]\\
&\hookrightarrow [s_{j_1,\alpha^{op}(i_2)}(u_1), s_{j_1,\alpha^{op}(i_2)}(u_1+1)]\overset{s_{\alpha^{op}(i_2), i_1}}{\longrightarrow}[s_{j_1,i_1}(u_1),s_{j_1,i_1}(u_1+1)],
\end{align*}
and $(F_{\cT,\Delta^{op}}(\hat{\beta}\hat{\alpha}))^{op}$ is the composition
\begin{align*}
&[t_{\beta^{op}(j_3),\beta^{op}(i_3)}(u_3), t_{\beta^{op}(j_3),\beta^{op}(i_3)}(u_3+1)]\hookrightarrow [s_{j_1,(\beta\alpha)^{op}(i_3)}(u_1), s_{j_1,(\beta\alpha)^{op}(i_3)}(u_1+1)]\\
&\overset{s_{(\beta\alpha)^{op}(i_3)},i_1}{\longrightarrow}[s_{j_1,i_1}(u_1),s_{j_1,i_1}(u_1+1)]. 
\end{align*}
Now it is clear that $(F_{\cT,\Delta^{op}}(\hat{\beta})\circ F_{\cT,\Delta^{op}}(\hat{\alpha}))^{op}=(F_{\cT,\Delta^{op}}(\hat{\beta}\hat{\alpha}))^{op}$. 

Consider the functor $F_{C^\bullet}: \cT\rightarrow \cC$ from the composition
\begin{equation}\label{eq: thm F_C}
F_{C^\bullet}: \cT\overset{F_{\cT,\Delta^{op}}}{\longrightarrow} N(\Delta)^{op}\overset{C^\bullet}{\longrightarrow} \cC.
\end{equation}
A morphism $\hat{\alpha}$ (\ref{eq: morphism T}) in $\cT$ is Cartesian over $I$ if and only if $i_1=\alpha^{op}(i_2), j_1=\alpha^{op}(j_2)$ and $u_1=u_2$, so $F_{C^\bullet}$ sends a Cartesian morphism in $\cT$ over $I$ to an isomorphism in $\cC$, and therefore, it belongs to $\Hom_I(\cT^\natural, \cC^\natural\times I)$, whose image in $\Hom_I(I^\natural, (\cC^\natural)^{\cT^{\natural}})$ will be denoted by $F_{C^\bullet}$ as well. For any $([n], s)\in I^\natural$, $F_{C^\bullet}([n], s)$ is the functor in $\Fun(([n]\times[n]^{op})^{\geq\dgnl}\times_{N(\Delta)}T,\cC)$ that sends $([n], s,(i,j), u\leq u+1)$ to $C^{[s_{j,i}(u), s_{j,i}(u+1)]}$. We check that $F_{C^\bullet}([n], s)$ satisfies the condition (Obj) in Subsection \ref{subsec: monoidal Corr} for any $([n],s)$, which would imply that the image of $F_{C^\bullet}$ in $\Hom_{(\OneCat)^{\Delta^{op}}/_{\Seq_\bullet(N(\Delta)^{op})}}(\Seq_\bullet(N(\Delta)^{op}), f_\Delta^\cC)$ corresponds to a functor from $N(\Delta)^{op}$ to $\bCorr(\cC)^{\otimes, \Delta^{op}}$ over $N(\Delta)^{op}$. 

The condition (Obj) is equivalent to that the following diagram 
\begin{equation*}
\xymatrix{C^{[s_{j_1,i_1}(u), s_{j_1,i_1}(u+1)]}\ar[r]\ar[d]&\prod\limits_{[v,v+1]\subset [s_{j_1,j_2}(u), s_{j_1,j_2}(u+1)]}C^{[s_{j_2,i_1}(v), s_{j_2,i_1}(v+1)]}\ar[d]\\
C^{[s_{j_1,i_2}(u), s_{j_1,i_2}(u+1)]}\ar[r]&\prod\limits_{[v,v+1]\subset [s_{j_1,j_2}(u), s_{j_1,j_2}(u+1)]} C^{[s_{j_2,i_2}(v), s_{j_2,i_2}(v+1)]}
}
\end{equation*}
is Cartesian in $\cC$ for all $i_1\leq i_2\leq j_2\leq j_1$ and $0\leq u\leq u+1\leq s(j_1)$. It can be further reduced to the case when $i_2=j_2$. Then the diagram fits into (\ref{thm: diagram Cartesian}) and all diagrams (\ref{thm: diagram Cartesian}) occur in this way, and thus the assumption in the theorem guarantees that $F_{C^\bullet}$ gives a section of $p_{\bCorr(\cC)}$. Since $N(\Delta)^{op}$ is a 1-category, the condition (Mor) is vacuous. 

The last thing we need to check is that the section sends any inert morphism $\alpha^{op}: [n]\rightarrow [m]$ in $N(\Delta)$ to a 2-coCartesian morphism. By construction, $F_{C^\bullet}([1], \alpha)$ is determined by the collection of diagrams
\begin{equation*}
\xymatrix{C^{[\alpha^{op}(u), \alpha^{op}(u+1)]}\ar[r]\ar[d]&\prod\limits_{[v,v+1]\subset [\alpha^{op}(u), \alpha^{op}(u+1)]}C^{[v,v+1]}\\
C^{[u,u+1]}
}
\end{equation*}
for all $0\leq u<n$. The condition that $\alpha^{op}$ is inert implies that $[\alpha^{op}(u),\alpha^{op}(u+1)]$ is of length one, so the horizontal and vertical arrows are isomorphisms in $\cC$ and it corresponds to a 2-coCartesian morphism in $\bCorr(\cC)^{\otimes,\Delta^{op}}$ as desired. 

(ii) The proof is very similar to that of (i). There is a natural functor 
\begin{align}\label{eq: F_T_Comm}
&F_{\cT^\Comm, \Fin_*}: \cT^\Comm\longrightarrow N(\Fin_*)\\
\nonumber&([n], s,(i,j), u)\mapsto s_{i,j}^{-1}(u)\sqcup\{*\}\\
\nonumber&(\alpha^{op}: [n]\leftarrow [m], s\rightarrow t, (i_1,j_1)\rightarrow (\alpha^{op}(i_2), \alpha^{op}(j_2)), u_2\in s_{\alpha^{op}(j_2),j_1}^{-1}(u_1))\\
\nonumber&\mapsto (s_{i_1,j_1}^{-1}(u_1)\sqcup\{*\}\rightarrow t_{i_2,j_2}^{-1}(u_2)\sqcup\{*\}),
\end{align}
in which $s_{i,j}: s(i)\rightarrow  s(j), i\leq j$ is a morphism in $N(\Fin_*)$, the second line represents a morphism $\hat{\alpha}: ([n], s,(i_1,j_1), u_1)\rightarrow ([m], t,(i_2,j_2), u_2)$ consisting of the data of a morphism $\alpha^{op}: [n]\leftarrow [m]$ in $N(\Delta)$, $t_{a,b}=s_{\alpha^{op}(a),\alpha^{op}(b)},\ i_1\leq \alpha^{op}(i_2)\leq \alpha^{op}(j_2)\leq j_1$ and $u_2\in s_{\alpha^{op}(j_2),j_1}^{-1}(u_1)$. The morphism $s_{i_1,j_1}^{-1}(u_1)\sqcup\{*\}\rightarrow t_{i_2,j_2}^{-1}(u_2)\sqcup\{*\}$ comes from the composition
\begin{equation*}
s_{i_1,j_1}^{-1}(u_1)\sqcup\{*\}\overset{\rho}{\longrightarrow} s_{i_1,\alpha^{op}(j_2)}^{-1}(u_2)\sqcup\{*\}\overset{s_{i_1,\alpha^{op}(i_2)}}{\longrightarrow} s_{\alpha^{op}(i_2),\alpha^{op}(j_2)}^{-1}(u_2)\sqcup\{*\},
\end{equation*} 
in which $\rho$ is the inert morphism that is the right inverse to the inclusion $s_{i_1,\alpha^{op}(j_2)}^{-1}(u_2)\sqcup\{*\}\hookrightarrow s_{i_1,j_1}^{-1}(u_1)\sqcup\{*\}$. It is straightforward to check that $F_{\cT^\Comm,\Fin_*}$ is compatible with compositions. The remaining steps of the proof are direct analogues of those for (i). 
\end{proof}

Recall that a simplicial object $C^\bullet$ in $\cC$ (that admits finite limits) is called a \emph{Segal object} if the morphism 
\begin{align*}
C^{n}\rightarrow C^{1}\underset{C^0}{\times}C^{1}\underset{C^0}{\times}\cdots\underset{C^0}{\times}C^1
\end{align*}
induced from the $n$ distinct inert morphisms $[1]\hookrightarrow [n]$ is an isomorphism in $\cC$. 

Similarly, we say a functor $C^\bullet: N(\Fin_*)\rightarrow \cC$ is a \emph{commutative Segal object}, if for any $m$, the diagram
\begin{align*}
\xymatrix{C^{\lng m\rng}\ar[r]\ar[d]& \prod\limits_{j\in \lng m\rng^\circ}C^{\{j,*\}}\ar[d]\\
C^{\lng 0\rng}\ar[r]^\Delta&\prod\limits_{j\in \lng m\rng^\circ}C^{\lng 0\rng}
}
\end{align*}
associated with the $m$ distinct inert morphisms $\alpha_j: \langle m\rangle\rightarrow \langle 1\rangle, \alpha_j^{-1}(1)=j$, is a Cartesian diagram. 
\begin{cor}\label{prop: Segal algebra}
Let $\cC$ be an $(\infty,1)$-category which admits finite limits. Then every (commutative) Segal object $C^\bullet$ in $\cC$ naturally induces an (commutative) algebra object in $\Corr(\cC^\times)$ via (\ref{eq: Maps(T,C)}). 
\end{cor}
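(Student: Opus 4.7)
The plan is to derive the corollary directly from Theorem \ref{thm: algebra objs} by verifying that every (commutative) Segal object automatically satisfies the Cartesian-square hypothesis (\ref{thm: diagram Cartesian}) (resp.\ (\ref{thm: diagram Cartesian 2})) for every active morphism.

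The workhorse I would isolate first is a \emph{reassembly} statement: if $C^\bullet$ is a simplicial Segal object and $0 = a_0 < a_1 < \cdots < a_k = m$ is any partition of $[m]$, then the canonical map
\[
C^{[m]} \longrightarrow C^{[a_0,a_1]} \times_{C^{[0]}} C^{[a_1,a_2]} \times_{C^{[0]}} \cdots \times_{C^{[0]}} C^{[a_{k-1},a_k]}
\]
is an isomorphism. This is immediate from applying the Segal condition to $[m]$ and to each subinterval $[a_{j-1},a_j]$ (which is itself a simplex), together with the associativity of iterated fiber products over $C^{[0]}$. The analogous reassembly for a commutative Segal object identifies $C^{\lng m\rng}$ with the iterated fiber product over $C^{\lng 0\rng}$ of the $C^{S_i \sqcup \{*\}}$ for any partition $\lng m\rng^\circ = \bigsqcup_i S_i$.

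For an active $f\colon [n]\to[m]$, activity gives $f(0)=0$ and $f(n)=m$, so $0=f(0)<f(1)<\cdots<f(n)=m$ is a genuine partition of $[m]$. Writing $Z := C^{[0]}$, applying the reassembly to $[m]$ with this partition and to $[n]$ with its unit partition yields
\[
C^{[m]} \simeq A \times_{Z^{n-1}\times Z^{n-1}} Z^{n-1}, \qquad C^{[n]} \simeq B \times_{Z^{n-1}\times Z^{n-1}} Z^{n-1},
\]
with $A := \prod_{i=0}^{n-1} C^{[f(i),f(i+1)]}$ and $B := \prod_{i=0}^{n-1} C^{[i,i+1]}$, the maps $A,B \to Z^{n-1}\times Z^{n-1}$ recording the $n-1$ pairs (target of the $i$-th factor, source of the $(i{+}1)$-st factor), and the base change taken along the diagonal. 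The vertical arrow in (\ref{thm: diagram Cartesian}) is exactly the factor-wise map $v\colon A\to B$, and by construction the map $A\to Z^{n-1}\times Z^{n-1}$ factors as $A\overset{v}{\to}B\to Z^{n-1}\times Z^{n-1}$. A direct unwinding (or equivalently, the pasting lemma for pullbacks) then gives
\[
A \times_B C^{[n]} \simeq A \times_B \bigl(B \times_{Z^{n-1}\times Z^{n-1}} Z^{n-1}\bigr) \simeq A \times_{Z^{n-1}\times Z^{n-1}} Z^{n-1} \simeq C^{[m]},
\]
so the square in (\ref{thm: diagram Cartesian}) is Cartesian, and Theorem \ref{thm: algebra objs}(i) applies to produce the asserted associative algebra object in $\Corr(\cC^\times)$.

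The commutative case (\ref{thm: diagram Cartesian 2}) proceeds along identical lines, taking $\lng m\rng^\circ = \bigsqcup_{i\in\lng n\rng^\circ} f^{-1}(i)$ (exhaustive because $f$ is active) as the partition of $\lng m\rng^\circ$, replacing all $C^{[0]}$-fiber products by $C^{\lng 0\rng}$-fiber products, and using the basepoint-collapse maps in place of source/target maps. The only step I expect to require real care is the reassembly itself — ensuring that the iterated-pullback identifications are canonical and compatible with the structural maps to $C^{[0]}$ (resp.\ $C^{\lng 0\rng}$); once this is in place, the verification of the Cartesian condition and the invocation of Theorem \ref{thm: algebra objs} are formal.
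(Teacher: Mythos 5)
Your approach is correct and is exactly what the paper leaves implicit (the corollary is stated with no separate proof, so the intended argument is precisely this verification of the hypothesis of Theorem~\ref{thm: algebra objs}). The reassembly lemma, the reformulation of iterated fiber products as a base change along a diagonal, and the pasting lemma together give a clean and complete proof.

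One small slip: you write that activity gives a \emph{genuine} partition $0=f(0)<f(1)<\cdots<f(n)=m$, but active morphisms of $\Delta$ in this paper only satisfy $f(0)=0$ and $f(n)=m$; they need not be injective, so some consecutive values $f(i)=f(i+1)$ may coincide. Your reassembly lemma as stated (with strict inequalities) therefore does not directly apply. The fix is easy and you should record it: for a weak chain $0=a_0\leq a_1\leq\cdots\leq a_k=m$ the same conclusion holds, since each degenerate segment $[a_{j-1},a_j]$ with $a_{j-1}=a_j$ contributes a factor $C^{[a_{j-1},a_j]}=C^{[0]}$, and $C^{[0]}\times_{C^{[0]}}X\simeq X$, so these factors drop out of the iterated fiber product. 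You must also check that the two maps $C^{[f(i),f(i+1)]}\to C^{[0]}\times C^{[0]}$ (the direct endpoint map and the one factored through the degeneracy $C^{[f(i),f(i+1)]}=C^{[0]}\to C^{[i,i+1]}$) agree, which they do since both come from the two inclusions $\{i\},\{i+1\}\hookrightarrow[i,i+1]$ postcomposed with $f$. The analogous point in the $\Fin_*$ case is that $f^{-1}(i)$ may be empty, giving $C^{(f^{-1}(i)\sqcup\{*\})}=C^{\lng 0\rng}$, which likewise drops out. With those degenerate cases handled, the verification of the Cartesian squares goes through exactly as you wrote.
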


\begin{remark}\label{rem: const alg}
For each $X\in \cC^\times$, the constant $\Fin_*$-object mapping to $X$ in $\cC$, denoted by $X^{\const, \bullet}$, gives a commutative Segal object and so a commutative algebra object in $\Corr(\cC)$, by Corollary \ref{prop: Segal algebra}.  
\end{remark}

\subsection{Construction of (right-lax) algebra homomorphisms in $\bCorr(\cC^\times)$}\label{subsec: right-lax assoc.}

Let $\Gray$ be the Gray tensor product of two $(\infty,2)$-categories (cf. \cite[Chapter A.1, Section 3.2]{Nick}), which agrees with the usual Gray product in the case of ordinary 2-categories. We have a description of $\Seq_\bullet(N(\Delta)^{op}\Gray [1])$: $\Seq_k(N(\Delta)^{op}\Gray [1])$ is an ordinary $1$-category\footnote{Here $[1]=\Delta^1$.} consisting of objects $(\alpha,\beta;\tau_0,\tau_1)$
\begin{equation*}
\xymatrix{
&&&[\xi]\ar[dl]_{\tau_0}&&&\\
[n_0]_0& \cdots \ar[l]_{\alpha_{1,0}}&[n_{\mu-1}]_0\ar[l]_{\alpha_{\mu-1,\mu-2}}&&[m_0]_{1}\ar[ll]_{\tau_0\tau_1}\ar[ul]_{\tau_1}&\ar[l]_{\beta_{1,0}}\cdots &&[m_{k-\mu}]_1\ar[ll]_{\beta_{k-\mu,k-\mu-1}},
}
\end{equation*}
where $[a]_\epsilon$ means the object $([a],\epsilon)\in \text{Obj}(N(\Delta)^{op}\times[1])= \text{Obj}([ N(\Delta)^{op}\Gray [1]),\ \epsilon=0,1$ (note that the marked morphisms $\alpha_{i,j},\beta_{i,j}$ are in $N(\Delta)$).
Here we allow $\mu=0$ and $\mu=k+1$, so then $\tau_0,\tau_1=\emptyset$. 
For two objects $(\alpha,\beta;\tau_0,\tau_1)$ and $(\alpha,\beta;\tau'_0,\tau'_1)$, a morphism from  the former to the latter is corresponding to a map $\varphi:[\xi']\rightarrow [\xi]$ in $N(\Delta)$ such that the following diagram commutes
\begin{equation*}
\xymatrix{&[\xi]\ar[dl]_{\tau_0}&\\
[n_{\mu-1}]_0& &[m_0]_{1}\ar[ul]_{\tau_1}\ar[dl]^{\tau_1'}\\
&[\xi']\ar[uu]^{\varphi}\ar[ul]^{\tau_0'}&
}
\end{equation*}

Here is a picture illustrating a morphism $(\alpha,\beta;\tau_0,\tau_1)\rightarrow (\alpha,\beta;\tau_0',\tau_1')$ (the upper right and lower left arrows indicate morphisms in $N(\Delta)^{op}$).

\begin{center}
\begin{tikzpicture}
\draw
(1,-3)--(0,-3)--(0,-1.7) ;
\draw
(1,-4)--(0,-4)--(0,-3) ;
\draw
(1,-4)--(1,-3)--(0,-3) ;
\draw[->, double equal sign distance] (0.8,-3.2)--(0.2,-3.8);
\draw
(1,-5)-- (1,-4)--(0,-4) ;
\draw[blue,<-,thick]
(0.9,-5)--(0.9,-4.2)--(-0.1, -4.2)node[left]{$[\xi']_0$}--(-0.1,-3)node[left]{$[\xi]_0$}--(-0.1,-1.7);
\draw[orange,<-,thick]
(1.1,-5)--(1.1, -4)node[right]{$[\xi']_1$}--(1.1, -2.9)node[right]{$[\xi]_1$}--(0.1,-2.9)--(0.1, -1.7);
\draw[black] (1.6,-4.5) node{$(\tau_1')^{op}$};
\draw (1.4,-3.5) node{$\varphi^{op}$};
\draw (-0.6,-2.3) node{$(\tau_0)^{op}$};
\end{tikzpicture}
\end{center}
For a map $\sigma: [\ell]\rightarrow [k]$ in $N(\Delta)$, the functor $\sigma^*: \Seq_k(N(\Delta)^{op}\Gray[1])\rightarrow \Seq_\ell(N(\Delta)^{op}\Gray[1])$ sends an object $(\alpha, \beta, \tau_0,\tau_1)$ to $\sigma^*(\alpha,\beta;\tau_0,\tau_1)$ defined by 
\begin{itemize}
\item[(1)] $(\sigma^*\alpha,\emptyset;\emptyset, \emptyset )$ if $\sigma(\ell)\leq \mu-1$,\\
\item[(2)] $((\sigma|_{[0,\nu-1]})^*\alpha, (\sigma|_{[\nu,\ell]})^*\beta; \alpha_{\mu-1,\sigma(\nu-1)}\circ\tau_0, \tau_1\circ\beta_{\sigma(\nu)-\mu, 0})$, if there exists $\nu\in [\ell]$ such that $\sigma(\nu-1)\leq \mu-1<\sigma(\nu)$,\\
\item[(3)] $(\emptyset, \sigma^*\beta;\emptyset, \emptyset )$ if $\sigma(0)\geq \mu$.
\end{itemize}

Similarly, we can describe $\Seq_k(N(\Fin_*)\Gray[1])$: an object is a four-tuple $(\alpha,\beta; \tau_0,\tau_1)$, represented by the diagram
\begin{equation*}
\xymatrix{
&&&\langle\xi\rangle\ar[dr]^{\tau_1}&&&\\
\langle n_0\rangle_0 \ar[r]^{\alpha_{0,1}}& \cdots\ar[r]^{\alpha_{\mu-2,\mu-1}}&\langle n_{\mu-1}\rangle_0\ar[ur]^{\tau_0}\ar[rr]^{\tau_1\tau_0}& &\langle m_0\rangle_{1}\ar[r]^{\beta_{0,1}}&\cdots \ar[rr]^{\beta_{k-\mu-1,k-\mu}}&&\langle m_{k-\mu}\rangle_1,
}
\end{equation*}
and a morphism from $(\alpha,\beta;\tau_0,\tau_1)$ to $(\alpha,\beta;\tau_0',\tau_1')$ corresponds to a morphism $\varphi: \langle\xi'\rangle\rightarrow \langle \xi\rangle$ in $\Fin_*$ such that the following diagram commutes
\begin{equation*}
\xymatrix{&\langle\xi\rangle\ar[dr]^{\tau_1}\ar[dd]^{\varphi}&\\
\langle n_{\mu-1}\rangle_0\ar[ur]^{\tau_0}\ar[dr]_{\tau'_0}& &\langle m_0\rangle_{1}\\
&\langle\xi'\rangle\ar[ur]_{\tau'_1}&
}
\end{equation*}
For any map $\sigma: [\ell]\rightarrow [k]$, the functor $\sigma^*: \Seq_k(N(\Fin_*)\Gray[1])\rightarrow \Seq_\ell(N(\Fin_*)\Gray[1])$ is similar as above. 

Let $\pi_\Gray: I_{N(\Delta)^{op}\Gray[1]}\rightarrow N(\Delta)^{op}$ (resp.  $\pi_\Gray^\Comm: I^\Comm_{N(\Delta)^{op}\Gray[1]}\rightarrow N(\Delta)^{op}$) be the coCartesian fibration from the Grothendieck construction for  $\Seq_\bullet (N(\Delta)^{op}\Gray[1]): N(\Delta)^{op}\rightarrow \OneCat^\ord$ (resp. $\Seq_\bullet (\Fin_*\Gray[1]): N(\Delta)^{op}\rightarrow \OneCat^\ord$), where $\OneCat^\ord$ means the full subcategory of $\OneCat$ consisting of ordinary 1-categories. 
We introduce another coCartesian fibration $\pi_{I_+}: I_+\rightarrow N(\Delta)^{op}$, which is the Grothendieck construction of the functor 
\begin{align*}
&N(\Delta)^{op}\rightarrow \Sets\\
&[n]\mapsto \{(s,e)|s: \Delta^n\rightarrow N(\Delta)^{op}, e\in\{-1,0,\cdots, n\}\}\\
&([n]\leftarrow [m]:\alpha^{op})\mapsto (f: (s,e)\mapsto ((\alpha^{op})^*s, e'=\begin{cases}-1,\text{ if }\alpha^{op}(0)>e;\\
m, \text{ if }\alpha^{op}(n)<e+1;\\
\text{the unique element }u \text{ such that }\\
\ \  [e,e+1]\subset\alpha^{op}([u,u+1]), \text{ otherwise}
\end{cases})).
\end{align*}
Here we think of $e$ as a marked edge $\{i,i+1\}$ in $\Delta^n$ when $i\in [0,n-1]$, and when $i=-1, n$, we get a degenerate marked edge at $0$ and $n$ respectively. In a similar way, we can define $\pi_{I_+}^{\Comm}: I_+^\Comm\rightarrow N(\Delta)^{op}$.

\begin{lemma}\label{lemma: pi Gray coCart}
The natural functors 
\begin{align*}
\pi_{\Gray, I_+}: I_{N(\Delta)^{op}\Gray[1]}&\longrightarrow I_+\\
(\alpha,\beta; \tau_0,\tau_1)&\mapsto ((\alpha,\tau_0\tau_1,\beta), e=\mu-1),
\end{align*}
and 
\begin{align*}
\pi_{\Gray, I_+}^\Comm: I_{N(\Fin_*)\Gray[1]}&\longrightarrow I_+^\Comm\\
(\alpha,\beta; \tau_0,\tau_1)&\mapsto ((\alpha,\tau_1\tau_0,\beta), e=\mu-1),
\end{align*}
are coCartesian fibrations. 
\end{lemma}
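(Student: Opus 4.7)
The plan is to verify the coCartesian fibration property directly, exploiting that both $I_{N(\Delta)^{op}\Gray[1]}$ and $I_+$ are Grothendieck constructions of functors valued in $\OneCat^{\ord}$ over the same base $N(\Delta)^{op}$, and that $\pi_{\Gray,I_+}$ lies over $N(\Delta)^{op}$. The crucial observation is that $\pi_{I_+}$ is a \emph{left} fibration, since its fibers are discrete sets, so morphisms in $I_+$ covering a given base morphism are uniquely determined once the source is fixed. This reduces the coCartesian fibration property of $\pi_{\Gray, I_+}$ to a single naturality check relating the pushforward rules on the two sides.

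The core of the argument is to show that $\pi_{\Gray, I_+}$ preserves coCartesian edges over $N(\Delta)^{op}$. The coCartesian lifts in $\pi_\Gray$ over a morphism $\sigma\colon [\ell]\to [k]$ in $N(\Delta)$ are given by the three-case pushforward rule $\sigma^*$ displayed just before the statement of the lemma, while the coCartesian lifts in $\pi_{I_+}$ are given by the three-case rule $f$ in the definition of $I_+$. Writing $\mu$ for the distinguished index of $(\alpha,\beta;\tau_0,\tau_1)$ and $\mu'$ for the new index produced by $\sigma^*$, a direct case-by-case check shows that $\mu'-1$ equals the index $e'$ produced by $f$ applied to $e=\mu-1$: case $(1)$ with $\sigma(\ell)\leq \mu-1$ yields $\mu'-1=\ell$; case $(3)$ with $\sigma(0)\geq \mu$ yields $\mu'-1=-1$; and the intermediate case $(2)$ with $\sigma(\nu-1)\leq \mu-1<\sigma(\nu)$ yields $\mu'-1=\nu-1$. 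One also checks tautologically that the underlying $N(\Delta)^{op}$-simplex $(\alpha,\tau_0\tau_1,\beta)$ pulls back to the corresponding datum after $\sigma^*$.

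Granted this preservation, the candidate $\pi_{\Gray, I_+}$-coCartesian lift of a morphism $y\to y'$ in $I_+$ covering $\sigma$, starting at a lift $x=(\alpha,\beta;\tau_0,\tau_1)$ of $y$, is the $\pi_\Gray$-coCartesian arrow $x\to \sigma^* x$, which by the preceding paragraph indeed lies over $y\to y'$. To see this arrow is $\pi_{\Gray, I_+}$-coCartesian one combines the $\pi_\Gray$-universal property with the fact that $\pi_{I_+}$ is a left fibration: the mapping space in $I_+$ over a fixed base morphism is either empty or a single point, so the fiber product characterizing a $\pi_{\Gray, I_+}$-coCartesian morphism collapses onto the one characterizing a $\pi_\Gray$-coCartesian morphism. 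The commutative variant $\pi_{\Gray, I_+}^\Comm$ is handled by the identical argument, replacing $N(\Delta)^{op}$ by $N(\Fin_*)$ and using the obvious analogue of the three-case description of $\sigma^*$ on $\Seq_\bullet(N(\Fin_*)\Gray[1])$. The main obstacle is the bookkeeping in the three-case match, which requires careful attention to the opposite-category conventions so that directions and inequalities align; everything beyond that is formal coCartesian/left fibration yoga.
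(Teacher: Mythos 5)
Your proof is correct and follows essentially the same strategy as the paper: note that $\pi_{I_+}$ is a left fibration (being the Grothendieck construction of a $\mathrm{Sets}$-valued functor) so every edge of $I_+$ is $\pi_{I_+}$-coCartesian, deduce that $\pi_\Gray$-coCartesian edges are also $\pi_{\Gray,I_+}$-coCartesian, and conclude using that $\pi_\Gray$ is a coCartesian fibration. The paper obtains the middle reduction by citing \cite[Proposition 2.4.1.3]{higher-topoi} whereas you unfold it directly from the left-fibration universal property; your explicit three-case bookkeeping is a useful sanity check on the formula for $\pi_{\Gray,I_+}$ but is already forced by the uniqueness of lifts in $\pi_{I_+}$.
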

\begin{proof}
By construction, every edge in $I_+$ is $\pi_{I_+}$-coCartesian. By \cite[Proposition 2.4.1.3]{higher-topoi}, every $\pi_\Gray$-coCartesian edge in $I_{N(\Delta)^{op}\Gray[1]}$ is also $\pi_{\Gray, I_+}$-coCartesian. Since $\pi_{\Gray}$ is a coCartesian fibration,
the claim for $\pi_{\Gray,I_+}$ follows. The part for $\pi_{\Gray,I_+}^\Comm$ follows similarly. 
\end{proof}

\begin{lemma}\label{lemma: Fun coCar Car}
For any $\infty$-category $S$, a coCartesian fibration $X\rightarrow S$, a Cartesian fibration $Y\rightarrow S$ and an ordinary 1-category $\cC$, to give a functor $F: X\times_S Y\rightarrow  \cC$, it suffices to give the following data:
\begin{itemize}
\item[(a)] 
a functor $F_s: X_s\times Y_s\rightarrow \cC$ for each fiber over $s\in S$,
\item[(b)]
for each edge $(\alpha,\beta): (x_0,y_0)\rightarrow (x_1,y_1)$ in $X\times_SY$ over an edge $p(\alpha): s\rightarrow t$ in $S$, given by a coCartesian edge $(\alpha: x_0\rightarrow x_1)$ and a Cartesian edge $(\beta: y_0\rightarrow y_1)$ over $p(\alpha)$, a morphism in $\cC$ from $F_s(x_0,y_0)$ to $F_t(x_1,y_1)$, such that for any commutative diagrams in $X,Y$ respectively
\begin{align*}
\xymatrix{x_0\ar[r]\ar[d] &x_1\ar[d]\\
x_0'\ar[r]&x_1'
},\ \xymatrix{y_0\ar[r]\ar[d] &y_1\ar[d]\\
y_0'\ar[r]&y_1'
},\end{align*} 
where the horizontal arrows are respectively coCartesian/Cartesian, and the left (resp. right) vertical arrow  in each diagram is in $X_s, Y_s$ (resp. $X_t, Y_t$) respectively,  
the following diagram commutes
\begin{align}\label{lemma: diag coCart_Cart}
\xymatrix{F_s(x_0,y_0)\ar[r]\ar[d] &F_t(x_1,y_1)\ar[d]\\
F_s(x_0', y_0')\ar[r]&F_t(x_1',y_1')
}
\end{align}
in $\cC$. 
\item[(c)] The assignment in (b) is compatible with composition of coCartesian and Cartesian edges in $X$ and $Y$ respectively. 
\end{itemize}
\end{lemma}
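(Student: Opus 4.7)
The plan is to construct $F$ directly by specifying its values on objects, edges, and $2$-simplices, and then invoke the fact that the nerve $N(\cC)$ of a $1$-category is $2$-coskeletal, so such a specification (subject to face-compatibility) extends uniquely to all higher simplices. The key geometric observation is that every edge of $X\times_S Y$ factors, via a $3$-simplex, as a fiber edge followed by a (coCartesian, Cartesian) pair followed by another fiber edge; hence the data (a) and (b) suffice on $1$-simplices, and (c) together with the naturality square (\ref{lemma: diag coCart_Cart}) will yield the required $2$-simplex identity.

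On objects, I set $F(x,y) := F_s(x,y)$ where $s=p(x)=p(y)$. For an edge $(\alpha,\beta):(x_0,y_0)\to(x_1,y_1)$ over $\sigma:s\to t$, I pick a coCartesian lift $\alpha^c:x_0\to x_0'$ of $\sigma$ in $X$ and a Cartesian lift $\beta^c:y_1'\to y_1$ of $\sigma$ in $Y$; the universal properties then produce fiber morphisms $\alpha^f:x_0'\to x_1$ in $X_t$ and $\beta^f:y_0\to y_1'$ in $Y_s$ with $\alpha\simeq\alpha^f\circ\alpha^c$ and $\beta\simeq\beta^c\circ\beta^f$. These assemble into a $3$-simplex
\[
(x_0,y_0)\xrightarrow{(\mathrm{id},\beta^f)}(x_0,y_1')\xrightarrow{(\alpha^c,\beta^c)}(x_0',y_1)\xrightarrow{(\alpha^f,\mathrm{id})}(x_1,y_1)
\]
in $X\times_S Y$ whose long edge is $(\alpha,\beta)$, and I declare
\[
F(\alpha,\beta) \;:=\; F_t(\alpha^f,\mathrm{id})\circ\Phi(\alpha^c,\beta^c)\circ F_s(\mathrm{id},\beta^f),
\]
writing $\Phi$ for the assignment from (b). Any two choices of coCartesian (resp.\ Cartesian) lifts differ by a unique isomorphism in the target (resp.\ source) fiber, and (b) forces these isomorphisms to cancel under $\Phi$, giving well-definedness. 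Degeneracies are handled by observing that identities are both coCartesian and Cartesian, so $F(\mathrm{id})$ collapses to $F_s(\mathrm{id})=\mathrm{id}$.

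For a $2$-simplex with composite $(\alpha_{02},\beta_{02})=(\alpha_{12},\beta_{12})\circ(\alpha_{01},\beta_{01})$, I choose coherent lifts: extending the coCartesian lift $\alpha^c_{01}:x_0\to x_0'$ by a coCartesian lift of $\sigma_{12}$ starting at $x_0'$, and dually on the $Y$-side, I obtain canonical coCartesian and Cartesian lifts for $\sigma_{02}$ that serve as admissible choices in the definition of $F(\alpha_{02},\beta_{02})$. With these coherent choices, the composite $F(\alpha_{12},\beta_{12})\circ F(\alpha_{01},\beta_{01})$ unfolds into a pasting of panels in $\cC$, of which all but one commute by functoriality of the fiber functors $F_s$, $F_t$ combined with the naturality square (\ref{lemma: diag coCart_Cart}) of (b), and the remaining one (the concatenation of two consecutive coCart/Cart transitions) collapses to $\Phi$ of the composite pair by (c). Since $N(\cC)$ is $2$-coskeletal, this extends canonically to a map of simplicial sets $X\times_S Y\to N(\cC)$.

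The main obstacle will be the $2$-simplex verification: the factorizations chosen independently for $(\alpha_{01},\beta_{01})$ and $(\alpha_{12},\beta_{12})$ do not immediately glue into a factorization of the composite, and the interior of the resulting pasting diagram must first be rearranged using (b) to shuttle fiber morphisms past coCart/Cart transitions before (c) can be invoked to merge them. The argument is a diagram chase in principle, but the careful bookkeeping of the interaction between the three chosen pairs of lifts constitutes the technical heart of the lemma.
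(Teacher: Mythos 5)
Your proof follows essentially the same strategy as the paper's: factor each edge of $X\times_S Y$ as a fiber morphism in $X_s\times Y_s$ (moving $y$), then a (coCartesian, Cartesian) pair, then a fiber morphism in $X_t\times Y_t$ (moving $x$), define $F$ on each piece, and check that the $2$-simplex compatibility reduces, via a commuting paste of squares coming from (b) and (c), to the hypotheses; the $2$-coskeletality of $N(\cC)$ then does the rest. The paper records this verification by writing out a single large commutative diagram in $X\times_S Y$ (its diagram~(2.6.2)) and stating that (a)--(c) are exactly what make its image in $\cC$ commute, whereas you spell out the well-definedness of the choices of coCartesian/Cartesian lifts and describe the pasting verbally; the content is the same.
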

\begin{proof}
Any morphism $f:(x_0,y_0)\rightarrow (x_1,y_1)$ over $s\rightarrow t$ in $S$ factors uniquely as 
\begin{align*}
\xymatrix{(x_0,y_0)\ar[d]&\\
(x_0,y_{01})\ar[r]^{(\alpha_{01}, \beta_{01})}&(x_{01},y_1)\ar[d]\\
&(x_1,y_1)
}
\end{align*}
where the vertical arrows are contained in $X_s\times Y_s$ and $X_t\times Y_t$ respectively, and the horizontal arrow $(\alpha,\beta)$ has $\alpha$ (resp. $\beta$) a coCartesian (resp. Cartesian) arrow.  
Now if in addition we have $g: (x_1,y_1)\rightarrow (x_2,y_2)$ over $t\rightarrow u$ in $S$ and $g\circ f$ factor as follows 
\begin{align*}
\xymatrix{(x_1,y_1)\ar[d]&\\
(x_1,y_{12})\ar[r]^{(\alpha_{12}, \beta_{12})}&(x_{12},y_2)\ar[d]\\
&(x_2,y_2)
},\ 
\xymatrix{(x_0,y_0)\ar[d]&\\
(x_0,y_{02})\ar[r]^{(\alpha_{02}, \beta_{02})}&(x_{02},y_2)\ar[d]\\
&(x_2,y_2)
},
\end{align*}
then we have a commutative diagram
\begin{align}\label{diagram: composition F}
\xymatrix{(x_0,y_0)\ar[d]&&&\\
(x_0,y_{01})\ar[dd]_{(id, \gamma_{12})}\ar[rr]^{(\alpha_{01}, \beta_{01})}&&(x_{01},y_1)\ar[d]\ar[dl]&\\
&(x_{01},y_{12})\ar[dr]\ar@/_/[rr]_{(\alpha_{012}, \beta_{12})}&(x_{1},y_1)\ar[d]&(x_{02,}y_2)\ar[d]^{(\eta_{01},id)}\\
(x_0, y_{02})\ar[ur]_{(\alpha_{01}, \beta_{012})}&&(x_1,y_{12})\ar[r]&(x_{12},y_2)\ar[d]\\
&&&(x_2,y_2)
}
\end{align}
in which the arrows $\beta_{012}$ (resp. $\alpha_{012}$) is Cartesian (resp. coCartesian), and $\gamma_{12}$ (resp. $\eta_{01}$) is the image under pullback (resp. pushforward) along the Cartesian (resp. coCartesian) arrows from $F_s\rightarrow F_t$ (resp. $F_t\rightarrow F_u$). 
Therefore, to define a functor $X\times_S Y\rightarrow \cC$, we just need to define functor $F_s: X_s\times Y_s\rightarrow \cC$ for every $s\in S$, and a morphism $F_s(x_0,y_0)\rightarrow F_t(x_1,y_1)$ for each edge $(\alpha_{01},\beta_{01})$ as above, so that  diagram (\ref{diagram: composition F}) is sent to a commutative diagram in $\cC$. 
It's easy to see that these are exactly the data (a), (b) and (c) in the Lemma. 
 \end{proof}

By \cite[Chapter 9, 1.4.5, Chapter 10, 3.2.7]{Nick}, 
to give a right-lax homomorphism between two associative (resp. commutative) algebra objects in $\bCorr(\cC)^{\otimes, \Delta^{op}}$ (resp. $\bCorr(\cC)^{\otimes, \Fin_*}$) is the same as to construct a functor from $N(\Delta)^{op}\Gray[1]$ (resp. $N(\Fin_*)\Gray[1]$) to $\bCorr(\cC)^{\otimes, \Delta^{op}}$ (resp. $\bCorr(\cC)^{\otimes, \Fin_*}$) over $N(\Delta)^{op}$ (resp. $N(\Fin_*)$), such that the restriction of the functor to $N(\Delta)^{op}\times\{\epsilon\}$ (resp. $N(\Fin_*)\times\{\epsilon\}$), $\epsilon=0,1$ coincides with the two associative (resp. commutative) algebra objects respectively (up to homotopy). Therefore, we are aiming to define for each $(\alpha,\beta;\tau_0,\tau_1)\in \Seq_k(N(\Delta)^{op}\Gray[1])$ (resp. $(\alpha,\beta;\tau_0,\tau_1)\in \Seq_k(N(\Fin_*)\Gray[1])$) a diagram $W_{\alpha,\beta;\tau_0,\tau_1}$ in $\Fun''(([k]\times [k]^{op})^{\geq \dgnl}, \cC^{\otimes,\Delta})$ (resp. $\Fun''(([k]\times [k]^{op})^{\geq \dgnl}, \cC^{\otimes,(\Fin_*)^{op}})$) satisfying required properties. 

By the approach in Subsection \ref{subsubsec: algebra objects}, we just need to construct an element in $\Hom_{I}(I^\natural_{N(\Delta)^{op}\Gray[1]}\underset{I}{\times}\cT^\natural, (\cC\times I)^\natural)$ (resp. $\Hom_{I}(I^\natural_{N(\Fin_*)\Gray[1]}\underset{I}{\times} \cT^\natural, (\cC\times I)^\natural)$). For this purpose, we first define a Cartesian fibration $\pi_{\cT_+}:\cT_+\rightarrow I_+$ which can be thought of as an extension of $\pi_{\cT}: \cT\rightarrow I$. 
Consider the ordinary 1-category 
\begin{align*}
&\Gamma_{[n],e}:=([0,e]\times [0,n]^{op})^{\geq \dgnl})\times\{0\}_1 \coprod\limits_{[0,e]\times [e+1,n]^{op}\times \{0\}_1}([0,e]\times[e+1,n]^{op}\times (\Delta^1_1\coprod\limits_{\Delta^{\{1\}}}\Delta^1_2))\\
&\coprod\limits_{[0,e]\times [e+1,n]^{op}\times \{0\}_2}([0,n]\times [e+1,n]^{op})\times \{0\}_2,
\end{align*}
which has a natural functor to $[n]^{op}$ from projecting to the second factor (see Figure \ref{figure: Gamma}). 
The functor $\pi_{\cT_+}$ comes from the Grothendieck construction of the following functor 
\begin{align*}
&f_{I_+}: I_+^{op}\longrightarrow\OneCat^\ord\\
&([n], (s,e))\mapsto \Gamma_{[n],e}\underset{N(\Delta)}{\times}T,
\end{align*}
for any morphism $([m], ((\alpha^{op})^*s,e'))\rightarrow([n],(s,e))$ in $I_+^{op}$ induced by $\alpha^{op}: [m]\rightarrow [n]$ in $N(\Delta)$, the corresponding morphism under $f_{I_+}$ is the natural one that sends $(i,j,\eta; u\leq u+1)\in [m]\times [m]^{op}\times \{\{0\}_1,\{0\}_2,\{1\}\}$ to $(\alpha^{op}(i), \alpha^{op}(j), \eta; u\leq u+1)$. 

\subsubsection{Construction of the main functor}\label{subsec: F_Gray}
Now we define a functor from $F_{\Gray,\cT_+}:  I_{N(\Delta)^{op}\Gray[1]}\underset{I_+}{\times}\cT_+\rightarrow N(\Delta)^{op}\times (\Delta^{\{W,C\}}\coprod\limits_{\Delta^{\{W\}}}\Delta^{\{W,D\}})$ as follows. First, on the object level, we have
\begin{align}\label{eq: F_Gray, T_+}
&F_{\Gray,\cT_+}: I_{N(\Delta)^{op}\Gray[1]}\underset{I_+}{\times}\cT_+\rightarrow N(\Delta)^{op}\times (\Delta^{\{W,C\}}\coprod\limits_{\Delta^{\{W\}}}\Delta^{\{W,D\}})\\
\nonumber&(\alpha, \beta,\tau_0,\tau_1;[n], (s,e); i,j,\{0\}_1;u\leq u+1)\mapsto ([\alpha_{ji}(u),\alpha_{ji}(u+1)], \{C\}), \text{ if }i\leq j\leq e;\\
\nonumber&(\alpha, \beta,\tau_0,\tau_1;[n], (s,e); i,e+1+j,\{0\}_1;u\leq u+1)\mapsto ([\alpha_{ei}\tau_0\tau_1\beta_{j0}(u), \alpha_{ei}\tau_0\tau_1\beta_{j0}(u+1)], \{C\}), i\leq e, j\geq 0;\\
\nonumber&(\alpha, \beta,\tau_0,\tau_1;[n], (s,e); i,e+1+j,\{1\};u\leq u+1)\mapsto ([\tau_1\beta_{j0}(u),\tau_1\beta_{j0}(u+1)], \{C\}),i\leq e, j\geq 0;\\
\nonumber&(\alpha, \beta,\tau_0,\tau_1;[n], (s,e); i,e+1+j,\{0\}_2;u\leq u+1)\mapsto ([\tau_1\beta_{j0}(u),\tau_1\beta_{j0}(u+1)], \{W\}),i\leq e, j\geq 0;\\
\nonumber&(\alpha, \beta,\tau_0,\tau_1;[n], (s,e);e+1+i,e+1+j,\{0\}_2; u\leq u+1)\mapsto ([\beta_{ji}(u),\beta_{ji}(u+1)], \{D\}),\text{ if }0\leq i\leq j.
\end{align}
By Lemma \ref{lemma: Fun coCar Car}, to give a complete definition of $F_{\Gray, \cT_+}$, we just need to give the following data\\

\noindent(a) For any $([n], (s,e))\in I_+$, 
\begin{align}\label{eq: F_Gray, T, [n], s, e}
F_{\Gray,\cT_+; [n],(s,e)}: &(I_{N(\Delta)^{op}\Gray[1]})_{[n],(s,e)}\rightarrow \Fun((\cT_+)_{[n],(s,e)}, N(\Delta)^{op}\times (\Delta^{\{W,C\}}\coprod\limits_{\Delta^{\{W\}}}\Delta^{\{W,D\}}))
\end{align}

Let $[0,n]_{e,+}$ be the ordered set $[0,n]\sqcup\{e_+\}$ with $e<e_+<e+1$. Consider the ordinary 1-category
\begin{align*}
S_{[n],e_+}=&(([0,n]_{e,+}\times [0,n]^{op})^{\geq \dgnl}\times \{0\}_1)\coprod\limits_{([e_+, n]\times [e+1,n]^{op})^{\geq \dgnl}\times \{0\}_1}\\
&(([e_+, n]\times [e+1,n]^{op})^{\geq \dgnl}\times \Delta^{\{\{0\}_2,\{0\}_1\}}),
\end{align*}
which has a natural functor to $[n]^{op}$ by projecting to the second factor (see Figure \ref{figure: S_+}). 
There is a unique functor 
\begin{align*}
\widetilde{C}_e: \Gamma_{[n], e}\rightarrow S_{[n],e_+}
\end{align*}
that 
\begin{itemize}
\item[(i)] collapses the vertical edges $(i,j,\eta)\rightarrow (i+1, j,\eta)$ to the identity morphism at $(e_+, j,\eta)$ for $i<e, j\geq e+1,\eta\in \{\{1\},\{0\}_2\}$
\item[(ii)] is the identity on the subcategories $(([0,e]\times [0,n]^{op})^{\geq \dgnl}\times \{0\}_1)$ and restricts to an equivalence of subcategories 
\begin{align*}
&([e,n]\times [e+1,n]^{op})^{\geq \dgnl}\times \{0\}_2\overset{\sim}{\rightarrow} ([e_+, n]\times [e+1,n]^{op})^{\geq\dgnl}\times\{0\}_2\\
&\{e\}\times [e+1,n]^{op}\times\Delta_2^1\overset{\sim}{\rightarrow} \{e_+\}\times[e+1,n]^{op}\times \Delta_2^1
\end{align*}
\item[(iii)] restricts to an equivalence of subcategories
\begin{align*}
&\{e\}\times [e+1,n]^{op}\times\Delta_1^{1}\overset{\sim}{\rightarrow} [e,e_+]\times [e+1,n]^{op}\times\{0\}_1.
\end{align*}
\end{itemize}
Property (ii) (resp. (iii)) is illustrated by the white wall at the back and gray faces (resp. the faces hatched by lines) in both Figure \ref{figure: Gamma} and Figure \ref{figure: S_+}. 

For any $([n],s,e)\in I_+$, let 
\begin{align*}
&C_{[n],s, e}:(\cT_+)_{([n],(s,e))}\simeq\Gamma_{[n],e}\underset{N(\Delta)}{\times}T\rightarrow S_{[n],e_+}\underset{N(\Delta)}{\times}T\end{align*}
be the functor induced from $\widetilde{C}_e$. 

Given any $s^\tau=(\alpha,\beta,\tau_0,\tau_1)\in (I_{N(\Delta)^{op}\Gray[1]})_{[n],(s,e)}$ which we will also think of as a functor $[n]_{e,+}\rightarrow N(\Delta)^{op}$, 
let 
\begin{align*}
&F_{2,\tau}: S_{[n],e_+}\underset{N(\Delta)}{\times }T\rightarrow N(\Delta)^{op}\times (\Delta^{\{W, C\}}\coprod\limits_{\Delta^{\{W\}}}\Delta^{\{W,D\}})
\end{align*}
 be the functor given by the product of $G_{1,\tau}\circ\proj$ and $G_2\circ\proj$ defined as follows.
\begin{align*}
&S_{[n],e_+}\underset{N(\Delta)}{\times}T
\overset{\proj}{\rightarrow} ([0, n]_{e,+}\times [0,n]^{op})^{\geq \dgnl}\underset{N(\Delta)}{\times}T\overset{G_{1,\tau}}{\rightarrow} N(\Delta)^{op}
\end{align*}
\begin{align*}
&S_{[n],e_+}\underset{N(\Delta)}{\times}T
\overset{\proj}{\rightarrow} ([0,n]_{e,+}\times [0,n]^{op})^{\geq \dgnl}\times \Delta^{\{\{0\}_2,\{0\}_1\}}\overset{G_2}{\rightarrow}\Delta^{\{W, C\}}\coprod\limits_{\Delta^{\{W\}}}\Delta^{\{W,D\}},\\
&\text{where }G_{1,\tau}\text{ is defined similarly as }F_{\cT,\Delta^{op}} \text{ that is induced by } s^\tau \text{ and }G_2 \text{ is the one} \\
&\text{sending }(i,j,\{0\}_1)\text{ to }C, (e_+,j,\{0\}_2)\text{ to }W, 
\text{ and }(e+i,j,\{0\}_2) \text{ to }D \text{ for }i>0. 
\end{align*}

Now for each $([n], s^\tau)\in (I_{N(\Delta)^{op}\Gray[1]})_{[n],(s,e)}$, where $s^\tau=(\alpha,\beta,\tau_0,\tau_1)$ as above, we define 
\begin{align*}
F_{\Gray,\cT_+;[n],(s,e)}([n], s^\tau)=F_{2,\tau}\circ C_{[n], s,e},
\end{align*} 
(as in (\ref{eq: F_Gray, T, [n], s, e})). For any morphism $([n],s^\tau)\mapsto ([n], s^{\tau'})$ defined by $\varphi: [\xi']\rightarrow [\xi]$ in $N(\Delta)$, 
let $[n]_{e,+,+}=[n]_{e,+}\sqcup\{e_+'\}$ with the ordering $e_+<e_+'<e+1$, and let $s^{\tau,\tau'}: [n]_{e,+,+}^{op}\rightarrow N(\Delta)$, such that $s^{\tau,\tau'}|_{[n]_{e,+}^{op}}=(s^\tau)^{op}$, $s^{\tau,\tau'}(e_+'\rightarrow e_+)=\varphi$ and  $s^{\tau,\tau'}(e+1\rightarrow e_+')=\tau_1'$. 
Consider the composition
\begin{align}\label{eq: natural transformation tau,tau'}
&\Delta^1\times (\cT_+)_{[n], s,e}\overset{(id_{\Delta^1},C_{[n],s,e})}{\longrightarrow}\Delta^1\times (S_{[n],e_+}{\underset{N(\Delta)}{\times}}T)\overset{p}{\rightarrow} \Delta^1\times (([0,n]_{e,+}\times [0,n]^{op})^{\geq \dgnl}{\underset{N(\Delta)}{\times}}T)\\
\nonumber&\overset{C_+}{\rightarrow} ([0,n]_{e,+,+}\times [0,n]^{op})^{\geq \dgnl}{\underset{N(\Delta)}{\times}}T\overset{F_{\tau,\tau'}}{\rightarrow }N(\Delta)^{op},
\end{align}
where 
\begin{itemize}
\item[(i)] $p$ is determined by the projection $S_{[n],e_+}\rightarrow ([0,n]_{e,+}\times [0,n]^{op})^{\geq \dgnl}$,  
\item[(ii)] $C_+$ is the contraction determined by sending the arrow $((0,e_+,j;[u,u+1])\rightarrow (1,e_+,j;[u,u+1]))$ to $((e_+,j;[u,u+1])\rightarrow (e_+',j;[u,u+1]))$, and sending $(\epsilon, i,j;[u,u+1])$ to $(i,j;[u,u+1])$ for $\epsilon=0,1, i\neq e_+$,
\item[(iii)] $F_{\tau,\tau'}$ is defined similarly as $F_{\cT,\Delta^{op}}$ induced by $s^{\tau,\tau'}$. 
\end{itemize}
The composite functor (\ref{eq: natural transformation tau,tau'}) gives a natural transformation 
\begin{align*}
T_{\tau,\tau'}: F_{\Gray,\cT_+;[n],(s,e)}([n], s^\tau)\rightarrow F_{\Gray,\cT_+;[n],(s,e)}([n], s^{\tau'}). 
\end{align*}
By the naturality of the definition, it is easy to see that $T_{\tau',\tau''}\circ T_{\tau,\tau'}=T_{\tau,\tau''}$. \\

\noindent(b) Over any edge $([n], (s,e_1))\rightarrow ([m], (\sigma^*(s), e_2))$ in $I_+$ induced by $\sigma: [m]\rightarrow [n]$ in $N(\Delta)$, we have any $\pi_{\Gray, I_+}$-coCartesian edge of the form $\sigma_1:([n],(\alpha,\beta;\tau_0,\tau_1))\rightarrow ([m], (\sigma^*(\alpha,\beta;\tau_0,\tau_1)))$, and any $\pi_{\cT_+}$-Cartesian edge of the form $\sigma_2(\sigma(i),\sigma(j),\eta;[u,u+1])\rightarrow (i,j,\eta;[u,u+1])$, where $\eta\in \{\{0\}_1,\{0\}_2,\{1\}\}$. For any commutative diagrams
\begin{align*}
\xymatrix{([n],(\alpha,\beta;\tau_0,\tau_1))\ar[r]\ar[d]&([m],\sigma^*(\alpha,\beta;\tau_0,\tau_1))\ar[d]\\
([n],(\alpha,\beta;\tau_0',\tau_1'))\ar[r]&([m],\sigma^*(\alpha,\beta;\tau_0',\tau_1'))
},\\
\xymatrix{(\sigma(i),\sigma(j),\eta;[u,u+1])\ar[r]\ar[d]&(i,j,\eta;[u,u+1])\ar[d]\\
(\sigma(i'),\sigma(j'),\eta;[u',u'+1])\ar[r]&(i',j',\eta;[u',u'+1])
},
\end{align*}
we denote $\sigma^*(\alpha,\beta;\tau_0,\tau_1)$ by $(\widetilde{\alpha},\widetilde{\beta};\widetilde{\tau_0},\widetilde{\tau_1})$ and similarly for $\sigma^*(\alpha,\beta;\tau'_0,\tau'_1)$. 
By the commutativity of the following diagram
\begin{align*}
\xymatrix{
\Delta^1\times (\cT_{+})_{[n],(s,e_1)}\ar[r]^{\substack{C_+p\circ (id_{\Delta^1},C_{[n],s,e_1})\\ \\ }}& ([0,n]_{e_1,+,+}\times [0,n]^{op})^{\geq \dgnl}\underset{N(\Delta)}{\times}T\ar[r]^{\ \ \ \ \ \ \ \ \ \ \ \ \ \ \ F_{\tau,\tau'}}&N(\Delta)^{op}\\
\Delta^1\times (\cT_{+})_{[m],(\sigma^*s,e_2)}\ar[u]\ar[r]^{\substack{C_+p\circ (id_{\Delta^1},C_{[m],\sigma^*s,e_2})\\ \\ }}& ([0,m]_{e_2,+,+}\times [0,m]^{op})^{\geq \dgnl}\underset{N(\Delta)}{\times}T\ar[u]\ar[ur]_{F_{\widetilde{\tau},\widetilde{\tau}'}}&
},
\end{align*}
we see that we just need to make $F_{\Gray,\cT_+}(\sigma_1,\sigma_2)=id$, then the diagrams (\ref{lemma: diag coCart_Cart}) are all commutative. So we have finished the definition of $F_{\Gray, \cT_+}$.

\begin{center}
\begin{figure}[h]
\begin{tikzpicture}
\foreach \x in {3,...,6} 
  \foreach \k in {1,...,\x}
  \draw [->] (\k-1,\x-3)--(\k,\x-3);
\foreach \x in {1,...,4}
   \foreach \k in {1,...,3}
       \draw [->] (\x-1, \k)--(\x-1, \k-1);
\foreach \x in {1,2}
    \foreach \k in {1,...,\x}
       \draw [->] (-\x+6, -\k+4)--(-\x+6,-\k+3);       
\foreach \x in {0,...,2}
   \foreach \y in {0,...,3}
     \draw[->, orange] (\x,\y)--(\x-0.4, \y-0.4);    
\foreach \x in {0,...,2}
   \foreach \y in {0,...,3}       
     \draw[->,orange](\x-0.8, \y-0.8)--(\x-0.4, \y-0.4);
 \foreach \x in {0,1}
    \foreach \y in {0,...,3}
       \draw[->, orange](\x-0.8,\y-0.8)--(\x+0.2,\y-0.8);     
 \foreach \x in {0,1}
    \foreach \y in {0,...,3}
       \draw[->, orange](\x-0.4,\y-0.4)--(\x+0.6,\y-0.4);    
 \foreach \x in {0,1}
    \foreach \y in {0,...,3}
       \draw[->, orange](\x-0.8,\y-0.8)--(\x+0.2,\y-0.8);   
 \foreach \x in {0,...,2}
    \foreach \y in {0,...,2}
       \draw[<-,orange](\x-0.8,\y-0.8)--(\x-0.8,\y+0.2);   
 \foreach \x in {0,...,2}
    \foreach \y in {0,...,2}
       \draw[<-,orange](\x-0.4,\y-0.4)--(\x-0.4,\y+0.6);   
\foreach \x in {0,1,2}
    \foreach \k in {0,...,\x} 
        \draw[<-] (-\x+1.2,-1.8-\k)--(-\x+1.2,-0.8-\k);
\foreach \y in {0,1}
     \foreach \x in {0,...,\y}
        \draw[->] (\x-0.8,\y-2.8)--(\x+0.2,\y-2.8);
 \draw (4,0) node {$(e,e,\{0\}_1)$};
 \draw (2.7, -1.8) node {$(e+1,e+1,\{0\}_2)$};
\draw[pattern=north west lines, pattern color=cyan] (0,0)--(-0.4,-0.4)--(0.6,-0.4)--(1,0)--(0,0);
\draw[pattern=north west lines, pattern color=cyan] (1,0)--(0.6,-0.4)--(1.6,-0.4)--(2,0)--(1,0);
\fill[fill=gray, opacity=0.2] (-0.4,-0.4)--(-0.8,-0.8)--(1.2,-0.8)--(1.6,-0.4); 
\fill[fill=gray, opacity=0.2] (-0.8,-0.8)--(-0.8,-1.8)--(1.2,-1.8)--(1.2,-0.8);
\fill[fill=gray, opacity=0.2] (-0.8,-1.8)--(-0.8, -2.8)--(0.2,-2.8)--(0.2,-1.8); 
\end{tikzpicture}
\caption{A picture of $\Gamma_{[n],s,e}$.}\label{figure: Gamma}
\end{figure}
\end{center}

\begin{figure}[h]
\begin{tikzpicture}
\foreach \x in {3,...,6} 
  \foreach \k in {1,...,\x}
  \draw [->] (\k-1,\x-3)--(\k,\x-3);
\foreach \x in {1,...,4}
   \foreach \k in {1,...,3}
       \draw [->] (\x-1, \k)--(\x-1, \k-1);
\foreach \x in {1,2}
    \foreach \k in {1,...,\x}
       \draw [->] (-\x+6, -\k+4)--(-\x+6,-\k+3);    
\foreach \x in {0,...,1}
    \draw[->] (\x, -1)--(\x+1, -1); 
 \foreach \x in {0,...,2}
    \draw[->] (\x, 0)--(\x, -1);            
\foreach \x in {0,1,2}
    \foreach \k in {0,...,\x} 
        \draw[<-] (-\x+1.6,-2.4-\k)--(-\x+1.6,-1.4-\k);
\foreach \x in {0,1}
   \draw [->] (\x-0.4, -1.4)--(\x+0.6, -1.4);         
\foreach \y in {0,1}
     \foreach \x in {0,...,\y}
        \draw[->] (\x-0.4,\y-3.4)--(\x+0.6,\y-3.4);
\foreach \x in {0,1,2}
    \foreach \k in {0,...,\x} 
        \draw[<-] (-\x+2,-2-\k)--(-\x+2,-1-\k);
      \foreach \y in {0,1}
     \foreach \x in {0,...,\y}
        \draw[->] (\x,\y-3)--(\x+1,\y-3);     
 \foreach \x in {0,...,2}
    \draw[->] (\x-0.4,-1.4)--(\x,-1);
 \foreach \y in {0,1,2}
    \foreach \x in {0,...,\y}   
     \draw[->] (\x-0.4, \y-4.4)--(\x, \y-4); 
\draw[pattern=north west lines, pattern color=cyan]  (0,0)--(0,-1)--(2,-1)--(2,0); 
 \fill[fill=gray, opacity=0.2] (0,-1)--(-0.4,-1.4)--(1.6,-1.4)--(2,-1); 
\fill[fill=gray, opacity=0.2] (-0.4,-1.4)--(-0.4,-2.4)--(1.6,-2.4)--(1.6,-1.4);
\fill[fill=gray, opacity=0.2] (-0.4,-2.4)--(-0.4, -3.4)--(0.6,-3.4)--(0.6,-2.4); 
 \draw (4,0) node {$(e,e,\{0\}_1)$};
  \draw (3.3,-1) node {$(e_+,e_+,\{0\}_1)$};
 \draw (3.2, -2.4) node {$(e+1,e+1,\{0\}_2)$};
\end{tikzpicture}
\caption{A picture of $S_{[n],e_+}$.}\label{figure: S_+}
\end{figure}

Let $C^\bullet, D^\bullet, W^\bullet$ be simplicial objects in $\cC$, and let $D^\bullet\leftarrow W^\bullet\rightarrow C^\bullet$ represent a fixed functor $S_W\in \Fun(N(\Delta)^{op}\times (\Delta^{\{W, C\}}\coprod\limits_{\Delta^{\{W\}}}\Delta^{\{W,D\}}), \cC)$ whose restriction to $N(\Delta)^{op}\times \Delta^{\{?\}}$ is the simplicial object $(?)^\bullet$ for $?=C, D,W$. We say $S_W$ is a \emph{correspondence} from $C^\bullet$ to $D^\bullet$. If all of $C^\bullet$, $D^\bullet$ and $W^\bullet$ are Segal objects of $\cC$, then we say $S_W$ is a \emph{correspondence of Segal objects} from $C^\bullet$ to $D^\bullet$. The composition $S_W\circ F_{\Gray, \cT_+}$ gives a functor $I_{N(\Delta)^{op}\Gray [1]}\underset{I_+}{\times}\cT_+\rightarrow \cC$, which also gives a functor $I^\natural_{N(\Delta)^{op}\Gray [1]}\underset{I_+}{\times}\cT^\natural_+\rightarrow (\cC\times I_+)^\natural$ over $I_+$. 

Next, we take the right Kan extension of $S_W\circ F_{\Gray, \cT_+}$ along the projection $p_{\cT_+}: I_{N(\Delta)^{op}\Gray [1]}\underset{I_+}{\times}\cT_+\rightarrow I_{N(\Delta)^{op}\Gray [1]}\underset{I}{\times}\cT$ defined by forgetting the coordinate $\{0\}_1,\{0\}_2,\{1\}$ in $\cT_+$. This gives the desired functor 
\begin{align}\label{eq: functor F_Gray, W}
&F_{\Gray,S_W}:  I_{N(\Delta)^{op}\Gray [1]}\underset{I}{\times}\cT\rightarrow \cC,
\end{align} 
which also lies in 
\begin{align*}
&\Maps_{I}(I^\natural_{N(\Delta)^{op}\Gray [1]}\underset{I}{\times}\cT^\natural, (\cC\times I)^\natural)\simeq \Maps_I(I^\natural_{N(\Delta)^{op}\Gray [1]}, (\cC^\natural)^{\cT^\natural})\\
\simeq&\Maps_{(\OneCat)^{\Delta^{op}}_{/\Seq_\bullet (N(\Delta)^{op})}}(\Seq_\bullet(N(\Delta)^{op}\Gray [1]),f_{\Delta^\bullet}^\cC),
\end{align*} 
where $f_{\Delta^\bullet}^\cC$ was defined in (\ref{eq: f_Delta^C}). In the following, we also think of $F_{\Gray,S_W}$ as a functor $\Seq_\bullet(N(\Delta)^{op}\Gray[1])\rightarrow f_{\Delta^\bullet}^\cC$ over $\Seq_\bullet (N(\Delta)^{op})$. In a totally similar fashion, we can define a functor
\begin{align}\label{eq: functor F_Gray, W, comm}
&F_{\Gray,S_W}^\Comm:  I_{N(\Fin_*)\Gray [1]}\underset{I^\Comm}{\times}\cT^\Comm\rightarrow \cC,
\end{align} 
from any correspondence $D^\bullet\leftarrow W^\bullet\rightarrow C^\bullet$ of $\Fin_*$-objects in $\cC$ from $C^\bullet$ to $D^\bullet$.

\subsubsection{Right-lax homomorphism from correspondence of simplicial/$\Fin_*$-objects}
\begin{thm}\label{thm: right-lax}
Let $D^\bullet\leftarrow W^\bullet\rightarrow C^\bullet$ be a correspondence of simplicial (resp. $\Fin_*$-) objects (from $C^\bullet$ to $D^\bullet$). Suppose that $C^\bullet$ and $D^\bullet$ represent associative (resp. commutative) algebra objects in $\bCorr(\cC^\times)$. Then $F_{\Gray, S_W}$ (\ref{eq: functor F_Gray, W}) (resp. $F_{\Gray, S_W}^\Comm$ (\ref{eq: functor F_Gray, W, comm})) gives a right-lax homomorphism between the associative (resp. commutative) algebra objects if and only if $W^\bullet$ represents another associative (resp. commutative) algebra object and the following diagrams are Cartesian in $\cC$
 \begin{align}\label{lemma: diagram Cart}
&\xymatrix{W^{[k]}\ar[r]\ar[d]&\prod\limits_{[v,v+1]\subset [k]} W^{[v,v+1]}\ar[d]\\
D^{[k]}\ar[r]&\prod\limits_{[v,v+1]\subset[k]}D^{[v,v+1]}
}
\end{align}
for any $[k]\in N(\Delta)^{op}$. 
(resp.
\begin{align}\label{lemma: diagram Cart comm}
&\xymatrix{W^{\lng n\rng}\ar[r]\ar[d]&\prod\limits_{j\in \langle n\rangle^\circ} W^{\{j,*\}}\ar[d]\\
D^{\langle n\rangle}\ar[r]&\prod\limits_{j\in \langle n\rangle^\circ} D^{\{j,*\}}
}
\end{align}
for any $\lng n\rng\in N(\Fin_*)$).

Moreover, if the following diagrams are also Cartesian in $\cC$
\begin{align}\label{lemma: diagram Cart 2}
\xymatrix{
W^{[2]}\ar[r]\ar[d]&C^{[2]}\ar[d]\\
W^{[1]}\ar[r]&C^{[1]}
}
\end{align}
for the unique \emph{active} $f: [1]\rightarrow [2]$ in $N(\Delta)$
(resp. 
\begin{align}\label{lemma: diagram Cart 2 comm}
\xymatrix{
W^{\lng 2\rng}\ar[r]\ar[d]&C^{\lng 2\rng}\ar[d]\\
W^{\langle 1\rangle}\ar[r]&C^{\langle 1\rangle}
}
\end{align} 
for the unique \emph{active} $f: \langle 2\rangle\rightarrow\langle 1\rangle$ in $N(\Fin_*)$),
then $F_{\Gray, S_W}$ (resp. $F_{\Gray, S_W}^\Comm$) induces an algebra homomorphism between the associative (resp. commutative) algebra objects in $\Corr(\cC)$.
\end{thm}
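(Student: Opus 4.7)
The plan is to verify that the functor $F_{\Gray, S_W}$ of (\ref{eq: functor F_Gray, W}), viewed as a morphism $\Seq_\bullet(N(\Delta)^{op}\Gray[1])\rightarrow f^\cC_{\Delta^\bullet}$ over $\Seq_\bullet(N(\Delta)^{op})$, factors through $\Seq_\bullet(\bCorr(\cC)^{\otimes,\Delta^{op}})$ and restricts on the two slices $\epsilon = 0, 1$ to the given algebra objects classified by $C^\bullet$ and $D^\bullet$ via Theorem \ref{thm: algebra objs}(i). Compatibility on the slices is immediate from the construction of $F_{\Gray, \cT_+}$ in Subsection \ref{subsec: F_Gray}: when $\mu = k+1$ (resp.\ $\mu = 0$) there is no interface region, the region labels are constant, and the right Kan extension along $p_{\cT_+}$ reduces to $F_{C^\bullet}$ (resp.\ $F_{D^\bullet}$) of (\ref{eq: thm F_C}). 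The substantive content is thus the verification of condition (Obj) of Subsection \ref{subsec: monoidal Corr} on all outputs of $F_{\Gray, S_W}$, and this is precisely where the conditions (\ref{lemma: diagram Cart}) enter.

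For a general $([k], s^\tau)$ with $s^\tau = (\alpha, \beta; \tau_0, \tau_1)$ and transition index $\mu$, the right Kan extension formula identifies $F_{\Gray, S_W}([k], s^\tau)(i, j; u \leq u+1)$ with an object of the form $C^{[\cdots]}$, $D^{[\cdots]}$, or $W^{[\cdots]}$ according to whether the simplex $(i, j, u)$ lies strictly in the $C$-region (i.e.\ $j \leq \mu - 1$), strictly in the $D$-region (i.e.\ $i \geq \mu$), or at the $W/D$ interface created by the extension (where both the $\{0\}_2$ and $\{1\}$ coordinates contribute via the span $D \leftarrow W \rightarrow C$). The (Obj) squares for the resulting functor then split into four families: those interior to the $C$-region, covered by the hypothesis on $C^\bullet$; those interior to the $D$-region, covered by the hypothesis on $D^\bullet$; those interior to the $W$-region, which require $W^\bullet$ itself to satisfy the Cartesian hypothesis of Theorem \ref{thm: algebra objs}(i), hence to represent an algebra object; and the squares straddling the $W/D$ interface, which reduce, after collapsing the Kan-extension pullbacks, exactly to the diagrams (\ref{lemma: diagram Cart}). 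Necessity of each condition is then obtained by testing $F_{\Gray, S_W}$ on minimal $([k], s^\tau)$ that isolate one square type at a time.

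Granting (Obj), the right-lax claim requires only the matching of inert morphisms with 2-coCartesian morphisms, which is handled verbatim as in the last step of the proof of Theorem \ref{thm: algebra objs}: inert morphisms produce length-one intervals and trivialize the relevant comparison squares. For the strict algebra-homomorphism statement in $\Corr(\cC)$, the additional requirement is that the comparison 2-morphism built into the Gray direction of $F_{\Gray, S_W}$ become an isomorphism; equivalently, that the horizontal arrow of the produced correspondence be Cartesian in $\cC^{\otimes,\Delta}$ in the sense of Proposition \ref{prop: Cartesian equiv}. Unwinding this at the generating case $k = 2$, $\mu = 1$ with the unique active $f: [1] \to [2]$ shows it is governed precisely by (\ref{lemma: diagram Cart 2}). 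The main obstacle will be the combinatorial bookkeeping needed to identify every (Obj) square with one of the four families above and to confirm that no further Cartesian conditions appear. The $\Fin_*$ case runs in complete parallel with $F^\Comm_{\Gray, S_W}$, replacing subintervals $[v, v+1] \subset [k]$ by pointed singletons $\{j, *\} \subset \lng n \rng$ and conditions (\ref{lemma: diagram Cart}), (\ref{lemma: diagram Cart 2}) by (\ref{lemma: diagram Cart comm}), (\ref{lemma: diagram Cart 2 comm}).
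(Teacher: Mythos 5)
Your overall strategy matches the paper's: view $F_{\Gray, S_W}$ as a map $\Seq_\bullet(N(\Delta)^{op}\Gray[1])\rightarrow f^\cC_{\Delta^\bullet}$, check that its image lands in $\Seq_\bullet(\bCorr(\cC)^{\otimes,\Delta^{op}})$ (i.e.\ verify (Obj) and (Mor) from Subsection \ref{subsec: monoidal Corr}), and then handle inert morphisms as in Theorem \ref{thm: algebra objs}. The slice compatibility observation is also correct. But there is a real imprecision in the centrepiece of the argument.

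You describe the value of $F_{\Gray, S_W}([k], s^\tau)$ at a vertex $(i,j;u\leq u+1)$ as ``an object of the form $C^{[\cdots]}$, $D^{[\cdots]}$, or $W^{[\cdots]}$'' according to region. This is not what the construction produces: at an interface vertex $(i,\mu+j)$ with $i\leq\mu-1$, the right Kan extension along $p_{\cT_+}$ produces the \emph{fibre product}
\begin{equation*}
W^{[\tau_1\beta_{j,0}(u),\tau_1\beta_{j,0}(u+1)]}\underset{C^{[\tau_1\beta_{j,0}(u),\tau_1\beta_{j,0}(u+1)]}}{\times}C^{[\alpha_{\mu-1,i}\tau_0\tau_1\beta_{j,0}(u),\alpha_{\mu-1,i}\tau_0\tau_1\beta_{j,0}(u+1)]},
\end{equation*}
not $W^{[\cdots]}$. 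Because of this, your four-family split of the (Obj) squares (interior $C$, interior $D$, ``interior $W$'', and $W/D$-interface) does not cleanly reflect the combinatorics: there is no pure-$W$ sub-region of the grid, and squares that vary $i$ inside the interface band actually cross into the $C$-region at the diagonal (so they are governed by $C^\bullet$ being an algebra together with the constancy of the $W$-factor in $i$, not by a hypothesis on $W^\bullet$). Symmetrically, the squares that vary $j$ at the interface straddle into the $D$-region and are the ones that actually produce the conditions you want. The paper organizes this as exactly two composite Cartesian conditions involving fibre products; the first is absorbed by $C^\bullet$ being an algebra, and the second, after a pasting-law reduction, is equivalent to the \emph{conjunction} of ``$W^\bullet$ is an algebra object'' and (\ref{lemma: diagram Cart}) --- these two statements do not arise from two disjoint families of squares, but jointly from one family once the fibre products are unwound. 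You would need to carry out that unwinding explicitly to turn your sketch into a proof.

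The remaining points --- inert morphisms giving 2-coCartesian morphisms verbatim as in Theorem \ref{thm: algebra objs}, and the strict-homomorphism case reducing to isomorphism of the Gray-direction 2-morphisms, hence to the Cartesian square (\ref{lemma: diagram Cart 2}) via (\ref{diagram: W, C Cartesian}) --- are in accordance with the paper, though phrasing the latter as ``the horizontal arrow of the produced correspondence be Cartesian'' conflates the (Mor) condition with the (Obj) condition; what must be checked is that the morphism $q: F\to F'$ induced by $\varphi:[\xi']\to[\xi]$ restricts to isomorphisms on the diagonal entries $(i,i)$.
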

\begin{proof}
We will only prove the associative case; the commutative case follows in a completely similar way. Like in the proof of Theorem \ref{thm: algebra objs}, to make $F_{\Gray,S_W}$ correspond to a functor $N(\Delta)^{op}\Gray [1]\rightarrow \bCorr(\cC)^{\otimes,\Delta^{op}}$ over $N(\Delta)^{op}$, we just need $F_{\Gray,S_W}$ to send any object $([n]; \alpha,\beta,\tau_0,\tau_1)\in \Seq_n(N(\Delta)^{op}\Gray [1])$ to an element $F\in \Fun''(([n]\times[n]^{op})^{\geq \dgnl},\cC^{\times,\Delta})$, i.e. $F$ satisfying (Obj) below (\ref{eq: diagram Fun'}), and for any morphism $(([n]; \alpha,\beta,\tau_0,\tau_1)\rightarrow ([n]; \alpha,\beta,\tau'_0,\tau'_1))$ to a morphism $q: F\rightarrow F'$ satisfying (Mor) below (\ref{eq: diagram Fun'}). It is clear from the construction of $F_{\Gray,S_W}$ that the latter is satisfied by $F_{\Gray, S_W}$ regardless of the conditions on $C^\bullet, W^\bullet, D^\bullet$. 

For any $([n]; \alpha,\beta,\tau_0,\tau_1)\in \Seq_n(N(\Delta)^{op}\Gray [1])$, its image under $F_{\Gray, S_W}$ is a functor $F_{([n],s^\tau)}: ([n]\times[n]^{op})^{\geq \dgnl}\underset{N(\Delta)}{\times} T\rightarrow \cC$ in $\cC$ which takes
\begin{align}
\nonumber((i,j), [u,u+1])\mapsto &C^{[\alpha_{ji}(u), \alpha_{ji}(u+1)]}, i\leq j\leq \mu-1\\
\nonumber((i, \mu+j),[u,u+1])\mapsto &W^{[\tau_1\beta_{j,0}(u),\tau_1\beta_{j,0}(u+1)]}\underset{C^{[\tau_1\beta_{j,0}(u),\tau_1\beta_{j,0}(u+1)]}}{\times}C^{[\alpha_{\mu-1,i}\tau_0\tau_1\beta_{j,0}(u), \alpha_{\mu-1,i}\tau_0\tau_1\beta_{j,0}(u+1)]}\\
\label{eq: WCCalphabeta}&0\leq i\leq \mu-1, 0\leq j\leq n-\mu\\
\nonumber(\mu+i,\mu+j; [u,u+1])\mapsto &D^{[\beta_{ji}(u),\beta_{ji}(u+1)]}, 0\leq i\leq j\leq n-\mu.
\end{align}

Now assume $C^\bullet\leftarrow W^\bullet \rightarrow D^\bullet$ is a correspondence of simplicial objects with $C^\bullet$ and $D^\bullet$ representing algebra objects in $\Corr(\cC)$. Let $WCC^{\alpha,\beta,\tau}_{i, j;u}$ denote the fiber product on the RHS of (\ref{eq: WCCalphabeta}). 
Then to show that $F_{([n],s^\tau)}$ satisfies (Obj), it suffices to check that the following diagrams are Cartesian in $\cC$:
\begin{align}\label{thm: diagram Cart 1}
&\xymatrix{WCC^{\alpha,\beta,\tau}_{i,j;u}\ar[r]\ar[d]&
\prod\limits_{\substack{[v,v+1]\subset [\alpha_{\mu-1,i+1}\tau_0\tau_1\beta_{j,0}(u),\\
\alpha_{\mu-1,i+1}\tau_0\tau_1\beta_{j,0}(u+1)]}}C^{[\alpha_{i+1,i}(v),\alpha_{i+1,i}(v+1)]}\ar[d]\\
WCC_{i+1,j;u}^{\alpha,\beta,\tau}\ar[r]&
\prod\limits_{\substack{[v,v+1]\subset [\alpha_{\mu-1,i+1}\tau_0\tau_1\beta_{j,0}(u),\\
 \alpha_{\mu-1,i+1}\tau_0\tau_1\beta_{j,0}(u+1])]}}C^{[v,v+1]}
}\\
\nonumber&i+1\leq \mu-1,
\end{align}

 \begin{align}\label{thm: diagram Cart 2}
&\xymatrix{WCC^{\alpha,\beta,\tau}_{i,j;u}\ar[r]\ar[d]&\prod\limits_{[v,v+1]\subset [\beta_{j,j-1}(u),\beta_{j,j-1}(u+1)]} WCC^{\alpha,\beta,\tau}_{i,j-1;v}\ar[d]\\
D^{[\beta_{j,j-1}(u),\beta_{j,j-1}(u+1)]}\ar[r]&\prod\limits_{[v,v+1]\subset [\beta_{j,j-1}(u),\beta_{j,j-1}(u+1)]}D^{[v,v+1]}
}\\
\nonumber&i\leq \mu-1.
\end{align}

The first diagram (\ref{thm: diagram Cart 1}) is Cartesian directly follows from the assumption that $C^\bullet$ represents an algebra object, and the second (\ref{thm: diagram Cart 2}) is Cartesian 
is equivalent to the condition that the following diagram 
 \begin{align}
&\xymatrix{W^{[f(0),f(k)]}\ar[r]\ar[d]&\prod\limits_{[v,v+1]\subset [k]} W^{[f(v),f(v+1)]}\ar[d]\\
D^{[k]}\ar[r]&\prod\limits_{[v,v+1]\subset[k]}D^{[v,v+1]}
}
\end{align}
is Cartesian for any $f: [k]\rightarrow [\ell]$ in $N(\Delta)$. The latter is equivalent to the assumptions on $W^\bullet$ in the theorem. So this proves (a).

For the second part, we just need to check that for any morphism $([n],\alpha,\beta;\tau_0,\tau_1)\rightarrow ([n],\alpha,\beta;\tau_0',\tau_1')$ in $\Seq_n(N(\Delta)^{op}\Gray[1])$ induced by $\tau: [\xi']\rightarrow [\xi]$ in $N(\Delta)$, the morphisms
\begin{align*}
&WCC_{i,j;u}^{\alpha,\beta,\tau}
\longrightarrow WCC_{i,j;u}^{\alpha,\beta,\tau'}
\end{align*}
are isomorphisms for $0\leq i\leq \mu-1, 0\leq j\leq n-\mu$. It then suffices to show that for any $[n]\overset{f}{\rightarrow} [m]\overset{g}{\rightarrow} [k]$ in $N(\Delta)$, the following diagram is Cartesian
\begin{align}\label{diagram: W, C Cartesian}
\xymatrix{
W^{[gf(u),gf(u+1)]}\ar[r]\ar[d]&C^{[gf(u),gf(u+1)]}\ar[d]\\
W^{[f(u), f(u+1)]}\ar[r]&C^{[f(u),f(u+1)]}
}.
\end{align}
These are exactly the diagrams
\begin{align*}
\xymatrix{
W^{[\ell]}\ar[r]\ar[d]&C^{[\ell]}\ar[d]\\
W^{[1]}\ar[r]&C^{[1]}
}
\end{align*}
for the unique \emph{active} $f: [1]\rightarrow [\ell]$ for each $[\ell]$ in $N(\Delta)$. This can be easily reduced to the case for $\ell=2$. Alternatively, one can directly get to this from the characterization of algebra homomorphisms out of right-lax homomorphisms. So the second part is established. 
 \end{proof}

Now Theorem \ref{thm: right-lax} immediately implies the following. 
\begin{cor}\label{cor: right-lax Segal}
\begin{itemize}
\item[(a)]
Any correspondence of Segal objects (resp. commutative Segal objects) $D^\bullet\leftarrow W^\bullet \rightarrow C^\bullet$ gives a right-lax homomorphism from the associative algebra object (resp. commutative algebra object) in $\bCorr(\cC^\times)$ represented by $C^\bullet$ to that represented by $D^\bullet$. \\
\item[(b)] In the same setting of (a), if the correspondence further satisfies that the following diagram is Cartesian in $\cC$,
\begin{equation}\label{thm: Cartesian}
\xymatrix{W^{[2]}\ar[r]\ar[d]&C^{[2]}\ar[d]\\
W^{[1]}\ar[r]&C^{[1]}.}
\end{equation}
(resp. 
\begin{equation}\label{thm: Cartesian comm}
\xymatrix{W^{\langle 2\rangle}\ar[r]\ar[d]&C^{\lng 2\rng}\ar[d]\\
W^{\lng 1\rng}\ar[r]&C^{\lng 1\rng}.}),
\end{equation}
where the vertical maps are induced by the unique active map $[1]\rightarrow [2]$ in $N(\Delta)$ (resp. $\lng 2\rng\rightarrow\lng 1\rng$ in $N(\Fin_*)$),
then it gives a homomorphism in $\Corr(\cC^\times)$ between the corresponding associative (resp. commutative) algebra objects. 
\end{itemize}
\end{cor}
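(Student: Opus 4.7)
The plan is to reduce both parts directly to Theorem~\ref{thm: right-lax}, whose hypotheses will all follow from the Segal assumptions. The algebra inputs---that $C^\bullet$ and $D^\bullet$ represent associative (resp.\ commutative) algebra objects in $\bCorr(\cC^\times)$---are already supplied by Corollary~\ref{prop: Segal algebra}. What remains for part~(a) is to verify the Cartesian squares~(\ref{lemma: diagram Cart}) (resp.~(\ref{lemma: diagram Cart comm})) governing the correspondence $W^\bullet \to D^\bullet$, and for part~(b) to observe that the additional hypothesis~(\ref{thm: Cartesian}) (resp.~(\ref{thm: Cartesian comm})) is by inspection the extra square~(\ref{lemma: diagram Cart 2}) (resp.~(\ref{lemma: diagram Cart 2 comm})) that Theorem~\ref{thm: right-lax} requires in order to upgrade a right-lax homomorphism to an honest algebra homomorphism.

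For the verification in part~(a), the plan is to expand the vertical arrows of~(\ref{lemma: diagram Cart}) using the iterated Segal decompositions $W^{[k]}\simeq W^{[1]}\times_{W^{[0]}}\cdots\times_{W^{[0]}}W^{[1]}$ and $D^{[k]}\simeq D^{[1]}\times_{D^{[0]}}\cdots\times_{D^{[0]}}D^{[1]}$ arising from the $k$ inert maps $[1]\hookrightarrow[k]$. Under these identifications the morphism $W^{[k]}\to D^{[k]}$ becomes the map of iterated fiber products induced by $W^{[1]}\to D^{[1]}$ and $W^{[0]}\to D^{[0]}$, and the horizontal arrows of~(\ref{lemma: diagram Cart}) become the canonical maps from those iterated fiber products into the unrestricted $k$-fold products. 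The Cartesian property then follows by pasting the $k$ naturality squares for $W^\bullet\to D^\bullet$ attached to each inert edge $[1]\hookrightarrow[k]$, together with the tautological fiber-product squares supplied by the Segal decomposition. The commutative case runs the same way, with $[1]\hookrightarrow[k]$ replaced by the inert morphisms $\lng k\rng\to\lng 1\rng$ in $N(\Fin_*)$ and the commutative Segal decomposition indexed by $\lng k\rng^\circ$ used in place of the simplicial one.

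Part~(b) is then formal: once (a) is in place, the hypothesis~(\ref{thm: Cartesian}) (resp.~(\ref{thm: Cartesian comm})) matches exactly the second family of Cartesian squares demanded by the latter half of Theorem~\ref{thm: right-lax}, so the right-lax homomorphism from~(a) automatically descends to a homomorphism in $\Corr(\cC^\times)$.

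The main technical step I anticipate is the pasting argument in~(a): while entirely formal, it requires care in reconciling the iterated Segal fiber-product presentations of $W^{[k]}$ and $D^{[k]}$ with the plain $k$-fold products appearing in~(\ref{lemma: diagram Cart}), and in organizing the diagram chase coherently in the $\infty$-categorical setting. Every other step is a direct invocation of results already proved in this section.
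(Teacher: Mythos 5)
Your reduction to Theorem~\ref{thm: right-lax} is the route the paper has in mind, and part~(b) is correct once~(a) is in place: the extra square~(\ref{thm: Cartesian}) is exactly~(\ref{lemma: diagram Cart 2}). The gap is in the Cartesian verification of~(\ref{lemma: diagram Cart}) for~(a). The Segal decompositions you invoke, $W^{[k]}\simeq W^{[1]}\times_{W^{[0]}}\cdots\times_{W^{[0]}}W^{[1]}$ and $D^{[k]}\simeq D^{[1]}\times_{D^{[0]}}\cdots\times_{D^{[0]}}D^{[1]}$, are iterated fiber products over \emph{different} base objects, $W^{[0]}$ and $D^{[0]}$ respectively, and the pasting you describe does not close up across this discrepancy. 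Computing directly, $D^{[k]}\times_{\prod_v D^{[v,v+1]}}\prod_v W^{[v,v+1]}$ is the subobject of $\prod_v W^{[v,v+1]}$ whose consecutive ``endpoints'' agree after mapping to $D^{[0]}$, whereas $W^{[k]}$ is the subobject where they already agree in $W^{[0]}$; these differ whenever $W^{[0]}\to D^{[0]}$ is not a monomorphism, and the $k=0$ instance of~(\ref{lemma: diagram Cart}) (whose right column is the terminal object) moreover forces $W^{[0]}\to D^{[0]}$ to be an isomorphism. For a concrete failure of the unconditional claim, take $D^\bullet$ the nerve of $BG$ for a nontrivial group $G$, $W^\bullet$ the nerve of the translation groupoid $G\curvearrowright G$, and $W^\bullet\to D^\bullet$ the natural functor; then $W^{[2]}\simeq G^3$ maps by a proper monomorphism into $W^{[1]}\times W^{[1]}\simeq G^4\simeq D^{[2]}\times_{D^{[1]}\times D^{[1]}}(W^{[1]}\times W^{[1]})$, so~(\ref{lemma: diagram Cart}) fails and, by the ``only if'' direction of Theorem~\ref{thm: right-lax}, no right-lax homomorphism is produced.

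So as written your argument for~(a) does not work for an arbitrary correspondence of Segal objects; it needs the additional hypothesis that $W^{[0]}\to D^{[0]}$ is an equivalence. That hypothesis is automatic in the fixed-base setting $\Seg(c)$ invoked from~\cite{Nick} in the very next corollary, and is presumably the scope intended here; you should make it explicit. With that hypothesis in place the two Segal fiber products are taken over the same base, the pasting you sketch does close up, and the rest of your proposal is sound.
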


\begin{remark}\label{alg composition}
From the above description, it is easy to see that the composition of two (right-lax) homomorphisms between (commutative) algebra objects given by correspondences 
\begin{align}\label{eq: 2 corresp}
D^\bullet\leftarrow W^\bullet\rightarrow C^\bullet,\ E^\bullet\leftarrow Y^\bullet\rightarrow D^\bullet
\end{align}
of simplicial objects (resp. $\Fin_*$-objects). 
The composition is homotopic to the one given by the correspondences 
\begin{align}\label{eq: comp corresp}
E^\bullet\leftarrow W^\bullet\underset{D^\bullet}{\times}Y^\bullet\rightarrow C^\bullet.
\end{align}
Moreover, using the machinery that we have developed, one can construct  functors 
\begin{align*}
&\Corr(\Fun'(N(\Fin_*),\cC^\times))_{\inert, \all}\rightarrow \CAlg(\bCorr(\cC^\times))^{\rightlax},\\
&\Corr(\Fun'(N(\Fin_*),\cC^\times))_{\inert, \act}\rightarrow \CAlg(\bCorr(\cC^\times)),
\end{align*}
where $\Fun'(N(\Fin_*),\cC^\times)$ is the full subcategory of $\Fun(N(\Fin_*),\cC^\times)$ consisting of $\Fin_*$-objects satisfying the condition in Theorem \ref{thm: algebra objs}, $\inert$ (resp. $\act$) is the class of morphisms satisfying that (\ref{lemma: diagram Cart comm}) (resp. (\ref{lemma: diagram Cart 2 comm}))) is Cartesian. There is an obvious analogue for the associative case. it is reasonable to expect that the functors are equivalences and we will address this in a separate note.
\end{remark}

Let $\Fun_{\inert}(N(\Delta)^{op}, \cC)$ (resp. $\Fun_{\inert}(N(\Fin_*), \cC)$) be the 1-full subcategory of $\Fun(N(\Delta)^{op}, \cC)$ (resp. $\Fun(N(\Fin_*), \cC)$) consisting of simplicial objects (resp. $\Fin_*$-objects) in $\cC$ that satisfy the condition in Theorem \ref{thm: algebra objs} (i) (resp. (ii)), and whose morphisms from $W^\bullet$ to $D^\bullet$ consisting of those satisfying  (\ref{lemma: diagram Cart}) (resp. (\ref{lemma: diagram Cart comm})). It is not hard to deduce from the above constructions the following proposition. 
\begin{prop}\label{prop: Fun_Cart}
There are natural functors 
\begin{align*}
&\Fun_{\inert}(N(\Delta)^{op}, \cC)\rightarrow \Assoc\Alg(\Corr(\cC^\times)),\\
&\Fun_{\inert}(N(\Fin_*), \cC)\rightarrow \Comm\Alg(\Corr(\cC^\times)), 
\end{align*}
that send every object $C^\bullet$ on the left-hand-side to the associative/commutative algebra object assigned in Theorem \ref{thm: algebra objs}. 
\end{prop}
\begin{proof}
We will prove the commutative case; the associative case is entirely similar. Let $I_{\Delta^n}\rightarrow N(\Delta)^{op}$ (resp. $I_{\Delta^n\times N(\Fin_*)}\rightarrow N(\Delta)^{op}$) be the coCartesian fibration of ordinary 1-categories from the Grothendieck construction of $\Seq_\bullet(\Delta^n): N(\Delta)^{op}\rightarrow \OneCat^{\ord}$ (resp. $\Seq_\bullet(N(\Fin_*)\times \Delta^n): N(\Delta)^{op}\rightarrow \OneCat^{\ord}$). Then we have an isomorphism of coCartesian fibrations over $I^\Comm$. 
\begin{align*}
I_{\Delta^n\times N(\Fin_*)}\simeq I_{\Delta^n}\underset{N(\Delta)^{op}}{\times}(I^\Comm).
\end{align*}
We will define the sought-for functor by first exhibiting a morphism in $\Spc^{\Delta^{op}}$:
\begin{align}\label{eq: proof Seq I_Delta}
\Seq_\bullet(\Fun(N(\Fin_*),\cC))\rightarrow \Maps_{I^{\Comm}}(I_{\Delta^\bullet}\underset{N(\Delta^{op})}{\times} \cT^\Comm,\cC)
\end{align}
To this end, we just need to construct a natural morphism
\begin{align}\label{eq: prop I_delta_n}
I_{\Delta^\bullet}\underset{N(\Delta)^{op}}{\times} \cT^\Comm\rightarrow \Delta^\bullet\times N(\Fin_*)
\end{align}
in $(\OneCat)^{\Delta}$. The construction is very similar to that of (\ref{eq: F_T_Comm}). For any $[n]\in N(\Delta)^{op}$, the fiber of 
\begin{align*}
I_{\Delta^\bullet}\underset{N(\Delta)^{op}}{\times} \cT^\Comm\rightarrow N(\Delta)^{op}
\end{align*}
over $[n]$ is $\Seq_n(\Delta^\bullet)\times (\bigsqcup\limits_{s\in \Seq_n(N(\Fin_*))}\cT^\Comm_{([n],s)})$. Now we define 
\begin{align}
\label{eq: I_Delta to Delta}&\Seq_n(\Delta^\bullet)\times (\bigsqcup\limits_{s\in \Seq_n(N(\Fin_*))}\cT^\Comm_{([n],s)})\rightarrow \Delta^\bullet\times N(\Fin_*)\\
\nonumber&(\vartheta^{(m)}; (s, (i,j), u))\mapsto (\vartheta^{(m)}(i); s_{i,j}^{-1}(u)\sqcup\{*\}),
\end{align}
in which $\vartheta^{(m)}: \Delta^n\rightarrow \Delta^m$ represents an element in $\Seq_n(\Delta^m)$ and the functor on morphisms is the obvious one. With the input from $F_{\cT^\Comm, \Fin_*}$,  the functors  (\ref{eq: I_Delta to Delta}) for different $[n]\in N(\Delta)^{op}$ assemble to be the desired morphism (\ref{eq: prop I_delta_n}). It is easy to see that the image under (\ref{eq: proof Seq I_Delta}) of any object in $\Seq_\bullet(\Fun(N(\Fin_*),\cC))$ lies in $\Maps_{I^\Comm}(I^\natural_{\Delta^\bullet\times N(\Fin_*)}\underset{I^\Comm}{\times}(\cT^\Comm)^\natural, (\cC\times I^\Comm)^\natural)$, therefore we have a well defined morphism in $\Spc^{\Delta^{op}}$:
\begin{align}\label{eq: proof Seq bullet star}
\Seq_\bullet(\Fun(N(\Fin_*),\cC))\rightarrow\Maps_{(\OneCat)^{\Delta^{op}}_{/\Seq_\clubsuit(N(\Fin_*)) }}(\Seq_\clubsuit(\Delta^\bullet\times N(\Fin_*)), f_{\Delta^\clubsuit}^{\cC,\Comm}).
\end{align}

Lastly, one just need to check that the image of any object in (\ref{eq: proof Seq bullet star}) gives rise to a functor $\Delta^k\times N(\Fin_*)\rightarrow \Corr(\cC)^{\otimes, \Fin_*}$ over $N(\Fin_*)$ for any $k$. Like in the proof of Theorem \ref{thm: algebra objs}, this is guaranteed by the assumptions on objects and morphisms in $\Fun_{\inert}(N(\Fin_*),\cC)$. 
\end{proof}

An immediate corollary of Proposition \ref{prop: Fun_Cart} is the following. 
\begin{cor}\cite[Chapter 9, Corollary 4.4.5]{Nick}
For any $c\in \cC$, there is a canonically defined functor
\begin{align*}
\Seg(c)\rightarrow \Assoc\Alg(\Corr(\cC)).
\end{align*}
\end{cor}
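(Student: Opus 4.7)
The strategy is to realize $\Seg(c)$, the subcategory of $\Fun(N(\Delta)^{op}, \cC)$ consisting of Segal objects $C^{\bullet}$ with $C^{[0]} = c$ and morphisms that restrict to the identity on the $0$-th level, as a (non-full) subcategory of $\Fun_{\inert}(N(\Delta)^{op}, \cC)$, and then to compose with the functor produced by Proposition \ref{prop: Fun_Cart}.

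First I would verify that every such Segal object $C^{\bullet}$ satisfies the Cartesianness hypothesis of Theorem \ref{thm: algebra objs}(i). For an active map $f : [n] \to [m]$, the Segal condition yields identifications
\[
C^{[m]} \cong \underbrace{C^{[1]} \times_{c} \cdots \times_{c} C^{[1]}}_{m}, \qquad C^{[f(i),f(i+1)]} \cong \underbrace{C^{[1]} \times_{c} \cdots \times_{c} C^{[1]}}_{f(i+1)-f(i)},
\]
and similarly for $C^{[n]}$ and $C^{[i,i+1]} = C^{[1]}$. Substituting these into the square (\ref{thm: diagram Cartesian}) and computing the pullback of the right and bottom maps reassembles the iterated fiber product for $C^{[m]}$ (the active map $f$ partitioning the $m$ copies of $C^{[1]}$ into consecutive blocks of lengths $f(i+1)-f(i)$), showing the square is Cartesian.

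Second, I would verify that every morphism $W^{\bullet} \to D^{\bullet}$ in $\Seg(c)$ automatically satisfies the inert condition (\ref{lemma: diagram Cart}). Invoking the Segal condition for $W^\bullet$ and $D^\bullet$ and using that $W^{[0]} = D^{[0]} = c$ with the morphism restricting to $\mathrm{id}_c$, the pullback of $\prod_v W^{[v,v+1]}$ and $D^{[k]}$ over $\prod_v D^{[v,v+1]}$ identifies with the tuples $(w_{1},\dots,w_{k}) \in (W^{[1]})^{k}$ whose consecutive endpoints match in $c$ (the matching constraint coming nominally from $D^{[k]}$, but equivalent to the constraint in $W^\bullet$ since $W^{[0]} = D^{[0]} = c$), i.e.\ with $W^{[k]}$.

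Combining these observations, the inclusion $\Seg(c) \hookrightarrow \Fun_{\inert}(N(\Delta)^{op}, \cC)$ is well-defined, and composing with the functor supplied by Proposition \ref{prop: Fun_Cart} produces the desired $\Seg(c) \to \Assoc\Alg(\Corr(\cC))$. The only point demanding care is that these fiber-product manipulations are to be interpreted as identifications of iterated homotopy pullbacks in the $\infty$-category $\cC$, which is routine.
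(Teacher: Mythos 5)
Your argument matches the paper's intended route: realize $\Seg(c)$ as a (non-full) subcategory of $\Fun_{\inert}(N(\Delta)^{op},\cC)$ and compose with the functor of Proposition \ref{prop: Fun_Cart}. The two verifications you supply---that every Segal object satisfies the condition in Theorem \ref{thm: algebra objs}(i) (this is Corollary \ref{prop: Segal algebra}), and that a morphism of Segal objects restricting to $\mathrm{id}_c$ on level $0$ makes the squares (\ref{lemma: diagram Cart}) Cartesian---are exactly the details the paper leaves implicit when it calls the statement an ``immediate corollary.''
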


Let $\Fun_{\act}(N(\Delta)^{op}, \cC)$ (resp. $\Fun_{\act}(N(\Fin_*), \cC)$) be the 1-full subcategory of $\Fun(N(\Delta)^{op},\cC)$ (resp. $\Fun(N(\Fin_*), \cC)$) consisting of the simplicial  ($\Fin_*$-) objects in $\cC$ that satisfy the condition  in Theorem \ref{thm: algebra objs} (i) (resp. (ii)), whose morphisms from $W^\bullet$ to $C^\bullet$ consist of those satisfying (\ref{lemma: diagram Cart 2}) (resp. (\ref{lemma: diagram Cart 2 comm})). 
\begin{prop}\label{prop: Fun_active, Alg}
There are natural functors 
\begin{align}
\label{eq: prop Fun_act, assoc}&\Fun_{\act}(N(\Delta)^{op}, \cC)^{op}\rightarrow \Assoc\Alg(\Corr(\cC^\times)),\\
\label{eq: prop Fun_act, comm}&\Fun_{\act}(N(\Fin_*), \cC)^{op}\rightarrow \Comm\Alg(\Corr(\cC^\times)),
\end{align}
that send every object $C^\bullet$ on the left-hand-side to the associative/commutative algebra object assigned in Theorem \ref{thm: algebra objs}. 
\end{prop}
\begin{proof}
The idea is similar to that of Proposition \ref{prop: Fun_Cart}.  
In the commutative case, we define a functor 
\begin{align*}
&I_{\Delta^\bullet}\underset{N(\Delta)^{op}}{\times}\cT^\Comm\rightarrow (\Delta^\bullet)^{op}\times N(\Fin_*)\\
&([n]; \vartheta^{(m)}; (s,(i,j),u))\mapsto ((\vartheta^{(m)})^{op}(j); s_{i,j}^{-1}(u)\sqcup\{*\}),
\end{align*} 
where $(\vartheta^{(m)}; (s,(i,j),u))$ is in the fiber of $I_{\Delta^\bullet}\underset{N(\Delta)^{op}}{\times}\cT^\Comm$ over $[n]$ (cf. (\ref{eq: I_Delta to Delta})). This induces a functor
\begin{align*}
\Seq_\bullet(\Fun(N(\Fin_*),\cC)^{op})\rightarrow \Maps_{I^\Comm}(I^\natural_{\Delta^\bullet\times N(\Fin_*)}\underset{I^\Comm}{\times}(\cT^\Comm)^\natural, (\cC\times I^\Comm)^\natural)
\end{align*}
 for any $\cC$, and hence a morphism in $\Spc^{\Delta^{op}}$:
 \begin{align}\label{eq: proof clubsuit}
 \Seq_\bullet(\Fun(N(\Fin_*),\cC)^{op})\rightarrow\Maps_{(\OneCat)^{\Delta^{op}}_{/\Seq_\clubsuit(N(\Fin_*)) }}(\Seq_\clubsuit(\Delta^\bullet\times N(\Fin_*)), f_{\Delta^\clubsuit}^{\cC,\Comm}).
\end{align}
It is straightforward to see that the restriction of the morphism (\ref{eq: proof clubsuit}) to the simplical subspace $\Seq_\bullet(\Fun_{\act}(N(\Fin_*),\cC)^{op})$ induces the desired functor (\ref{eq: prop Fun_act, comm}).

The proof for the associative case proceeds in the same vein. 
\end{proof}

\begin{prop}\label{prop: K times L CAlg}
Let $K,L$ be two $\infty$-categories. Assume $F: K\times L^{op}\rightarrow \Fun(N(\Fin_*), \cC)$ is a functor satisfying
\begin{itemize}
\item[(i)] For any $\ell\in L$ (resp.  $k\in K$), $F|_{K\times\{\ell\}}$ (resp. $F|_{\{k\}\times L^{op}}$) factors through the 1-full subcategory $\Fun_{\inert}(N(\Fin_*), \cC)$ (resp. $\Fun_{\act}(N(\Fin_*), \cC)$);
\item[(ii)] The induced functor $ev_{\lng 1\rng}\circ F: K\times L^{op}\rightarrow \cC$ satisfies that for any square $\alpha: [1]\times [1]^{op}\rightarrow K\times L^{op}$ determined by a functor $\alpha\in \Fun([1],K)\times \Fun([1]^{op},L^{op})$, the diagram $ev_{\lng 1\rng}\circ F\circ \alpha: [1]\times [1]^{op}\rightarrow \cC$ is a Cartesian square,
\end{itemize}
then $F$ naturally determines a functor $F_{\CAlg}: K\times L\rightarrow \CAlg(\Corr(\cC^\times))$. 
\end{prop}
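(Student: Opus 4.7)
\emph{Plan.} The strategy is to combine the inert functoriality for $K$-morphisms provided by Proposition \ref{prop: Fun_Cart} with the active functoriality for $L$-morphisms provided by Proposition \ref{prop: Fun_active, Alg}, using condition (ii) as the Cartesian glue that fuses them into a single functor out of $K\times L$.

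On objects, set $F_{\CAlg}(k,\ell)=F(k,\ell)$, viewed as a commutative algebra object in $\Corr(\cC^\times)$ via Theorem \ref{thm: algebra objs}(ii). For a $1$-morphism $(f,g):(k_1,\ell_1)\to(k_2,\ell_2)$ in $K\times L$, pass through the intermediate object $F(k_1,\ell_2)$: condition (i) makes $F(f,\ell_2):F(k_1,\ell_2)\to F(k_2,\ell_2)$ a morphism in $\Fun_{\inert}(N(\Fin_*),\cC)$, and $F(k_1,g^{\op}):F(k_1,\ell_2)\to F(k_1,\ell_1)$ a morphism in $\Fun_{\act}(N(\Fin_*),\cC)$. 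Assemble them into the correspondence of $\Fin_*$-objects
\begin{equation*}
F(k_2,\ell_2)\xleftarrow{F(f,\ell_2)} F(k_1,\ell_2)\xrightarrow{F(k_1,g^{\op})} F(k_1,\ell_1),
\end{equation*}
whose legs satisfy the Cartesian conditions (\ref{lemma: diagram Cart comm}) on the left and (\ref{lemma: diagram Cart 2 comm}) on the right. By Theorem \ref{thm: right-lax} this defines a homomorphism $F(k_1,\ell_1)\to F(k_2,\ell_2)$ in $\CAlg(\Corr(\cC^\times))$.

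Compatibility with composition is precisely where condition (ii) is used. For a composable pair $(k_1,\ell_1)\to(k_2,\ell_2)\to(k_3,\ell_3)$, Remark \ref{alg composition} identifies the composite with the correspondence
\begin{equation*}
F(k_3,\ell_3)\leftarrow F(k_2,\ell_3)\underset{F(k_2,\ell_2)}{\times}F(k_1,\ell_2)\to F(k_1,\ell_1),
\end{equation*}
which must match the direct correspondence $F(k_3,\ell_3)\leftarrow F(k_1,\ell_3)\to F(k_1,\ell_1)$ associated to the composed morphism. At level $\lng 1\rng$, the required identification $F(k_2,\ell_3)\times_{F(k_2,\ell_2)}F(k_1,\ell_2)\simeq F(k_1,\ell_3)$ is exactly condition (ii) applied to the mixed square in $K\times L^{op}$ determined by $f_1:k_1\to k_2$ and $g_2^{\op}:\ell_3\to\ell_2$. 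To upgrade to every level $\lng n\rng$, invoke condition (i): the inertness of the horizontal maps gives, via (\ref{lemma: diagram Cart comm}), Cartesian squares that build a prism whose ``bottom'' face $\prod_{j\in\lng n\rng^\circ}F(k_?,\ell_?)^{\{j,*\}}$ is a product of level-$\lng 1\rng$ Cartesian squares from (ii), hence is Cartesian, and whose vertical faces are Cartesian by inertness; it follows that the ``top'' $\lng n\rng$-face is Cartesian as well.

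To promote this object and $1$-morphism data to a full functor of $\infty$-categories with all higher coherences, imitate the simplicial arguments in the proofs of Propositions \ref{prop: Fun_Cart} and \ref{prop: Fun_active, Alg} in the bivariant setting. Concretely, I would construct a functor
\begin{equation*}
I^{\Comm}_{\Delta^\bullet\times\Delta^\bullet}\underset{N(\Delta)^{op}}{\times}\cT^{\Comm}\longrightarrow \Delta^\bullet\times\Delta^\bullet\times N(\Fin_*)
\end{equation*}
by hybridizing formula (\ref{eq: I_Delta to Delta}) from Proposition \ref{prop: Fun_Cart} (applied forward on the $K$-factor) with its opposite-direction analogue from the proof of Proposition \ref{prop: Fun_active, Alg} (applied on the $L$-factor), sending $(\vartheta^{(m)},\widetilde{\vartheta}^{(m')};s,(i,j),u)$ to $(\vartheta^{(m)}(i),(\widetilde{\vartheta}^{(m')})^{\op}(j);s_{i,j}^{-1}(u)\sqcup\{*\})$. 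Post-composing with $F$ yields a morphism of simplicial spaces $\Seq_\bullet(K\times L)\to\Maps_{(\OneCat)^{\Delta^{op}}_{/\Seq_\clubsuit(N(\Fin_*))}}(\Seq_\clubsuit(\Delta^\bullet\times\Delta^\bullet\times N(\Fin_*)),f^{\cC,\Comm}_{\Delta^\clubsuit})$. The main technical obstacle is verifying that the image of any $n$-simplex of $K\times L$ lands in $\Fun''$, i.e.\ satisfies condition (Obj) from Subsection \ref{subsec: monoidal Corr}; this unpacks to the Cartesianness of every mixed square in $K\times L^{op}$ appearing inside the $n$-simplex data, which is exactly what condition (ii) supplies (together with the inert prism argument of the previous paragraph to reach higher levels $\lng n\rng$).
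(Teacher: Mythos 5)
Your proposal is correct and takes essentially the same approach as the paper. The construction in your final paragraph—the functor $I_K\underset{N(\Delta)^{op}}{\times}I_L\underset{N(\Delta)^{op}}{\times}\cT^{\Comm}\to K\times L^{op}\times N(\Fin_*)$ sending $(\vartheta^K,\vartheta^L;s,(i,j),u)\mapsto(\vartheta^K(i),(\vartheta^L)^{op}(j);s^{-1}_{i,j}(u)\sqcup\{*\})$, post-composed with $F$, with condition (i) used to reduce the (Obj)-Cartesianness check for general active $\lng m\rng\to\lng n\rng$ to the $\lng 1\rng$-level square supplied by condition (ii)—is exactly the paper's proof; your first three paragraphs are informal motivation that correctly unpack what the construction does on objects, $1$-morphisms and compositions but are not strictly needed.
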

\begin{proof}
Let $I_{K\times L\times N(\Fin_*)}$ be the coCartesian fibration over $N(\Delta)^{op}$ from the Grotherdieck construction of 
\begin{align*}
&\Seq_\bullet(K\times L\times N(\Fin_*)): N(\Delta)^{op}\rightarrow \OneCat^\ord.
\end{align*}
Define $I_K$ and $I_L$ in the same way, and we have 
\begin{align*}
I_{K\times L\times N(\Fin_*)}\simeq I_K\underset{N(\Delta)^{op}}{\times}I_L\underset{N(\Delta)^{op}}{\times}I^\Comm.
\end{align*}
We define a functor 
\begin{align*}
&I_{K\times L\times N(\Fin_*)}\underset{I^\Comm}{\times} \cT^\Comm\simeq I_K\underset{N(\Delta)^{op}}{\times}I_L\underset{N(\Delta)^{op}}{\times}\cT^\Comm\longrightarrow K\times L^{op}\times N(\Fin_*)\\
&([n]; \vartheta^K,\vartheta^L;(s,(i,j),u))\mapsto (\vartheta^K(i),(\vartheta^L)^{op}(j);s^{-1}_{i,j}(u)\sqcup\{*\}). 
\end{align*}
The composition of the above with $F$ determines a morphism in $(\OneCat)^{\Delta^{op}}_{/\Seq_\bullet(N(\Fin_*))}$
\begin{align}\label{eq: prop proof K, L}
\Seq_\bullet(K\times L\times N(\Fin_*))\rightarrow f_{\Delta^\bullet}^{\cC,\Comm}.
\end{align}
To make (\ref{eq: prop proof K, L}) into a functor $K\times L\times N(\Fin_*)\rightarrow \Corr(\cC)^{\otimes,\Fin_*}$ over $N(\Fin_*)$, we need for every square $\alpha\in \Fun([1], K)\times \Fun([1]^{op},L^{op})\rightarrow \Fun([1]\times [1]^{op},K\times L^{op})$ in $K\times L^{op}$ and every active morphism $f: \lng m\rng\rightarrow \lng n\rng$ in $N(\Fin_*)$, the following diagram 
\begin{align*}
\xymatrix{(F\circ\alpha(0,1))^{\lng m\rng}\ar[r] \ar[d]&\prod\limits_{j\in\lng n\rng^\circ}(F\circ\alpha(0,0))^{(f^{-1}(j)\sqcup\{*\})}\ar[d]\\
(F\circ\alpha(1,1))^{\lng n\rng}\ar[r] &\prod\limits_{j\in\lng n\rng^\circ}(F\circ\alpha(1,0))^{\{j,*\}}
}
\end{align*}
is Cartesian in $\cC$. Assumption (i) in the proposition reduces the above family of Cartesian diagrams to the following Cartesian diagram 
\begin{align*}
\xymatrix{(F\circ\alpha(0,1))^{\lng 1\rng}\ar[r] \ar[d]&(F\circ\alpha(0,0))^{\lng 1\rng}\ar[d]\\
(F\circ\alpha(1,1))^{\lng 1\rng}\ar[r] &(F\circ\alpha(1,0))^{\lng 1\rng}
}, 
\end{align*}
for each $\alpha$.  This is exactly condition (ii) in the proposition and the above determines the desired functor $F_\CAlg: K\times L\rightarrow\CAlg(\Corr(\cC^\times))$.  
\end{proof}

\subsection{Module objects of an algebra object $C^\bullet$ in $\Corr(\cC^\times)$}

Let $e: N(\Delta)^{op}\rightarrow N(\Delta)^{op}$ be the natural functor taking $[n]$ to $[n]\star[0]=[n+1]$, and let $\alpha: e\rightarrow id$ be the obvious natural transformation from $e$ to $id$. These notions have been used in the path space construction for a simplicial space (cf. \cite{Seg2}).  
\begin{definition}\label{def: left module}
For any monoidal $\infty$-category $\cC^{\otimes,\Delta^{op}}$ over $N(\Delta)^{op}$, and an algebra object $C$ defined by a section $s\in \Hom_{N(\Delta)^{op}}(N(\Delta)^{op}, \cC^\otimes)$, a \emph{left $C$-module} $M$ is given by a functor 
\begin{align*}
M: \Delta^1\times N(\Delta)^{op}\rightarrow \cC^{\otimes,\Delta^{op}}
\end{align*}
over $N(\Delta)^{op}$, where the projection $\Delta^1\times N(\Delta)^{op}\rightarrow N(\Delta^{op})$ is given by the natural transformation $\theta: e\rightarrow id$, satisfying that 
\begin{itemize}
\item[(i)] it sends every edge $\Delta^1\times \{[k]\}$ to a coCartesian edge in $\cC^{\otimes,\Delta^{op}}$;
\item[(ii)] the restriction of $M$ to $\{1\}\times N(\Delta)^{op}$ is equivalent to the algebra object $C$;
\item[(iii)] For any $[n]\in N(\Delta)^{op}$, the inclusion $\iota_{(0,[j,n])}: (0,[j,n])\hookrightarrow (0,[n])$ in $N(\Delta)$
is sent to a coCartesian morphism in $\cC^{\otimes,\Delta^{op}}$ over $\alpha(\iota_{(0,[j,n])}^{op }):[0,n+1]\rightarrow [j,n+1]$ in $N(\Delta)^{op}$. 
\end{itemize}
\end{definition}

\begin{definition}
For any symmetric monoidal $\infty$-category $\cC^{\otimes, \Fin_*}$ over $N(\Fin_*)$, and a commutative algebra object $C$ given by a section $s: N(\Fin_*)\rightarrow \cC^{\otimes, \Fin_*}$ over $N(\Fin_*)$, a $C$-module $M$ is defined to be a functor 
\begin{align*}
M: N(\Fin_*)_{\langle 1\rangle/}\rightarrow \cC^{\otimes, \Fin_*}
\end{align*}
over $N(\Fin_*)$ satisfying that 
\begin{itemize}
\item[(i)] its restriction to the null morphisms from $\langle 1\rangle$ (as objects in $N(\Fin_*)_{\lng 1\rng/}$) gives $C$;
\item[(ii)] it sends every inert morphism to an inert morphism (i.e. coCartesian morphism) in $\cC^{\otimes, \Fin_*}$. 
\end{itemize}
\end{definition}

Let $I_{\Delta^1\times N(\Delta)^{op}}$ be the 1-category from the Grothendieck construction of the functor $\Seq_\bullet(\Delta^1\times N(\Delta)^{op}): N(\Delta)^{op}\rightarrow \Sets$. Let $\pi
_\theta:I_{\Delta^1\times N(\Delta)^{op}}\rightarrow I$ be the functor corresponding to $\theta: \Delta^1\times N(\Delta)^{op}\rightarrow N(\Delta)^{op}$ which is a coCartesian fibration by the same argument as in Lemma \ref{lemma: pi Gray coCart}. 

Given a morphism of simplicial objects $M^\bullet\rightarrow C^\bullet$ in $\cC^\times$, we are aiming to construct a functor 
\begin{align}\label{eq: F_{M,C}}
F_{M,C}: I^\natural_{\Delta^1\times N(\Delta)^{op}}\underset{I}{\times} \cT^\natural\rightarrow (\cC\times I)^\natural
\end{align}
over $I$. We will represent every object in $I_{\Delta^1\times N(\Delta^{op})}$ by $(\alpha, \beta;\tau)$
\begin{align*}
\xymatrix{
[n_0]_0& \cdots \ar[l]_{\alpha_{1,0}}&[n_{\mu-1}]_0\ar[l]_{\alpha_{\mu-1,\mu-2}}&[m_0]_{1}\ar[l]_{\tau}&\ar[l]_{\beta_{1,0}}\cdots &[m_{k-\mu}]_1\ar[l]_{\beta_{k-\mu,k-\mu-1}},
}\end{align*}
where as before, the subscripts indicate the vertices in $\Delta^1$ and  the arrows indicate the morphisms in $(\Delta^1)^{op}\times N(\Delta)$. The image of $(\alpha, \beta;\tau)$ in $I$ is 
\begin{align*}
\xymatrix{
[n_0]\star [0]& \cdots \ar[l]_{\theta(\alpha_{1,0})}&[n_{\mu-1}]\star[0]\ar[l]_{\theta(\alpha_{\mu-1,\mu-2})}&[m_0]\ar[l]_{\theta([n_{\mu-1}])\circ\tau}&\ar[l]_{\beta_{1,0}}\cdots &[m_{k-\mu}]\ar[l]_{\beta_{k-\mu,k-\mu-1}},
}
\end{align*}

Now we define a functor $F_{\theta}: I_{\Delta^1\times N(\Delta)^{op}}\underset{I}{\times} \cT\rightarrow \Delta^1\times N(\Delta)^{op}$ in a similar way as we defined for $F_{\cT,\Delta^{op}}$ in the proof of Theorem \ref{thm: algebra objs}. First, we have the composition $\widetilde{F}_\theta: I_{\Delta^1\times N(\Delta)^{op}}\underset{I}{\times} \cT\rightarrow \cT\overset{F_{\cT,N(\Delta)^{op}}}{\rightarrow} N(\Delta)^{op}$.  
On the object level, 
\begin{itemize}
\item[(i)] for any object $(\alpha,\beta,\tau; (\mu+i,\mu+j), [u,u+1])\in I^\natural_{\Delta^1\times N(\Delta)^{op}}\underset{I}{\times} \cT^\natural$, $\widetilde{F}_\theta$ sends it to $[\beta_{ji}(u),\beta_{ji}(u+1)]\in N(\Delta)^{op}$;
\item[(ii)] for any object $(\alpha,\beta,\tau; (i,j), [u,u+1])$ with $i,j\leq \mu-1$, if $u<n_j$, then $F_\theta$ sends it to $[\alpha_{ji}(u), \alpha_{ji}(u+1)]$. If $u=n_j$, then $\widetilde{F}_\theta$ sends it to $[\theta(\alpha_{ji})(n_j), n_i+1]$;
\item[(iii)] for any object $(\alpha,\beta,\tau; (i,\mu+j), [u,u+1])$ with $i\leq \mu-1$, $\widetilde{F}_\theta$ sends it to
$[\alpha_{\mu-1,i}\tau\beta_{j,0}(u), \alpha_{\mu-1,i}\tau\beta_{j,0}(u+1)]$. 
\end{itemize}
Now we do the following modification of $\widetilde{F}_\theta$ on the object level to the above as follows: 
\begin{itemize}
\item[(i)] $[\beta_{ji}(u),\beta_{ji}(u+1)]\in N(\Delta)^{op}\rightsquigarrow(1, [\beta_{ji}(u),\beta_{ji}(u+1)])\in \Delta^1\times N(\Delta)^{op}$\\
\item[(ii)] If $u<n_j$, $[\alpha_{ji}(u), \alpha_{ji}(u+1)]\in N(\Delta)^{op}\rightsquigarrow (1, [\alpha_{ji}(u), \alpha_{ji}(u+1)])\in \Delta^1\times N(\Delta)^{op}$.
If $u=n_j$, then  $[\theta(\alpha_{ji})(n_j), n_i+1]\in N(\Delta)^{op}\rightsquigarrow (0, [\theta(\alpha_{ji})(n_j), n_i])=(0, [\alpha_{ji}(n_j), n_i])\in \Delta^1\times N(\Delta)^{op}$\\
\item[(iii)] $[\alpha_{\mu-1,i}\tau\beta_{j,0}(u), \alpha_{\mu-1,i}\tau\beta_{j,0}(u+1)]\in N(\Delta)^{op}\rightsquigarrow (1, [\alpha_{\mu-1,i}\tau\beta_{j,0}(u), \alpha_{\mu-1,i}\tau\beta_{j,0}(u+1)])\in \Delta^1\times N(\Delta)^{op}$. 
\end{itemize}
Since $\theta(\alpha_{ji})$ maps $n_{j}+1$ to $n_i+1$ and $n_j$ to a number less than or equal to $n_i$, it is easy to see that the above modification defines a functor $ I_{\Delta^1\times N(\Delta)^{op}}\underset{I}{\times} \cT\rightarrow \Delta^1\times N(\Delta)^{op}$ and that is $F_\theta$. 

Now viewing $M^\bullet\rightarrow C^\bullet$ as a functor $MC: \Delta^1\times N(\Delta)^{op}\rightarrow \cC$, the composition $MC\circ F_\theta$ gives the functor $F_{M,C}$ (\ref{eq: F_{M,C}}). 

\begin{thm}\label{thm: left module}
Given a morphism of simplicial objects $M^\bullet \rightarrow C^\bullet$ in $\cC^\times$, if $C^\bullet$ represents an algebra object in $\Corr(\cC)$, i.e. it satisfies the conditions in Theorem \ref{thm: algebra objs} (i), and $M^\bullet$ satisfies that for any morphism $f: [m]\rightarrow [n]$ in $N(\Delta)$, we have the following diagram 
 \begin{align}\label{diagram: thm left module}
\xymatrix{M^{[f(0), n]}\ar[d]\ar[r]&\prod\limits_{[u,u+1]\subset [0,m])}C^{[f(u), f(u+1)]}\times M^{[f(m), n]}\ar[d]\\
M^{[0, m]}\ar[r] &\prod\limits_{[u,u+1]\subset [0,m]}C^{[u,u+1]}\times M^{\{m\}}
}
\end{align}
is Cartesian in $\cC$, then the data naturally determine a left module of $C$ in $\Corr(\cC)$. Moreover, the condition on $M^\bullet$ is both necessary and sufficient. 
\end{thm}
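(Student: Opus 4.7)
The plan is to run the argument of Theorem \ref{thm: algebra objs} with the extra $\Delta^1$-direction that encodes the module structure. The functor $F_{M,C}$ constructed just before the theorem statement lies in $\Maps_I(I^\natural_{\Delta^1\times N(\Delta)^{op}}\underset{I}{\times}\cT^\natural,(\cC\times I)^\natural)$, and the chain of equivalences behind (\ref{eq: Maps(T,C)}), which rests on Lemma \ref{lemma: Z^X}, identifies this with a morphism
\begin{equation*}
\Seq_\bullet(\Delta^1\times N(\Delta)^{op})\longrightarrow f^\cC_{\Delta^\bullet}
\end{equation*}
over $\Seq_\bullet(N(\Delta)^{op})$. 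So the task is to show that this morphism lifts to a functor $\Delta^1\times N(\Delta)^{op}\to\bCorr(\cC)^{\otimes,\Delta^{op}}$ over $N(\Delta)^{op}$ which satisfies (i)--(iii) of Definition~\ref{def: left module}; equivalently, I need to check condition (Obj) of Subsection \ref{subsec: monoidal Corr} for each object of $\Seq_k(\Delta^1\times N(\Delta)^{op})$, together with the coCartesianness requirements.

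For a fixed $(\alpha,\beta;\tau)\in\Seq_k(\Delta^1\times N(\Delta)^{op})$, the associated functor $F_{([k],\alpha,\beta;\tau)}:([k]\times[k]^{op})^{\geq\dgnl}\times_{N(\Delta)}T\to\cC$ decomposes, according to the object-level formula for $F_\theta$, into three regions: the upper-left block ($i,j\leq\mu-1$, $u<n_j$) together with the column $u=n_j$ valued in $M^\bullet$, the lower-right block ($i,j\geq\mu$) valued purely in $C^\bullet$, and the mixed block ($i\leq\mu-1$, $j\geq\mu$) valued in $C^\bullet$. Condition (Obj) amounts to the Cartesianness of a family of squares in $\cC$. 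The squares contained entirely in the $C^\bullet$-region are instances of (\ref{thm: diagram Cartesian}) and are Cartesian by the hypothesis that $C^\bullet$ represents an algebra object. The squares that straddle the distinguished column $u=n_j$ are precisely the diagrams (\ref{diagram: thm left module}) with $f$ being an appropriate composite of $\alpha$'s (and, in the mixed block, also of $\tau$ and the $\beta$'s), and therefore are Cartesian by the hypothesis on $M^\bullet$. The squares in the mixed region that do not involve $u=n_j$ reduce to products of $C^\bullet$-squares and a single pulled-back $M^\bullet$-edge, so they are Cartesian as well. Condition (Mor) is automatic because the base is a $1$-category.

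I then verify conditions (i)--(iii) of Definition \ref{def: left module}. Condition (ii) holds by construction: restricting $F_{M,C}$ to $\{1\}\times N(\Delta)^{op}$ collapses $\tau$ and the $\mu$-labelling away, and the formula for $F_\theta$ recovers $F_{C^\bullet}$ from the proof of Theorem \ref{thm: algebra objs}. Condition (i) follows from the specific modification built into $F_\theta$ at the column $u=n_j$, which turns the $\Delta^1$-edge over a fixed $[k]$ into a correspondence whose horizontal and vertical arrows in $\cC$ are isomorphisms; by the argument for 2-coCartesianness in the proof of Proposition \ref{prop: 2-coCart monoidal}, this edge is coCartesian in $\bCorr(\cC)^{\otimes,\Delta^{op}}$. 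Condition (iii) reduces, as in the last paragraph of the proof of Theorem \ref{thm: algebra objs}, to the observation that the inert inclusions $\iota_{(0,[j,n])}$ get sent to correspondences of intervals of length $\leq 1$, hence to isomorphisms, hence to $2$-coCartesian edges.

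For necessity, I reverse the reduction performed in the verification of (Obj): the distinguished family of Cartesian squares required by (Obj) in the mixed region is precisely the family (\ref{diagram: thm left module}) as $f:[m]\to[n]$ ranges over all maps of $N(\Delta)$, so a left module lift of $F_{M,C}$ forces every such diagram to be Cartesian in $\cC$. The main obstacle I expect is the bookkeeping in the intermediate paragraph, specifically matching up the various composites $\alpha_{\mu-1,i}\tau\beta_{j,0}$ and the modified object-level assignments of $F_\theta$ at $u=n_j$ with the general Cartesian diagram (\ref{diagram: thm left module}); once one has a clean indexing convention, the rest is a direct transcription of the argument of Theorem \ref{thm: algebra objs}.
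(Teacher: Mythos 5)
Your proposal follows the paper's proof: express $F_{M,C}$ as a map in the relevant $\Maps_I$, identify the lifting condition with (Obj) together with Definition~\ref{def: left module}(i)--(iii), reduce (Obj) to the family (\ref{diagram: thm left module}) coming from the distinguished column $u=n_j$ in the upper-left block, and verify (i)--(iii) from the $2$-coCartesian and Cartesian-morphism characterizations. The one correction I would make concerns your treatment of the mixed block ($i\leq\mu-1$, $j\geq\mu$): it is \emph{entirely} $C^\bullet$-valued, with no $M^\bullet$-edges appearing. In the $I$-image of $(\alpha,\beta;\tau)$ the middle arrow is $\theta([n_{\mu-1}])\circ\tau$, whose image in $[n_{\mu-1}+1]$ misses the top element $n_{\mu-1}+1$; consequently, for $j_1\geq\mu$ and $j_2\leq\mu-1$, the map $s_{j_1,j_2}$ lands in $[0,n_{j_2}]\subset[n_{j_2}+1]$, so a square with a corner in the mixed block never reaches the $M$-column $v=n_{j_2}$ and is Cartesian for the $C^\bullet$-algebra hypothesis alone. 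In particular the map $f$ in (\ref{diagram: thm left module}) is always a composite of $\alpha$'s, never involving $\tau$ or the $\beta$'s, and there is no ``pulled-back $M^\bullet$-edge'' anywhere in the mixed region. These are bookkeeping fixes only; your conclusion and the overall method coincide with the paper's.
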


\begin{proof}
Assuming $C^\bullet$ gives an algebra object in $\Corr(\cC)$, to make $F_{M,C}$ a functor from $\Delta^1\times N(\Delta)^{op}$ to $\Corr(\cC)^{\otimes, \Delta^{op}}$, we need for every object $(\alpha,\beta;\tau)$ in $I_{\Delta^1\times N(\Delta)^{op}}$ and 
any $i\leq j\leq \mu-1$, the diagram 
\begin{align*}
\xymatrix{M^{[\alpha_{j,i-1}(n_j), n_{i-1}]}\ar[d]\ar[r]&\prod\limits_{[u,u+1]\subset [\alpha_{ji}(n_j),n_i])}C^{[\alpha_{i,i-1}(u),\alpha_{i,i-1}(u+1)]}\times M^{[\alpha_{i,i-1}(n_i), n_{i-1}]}\ar[d]\\
M^{[\alpha_{j,i}(n_j), n_{i}]}\ar[r] &\prod\limits_{[u,u+1]\subset [\alpha_{ji}(n_j),n_i])}C^{[u,u+1]}\times M^{[n_{i},n_i]}
}
\end{align*}
is Cartesian in $\cC$. But this is exactly the condition that (\ref{diagram: thm left module}) is Cartesian in $\cC$. By construction, $F_{M,C}$ certainly satisfies (ii) in Definition \ref{def: left module}. Condition (i) and (iii) can be easily checked by the characterization of a 2-coCartesian morphism in $\Corr(\cC)^{\otimes,\Delta^{op}}$ at the beginning of the proof of Proposition \ref{prop: 2-coCart monoidal} and the characterization of a Cartesian morphism in $\cC^{\times, \Delta}$ in Proposition \ref{prop: Cartesian equiv}.
\end{proof}

In a similar fashion, for the commutative case, introduce $\Fin_{*,\dagger}$ to be the ordinary 1-category consisting of $\langle n\rangle_\dagger:= \langle n\rangle\sqcup\{\dagger\}$, and morphisms $\langle n\rangle_\dagger\rightarrow \langle m\rangle_\dagger$ being maps of sets that send $*$ to $*$ and $\dagger$ to $\dagger$. Note that there is a natural functor $\pi_\dagger: N(\Fin_{*,\dagger})\rightarrow N(\Fin_*)$ that sends $\langle n\rangle_\dagger$ to $\langle n\rangle$, and sends $f: \langle n\rangle_\dagger\rightarrow \langle m\rangle_\dagger$ to $\widetilde{f}: \langle n\rangle\rightarrow \langle m\rangle$ where $\widetilde{f}$ sends $f^{-1}(\{\dagger,*\})\backslash \{\dagger\}$ to $*$ and others in the same way as $f$. Let $I_{\pi_\dagger}$ be the Grothendieck construction of $\pi_\dagger: \Delta^1\rightarrow \OneCat^\ord$. We will denote each object in $I_{\pi_\dagger}$ by $(0,\langle n\rangle_\dagger)$ or $(1, \langle n\rangle)$ if it projects to 0 or 1 in $\Delta^1$ respectively. Note that there is a natural equivalence $N(\Fin_*)_{\langle 1\rangle/}\rightarrow I_{\pi_\dagger}$ that takes the object $b: \langle 1\rangle\rightarrow \langle m\rangle$ to $(1,\langle m\rangle)$ if $b$ is null and to $(0, \langle m-1\rangle_\dagger)$ otherwise. 

 We will define a natural functor 
$
 F_\theta^\Comm: I_{N(\Fin_*)_{\langle 1\rangle/}}\underset{I^\Comm}{\times} \cT^\Comm\rightarrow 
 I_{\pi_\dagger}$ as follows, where the functor $N(\Fin_{*})_{\langle 1\rangle/}\rightarrow N(\Fin_*)$ is the natural projection. First, we have the composition functor 
 \begin{align}\label{eq: tilde F_theta}
 \widetilde{F}_\theta^\Comm: I_{N(\Fin_*)_{\langle 1\rangle/}}\underset{I^\Comm}{\times} \cT^\Comm\rightarrow \cT^\Comm\overset{F_{\cT^\Comm,\Fin_*}}{\longrightarrow}  N(\Fin_*).
 \end{align}
 We denote every object in $I_{N(\Fin_*)_{\langle 1\rangle/}}$ by $(\alpha,\beta;\tau)$ represented by 
\begin{align*}
\xymatrix{
\langle n_0\rangle_0\ar[r]^{\alpha_{01}}& \cdots \ar[r]^{\alpha_{\mu-2,\mu-1}}&\langle n_{\mu-1}\rangle_0\ar[r]^{\tau}&\langle m_0\rangle_{1}\ar[r]^{\beta_{01}}&\cdots \ar[r]^{\beta_{k-\mu-1,k-\mu}}&\langle m_{k-\mu}\rangle_1,\\
\langle1\rangle\ar[u]^{\gamma_0}\ar[urrr]&&&&
}
\end{align*}
where the morphism $\langle 1\rangle\rightarrow \langle \bullet\rangle_\epsilon$ is null if and only if $\epsilon=1$. We will abuse notation and denote the image of $(\alpha,\beta,\tau)$ in $I^{\Comm}$ under the obvious projection by the same notation. On the object level,
\begin{itemize}
\item[(i)] For any object $(\alpha,\beta,\tau; (\mu+i,\mu+j), t)\in I_{N(\Fin_*)_{\langle 1\rangle/}}\underset{I^\Comm}{\times} \cT^\Comm$, for $0\leq i\leq j\leq k-\mu$, $\widetilde{F}_\theta^\Comm$ sends it to $\beta_{ij}^{-1}(t)\sqcup\{*\}$;

\item[(ii)] For any object $(\alpha,\beta,\tau; (i,j), t)$, $0\leq i\leq j\leq \mu-1$, $\widetilde{F}_\theta^\Comm$ sends it to $\alpha_{ij}^{-1}(t)\sqcup\{*\}$;

\item[(iii)] For any object $(\alpha,\beta,\tau; (i,\mu+j), t)$, for $0\leq i\leq \mu-1$, $\widetilde{F}_\theta^\Comm$ sends it to $(\beta_{0j}\tau\alpha_{i,\mu-1})^{-1}(t)\sqcup\{*\}$;
\end{itemize}
We will modify $\widetilde{F}_\theta^\Comm$ on the object level in each of the above cases 
\begin{itemize}
\item[(i)] $\beta_{ij}^{-1}(t)\sqcup\{*\}\rightsquigarrow (1,\beta_{ij}^{-1}(t)\sqcup\{*\})\in I_{\pi_\dagger}$;

\item[(ii)] If $t$ is the image of $1\in \langle 1\rangle$, then $\alpha_{ij}^{-1}(t)\sqcup\{*\}\rightsquigarrow (0,(\alpha_{ij}^{-1}(t)\backslash (\alpha_{0i}\gamma_0)(1))\sqcup\{\dagger, *\})\in I_{\pi_\dagger}$, i.e. we replace $\alpha_{0i}\gamma_0(1)$ by $\dagger$.  If $t$ is not in the image of $1\in \langle 1\rangle$, then $\alpha_{ij}^{-1}(t)\sqcup\{*\}\rightsquigarrow (1, \alpha_{ij}^{-1}(t)\sqcup\{*\})$; 

\item[(iii)]$(\beta_{0j}\tau\alpha_{i,\mu-1})^{-1}(t)\sqcup\{*\}\rightsquigarrow (1,(\beta_{0j}\tau\alpha_{i,\mu-1})^{-1}(t)\sqcup\{*\})$.
\end{itemize}
This modification uniquely determines a functor $F_\theta^\Comm: I_{N(\Fin_*)_{\langle 1\rangle/}}\underset{I^\Comm}{\times} (\cT^\Comm)\rightarrow I_{\pi_\dagger}$: if we think of the coCartesian  morphism $(0,\langle n\rangle_\dagger)\rightarrow (1,\langle n\rangle)$ in $I_{\pi_\dagger}$ as an inert morphism sending $\dagger$ to $*$, then the modification of $\widetilde{F}_{\theta}^\Comm$ is just replacing the image of $1\in \langle 1\rangle$ in each $\langle n_i\rangle_0$ (and its subsets containing the image) by $\dagger$ and mark it with $0\in \Delta^1$; since $\dagger$ can be only mapped to $\dagger$ or $*$, whose projection to $\Delta^1$ is $id_{0}$ and $0\rightarrow 1$ respectively, $F_\theta^\Comm$ is well defined.

Given a diagram
\begin{align*}
\xymatrix{ N(\Fin_{*,\dagger})\ar[r]^{M^{\bullet,\dagger}}\ar[d]_{\pi_\dagger}&\cC\\
N(\Fin_*)\ar[ur]_{C^\bullet}&
},
\end{align*} 
and a natural transformation $\eta: M^{\bullet,\dagger}\rightarrow C^\bullet\circ \pi_\dagger$, which we can view as a functor $MC^\Comm:I_{\pi_\dagger}\rightarrow \cC$, the composition $MC^\Comm\circ F_\theta^\Comm$ induces a functor between coCartesian fibrations
\begin{align*}
F_{M,C}^\Comm: I^\natural_{N(\Fin_*)_{\langle 1\rangle/}}\underset{I^\Comm}{\times} (\cT^\Comm)^\natural\rightarrow (\cC\times I^\Comm)^\natural
\end{align*}
over $I^\Comm$. 

\begin{thm}\label{thm: module}
Assume we are given a $\Fin_{*,\dagger}$-object $M^{\bullet,\dagger}$ and a $\Fin_*$-object $C^\bullet$  in $\cC^\times$, together with a natural transformation $\eta: M^{\bullet,\dagger}\rightarrow C^\bullet\circ \pi_\dagger$.  If $C^\bullet$ represents a commutative algebra object in $\Corr(\cC)$, i.e. it satisfies the conditions in Theorem \ref{thm: algebra objs} (ii), and $M^{\bullet,\dagger}$ satisfies that for any active morphism $f: \langle n\rangle_\dagger \rightarrow \langle m\rangle_\dagger $, i.e. $f^{-1}(*)=*$, we have the following diagram 
 \begin{align}\label{diagram: thm module}
\xymatrix{M^{\lng n\rng_\dagg}\ar[d]\ar[r]&\prod\limits_{s\in  \langle m\rangle^\circ}C^{f^{-1}(s)\sqcup\{*\}}\times M^{f^{-1}(\dagger)\sqcup\{*\}}\ar[d]\\
M^{\langle m\rangle_\dagger}\ar[r] &\prod\limits_{s\in \langle m\rangle^\circ}C^{\{s,*\}}\times M^{\{\dagger,*\}}
}
\end{align}
is Cartesian in $\cC$, then the data naturally determine a module of $C$ in $\Corr(\cC)$.  Moreover, the condition on $M^{\bullet,\dagg}$ is both necessary and sufficient.
\end{thm}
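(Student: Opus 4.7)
The plan is to adapt the proof template of Theorem \ref{thm: left module} to the commutative setting, taking the functor $F_{M,C}^\Comm$ constructed immediately before the theorem statement as the candidate module. Via the commutative analog of the chain of equivalences (\ref{eq: Maps(T,C)}), namely
\begin{align*}
&\Maps_{I^\Comm}(I^\natural_{N(\Fin_*)_{\lng 1\rng/}} \underset{I^\Comm}{\times} (\cT^\Comm)^\natural, (\cC \times I^\Comm)^\natural) \\
&\simeq \Maps_{(\OneCat)^{\Delta^{op}}/_{\Seq_\bullet(N(\Fin_*))}}(\Seq_\bullet(N(\Fin_*)_{\lng 1\rng/}), f_{\Delta^\bullet}^{\cC,\Comm}),
\end{align*}
the datum $F_{M,C}^\Comm$ corresponds to a morphism $\Seq_\bullet(N(\Fin_*)_{\lng 1\rng/}) \rightarrow f_{\Delta^\bullet}^{\cC,\Comm}$ over $\Seq_\bullet(N(\Fin_*))$, and the problem becomes whether this morphism factors through a functor $N(\Fin_*)_{\lng 1\rng/} \rightarrow \bCorr(\cC)^{\otimes, \Fin_*}$ over $N(\Fin_*)$ obeying the module axioms.

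The crux of the verification is condition (Obj) of Subsection \ref{subsec: monoidal Corr}: for each object $(\alpha,\beta;\tau) \in I_{N(\Fin_*)_{\lng 1\rng/}}$, the image of $F_{M,C}^\Comm$ must land in $\Fun''(([k]\times [k]^{op})^{\geq \dgnl}, \cC^{\times, (\Fin_*)^{op}})$. I would sort the relevant Cartesian squares into three regimes following the trichotomy used in the definition of $F_\theta^\Comm$: squares whose indices both lie in $\{0,\dots,\mu-1\}$ (handled by the hypothesis on $C^\bullet$ via Theorem \ref{thm: algebra objs}(ii)); squares whose indices both lie in $\{\mu,\dots,k\}$ (handled again by the algebra hypothesis on $C^\bullet$); and squares straddling the divide, where the $\dagger$-marking produced by $F_\theta^\Comm$ couples $C^\bullet$-values with an $M^{\bullet,\dagger}$-value. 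The straddling case is where the hypothesis (\ref{diagram: thm module}) enters, and after a base-change reduction paralleling the one in the proof of Theorem \ref{thm: algebra objs}(ii) the required Cartesianness reduces to precisely the family (\ref{diagram: thm module}) indexed by active morphisms of $N(\Fin_{*,\dagger})$. Condition (Mor) is vacuous since $N(\Fin_*)$ is a $1$-category, and the two module axioms of $C^\bullet$-compatibility and sending inert morphisms to coCartesian morphisms follow mechanically from the construction of $F_\theta^\Comm$ combined with the characterization of $2$-coCartesian morphisms at the start of the proof of Proposition \ref{prop: 2-coCart monoidal} and the Cartesian-edge criterion of Proposition \ref{prop: Cartesian equiv comm}.

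For the necessity of (\ref{diagram: thm module}), I would run the same argument backward: any module structure yields for each sequence-degree-$1$ object of $I_{N(\Fin_*)_{\lng 1\rng/}}$ a Cartesian diagram of the form dictated by (Obj), and specializing to the generating active morphisms of $\Fin_{*,\dagger}$ reproduces (\ref{diagram: thm module}) exactly. The main obstacle I anticipate is the combinatorial bookkeeping associated with the asymmetric role of $\dagger$ in $F_\theta^\Comm$: the substitution of the image of $1\in\lng 1\rng$ by $\dagger$ in the straddling regime is precisely what collapses what would naively be a product of many $M^{\bullet,\dagger}$-diagrams into the single coupled square (\ref{diagram: thm module}), so tracking how inert-active decompositions of morphisms in $\Fin_{*,\dagger}$ interact with $\pi_\dagger$ and the marking requires care. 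Once this accounting is carried out, necessity and sufficiency follow by the same dictionary that underlies Theorem \ref{thm: algebra objs}(ii) and Theorem \ref{thm: left module}.
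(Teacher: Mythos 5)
Your overall blueprint is the correct one and coincides with the paper's: construct $F_{M,C}^{\Comm}$, pass through the commutative analogue of the mapping-space identification (\ref{eq: Maps(T,C)}) to obtain a candidate section of $\bCorr(\cC)^{\otimes,\Fin_*}\to N(\Fin_*)$ over $N(\Fin_*)_{\lng 1\rng/}$, verify condition (Obj) square by square, note (Mor) is vacuous, and read off necessity by running (Obj) in reverse. The paper's own proof does exactly this, declaring it ``completely similar to Theorem~\ref{thm: left module}'' and then writing out the single family of Cartesian squares that carries the new content.

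However, your sorting of the (Obj)-squares into three regimes is mislabeled in a way that would derail a literal execution of the plan. You assign (\ref{diagram: thm module}) to the ``straddling'' cells $(i,\mu+j)$ and claim the cells with $i,j\leq\mu-1$ are disposed of by the algebra hypothesis on $C^\bullet$. This is backwards. Tracing the modification that turns $\widetilde{F}_\theta^{\Comm}$ into $F_\theta^{\Comm}$: the straddling cells $(i,\mu+j)$ are always sent to $(1,\ -)\in I_{\pi_\dagg}$, hence always yield $C^\bullet$-values; the $\dagg$-marking, and therefore the appearance of $M^{\bullet,\dagg}$, happens exclusively at cells $(i,j)$ with $0\leq i\leq j\leq\mu-1$ whose coordinate $t$ is the image $\alpha_{0j}\gamma_0(1)$ of the marked point $1\in\lng 1\rng$. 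Consequently the new Cartesian constraint lives entirely inside the ``first block,'' indexed by the marked value of $t$, and that is precisely the family
\begin{align*}
\xymatrix{M^{(\alpha_{i-1,j}^{-1}(t)\setminus \alpha_{0,i-1}\gamma_0(1))\sqcup\{\dagger, *\}}\ar[r]\ar[d]&\prod\limits_{s}C^{\alpha_{i-1,i}^{-1}(s)\sqcup\{*\}}\times M^{(\alpha_{i-1,i}^{-1}(\alpha_{0i}\gamma_0(1))\setminus \alpha_{0,i-1}\gamma_{0}(1))\sqcup\{\dagger,*\}}\ar[d]\\
M^{(\alpha_{ij}^{-1}(t)\setminus \alpha_{0i}\gamma_0(1))\sqcup\{\dagger,*\}}\ar[r]&\prod\limits_{s}C^{\{s,*\}}\times M^{\{\dagger,*\}}}
\end{align*}
the paper records, which unwinds to (\ref{diagram: thm module}). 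If you apply the $C^\bullet$-hypothesis to the first block and look for $M$ only in straddling squares, you will find nothing to check there and will miss the squares that actually require (\ref{diagram: thm module}). Correct the regime assignment and the rest of your outline goes through as stated.
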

\begin{proof}
The proof is completely similar to that of Theorem \ref{thm: left module}. For any object $(\alpha,\beta,\tau)\in I_{N(\Fin_*)_{\langle1\rangle/}}$, we just need to make sure that for any $t=\alpha_{0j}\gamma_0(1)$,  the diagram
\begin{align*}
\xymatrix{M^{(\alpha_{i-1,j}^{-1}(t)\backslash \alpha_{0,i-1}\gamma_0(1))\sqcup\{\dagger, *\}}\ar[r]\ar[d]&\prod\limits_{s\in \alpha_{ij}^{-1}(t)\backslash \alpha_{0i}\gamma_0(1)}C^{\alpha_{i-1,i}^{-1}(s)\sqcup\{*\}}\times M^{(\alpha_{i-1,i}^{-1}(\alpha_{0i}\gamma_0(1))\backslash \alpha_{0,i-1}\gamma_{0}(1))\sqcup\{\dagger,*\}}\ar[d]\\
M^{(\alpha_{ij}^{-1}(t)\backslash \alpha_{0i}\gamma_0(1))\sqcup\{\dagger,*\}}\ar[r]&\prod\limits_{s\in \alpha_{ij}^{-1}(t)\backslash \alpha_{0i}\gamma_0(1)}C^{\{s,*\}}\times M^{\{\dagger,*\}}
}
\end{align*}
is Cartesian in $\cC$. 

The condition is equivalent to that 
for any morphism $f: \langle n\rangle \rightarrow \langle m\rangle $ and any $a\in \langle n\rangle^\circ$ such that $f(a)\neq *$ in $N(\Fin_*)$, we have the following diagram 
 \begin{align}\label{diagram: thm pf module}
\xymatrix{M^{(f^{-1}\langle m\rangle^\circ\backslash \{a\})\sqcup\{\dagger, *\}}\ar[d]\ar[r]&\prod\limits_{s\in  \langle m\rangle^\circ\backslash\{f(a)\}}C^{f^{-1}(s)\sqcup\{*\}}\times M^{(f^{-1}(f(a))\backslash \{a\})\sqcup\{\dagger,*\}}\ar[d]\\
M^{(\langle m\rangle\backslash\{f(a)\})\sqcup\{\dagger\}}\ar[r] &\prod\limits_{s\in \langle m\rangle^\circ\backslash\{f(a)\}}C^{\{s,*\}}\times M^{\{\dagger, *\}}
}
\end{align}
is Cartesian in $\cC$. This is equivalent to (\ref{diagram: thm module})
\end{proof}

Recall the definition of 
\begin{align}\label{eq: Mod^O(C)}
\cMod^{\cO}(\cC)^{\otimes}\rightarrow \cO^\otimes\times \Alg_{/\cO}(\cC) 
\end{align}
given in \cite[Definition 3.3.3.8]{higher-algebra} for any map $\cC^{\otimes}\rightarrow \cO^{\otimes}$ of generalized $\infty$-operads. For $\cO^\otimes=N(\Fin_*)$, we use $\cMod^{N(\Fin_*)}(\cC)$ to denote the fiber product 
\begin{align*}
\cMod^{N(\Fin_*)}(\cC)^{\otimes}\underset{N(\Fin_*)\times \Alg_{/N(\Fin_*)}(\cC)}{\times} (\{\langle 1\rangle\}\times \Alg_{/N(\Fin_*)}(\cC)),
\end{align*}
and any object is represented by a pair $(A,M)$, where $A$ is a commutative algebra object in $\cC$ and $M$ is an $A$-module. In the following, we represent every object $(\lng 1\rng\rightarrow \lng n\rng)$ in $N(\Fin_*)_{\langle 1\rangle/}$ by a pair $(\lng n\rng, s)$, where $s\in \lng n\rng$ is the image of $1\in \lng 1\rng$ and  is regarded as the \emph{marking}. 

When $\cC^{\otimes}=(\PrstL)^{\otimes}$, any object in $\cMod^{N(\Fin_*)}(\PrstL)$ can be represented by a coCartesian fibration 
\begin{align}\label{eq: coCart module}
\cM\rightarrow N(\Fin_*)_{\langle 1\rangle/}
\end{align} satisfying that 
\begin{itemize}
\item[(i)] The pullback of (\ref{eq: coCart module}) along the inclusion of null morphisms $N(\Fin_*)\hookrightarrow N(\Fin_*)_{\lng 1\rng/}$ gives a coCartsian fibration $C^\otimes\rightarrow N(\Fin_*)$ that represents a symmetric monoidal stable $\infty$-category $C\simeq \cM_{(\lng 1\rng, *)}$;
\item[(ii)] For any object $(\lng n\rng, s)\in N(\Fin_*)_{\langle 1\rangle/}$, the coCartesian morphisms over the $n$ inert morphisms $\rho^i: (\lng n\rng, s)\rightarrow (\lng 1\rng,\rho^i(s))$ gives an equivalence
\begin{align}\label{eq: M (ii)}
\cM_{(\lng n\rng, s)}\overset{\sim}{\rightarrow} \prod\limits_{i=1}^n \cM_{(\lng 1\rng, \rho^i(s))}
\end{align} 
of $\infty$-categories;
\item[(iii)] The fiber $\cM_{(\lng 1\rng, 1)}$ is stable, and any morphism in $\cM$ over $\alpha: (\lng m\rng,s)\rightarrow (\lng n\rng, \alpha(s))$, given by $n$ functors via (\ref{eq: M (ii)}),
\begin{align*}
F_i: \prod\limits_{j\in \alpha^{-1}(i)} \cM_{(\lng 1\rng, \rho^i(s))}\rightarrow \cM_{(\lng 1\rng, \rho^i\alpha(s))},\ i\in \lng n\rng^\circ,
\end{align*} 
satisfies that each $F_i$ is exact and continuous in each variable. 
\end{itemize}
Moreover, a \emph{right-lax} morphism (resp. morphism) between two objects $\cM\rightarrow N(\Fin_*)_{\lng1\rng/}$ and $\cN\rightarrow N(\Fin_*)_{\lng1\rng/}$ in $\cMod^{N(\Fin_*)}(\PrstL)$ is given by a commutative diagram
\begin{align*}
\xymatrix{\cM\ar[rr]\ar[dr]&&\cN\ar[dl]\\
&N(\Fin_*)_{\lng1\rng/}
}
\end{align*}
that restricts to exact and continuous functors $\cM_{(\lng 1\rng, 1)}\rightarrow \cN_{(\lng 1\rng, 1)}$ and $\cM_{(\lng 1\rng,*)}\rightarrow \cN_{(\lng 1\rng,*)}$ and that sends coCartesian morphisms in $\cM$ over inert morphisms (resp. all morphisms) in $N(\Fin_*)_{\lng1\rng/}$ to coCartesian morphisms in $\cN$.

Given two pairs $(C^\bullet, M^{\bullet,\dagg})$, $(D^\bullet, N^{\bullet,\dagg})$ of a module over a commutative algebra in $\bCorr(\cC^\times)$, similarly to the construction of (right-lax) algebra homomorphisms in $\bCorr(\cC^\times)$,  we can construct a (right-lax) morphism between the two pairs in the sense of  \cite[Chapter 8, 3.5.1]{Nick} from a correspondence
\begin{align}\label{diagram: corr of modules}
\xymatrix{N^{\bullet,\dagg}\ar[d]&\ar[l] W^{\bullet,\dagg}\ar[r]\ar[d]&M^{\bullet,\dagg}\ar[d]\\
D^\bullet\circ \pi_\dagg&P^\bullet\circ \pi_\dagg\ar[r]\ar[l]&C^\bullet\circ \pi_\dagg
}
\end{align}
of modules over commutative algebras. Let $I_{\widetilde{\pi}_{\dagg}}$ be the ordinary 1-category from the Grothendieck construction of the functor 
\begin{align*}
\widetilde{\pi}_{\dagg}: N(\Fin_{*,\dagg})\times(\Delta^{\{W,M\}}\coprod\limits_{\Delta^{\{W\}}}\Delta^{\{W,N\}})\rightarrow N(\Fin_{*})\times (\Delta^{\{P,C\}}\coprod\limits_{\Delta^{\{P\}}}\Delta^{\{P,D\}}),
\end{align*}
which is given by $(\pi_\dagg,(W\mapsto P, M\mapsto C, N\mapsto D))$. 
 We can regard (\ref{diagram: corr of modules}) as a functor 
\begin{align}\label{eq: MWN}
MWN: I_{\widetilde{\pi}_{\dagg}}\rightarrow \cC. 
\end{align}

Now we are going to construct a functor 
\begin{align}\label{eq: F_theta,module}
F_{\theta, \module}^{\Comm}: I_{N(\Fin_*)_{\langle 1\rangle/}\Gray \Delta^1}\underset{I_+^\Comm}{\times} \cT_+^\Comm\rightarrow I_{\widetilde{\pi}_{\dagg}}.
\end{align}
First, we have the composite functor
\begin{align*}
&\widetilde{F}_{\theta, \module}^{\Comm}: I_{N(\Fin_*)_{\langle 1\rangle/}\Gray \Delta^1}\underset{I_+^\Comm}{\times} \cT_+^\Comm\rightarrow I_{N(\Fin_*)\Gray \Delta^1}\underset{I_+^\Comm}{\times} \cT_+^\Comm\\
&\rightarrow N(\Fin_*)\times (\Delta^{\{W,M\}}\coprod\limits_{\Delta^{\{W\}}}\Delta^{\{W,N\}}),
\end{align*}
in which the first functor comes from the projection $N(\Fin_*)_{\langle 1\rangle/}\rightarrow N(\Fin_*)$ and the second functor is the commutative version of $F_{\Gray, \cT_+}$ defined in Subsection \ref{subsec: F_Gray} (note that we have changed $C,D$ to $M,N$ respectively in the target). 

Next, we modify $\widetilde{F}_{\theta, \module}^{\Comm}$ to get $F_{\theta, \module}^{\Comm}$ in a similar way as we did for $\widetilde{F}_\theta^\Comm$ (\ref{eq: tilde F_theta}). The objects in $N(\Fin_*)_{\langle 1\rangle/}\Gray\Delta^1$ can be divided into \emph{unmarked} ones and \emph{marked} ones, depending on whether its projection to $N(\Fin_*)_{\langle 1\rangle/}$ is null or not null. If an object $\alpha: \lng 1\rng\rightarrow \lng n\rng$ in $N(\Fin_*)_{\langle 1\rangle/}$ is not null, then we say $\alpha(1)$ is the \emph{marking} or \emph{marked element} in $ \lng n\rng$. For any object $x$ of in $I_{N(\Fin_*)_{\langle 1\rangle/}\Gray \Delta^1}\underset{I_+^\Comm}{\times} \cT_+^\Comm$ over an object of the form
\begin{align*}
\xymatrix{&&&(\lng \xi\rng,1)\ar[r]^{\tau_1}&(\langle m_0, 1)\rangle\ar[r]&\cdots \ar[r]&\lng (m_{k-\mu}\rng,1)\\
(\lng n_0\rng,0)\ar[r]&\cdots\ar[r]&(\lng n_{\mu-1}\rng,0)\ar[r]^{\tau_0}&(\lng \xi\rng,0)\ar[u]_{id_{\lng \xi\rng}}&&&\\
\langle 1\rangle\ar[u]
}
\end{align*}
in $I_{N(\Fin_*)_{\langle 1\rangle/}\Gray \Delta^1}$, the projection of $\widetilde{F}_{\theta, \module}^{\Comm}(x)$ to $N(\Fin_*)$ is by definition a subset of one of the above $\lng n_i\rng, 0\leq i\leq \mu-1$ and $\lng m_j\rng,0\leq j\leq k-\mu$. If it contains the image of $\lng 1\rng$ in the latter, i.e. the marked element, then we say the image $\widetilde{F}_{\theta, \module}^{\Comm}(x)$ is \emph{marked}, otherwise we say it is \emph{unmarked}. 

 Now for any  object $x$ in $I_{N(\Fin_*)_{\langle 1\rangle/}\Gray \Delta^1}\underset{I_+^\Comm}{\times} \cT_+^\Comm$, if $\widetilde{F}_{\theta, \module}^{\Comm}(x)$ is unmarked in the above sense, we define $F_{\theta, \module}^{\Comm}(x)$ to be the corresponding element of $\widetilde{F}_{\theta, \module}^{\Comm}(x)$  in $I_{\widetilde{\pi}_\dagg}\underset{\Delta^1}{\times}\{1\}$ under the identification 
 \begin{align*}
 I_{\widetilde{\pi}_\dagg}\underset{\Delta^1}{\times}\{1\}\simeq N(\Fin_{*})\times (\Delta^{\{P,C\}}\coprod\limits_{\Delta^{\{P\}}}\Delta^{\{P,D\}})\simeq  N(\Fin_{*})\times (\Delta^{\{W,M\}}\coprod\limits_{\Delta^{\{W\}}}\Delta^{\{W,N\}}),
 \end{align*}
 where in the latter identification, we send $P\mapsto W, M\mapsto C, N\mapsto D$. 
 If $\widetilde{F}_{\theta, \module}^{\Comm}(x)$ is marked, then we define $F_{\theta, \module}^{\Comm}(x)$ to be the corresponding element in 
 \begin{align*}
 I_{\widetilde{\pi}_\dagg}\underset{\Delta^1}{\times}\{0\}\simeq N(\Fin_{*,\dagg})\times (\Delta^{\{W,M\}}\coprod\limits_{\Delta^{\{W\}}}\Delta^{\{W,N\}}), 
 \end{align*}
 that makes the marking in $\lng n_x\rng$ to be $\dagg$. It is easy to see that these extend to a well defined functor 
$F_{\theta, \module}^{\Comm}$ (\ref{eq: F_theta,module}). 

The right Kan extension of the composition  $MWN\circ F_{\theta, \module}^{\Comm}$ (\ref{eq: MWN}) along 
\begin{align*}
p_{\cT^\Comm_+}: I_{N(\Fin_*)_{\langle 1\rangle/}\Gray \Delta^1}\underset{I_+^\Comm}{\times} \cT_+^\Comm\rightarrow I_{N(\Fin_*)_{\langle 1\rangle/}\Gray \Delta^1}\underset{I^\Comm}{\times} \cT^\Comm
\end{align*}
gives a functor between coCartesian fibrations
\begin{align*}
F_{MWN}^\Comm: I^\natural_{N(\Fin_*)_{\langle 1\rangle/}\Gray \Delta^1}\underset{I^\Comm}{\times} (\cT^\Comm)^\natural\rightarrow (\cC\times I^\Comm)^\natural.
\end{align*}
over $I^\Comm$. 

\begin{prop}\label{prop: right-lax module}
Assume we are given a commutative diagram (\ref{diagram: corr of modules}) which exhibits $M^{\bullet,\dagg}$, $W^{\bullet,\dagg}$ and $N^{\bullet,\dagg}$ as modules over the commutative algebra objects $C^\bullet$, $P^\bullet$ and $D^\bullet$ in $\bCorr(\cC^\times)$, respectively. Assume that the correspondence $D^\bullet\leftarrow P^\bullet\rightarrow C^\bullet$ determines a right-lax algebra homomorphism from the commutative algebra $C^\bullet$ to $D^\bullet$ in the sense of Theorem \ref{thm: right-lax}. Then the correspondence  (\ref{diagram: corr of modules}) of the pairs naturally determines a right-lax  
morphism from the pair $(C^\bullet, M^{\bullet,\dagg})$ to $(D^\bullet, N^{\bullet,\dagg})$ if  the diagram
\begin{align}\label{diagram: W, N, P, D}
&\xymatrix{W^{\lng n\rng_\dagg}\ar[r]\ar[d]&\prod\limits_{j\in \langle n\rangle^\circ} P^{\{j,*\}}\times W^{\{\dagg, *\}}\ar[d]\\
N^{\langle n\rangle_\dagg}\ar[r]&\prod\limits_{j\in \langle n\rangle^\circ} D^{\{j,*\}}\times N^{\{\dagg, *\}}
}
\end{align}
is Cartesian for every $\lng n\rng_\dagg$ in $\Fin_{*,\dagg}$. If in addition the correspondence $D^\bullet\leftarrow P^\bullet\rightarrow C^\bullet$ determines an algebra homomorphism and the following diagram
\begin{align}\label{eq: prop W, M, active}
\xymatrix{
W^{\lng 1\rng_\dagg}\ar[r]\ar[d]&M^{\langle 1\rangle_\dagg}\ar[d]\\
W^{\lng 0\rng_\dagg}\ar[r]&M^{\langle 0\rangle_\dagg}
}
\end{align}
is Cartesian for the active morphism $f: \langle 1\rangle_\dagg\rightarrow \langle 0\rangle_\dagg$ in $N(\Fin_{*,\dagg})$, i.e. $f^{-1}(*)=*$, 
then the same correspondence naturally determines a morphism between the pairs.
\end{prop}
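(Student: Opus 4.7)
The plan is to mimic the proof of Theorem~\ref{thm: right-lax}, carefully tracking the role played by the marking $\dagg$. By the commutative analogue of \cite[Chapter 9, 1.4.5 and Chapter 10, 3.2.7]{Nick}, to produce a right-lax (resp.\ strict) morphism in $\cMod^{N(\Fin_*)}(\bCorr(\cC^\times))$ from the pair $(C^\bullet, M^{\bullet,\dagg})$ to $(D^\bullet, N^{\bullet,\dagg})$ it suffices to produce a functor
\[
N(\Fin_*)_{\lng 1\rng/}\Gray \Delta^1 \longrightarrow \bCorr(\cC^\times)^{\otimes,\Fin_*}
\]
over $N(\Fin_*)$ whose restrictions to the two endpoints of $\Delta^1$ recover the pairs $(C^\bullet, M^{\bullet,\dagg})$ and $(D^\bullet, N^{\bullet,\dagg})$, and which, for the strict version, sends every morphism in $N(\Fin_*)_{\lng 1\rng/}$ (not just the inert ones) in the direction of $\Delta^1$ to a 2-coCartesian arrow. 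Via the chain of identifications already set up before Theorem~\ref{thm: right-lax}, this is the same as producing an element of
\[
\Maps_{(\OneCat)^{\Delta^{op}}_{/\Seq_\bullet(N(\Fin_*))}}\!\bigl(\Seq_\bullet(N(\Fin_*)_{\lng 1\rng/}\Gray\Delta^1),\, f_{\Delta^\bullet}^{\cC,\Comm}\bigr),
\]
and this is exactly the content of the functor $F_{MWN}^\Comm$ constructed right before the statement.

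The first task is to verify that, for every $k$ and every object $(\alpha,\beta;\tau_0,\tau_1)\in \Seq_k(N(\Fin_*)_{\lng 1\rng/}\Gray\Delta^1)$, the associated diagram $F_{MWN}^\Comm([k],(\alpha,\beta;\tau_0,\tau_1))$ lies in $\Fun''(([k]\times [k]^{op})^{\geq\dgnl},\cC^{\otimes,(\Fin_*)^{op}})$, i.e.\ satisfies condition (Obj). Using the description of $F_{\theta,\module}^\Comm$, the squares whose Cartesianness must be checked split into two disjoint classes, according to whether a given vertex of $([k]\times [k]^{op})^{\geq\dgnl}$ is \emph{unmarked} or \emph{marked} in the sense defined in the construction. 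For unmarked vertices the squares coincide with those already handled in the proof of Theorem~\ref{thm: right-lax}, so they are Cartesian by the hypothesis that $D^\bullet\leftarrow P^\bullet\rightarrow C^\bullet$ is a right-lax algebra homomorphism and that $C^\bullet$, $D^\bullet$ are commutative algebras. For the marked squares, a direct inspection of the formulas (replacing the image of $1\in\lng 1\rng$ by $\dagg$ on the $\epsilon=0$ side) reduces them either to the Cartesian square (\ref{diagram: thm module}) associated with $M^{\bullet,\dagg}$ or $N^{\bullet,\dagg}$ (already available as these are modules), or to the new Cartesian square (\ref{diagram: W, N, P, D}). Condition (Mor) is automatic since all the morphisms on $(i,i)$-entries are identities by construction.

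The second task is to check that inert morphisms in $N(\Fin_*)_{\lng 1\rng/}$ going from $0$ to $1$ in $\Delta^1$ are sent to 2-coCartesian arrows in $\bCorr(\cC^\times)^{\otimes,\Fin_*}$; by the characterization of 2-coCartesian arrows given at the start of the proof of Proposition~\ref{prop: 2-coCart monoidal} this reduces to verifying that the inert restriction maps on both the $C$/$D$ components and on the marked $M$/$N$ components are isomorphisms, which is guaranteed by the Segal-type conditions already built into the module structures on $M^{\bullet,\dagg}$ and $N^{\bullet,\dagg}$. This gives the right-lax morphism. For the strict statement, one must further show that the image of the unique active morphism $\lng 1\rng_\dagg\to\lng 0\rng_\dagg$ is 2-coCartesian; unwinding the definition of $F_{MWN}^\Comm$, this comes down precisely to the Cartesianness of (\ref{eq: prop W, M, active}), together with the already-assumed fact that $D^\bullet\leftarrow P^\bullet\rightarrow C^\bullet$ is a strict algebra homomorphism (so the corresponding active squares for the algebra components are Cartesian by Theorem~\ref{thm: right-lax}).

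The main obstacle is purely bookkeeping: tracking how the marking $\dagg$ propagates through the Gray product $N(\Fin_*)_{\lng 1\rng/}\Gray \Delta^1$ and correctly identifying, at each marked vertex of each grid $([k]\times [k]^{op})^{\geq\dgnl}$, which Cartesian square in $\cC$ must be checked. Once this combinatorics is laid out as above, the verification is a direct application of the module hypotheses together with the algebra hypotheses already established in Theorem~\ref{thm: right-lax}.
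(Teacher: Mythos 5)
Your proposal is correct and follows essentially the same strategy as the paper's (one-sentence) proof: mimic the proof of Theorem~\ref{thm: right-lax}, splitting the verification of condition (Obj) into the unmarked squares, handled by the hypotheses already in place, and the marked squares, which reduce to the module conditions and the new Cartesian diagrams (\ref{diagram: W, N, P, D}) and, for the strict case, (\ref{eq: prop W, M, active}). Your ``second task'' about inert morphisms is superfluous — that coCartesianness is already packaged in the hypothesis that $M^{\bullet,\dagg}$, $W^{\bullet,\dagg}$, $N^{\bullet,\dagg}$ are modules — but it does no harm, and you might slightly refine the phrasing since ``inert morphisms going from $0$ to $1$ in $\Delta^1$'' conflates the $\Fin_*$-direction with the Gray direction.
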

\begin{proof}
The proof is almost the same as the proof of Theorem \ref{thm: right-lax}, in which we only need to check the Cartesian property of the (commutative version of the) diagrams (\ref{thm: diagram Cart 1}), (\ref{thm: diagram Cart 2}) and (\ref{diagram: W, C Cartesian}) in the \emph{marked} case. These are exactly the conditions listed in the proposition. 
\end{proof}

Proposition \ref{prop: right-lax module} immediately implies the following. 
\begin{cor}\label{cor: right-lax module C}
Assume we are given a commutative algebra object $C$ in $\bCorr(\cC^\times)$, determined by a $\Fin_*$-object $C^\bullet$ in $\cC$, and a correspondence of $\Fin_{*,\dagg}$-objects 
\begin{align*}
\xymatrix{
N^{\bullet,\dagg}\ar[dr]&\ar[l] W^{\bullet,\dagg}\ar[d]\ar[r] &M^{\bullet,\dagg}\ar[dl]\\
&C^\bullet\circ\pi_\dagg&
}
\end{align*}
in $\cC$ as modules over $C$ in $\bCorr(\cC^\times)$. Then the correspondence naturally determines a right-lax $C$-module morphism from $M^{\bullet,\dagg}$ to $N^{\bullet,\dagg}$ if the diagram
\begin{align*}
&\xymatrix{W^{\lng n\rng_\dagg}\ar[r]\ar[d]&\prod\limits_{j\in \langle n\rangle^\circ}  C^{\{j,*\}}\times W^{\{\dagg, *\}}\ar[d]\\
N^{\langle n\rangle_\dagg}\ar[r]&\prod\limits_{j\in \langle n\rangle^\circ} C^{\{j,*\}}\times N^{\{\dagg, *\}}
}
\end{align*}
is Cartesian for every $\lng n\rng_\dagg$ in $\Fin_{*,\dagg}$. If in addition the following diagram
\begin{align*}
\xymatrix{
W^{\lng 1\rng_\dagg}\ar[r]\ar[d]&M^{\lng 1\rng_\dagg}\ar[d]\\
W^{\langle 0\rangle_\dagg}\ar[r]&M^{\langle 0\rangle_\dagg}
}
\end{align*}
is Cartesian for the \emph{active} $f: \langle 1\rangle_\dagg\rightarrow \langle 0\rangle_\dagg$ in $\Fin_{*,\dagg}$,
then the same correspondence naturally determines a $C$-module morphism.
\end{cor}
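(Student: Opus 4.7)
The plan is to deduce the corollary by specializing Proposition \ref{prop: right-lax module} to the situation in which the algebra correspondence $D^\bullet \leftarrow P^\bullet \rightarrow C^\bullet$ is the trivial one $C^\bullet \overset{=}{\leftarrow} C^\bullet \overset{=}{\rightarrow} C^\bullet$. First I would observe that the trivial correspondence of $\Fin_*$-objects automatically determines the identity right-lax (in fact strict) algebra homomorphism of $C$ to itself in $\bCorr(\cC^\times)$: the diagrams (\ref{lemma: diagram Cart comm}) and (\ref{lemma: diagram Cart 2 comm}) in Theorem \ref{thm: right-lax} become, for $W^\bullet = C^\bullet$, squares whose two horizontal arrows are both equal to the Segal-type map $C^{\lng n\rng}\rightarrow \prod_{j\in\lng n\rng^\circ} C^{\{j,*\}}$ (respectively the active pullback square for $C^\bullet$ itself), which are Cartesian by the very hypothesis that $C^\bullet$ represents a commutative algebra in $\bCorr(\cC^\times)$ (Theorem \ref{thm: algebra objs}(ii)).

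Next I would specialize: taking $P^\bullet = D^\bullet = C^\bullet$ with identity maps, the given correspondence $N^{\bullet,\dagg}\leftarrow W^{\bullet,\dagg}\rightarrow M^{\bullet,\dagg}$ with all three mapping to $C^\bullet\circ\pi_\dagg$ fits into the commutative diagram (\ref{diagram: corr of modules}) required by Proposition \ref{prop: right-lax module}. Under this specialization, the Cartesian hypothesis (\ref{diagram: W, N, P, D}) reduces to the diagram
\[
\xymatrix{W^{\lng n\rng_\dagg}\ar[r]\ar[d]&\prod\limits_{j\in \langle n\rangle^\circ} C^{\{j,*\}}\times W^{\{\dagg, *\}}\ar[d]\\
N^{\langle n\rangle_\dagg}\ar[r]&\prod\limits_{j\in \langle n\rangle^\circ} C^{\{j,*\}}\times N^{\{\dagg, *\}}
}
\]
which is precisely the Cartesian condition stated in the corollary. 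Similarly, the additional hypothesis (\ref{eq: prop W, M, active}) required for an honest homomorphism translates verbatim to the second Cartesian diagram in the corollary.

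Finally, since the resulting right-lax morphism (resp.\ morphism) of pairs $(C^\bullet,M^{\bullet,\dagg})\rightarrow (C^\bullet,N^{\bullet,\dagg})$ has underlying algebra component the identity on $C$, it factors through the fiber $\cMod_C(\bCorr(\cC^\times))$ in the sense of (\ref{eq: Mod^O(C)}), i.e.\ it is a right-lax $C$-module morphism (resp.\ $C$-module morphism) from $M^{\bullet,\dagg}$ to $N^{\bullet,\dagg}$. The only substantive point is bookkeeping: one must check that the construction $F_{MWN}^\Comm$ of Proposition \ref{prop: right-lax module}, when fed the trivial algebra correspondence, genuinely picks out the identity algebra homomorphism and not merely an equivalent one -- this is immediate from the fact that on the unmarked part of $I_{N(\Fin_*)_{\lng1\rng/}\Gray\Delta^1}\underset{I_+^\Comm}{\times}\cT_+^\Comm$, the functor $F_{\theta,\module}^\Comm$ agrees tautologically with the constant correspondence pattern used to recover $C^\bullet$ itself in Theorem \ref{thm: algebra objs}(ii). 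This is the one step that requires care, but it amounts only to unwinding the definitions in Subsection \ref{subsec: F_Gray}.
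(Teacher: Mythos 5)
Your proposal is correct and takes the same route as the paper: the paper states only that Proposition \ref{prop: right-lax module} ``immediately implies'' Corollary \ref{cor: right-lax module C}, and you have spelled out the intended specialization $D^\bullet = P^\bullet = C^\bullet$, together with the translation of conditions (\ref{diagram: W, N, P, D}) and (\ref{eq: prop W, M, active}) into the two Cartesian diagrams appearing in the corollary and the observation that the resulting morphism of pairs has identity algebra component, hence lives over $C$.

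One small imprecision in your first step: you say the squares from Theorem \ref{thm: right-lax} with $W^\bullet = D^\bullet = C^\bullet$ (resp.\ $W^\bullet = C^\bullet$) ``are Cartesian by the very hypothesis that $C^\bullet$ represents a commutative algebra in $\bCorr(\cC^\times)$.'' That is not the reason. In those squares the two vertical (resp.\ horizontal) arrows are identities on the nose, so the squares are Cartesian for trivial reasons, regardless of any Segal-type condition on $C^\bullet$. The hypothesis that $C^\bullet$ satisfies the conditions of Theorem \ref{thm: algebra objs}(ii) is of course still needed, but only to know that $C^\bullet$ determines a commutative algebra object in the first place (which the corollary assumes), not to verify the extra squares for the identity correspondence. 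This is a matter of attribution and does not create a gap; the remainder of the argument, including the concluding sanity check that $F_{MWN}^\Comm$ on the unmarked locus reproduces the identity algebra homomorphism, is exactly right.
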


\begin{remark}\label{remark: composition of right-lax Mod}
As one would expect, the composition of two (right-lax) morphisms between pairs given by two correspondences 
\begin{align*}
&\xymatrix{N^{\bullet,\dagg}\ar[d]&\ar[l] W^{\bullet,\dagg}\ar[r]\ar[d]&M^{\bullet,\dagg}\ar[d]\\
D^\bullet\circ \pi_\dagg\ar[r]&P^\bullet\circ \pi_\dagg\ar[r]&C^\bullet\circ \pi_\dagg
},\\
&\xymatrix{R^{\bullet,\dagg}\ar[d]&\ar[l] Y^{\bullet,\dagg}\ar[r]\ar[d]&N^{\bullet,\dagg}\ar[d]\\
E^\bullet\circ \pi_\dagg&H^\bullet\circ \pi_\dagg\ar[r]\ar[l]&D^\bullet\circ \pi_\dagg
}
\end{align*}
is homotopic to the one given by the correspondence
\begin{align*}
\xymatrix{R^{\bullet,\dagg}\ar[d]&\ar[l] W^{\bullet,\dagg}\underset{N^{\bullet,\dagg}}{\times}Y^{\bullet,\dagg}\ar[r]\ar[d]&N^{\bullet,\dagg}\ar[d]\\
E^\bullet\circ \pi_\dagg&(P^{\bullet}\underset{D^\bullet}{\times}H^\bullet)\circ \pi_\dagg\ar[r]\ar[l]&C^\bullet\circ \pi_\dagg
}
\end{align*}
\end{remark}

Let $\Fun_{\inert}(I_{\pi_\dagg}, \cC)$ (resp. $\Fun_{\act}(I_{\pi_\dagg},\cC)$) be the 1-full subcategory of $\Fun(I_{\pi_\dagg},\cC)$ consisting of objects satisfying the condition in Theorem \ref{thm: module} and whose morphisms from $W^{\bullet,\dagg}\rightarrow P^{\bullet}\circ\pi_\dagg$ to $N^{\bullet,\dagg}\rightarrow D^{\bullet}\circ\pi_\dagg$  (resp. $W^{\bullet,\dagg}\rightarrow P^{\bullet}\circ\pi_\dagg$ to $M^{\bullet,\dagg}\rightarrow C^{\bullet}\circ\pi_\dagg$) satisfy that the diagrams (\ref{diagram: W, N, P, D}) (resp. (\ref{eq: prop W, M, active})) are Cartesian for every  $\lng n\rng_\dagg\in \Fin_{*,\dagg}$. One can show with the same argument as in Proposition \ref{prop: Fun_Cart} for the following. 
\begin{prop}\label{prop: Fun_Cart, Mod}
There are natural functors
\begin{align*}
&\Fun_{\inert}(I_{\pi_\dagg}, \cC)\rightarrow \cMod^{N(\Fin_*)}(\Corr(\cC^\times)),\\
&\Fun_{\act}(I_{\pi_\dagg},\cC)^{op}\rightarrow \cMod^{N(\Fin_*)}(\Corr(\cC^\times)). 
\end{align*}
\end{prop}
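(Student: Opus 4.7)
The plan is to adapt the proof of Proposition \ref{prop: Fun_Cart} to the module setting, using the functor $F_\theta^\Comm$ and the conditions established in Theorem \ref{thm: module} in place of those in Theorem \ref{thm: algebra objs}. Throughout, I'll only detail the inert case; the active case follows by the same opposite-category modification used in Proposition \ref{prop: Fun_active, Alg}.

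First I would construct, for each $[n] \in N(\Delta)^{op}$, a functor
\begin{align*}
I_{\Delta^\bullet} \underset{N(\Delta)^{op}}{\times} I^\Comm_{N(\Fin_*)_{\langle 1\rangle/}} \underset{I^\Comm}{\times} \cT^\Comm &\longrightarrow \Delta^\bullet \times I_{\pi_\dagg} \\
([n];\vartheta^{(m)};(\alpha,\beta,\tau,\gamma);(s,(i,j),u)) &\mapsto \bigl(\vartheta^{(m)}(i);\,F_\theta^\Comm\text{-image}\bigr),
\end{align*}
where $F_\theta^\Comm$-image is the output of the construction preceding Theorem \ref{thm: module}, yielding an element of $I_{\pi_\dagg}$ that records whether or not the fiber $s_{i,j}^{-1}(u)$ contains the marked element, decorated appropriately with $\dagg$ or $*$. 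The key point is that the underlying $N(\Fin_*)$-coordinate is $s_{i,j}^{-1}(u) \sqcup \{*\}$ exactly as in (\ref{eq: I_Delta to Delta}), and the $\Delta^1$-coordinate in $I_{\pi_\dagg}$ is $0$ precisely when this fiber meets the image of $1 \in \lng 1\rng$.

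Next, composing with the adjoint of any $F: \Delta^n \to \Fun_{\inert}(I_{\pi_\dagg},\cC)$, which is a functor $\Delta^n \times I_{\pi_\dagg} \to \cC$, produces a functor out of the fiber product into $\cC$. Applying the analogue of Lemma \ref{lemma: Z^X} in the module setting, this gives a morphism in $\Spc^{\Delta^{op}}$
\begin{align*}
\Seq_\bullet\bigl(\Fun_{\inert}(I_{\pi_\dagg},\cC)\bigr) \longrightarrow \Maps_{(\OneCat)^{\Delta^{op}}_{/\Seq_\clubsuit(N(\Fin_*)_{\langle 1\rangle/})}}\bigl(\Seq_\clubsuit(\Delta^\bullet \times N(\Fin_*)_{\langle 1\rangle/}),\, f_{\Delta^\clubsuit}^{\cC,\Comm,\module}\bigr),
\end{align*}
where $f_{\Delta^\clubsuit}^{\cC,\Comm,\module}$ is the obvious module analogue of $f_{\Delta^\clubsuit}^{\cC,\Comm}$ from (\ref{eq: f_Delta^C}), built from the coCartesian fibration representing the module category $\cMod^{N(\Fin_*)}(\Corr(\cC^\times))$.

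The remaining step is to check that this morphism in fact lands in the sub-simplicial-space whose objects are functors $\Delta^k \times N(\Fin_*)_{\langle 1\rangle/} \to \Corr(\cC)^{\otimes,\Fin_*}$ over $N(\Fin_*)$ satisfying the module-fibration conditions (i)--(iii) listed just before (\ref{eq: coCart module}). As in the proof of Theorem \ref{thm: algebra objs}(ii), this amounts to verifying that, for the functors produced, the relevant squares of the form (\ref{thm: diagram Cartesian 2}) are Cartesian. For the squares not touching the marked element, this follows since the source functor restricts at each vertex of $\Delta^n$ to the algebra piece of the module, and this is handled by Proposition \ref{prop: Fun_Cart}. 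For the marked squares, this is precisely the condition (\ref{diagram: thm module}) imposed on objects of $\Fun_{\inert}(I_{\pi_\dagg},\cC)$.

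I expect the main obstacle to be bookkeeping rather than conceptual: one has to track the marked element through the Grothendieck construction carefully enough to see that the image in $I_{\pi_\dagg}$ transitions from the $\dagg$-marked copy to the unmarked copy precisely when the marked element is collapsed to the base-point under the $\Fin_*$-morphism, matching the definition of $\pi_\dagg$. Once this bookkeeping is set up, the verification that coCartesian morphisms in $I_{\Delta^\bullet}$ over edges of $N(\Delta)^{op}$ are sent to functors compatible with the module structure is formal, and the active case follows by replacing $(\vartheta^{(m)}(i), \cdots)$ with $((\vartheta^{(m)})^{op}(j), \cdots)$ as in Proposition \ref{prop: Fun_active, Alg}.
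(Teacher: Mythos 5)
Your proposal is correct and mirrors the paper's proof, which simply cites Proposition \ref{prop: Fun_Cart}: you replace $F_{\cT^\Comm,\Fin_*}$ by $F_\theta^\Comm$ to carry the marked element through $I_{\pi_\dagg}$, and the Cartesian conditions from Theorem \ref{thm: module} and diagram (\ref{diagram: W, N, P, D}) built into $\Fun_{\inert}(I_{\pi_\dagg},\cC)$ supply exactly what the verification requires. The one small imprecision is the reference to conditions (i)--(iii) preceding (\ref{eq: coCart module}), which are the $\PrstL$-specific description of $\cMod^{N(\Fin_*)}$ rather than the general one for $\Corr(\cC^\times)$; the conditions to verify here are those in the Definition of a $C$-module over a general symmetric monoidal $(\infty,2)$-category, though the checking mechanism you describe is identical.
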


\begin{prop}\label{prop: K times L Mod}
Let $K,L$ be two $\infty$-categories. Assume $F: K\times L^{op}\rightarrow \Fun(I_{\pi_\dagg}, \cC)$ is a functor satisfying
\begin{itemize}
\item[(i)] For any  $\ell\in L$ (resp. $k\in K$), $F|_{K\times\{\ell\}}$ (resp. $F|_{\{k\}\times L^{op}}$) factors through the 1-full subcategory $\Fun_{\inert}(I_{\pi_\dagg}, \cC)$ (resp. $\Fun_{\act}(I_{\pi_\dagg}, \cC)$);
\item[(ii)] The induced functor $ev_{\lng 1\rng}\circ F: K\times L^{op}\rightarrow \cC$ (resp. $ev_{\lng 0\rng_\dagg}\circ F$) satisfies that for any square $\alpha: [1]\times [1]^{op}\rightarrow K\times L^{op}$ determined by a functor $\alpha\in \Fun([1],K)\times \Fun([1]^{op},L^{op})$, the diagram $ev_{\lng 1\rng}\circ F\circ \alpha: [1]\times [1]^{op}\rightarrow \cC$ (resp. $ev_{\lng 0\rng_\dagg}\circ F\circ \alpha$) is a Cartesian square,
\end{itemize}
then $F$ naturally determines a functor $F_{\cMod}: K\times L\rightarrow \cMod^{N(\Fin_*)}(\Corr(\cC^\times))$. 
\end{prop}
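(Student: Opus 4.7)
The plan is to mirror the argument given for Proposition~\ref{prop: K times L CAlg}, with the target $\CAlg(\Corr(\cC^\times))$ replaced by $\cMod^{N(\Fin_*)}(\Corr(\cC^\times))$ and the indexing $\infty$-category $N(\Fin_*)$ replaced by $N(\Fin_*)_{\lng 1\rng/}\simeq I_{\pi_\dagg}$. Concretely, we will produce a functor on $K\times L$ valued in pairs (algebra, module) by running the formal machinery of the earlier proofs against the Grothendieck-type fibration $I_{K\times L\times I_{\pi_\dagg}}\to N(\Delta)^{op}$ obtained from the functor $\Seq_\bullet(K\times L\times I_{\pi_\dagg}):N(\Delta)^{op}\to \OneCat^{\ord}$.

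First, use the canonical splitting
\begin{align*}
I_{K\times L\times I_{\pi_\dagg}}\;\simeq\; I_K\underset{N(\Delta)^{op}}{\times} I_L\underset{N(\Delta)^{op}}{\times} I_{I_{\pi_\dagg}}
\end{align*}
to build the analogue of the functor appearing in Proposition~\ref{prop: K times L CAlg}. Namely, define
\begin{align*}
\Phi: I_{K\times L\times I_{\pi_\dagg}}\underset{I^\Comm}{\times}\cT^\Comm \longrightarrow K\times L^{op}\times I_{\pi_\dagg}
\end{align*}
on objects by $([n];\vartheta^K,\vartheta^L;(s,(i,j),u))\mapsto (\vartheta^K(i),(\vartheta^L)^{op}(j); \Psi(s,(i,j),u))$, where $\Psi$ applies the marking rule of $F_\theta^\Comm$ from the proof of Theorem~\ref{thm: module}: the image is $(1,s_{i,j}^{-1}(u)\sqcup\{*\})$ if $u$ is unmarked and $(0,(s_{i,j}^{-1}(u)\setminus\{\text{marking}\})\sqcup\{\dagg,*\})$ otherwise. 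Composing $\Phi$ with $F$ and invoking (a mild refinement of) the isomorphism $\Maps_{I^{\Comm}}(I^\natural_{\bullet}\underset{I^\Comm}{\times}(\cT^\Comm)^\natural,(\cC\times I^\Comm)^\natural)\simeq \Maps_{(\OneCat)^{\Delta^{op}}/\Seq_\clubsuit(N(\Fin_*))}(\Seq_\clubsuit(-),f_{\Delta^\clubsuit}^{\cC,\Comm})$ used throughout Section~\ref{section: background} yields a morphism in $\Spc^{\Delta^{op}}$ whose underlying data is a candidate functor $K\times L\to \cMod^{N(\Fin_*)}(\Corr(\cC^\times))$.

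The remaining task is to verify that this candidate actually lands in $\cMod^{N(\Fin_*)}(\Corr(\cC^\times))$ rather than merely in the larger $(\OneCat)^{\Delta^{op}}_{/\Seq_\bullet(N(\Fin_*))}$. By the characterization of morphisms in $\cMod^{N(\Fin_*)}(\PrstL)$ via coCartesian fibrations over $N(\Fin_*)_{\lng1\rng/}$, and in view of Proposition~\ref{prop: Fun_Cart, Mod}, this amounts to showing that for every square $\alpha\in\Fun([1],K)\times\Fun([1]^{op},L^{op})$ and every active morphism in $N(\Fin_{*,\dagg})$, the induced $[1]\times [1]^{op}$-diagram in $\cC$ is Cartesian. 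Hypothesis (i) lets one factor any such active morphism into purely horizontal (in $K$) and purely vertical (in $L$) pieces on which the Cartesian conditions of Theorem~\ref{thm: algebra objs}(ii) and Theorem~\ref{thm: module} apply; the diagram thus collapses to either $\mathrm{ev}_{\lng 1\rng}\circ F\circ\alpha$ (for unmarked contributions, controlling the underlying algebra) or $\mathrm{ev}_{\lng 0\rng_\dagg}\circ F\circ\alpha$ (for the contribution carrying the marking, controlling the module). Hypothesis (ii) is precisely the Cartesianness of these two squares.

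The main obstacle I anticipate is the bookkeeping around the marking. In the algebra setting of Proposition~\ref{prop: K times L CAlg}, a single evaluation $\mathrm{ev}_{\lng 1\rng}$ suffices because all the Cartesian conditions for a $\Fin_*$-algebra reduce to checks on $\lng 1\rng$. In the module setting, one must track the marked and unmarked strata of $I_{\pi_\dagg}$ simultaneously and ensure that the Cartesian conditions of Proposition~\ref{prop: right-lax module} (and Corollary~\ref{cor: right-lax module C}) are produced compatibly along both the $K$-direction and the $L$-direction. Arguing this without appeal to assumption~(ii) in both of its incarnations would be impossible, so the delicate step is to show that this pair of evaluations is both necessary and sufficient---something the analogous decomposition in the proof of Proposition~\ref{prop: right-lax module} already suggests but has to be carried out in the two-variable setting.
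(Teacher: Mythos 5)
The paper states this proposition without an explicit proof, leaving it understood that the argument adapts Proposition~\ref{prop: K times L CAlg} by substituting $N(\Fin_*)_{\lng 1\rng/}\simeq I_{\pi_\dagg}$ for $N(\Fin_*)$ and inserting the marking bookkeeping of $F_\theta^\Comm$ from Theorem~\ref{thm: module}. Your proposal carries out precisely this adaptation, correctly identifying that hypothesis (i) collapses the Obj/Cartesian checks for the resulting section to the two evaluation squares in hypothesis (ii) (at $\lng 1\rng$ for the unmarked algebra stratum and at $\lng 0\rng_\dagg$ for the marked module stratum), so it matches the paper's intended route.
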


\begin{remark}
There is a complete analogue of Proposition \ref{prop: right-lax module}, Corollary \ref{cor: right-lax module C}, Proposition \ref{prop: Fun_Cart, Mod}, Proposition \ref{prop: K times L Mod} for the associative case. We leave the details to the reader. 
\end{remark}

\section{The functor $\ShvSp$ out of the category of correspondences}\label{sec: ShvSp}

In this section, we will define the functor 
\begin{align*}
\ShvSp: &\Corr(\Slch)\rightarrow \PrstL\\
&X\mapsto \Shv(X;\Sp)
\end{align*}
from Theorem \ref{thm: ShvSp} and give a proof of the theorem. Moreover, we will show that $\ShvSp$ is canonically symmetric monoidal. These follow from a direct application of the approach in \cite{Nick} to define the functor of taking ind-coherent sheaves out of the category of correspondences of schemes, and the application is based on the foundations of $\infty$-topoi (\cite{higher-topoi}) and stable $\infty$-categories (\cite{higher-algebra}). We have provided the references for each step explicitly.

\subsection{The functor $\ShvSp_!: \Slch\rightarrow \PrstL$ }
We recall some basic definitions and useful facts about the $\infty$-category of sheaves on a topological space with values in an (stable) $\infty$-category from \cite{higher-topoi} and \cite{higher-algebra}. For any topological space $X$, let $\cU(X)$ be the poset of open subsets in $X$. Let $\cC$ be any $\infty$-category that admits small limits. Let $\cP(X;\cC)=\Fun(N(\cU(X))^{op}, \cC)$ be the $\infty$-category of $\cC$-valued presheaves on $X$. For any subset $S\subset X$, let $\cF(S)=\varinjlim_{S\subset U}\cF(U)$. 

\begin{definition}
A \emph{$\cC$-valued sheaf} on $X$ is an object $\cF$ in $\cP(X;\cC)$ satisfying that if $\{U_i\subset U\}_{i\in I}$ is an open cover of $U$, then the natural morphism
\begin{align*}
\cF(U)\rightarrow \varprojlim\limits_{\emptyset\neq J\subset I}\cF(\bigcap\limits_{j\in J}U_j)
\end{align*}
is an isomorphism in $\cC$. We denote the full subcategory of $\cP(X;\cC)$ consisting of $\cC$-valued sheaves by $\Shv(X;\cC)$.
\end{definition}

Any continuous map $f: X\rightarrow Y$ induces a functor $f^{-1}: N(\cU(Y))^{op}\rightarrow N(\cU(X))^{op}$, which gives rise to the functor $f_*: \cP(X;\cC)\rightarrow \cP(Y;\cC)$. It is straightforward from the definition that $f_*$ sends a sheaf to a sheaf. So we have a well defined functor $\Shv\cC: \STop\rightarrow \OneCat$ from the ordinary 1-category of topological spaces to the $\infty$-category of $\infty$-categories.

Now following notations in \cite{higher-topoi}, let $\Shv(X)$ (resp. $\cP(X)$) denote the $\infty$-category of $\Spc$-valued sheaves (resp. presheaves) on $X$. The functor $f_*$ admits a left adjoint $f^*$.

In particular, we see that  $f^*$ preserves small colimits and finite limits, and $f_*$ preserves small limits and finite colimits, using the adjunction and 
\begin{itemize}
\item $f^*$ on presheaves preserves finite limits and $f_*$ on presheaves preserves small colimits; 
\item sheafification commutes with finite limits and small colimits (\cite[Proposition 6.2.2.7]{higher-topoi});
\item the inclusion $\Shv(X)\hookrightarrow \cP(X)$ preserves small limits and finite colimits. 
\end{itemize}

When $f$ is an open embedding, $f^*$ preserves small limits, so it admits a left adjoint denoted by $f_!$. When $f$ is proper, $f_*$ preserves small colimits. In the following, we will focus on $\Shv(X;\Sp)$, the stable $\infty$-category of sheaves of spectra on $X$, which can be obtained by taking stabilization of $\Shv(X)$ (cf. \cite[\S1.4.2]{higher-algebra}).

Let $\Slch$ be the ordinary 1-category of locally compact Hausdorff spaces. There are two ways to define $f_!: \Shv(X;\Sp)\rightarrow \Shv(Y;\Sp)$ for any morphism $f:X\rightarrow Y$ in $\Slch$. One is to use the fact that $f_!$ is the left (resp. right) adjoint of $f^*$ for $f$ an open embedding (resp. proper), and decompose every morphism into the composition of an open embedding followed by a proper map. The construction needs to invoke the machinery developed in \cite{Nick}. The other way is to use the self-duality of $\Shv(X;\Sp)$ as a dualizable object in $\PrstL$ proved in \cite[Proposition 21.1.6.12, Proposition 21.1.7.1, Appendix D.7.3]{SAG} 
and $f_!$ is just the dual of $f^*$, and we also get $f^!$ directly as the right adjoint of $f_!$. We will adopt the latter definition, for it gives us a convenient way to follow the steps in \cite{Nick} to define the functor $\ShvSp$ of taking sheaves of spectra out of the category of correspondences in $\Slch$. We will denote the resulting functor as
\begin{align}\label{eq: ShvSp_!}
\ShvSp_!: & \Slch\rightarrow \bPrstL\\
\nonumber&X\mapsto \Shv(X; \Sp),\\
\nonumber& (f: X\rightarrow Y)\mapsto (f_!: \Shv(X;\Sp)\rightarrow \Shv(Y;\Sp)).
\end{align}
We state a lemma which will be used soon. 
\begin{lemma}\label{lemma: base change}
Let 
\begin{align*}
\xymatrix{W\ar[r]^{q}\ar[d]_{p} &X\ar[d]^{p'}\\
Y\ar[r]^{q'} &Z
}
\end{align*}
be any Cartesian diagram of locally compact Hausdorff spaces. 
\item[(i)]If  $q'$ is an open embedding (then so is $q$), then the natural transformation
\begin{align*}
p_!q^*\rightarrow (q')^*(p')_!: \Shv(X;\Sp)\rightarrow \Shv(Y;\Sp)
\end{align*}
arising from applying adjunctions to the natural isomorphism
\begin{align*}
(q')_!p_!\simeq p'_!q_!
\end{align*}
is an isomorphism of functors. 
\item[(ii)] If  $q'$ is proper (then so is $q$), then the natural transformation
\begin{align*}
(q')^*(p')_!\rightarrow p_!q^*: \Shv(X;\Sp)\rightarrow \Shv(Y;\Sp)
\end{align*}
arising from applying adjunctions to the natural isomorphism
\begin{align*}
(q')_*p_!\simeq p'_!q_*
\end{align*}
is an isomorphism of functors. 
\end{lemma}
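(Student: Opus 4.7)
The plan is to recognize both parts as instances of Beck--Chevalley base change attached to the Cartesian square, and to verify them via different geometric inputs. The underlying iso in both cases is functoriality of $(-)_!$ on the equal composites $q'p = p'q$, namely $(q')_!p_! \simeq p'_!q_!$; in (ii) this rewrites as $(q')_* p_! \simeq p'_! q_*$ because $q_* = q_!$ and $(q')_* = (q')_!$ (valid since $q, q'$ are proper). The maps of the lemma are the mates of this iso under different pairs of adjunctions, which is why they point in opposite directions.

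For part (i), the adjunctions $(q_!, q^*)$ and $((q')_!, (q')^*)$ are available because $q, q'$ are open embeddings. The Beck--Chevalley composite
\[
p_! q^* \longrightarrow (q')^*(q')_! p_! q^* \simeq (q')^* (p')_! q_! q^* \longrightarrow (q')^* (p')_!
\]
uses the unit of $((q')_!, (q')^*)$ and the counit of $(q_!, q^*)$. To prove it is an equivalence, I will identify $Y$ with the open subspace $q'(Y) \subseteq Z$ and $W = (p')^{-1}(Y)$ with the corresponding open subspace of $X$, so that $q, q'$ are literal inclusions. Then on any open $V \subseteq Y$ (also open in $Z$) both sides compute the sections of $\mathcal F$ over $p^{-1}(V) = (p')^{-1}(V)$ with proper support over $V$. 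In the $\infty$-categorical setup this is packaged by observing that $(q')^*$ is a smashing localization: it commutes with colimits and the $!$-pushforward along $p'$ of a sheaf pulled back from $W$ remains supported over $Y$, so restriction commutes with $(p')_!$ on such sheaves.

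For part (ii), the Beck--Chevalley composite
\[
(q')^*(p')_! \longrightarrow (q')^*(p')_! q_* q^* \simeq (q')^* (q')_* p_! q^* \longrightarrow p_! q^*
\]
uses the unit of $(q^*, q_*)$ and the counit of $((q')^*, (q')_*)$, its middle step being the iso $(q')_* p_! \simeq p'_! q_*$. I will prove it is an equivalence by reducing to classical proper base change, as follows. Factor $p'$ via a Deligne-style compactification $p' = \bar{p}' \circ j'$ with $j'$ an open embedding and $\bar{p}'$ proper (for LCH targets one can use a fiberwise one-point compactification), so that $(p')_! = \bar{p}'_* j'_!$. Pulling back along $q'$ gives two stacked Cartesian squares: the upper one has the open maps $j, j'$ as vertical arms, where part (i) applies (after transposing rows and columns), and the lower one has the proper map $\bar{p}'$ as vertical arm, where I invoke Lurie's proper base change theorem for $\Shv(-; \mathcal S)$ on locally compact Hausdorff spaces (\cite[\S 7.3]{higher-topoi}), stabilized to $\Sp$-valued sheaves. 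Concatenating the two resulting isomorphisms yields the equivalence $(q')^*(p')_! \simeq p_! q^*$, and one then checks that it agrees with the mate constructed above.

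The principal obstacle is the $\infty$-categorical proper base change input used in (ii); every other step is either formal adjunction manipulation, a check on sections over opens, or standard compactification. Part (i) is essentially formal from the smashing-localization character of restriction to an open subspace, while (ii) is the substantive geometric statement.
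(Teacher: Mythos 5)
Your part (i) is the same in spirit as the paper's, which dispatches it in one line by noting that for $q,q'$ open embeddings the statement is visible from the (co)sheaf description of $(-)_!$ and $(-)^*$; your remark about smashing localizations is a fleshed‑out version of the same observation.

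For part (ii) your route is genuinely different. The paper applies the self‑duality $\bbD$ of $\Shv(-;\Sp)$ in $\PrstL$, under which $f^*$ and $f_!$ are interchanged: the given transformation $(q')^*(p')_!\to p_!q^*$ is dual to $(p')^*q'_!\to q_!p^*$, which for $q,q'$ proper (so $q_!\simeq q_*$, $q'_!\simeq q'_*$) is precisely Lurie's nonabelian proper base change $(p')^*q'_*\to q_*p^*$. No compactification is needed, and no compatibility-of-mates check has to be done. Your compactification argument also works in outline, but two steps need more care than you give them. First, for the upper stacked square the identity you actually need is $\bar q^*\,j'_!\simeq j_!\,q^*$ (with $j,j'$ open vertical, $q,\bar q$ proper horizontal); transposing rows and columns and applying (i) would instead produce $q_!\,j^*\simeq (j')^*\bar q_!$, a different transformation. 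The fact you need — that extension by zero along an open embedding commutes with $*$-pullback — is true and elementary, but it is not literally an instance of (i), and invoking (ii) here would be circular. Second, your final sentence "one then checks that it agrees with the mate constructed above" conceals real work: showing that the pasted isomorphism from the two squares coincides with the global Beck--Chevalley mate is exactly the sort of coherence statement that the $(\infty,2)$-categorical machinery of \cite{Nick} is designed to handle, and it is not a formality one may wave away. The paper's duality argument sidesteps both of these issues at once, which is why it is so much shorter.
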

\begin{proof}
\item[(i)]  Since $q'$ and $q$ are open embeddings, this is clear from the definition of the functors. 
\item[(ii)] Applying the self-duality on the sheaf categories, it suffices to prove that the natural transformation
\begin{align*}
&(p')^*q'_!\rightarrow q_!p^*
\end{align*}
is an isomorphism of functors. 
Since $q'_!\simeq q'_*, q_!\simeq q_*$, this is just the stable version of the Nonabelian Proper Base Change Theorem \cite[Corollary 7.3.1.18]{higher-topoi}. 
\end{proof}

\subsection{Definition of $\ShvSp_{\all,\all}^{\propmap}: \Corr(\Slch)_{\all,\all}^{\propmap}\rightarrow (\bPrstL)^{\twoop}$}

We follow the steps in \cite[Chapter 5, Section 2]{Nick} to define the canonical functor
\begin{align*}
\ShvSp_{\all,\all}^{\propmap}: \Corr(\Slch)_{\all,\all}^{\propmap}\rightarrow (\bPrstL)^{\twoop},
\end{align*}
where the superscript $\twoop$ means reversing the 2-morphisms. Moreover, we will show that $\ShvSp_{\all,\all}^{\propmap}$ is canonically symmetric monoidal. 

First, let 
\begin{align*}
vert=horiz=\all,\ adm=\propmap, \ co\text{-}adm=\open,
\end{align*}
which satisfy the conditions in \cite[Chapter 7, 5.1.1-5.1.2]{Nick}. 
We start from the functor (\ref{eq: ShvSp_!})
\begin{align*}
\ShvSp_!:& \Slch\rightarrow \bPrstL
\end{align*}
First, by Lemma \ref{lemma: base change} (i), $\ShvSp_!$ satisfies the left Beck-Chevalley condition (cf. \cite[Chapter 7, Definition 3.1.2]{Nick}) with respect to open maps, therefore by Theorem 3.2.2 in \emph{loc. cit.}, it uniquely determines a functor
\begin{align*}
\ShvSp_{\all,\open}^{\open}: \Corr(\Slch)_{\all,\open}^{\open}\rightarrow \bPrstL.
\end{align*}
Now by Theorem 4.1.3 in \emph{loc. cit.}, the restriction of $\ShvSp_{\all,\open}^{\open}$ to 
\begin{align*}
\ShvSp_{\all,\open}^{\isom}: \Corr(\Slch)_{\all,\open}^{\isom}\rightarrow \PrstL
\end{align*}
loses no information. 

Next, we apply \cite[Chapter 7, Theorem 5.2.4]{Nick} to give the canonical extension of $\ShvSp_{\all,\open}^{\isom}$ to 
\begin{align*}
\ShvSp_{\all, \all}^{\propmap}: \Corr(\Slch)_{\all,\all}^{\propmap}\rightarrow (\bPrstL)^{\twoop}.
\end{align*}
Note that the restriction $\ShvSp_{\all,\open}^{\isom}|_{(\Slch)_{vert}}$ satisfies the left Beck-Chevalley condition for $adm=\propmap\subset vert=\all$ if we view the target as $(\bPrstL)^{\twoop}$. This follows from Lemma \ref{lemma: base change} (ii). Hence to apply the theorem,  we only need to check the conditions in Chapter 7, 5.1.4 and  5.2.2 in \emph{loc. cit.}. 

The condition in 5.1.4 in our setting says the following. For any $\alpha: X\rightarrow Y$ in $horiz=\all$, consider the ordinary 1-category $\Factor(\alpha)$ of factorizations of $\alpha$ into an open embedding followed by a proper map: 
\begin{itemize}
\item[(i)] An object in $\Factor(\alpha)$ is a sequence of maps 
\begin{align*}
X\overset{f}{\rightarrow }Z\overset{g}{\rightarrow }Y
\end{align*}
such that $g\circ f=\alpha$, $f\in \open$ and $g\in \propmap$. 
\item[(ii)] A morphism from 
\begin{align*}
X\overset{f}{\rightarrow }Z\overset{g}{\rightarrow }Y
\end{align*}
to 
\begin{align*}
X\overset{f'}{\rightarrow }Z'\overset{g'}{\rightarrow }Y
\end{align*}
is a morphism $\beta: Z\rightarrow Z'$ that makes the following diagram commutes
\begin{align*}
\xymatrix{&Z\ar[dr]^{g}\ar[dd]^{\beta}&\\
X\ar[ur]^{f}\ar[dr]_{f'}&&Y\\
&Z'\ar[ur]_{g'}&
}.
\end{align*}
\end{itemize}
We need to show that $\Factor(\alpha)$ is contractible. Recall that a 1-category is \emph{contractible} if the geometric realization of its nerve is contractible.  
\begin{lemma}
For any $\alpha$, $\Factor(\alpha)$ is contractible.
\end{lemma}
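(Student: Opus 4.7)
The plan is to show that the ordinary 1-category $\Factor(\alpha)$ is cofiltered, from which contractibility of the nerve follows automatically (a cofiltered category is the opposite of a filtered one, and the two have homeomorphic geometric realizations). Cofilteredness of a nonempty category reduces to two conditions: any two objects admit morphisms to a common source, and any two parallel morphisms can be equalized by a morphism from a third object.

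For nonemptiness I would use the graph compactification. Since $X\in\Slch$, its one-point compactification $X^+$ is compact Hausdorff; let $\Gamma_\alpha\subset X\times Y$ denote the graph of $\alpha$ and set $Z:=\overline{\Gamma_\alpha}^{X^+\times Y}$. Then $Z$ lies in $\Slch$ as a closed subspace of an LCH space, the map $X\to Z$, $x\mapsto(x,\alpha(x))$, is a homeomorphism onto $Z\cap(X\times Y)=\Gamma_\alpha$ (the equality using that $\Gamma_\alpha$ is already closed in $X\times Y$ by Hausdorffness of $Y$), and $Z\cap(X\times Y)$ is open in $Z$, so $X\hookrightarrow Z$ is an open embedding; meanwhile the second projection $Z\to Y$ is proper as the closed restriction of the proper map $X^+\times Y\to Y$.

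For the common-source property, given two factorizations $X\xrightarrow{f_i} Z_i\xrightarrow{g_i}Y$ ($i=1,2$), let $Z_3$ be the closure in $Z_1\times_Y Z_2$ of $\Delta_{X,12}:=\{(f_1(x),f_2(x)):x\in X\}$; the two projections $Z_3\to Z_i$ then serve as the required morphisms in $\Factor(\alpha)$. Properness of $Z_3\to Y$ is immediate from properness of $Z_1\times_Y Z_2\to Y$ together with closedness of $Z_3$. The nontrivial point is that the diagonal $(f_1,f_2):X\hookrightarrow Z_3$ is an open embedding; this reduces to the identity $Z_3\cap(f_1(X)\times f_2(X))=\Delta_{X,12}$, which in turn follows from a short Hausdorffness argument: any net $(f_1(x_\nu),f_2(x_\nu))$ converging to some $(f_1(x_1),f_2(x_2))$ forces $x_\nu\to x_1$ and $x_\nu\to x_2$ (using that each $f_i$ is a homeomorphism onto its open image), hence $x_1=x_2$. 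For the equalizer property, given parallel $\beta_1,\beta_2:Z\to Z'$ in $\Factor(\alpha)$, the subspace $E:=\{z\in Z:\beta_1(z)=\beta_2(z)\}$ is closed in $Z$ (since $Z'$ is Hausdorff), contains $f(X)$ as an open subset (since $\beta_i\circ f=f'$), and maps properly to $Y$ (closed restriction of $g$); so $(E,f,g|_E)$ lies in $\Factor(\alpha)$ and the inclusion $E\hookrightarrow Z$ is the desired morphism equalizing $\beta_1$ and $\beta_2$.

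The only genuinely subtle step is the openness check in the common-source construction: the naive fiber product $Z_1\times_Y Z_2$ contains spurious off-diagonal points $(f_1(x_1),f_2(x_2))$ whenever $\alpha(x_1)=\alpha(x_2)$ with $x_1\neq x_2$, and it is precisely the passage to $\overline{\Delta_{X,12}}$ that cuts these out within the open locus $f_1(X)\times f_2(X)$; the Hausdorff limit argument above is what ensures no such off-diagonal point lies in the closure of $\Delta_{X,12}$ over the open locus. Everything else is formal manipulation with open embeddings, proper maps, and closed subsets in $\Slch$.
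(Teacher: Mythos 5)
Your proof is correct, but it takes a genuinely different route from the paper. The paper follows the strategy of \cite[Chapter 5, Proposition 2.1.6]{Nick}: after establishing nonemptiness via the same graph-compactification construction you use, it passes to the full subcategory $\Factor(\alpha)_{\dense}$ of factorizations with $\overline{f(X)} = Z$, observes that the inclusion $\Factor(\alpha)_{\dense}\hookrightarrow\Factor(\alpha)$ has a right adjoint (replace $Z$ by $\overline{f(X)}$), and then shows $\Factor(\alpha)_{\dense}$ has binary products — this makes the diagonal functor a left adjoint, hence a homotopy equivalence after realization, which forces contractibility. You instead prove that $\Factor(\alpha)$ itself is cofiltered by verifying both conditions directly: a common source (your $Z_3 = \overline{\Delta_{X,12}}$ inside $Z_1\times_Y Z_2$), and the equalizer condition (the closed equalizer $E\subset Z$, which the paper never needs to consider because it works with products and adjoints rather than checking cofilteredness). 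Your equalizer argument is clean: $E$ is closed in $Z$ so proper over $Y$, and $f(X)\subseteq E$ is still open in $E$ since it was open in $Z$. Your open-embedding check for $X\hookrightarrow Z_3$ correctly identifies the subtle point — one must rule out off-diagonal limit points $(f_1(x_1),f_2(x_2))$ with $x_1\neq x_2$ inside the open locus $f_1(X)\times f_2(X)$, and the net argument using that each $f_i$ is a homeomorphism onto an open subset and $X$ is Hausdorff does this correctly. The tradeoff: the paper's proof piggybacks on a formal adjoint trick and avoids an explicit equalizer check but introduces the auxiliary subcategory $\Factor(\alpha)_{\dense}$; yours works with $\Factor(\alpha)$ directly and is more self-contained (relying only on the standard fact that cofiltered categories have contractible nerves), at the cost of verifying one extra axiom.
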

\begin{proof}
The proof follows the same line as the proof of \cite[Chapter 5, Proposition 2.1.6]{Nick}, but significantly simpler than the scheme setting there. First, one shows that $\Factor(\alpha)$ is nonempty. Choose a compactification $\overline{X}$ of $X$, say the one-point compactification,
 and let 
\begin{align*}
Z=\overline{\Graph(\alpha)}\subset \overline{X}\times Y.
\end{align*} 
Then the natural embedding $X\hookrightarrow Z$ is open and the projection from $Z$ to $Y$ is proper, and their composition is $\alpha$. 

Let $\Factor(\alpha)_{\dense}$ be the full subcategory of $\Factor(\alpha)$ whose objects are factorizations $X\overset{f}{\rightarrow} Z\overset{g}{\rightarrow} Y$ with $\overline{f(X)}=Z$. Because of the transitivity of taking closures for any sequence of maps $A\overset{u}{\rightarrow} B\overset{v }{\rightarrow} C$, i.e. $\overline{v\circ u(A)}=\overline{v(\overline{u(A)})}$. The inclusion $\Factor(\alpha)_{\dense}\hookrightarrow \Factor(\alpha)$ admits a right adjoint, so one just need to show that  $\Factor(\alpha)_{\dense}$ is contractible. 

Lastly, one shows that $\Factor(\alpha)_{\dense}$ admits products, so then we have a functor $\Factor(\alpha)_{\dense}\times \Factor(\alpha)_{\dense}\rightarrow \Factor(\alpha)_{\dense}$ that admits a left adjoint, and this would imply $\Factor(\alpha)_{\dense}$ is contractible. For any two factorizations in $\Factor(\alpha)_{\dense}$,
\begin{align}\label{eq: product of two}
X\overset{f}{\rightarrow }Z\overset{g}{\rightarrow }Y,\ 
X\overset{f'}{\rightarrow }Z'\overset{g'}{\rightarrow }Y,
\end{align}
let $W$ be the closure of the image of $X$ in $Z\underset{Y}{\times }Z'$ under $(f,f')$. One can easily check that the factorization $X\rightarrow W\rightarrow Y$ serves as the product of the two in (\ref{eq: product of two}).
\end{proof}

Next, we check the condition of Chapter 7, 5.2.2 in \emph{loc. cit.}, which says that for any Cartesian diagram
\begin{align*}
\xymatrix{X_1\ar[r]^{j_X}\ar[d]_{p_1}&X_2\ar[d]^{p_2}\\
Y_1\ar[r]^{j_Y}&Y_2
}
\end{align*}
with $j_X,j_Y\in \open$ and $p_1,p_2\in \propmap$, the natural transformation
\begin{align*}
&p_1^*j_Y^!\rightarrow j_X^!p_2^*
\end{align*}
arising from applying adjunctions to the isomorphism from base change
\begin{align*}
 j_Y^!(p_2)_*\simeq (p_1)_*j_X^!
 \end{align*}
needs to be an isomorphism. Since $j_X^!\simeq j_X^*, j_Y^!\simeq j_Y^*$, the condition obviously holds. 

Having finished the check of the conditions for applying \cite[Chapter 7, Theorem 5.2.4]{Nick}, we now obtain the canonical functor 
\begin{align*}
\ShvSp_{\all, \all}^{\propmap}: \Corr(\Slch)_{\all,\all}^{\propmap}\rightarrow (\bPrstL)^{\twoop}.
\end{align*}

\subsection{Symmetric monoidal structure on $\ShvSp_{\all,\all}^{\propmap}$}
\subsubsection{The symmetric monoidal structure on $\ShvSp_!: \Slch\rightarrow \PrstL$.}
Let 
\begin{align*}
\Shv^*: &\Slch^{op}\rightarrow \PrL\\
&X\mapsto \Shv(X)\\
&(f:X\rightarrow Y)\mapsto (f^*: \Shv(Y)\rightarrow \Shv(X))
\end{align*}
be the functor associated to $*$-pullback.  It is proven in \cite{higher-topoi} that the functor $\Shv^*$ is symmetric monoidal (\cite[Proposition 7.3.1.11]{higher-topoi}). Since $\ShvSp_!$ is the composition of $\Shv^*$ with the functor $\PrL\rightarrow \PrstL$ of taking stabilizations, followed by the functor of taking the dual category on the full subcategory of dualizable objects in $\PrstL$,  we have 
\begin{align*}
\ShvSp_!: \Slch\rightarrow \PrstL
\end{align*}
is symmetric monoidal as well. 

Now applying the same argument as in \cite[Chapter 9, Proposition 3.1.2 and 3.1.5]{Nick}, we get that the symmetric monoidal structure on $\ShvSp_!$ uniquely extends to a symmetric monoidal structure on $\ShvSp_{\all,\open}^{\isom}$, which further uniquely extends to a symmetric monoidal structure on $\ShvSp_{\all, \all}^{\propmap}$. In summary, we have proved the following:

\begin{thm}\label{thm: ShvSp symm. monoidal}
 The functor 
\begin{align}\label{eq: functor ShvSp prop}
\ShvSp_{\all, \all}^{\propmap}: \Corr(\Slch)_{\all,\all}^{\propmap}\rightarrow (\bPrstL)^{\twoop}
\end{align}
is equipped with a canonical symmetric monoidal structure. 
\end{thm}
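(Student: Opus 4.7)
The plan is to bootstrap the symmetric monoidal structure from the starting functor $\Shv^*$ through each of the intermediate stages of the construction, following the same path by which the $1$-categorical functor $\ShvSp_{\all,\all}^{\propmap}$ was built. First I would promote $\Shv^*\colon (\Slch)^{op}\to \PrL$ to a symmetric monoidal functor, which is available by \cite[Proposition 7.3.1.11]{higher-topoi} (where the Cartesian symmetric monoidal structure on $\Slch$ corresponds to the tensor product $\otimes$ on $\PrL$). Composing with the symmetric monoidal stabilization functor $\Sp\colon \PrL\to \PrstL$ promotes this to a symmetric monoidal functor landing in $\PrstL$. Since each $\Shv(X;\Sp)$ is dualizable in $\PrstL$ (so that self-duality produces $f_!$ as the dual of $f^*$), dualization on the full subcategory of dualizable objects yields the symmetric monoidal structure on $\ShvSp_!\colon \Slch\to \PrstL$.

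Next I would propagate this structure to $\Corr(\Slch)_{\all,\open}^{\isom}$. The idea is to apply the symmetric monoidal enhancement of the universal property used to build $\ShvSp_{\all,\open}^{\open}$ in the first place, namely \cite[Chapter 9, Proposition 3.1.2]{Nick}. That proposition asserts that if the $1$-categorical input satisfies the left Beck--Chevalley condition with respect to the chosen class and is itself symmetric monoidal, then the extension to the relevant category of correspondences is canonically symmetric monoidal. The Beck--Chevalley condition with respect to $\open$ is the content of Lemma~\ref{lemma: base change}(i), and symmetric monoidality of $\ShvSp_!$ is the previous step; moreover, the symmetric monoidal structure on $\Slch$ is Cartesian, so it restricts to a symmetric monoidal structure on $\Corr(\Slch)_{\all,\open}^{\isom}$ via the general machinery of Subsection~\ref{subsec: monoidal Corr}. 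This gives the symmetric monoidal functor $\ShvSp_{\all,\open}^{\isom}\colon \Corr(\Slch)_{\all,\open}^{\isom}\to \PrstL$.

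Then I would apply \cite[Chapter 9, Proposition 3.1.2]{Nick} a second time, but now in its $(\infty,2)$-categorical, right-lax enhancement, to upgrade to the full $\ShvSp_{\all,\all}^{\propmap}$. The necessary input at this stage is the Beck--Chevalley condition for the class $\propmap$ and the compatibility condition concerning Cartesian squares of an open embedding and a proper map; these are respectively Lemma~\ref{lemma: base change}(ii) (transplanted into $(\bPrstL)^{\twoop}$) and the isomorphism $j^*\simeq j^!$ for open embeddings which was verified in the construction of $\ShvSp_{\all,\all}^{\propmap}$. The factorization category $\Factor(\alpha)$ remains contractible, so the extension is unique. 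Together these produce the canonical symmetric monoidal structure on $\ShvSp_{\all,\all}^{\propmap}$ as a functor to $(\bPrstL)^{\twoop}$.

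The main obstacle I anticipate is not any single verification but rather assembling the bookkeeping that each application of \cite[Chapter 9, Proposition 3.1.2]{Nick} really applies in the $(\infty,2)$-categorical and right-lax setting at the second step, so that the symmetric monoidal structure is carried through the $\twoop$-flip on target $2$-morphisms. In particular, one must ensure that the symmetric monoidal structure on the source $\Corr(\Slch)_{\all,\all}^{\propmap}$ defined via Subsection~\ref{subsec: monoidal Corr} matches the one implicit in the target of the universal property, and that the admissible class $\propmap$ is stable under fiber products with arbitrary morphisms (which is automatic since $\propmap$ is closed under base change in $\Slch$). Once these are in place, the theorem follows formally.
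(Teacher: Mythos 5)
Your proposal follows essentially the same route as the paper: establish symmetric monoidality of $\Shv^*$ via \cite[Proposition 7.3.1.11]{higher-topoi}, pass to $\ShvSp_!$ by composing with stabilization and dualization on dualizable objects, and then apply \cite[Chapter 9, Proposition 3.1.2]{Nick} twice to extend through $\ShvSp_{\all,\open}^{\isom}$ and then to $\ShvSp_{\all,\all}^{\propmap}$. The additional verifications you flag (Beck--Chevalley, contractibility of $\Factor(\alpha)$, compatibility of $j^*\simeq j^!$) are exactly the inputs the paper has already supplied in the preceding construction of the underlying functor, so your account matches the paper's proof.
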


\begin{proof}[Proof of Theorem \ref{thm: ShvSp}]
$\ShvSp^*_!$ is defined as the restriction of $\ShvSp_{\all, \all}^{\propmap}$ to $adm=\isom$. 
\end{proof}

By an entire similar process, one gets a canonical symmetric monoidal functor $\Corr(\Slch)_{\all,\all}^{\propmap}\rightarrow \bPrstR$, by taking $!$-pullback and $*$-pushforward. Alternatively, by taking $\oneop$ on both sides of (\ref{eq: functor ShvSp prop}), using that $(\bPrstL)^{\onetwoop}\simeq \bPrstR$ and $(\Corr(\Slch)_{\all,\all}^{\propmap})^{\oneop}\simeq \Corr(\Slch)_{\all,\all}^{\propmap}$, we get $\ShvSp^!_*$ as well. Restricting $adm=\isom$, this is exactly Corollary \ref{cor: ShvSp!*}.

\subsubsection{Symmetric monoidal structure on taking local system categories out of correspondences}
Now we restrict our vertical arrows to locally trivial fibrations, denoted by $\fib$, and consider the functor induced from $\ShvSp_{\all,\all}^{\propmap}$ by restricting the image to the full subcategory of locally constant sheaves:  
\begin{align}\label{eq: Loc,!,*}
(\Loc')_{!}^*: &\Corr(\Slch)_{\fib,\all}\rightarrow \PrstL\\
\nonumber&X\mapsto \Loc'(X;\Sp).
\end{align}
It is a symmetric monoidal functor, since $\boxtimes: \Loc'(X;\Sp)\otimes \Loc'(Y;\Sp)\to \Loc'(X\times Y;\Sp)$ is an equivalence. 

On the other hand, we have the symmetric monoidal functor $\Loc_*^!$ (\ref{eq: Loc_*up!}) already defined in Subsection \ref{subsec: intro correspondences}. 

Later, we also use the natural symmetric monoidal functor $\Loc\Sp_!: \Spc\rightarrow \PrstL$ that takes a CW-complex $X$ to $\Loc(X;\Sp)$ and $f: X\rightarrow Y$ to $f_!$ (with right adjoint $f^!$). Here again, we regard a local system on $X$ as a locally constant cosheaf, through the equivalence $\Loc(X, \Sp)\simeq \Fun(X; \Sp)$ (cf. Remark \ref{remark:locconstcoshv}). When $X\in \Slch$, the constant cosheaf with fiber $\bS$ is exactly the dualizing sheaf $\varpi_X:=f^!\bS_{pt}$, for the map $f: X\rightarrow\pt$. Then $f_!\varpi_X$ is the homology of $X$ (with coefficient in $\bS$), i.e. $\Sigma_+^\infty X$, while $f_*\varpi_X$ is the Borel-Moore homology of $X$, i.e. $\Sigma^\infty X^c$, where $X^c$ is the one-point compactification of $X$ as a pointed space. For example, if $X=V$ is a finite dimensional vector space, then $f_!$ and $f^!$ are inverse equivalences between $\Loc(V;\Sp)$ and $\Loc(pt;\Sp)\simeq \Sp$, so $f_!\varpi_V\simeq f_!f^!\bS\simeq \bS$. On the other hand, let $j: V\hookrightarrow V^c$ and $i: \{x\}\hookrightarrow V^c$ be the open and closed inclusions, where $\{x\}$ is the complement of $V$ in $V^c$. Let $\overline{f}: V^c\to pt$ be the obvious map.  Using the standard fiber sequence 
\begin{align*}
i_*\bS_{x}\to \varpi_{V^c}\to j_*\varpi_{V}
\end{align*} 
in $\Shv(V^c;\Sp)$ 
and applying $\overline{f}_*\simeq \overline{f}_!$, we get a fiber sequence in $\Sp$, 
\begin{align*}
\bS\to \overline{f}_!\varpi_{V^c}\to f_*\varpi_V
\end{align*}
which says that $f_*\varpi_V$ is exactly calculating the reduced homology of $V^c$, i.e. $\Sigma^\infty V^c$.

\section{Quantization of the Hamiltonian $\coprod\limits_nBO(n)$-action and the $J$-homomorphism}\label{sec: quantization}
In this section, we use sheaves of spectra to quantize the Hamiltonian $\coprod\limits_nBO(n)$-action and  its ``module" generated by the stabilization of $L_0=\Graph(d(-\frac{1}{2}|\bq|^2))$ in $T^*\bR^M$.  This is  an application of the various results that we have developed in Section \ref{section: background} concerning (commutative) algebra/module objects in $\Corr(\cC)$ and the morphisms between them, for the case $\cC=\Slch$. We first list the relevant algebra/module objects in $\Corr(\Slch)_{\fib,\all}$ and morphisms between them, and then we give a model of the $J$-homomorphism based on correspondences, and the quantization result will be an immediate consequence of these.

\subsection{The relevant algebra/module objects in $\Corr(\Slch)_{\fib,\all}$}
Recall that we let $A$ denote a quadratic form on $\bR^N$ whose symmetric matrix relative to the standard basis of $\bR^N$ is idempotent. We equally view $A$ as the eigenspace of eigenvalue $1$ of its  symmetric matrix and in this way $A$ is also regarded as an element of $\coprod\limits_nBO(n)$. 

We first define the $\Fin_*$-object $G_N^\bullet$ in $\Slch$. Let
\begin{align*}
G_N^{\lng n\rng}=\{(A_1,\cdots,A_n): A_i\subset \bR^N, A_j\perp A_k\text{ for }k\neq j\}.
\end{align*}
Here $G_N^{\lng 0\rng}=\pt$. 
For any $f: \lng n \rng\rightarrow\lng m\rng$ in $N(\Fin_*)$, define the associated morphism
\begin{align*}
G_N^{\lng n\rng}&\rightarrow G_N^{\lng m\rng}\\
(A_i)_{i\in\lng n\rng^\circ}&\mapsto (\bigoplus\limits_{i\in f^{-1}(j)}A_i)_{j\in\lng m\rng^\circ}. 
\end{align*}
Here we take the convention that if $ f^{-1}(j)=\emptyset$, then $\bigoplus\limits_{i\in f^{-1}(j)}A_i=0$. It is easy to see that $G_N^\bullet$ is a well defined $\Fin_*$-object in $\Slch$. 

\begin{lemma}
The $\Fin_*$-object $G_N^{\bullet}$ represents a commutative algebra object in\\
 $\Corr(\Slch)_{\fib,\all}$ in the sense of Theorem \ref{thm: algebra objs}. 
\end{lemma}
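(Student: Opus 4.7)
The plan is to invoke Theorem \ref{thm: algebra objs}(ii) applied to the $\Fin_*$-object $G_N^\bullet$, so the task reduces to checking that for every active morphism $f: \langle m\rangle \to \langle n\rangle$ in $N(\Fin_*)$, the square (\ref{thm: diagram Cartesian 2}) with $C^\bullet = G_N^\bullet$ is Cartesian in $\Slch$. One must also verify, for the lemma as stated, that the vertical arrows in the resulting multiplication correspondences are locally trivial fibrations, so that the algebra object genuinely lives in $\Corr(\Slch)_{\fib,\all}$ and not merely in $\Corr(\Slch)$.

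For the Cartesian property, I will unwind both sides pointwise. An element of the fiber product on the right is a pair $\bigl(\,((A_j^i)_{j\in f^{-1}(i)})_{i\in\langle n\rangle^\circ},\,(B_1,\dots,B_n)\,\bigr)$ in which each $(A_j^i)_{j\in f^{-1}(i)}$ is a mutually orthogonal tuple in $\bR^N$, $(B_1,\dots,B_n)$ is a mutually orthogonal tuple in $\bR^N$, and $\bigoplus_{j\in f^{-1}(i)} A_j^i = B_i$ for every $i\in\langle n\rangle^\circ$. The inverse to the canonical map out of $G_N^{\langle m\rangle}$ is the assignment $(A_j)_{j\in\langle m\rangle^\circ} \mapsto$ the above data with $A_j := A_j^{f(j)}$. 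The reconstructed tuple $(A_j)_{j\in\langle m\rangle^\circ}$ is globally orthogonal: if $f(j) = f(j')$ then $A_j \perp A_{j'}$ inside $B_{f(j)}$ by hypothesis, while if $f(j) \neq f(j')$ then $A_j \subset B_{f(j)} \perp B_{f(j')} \supset A_{j'}$. Both directions of the bijection are combinations of projections and closed subspace inclusions in ambient products of Grassmannians, hence continuous, giving the desired homeomorphism.

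For the fibration condition, each morphism $G_N^{\langle m\rangle} \to G_N^{\langle n\rangle}$ induced by an active $f$ decomposes over connected components indexed by dimension vectors, and on each component sends a dimension tuple $(k_1,\dots,k_m)$ to $\bigl(\sum_{j\in f^{-1}(i)}k_j\bigr)_i$. The fiber over $(B_1,\dots,B_n)$ is the product over $i$ of the orthogonal partial flag variety parametrizing decompositions of $B_i$ into orthogonal pieces of fixed dimensions $\{k_j : j\in f^{-1}(i)\}$, which is a homogeneous space of the form $O(\dim B_i)/\prod_{j\in f^{-1}(i)} O(k_j)$. Since $G_N^{\langle n\rangle}$ is itself a finite product of Grassmannians and these are classical homogeneous fiber bundles, the map is locally trivial.

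The main obstacle is only careful bookkeeping; the essential geometric input is just the elementary fact that any orthogonal direct sum decomposition uniquely refines through any coarser orthogonal decomposition, which is what forces the compatibility datum to glue to a single orthogonal tuple in $G_N^{\langle m\rangle}$.
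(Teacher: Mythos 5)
Your proof is correct and takes exactly the approach the paper intends — the paper's own proof just cites Theorem~\ref{thm: algebra objs} and declares the two conditions (Cartesian-ness of (\ref{thm: diagram Cartesian 2}) and the vertical maps lying in $\fib$) ``straightforward,'' whereas you supply the verification. Your unwinding of the fiber product and the observation that an orthogonal tuple in $G_N^{\lng m\rng}$ is uniquely reconstructed from the compatibility data because distinct $B_i$'s are already mutually orthogonal is the essential point.

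One small factual slip: $G_N^{\lng n\rng}$ for $n\geq 2$ is \emph{not} a finite product of Grassmannians — the mutual-orthogonality constraint couples the factors, so each connected component is a partial flag variety $O(N)/(\prod_i O(k_i)\times O(N-\sum_ik_i))$, and $G_N^{\lng n\rng}$ is a disjoint union of these. Your conclusion still stands and the fiber identification is right, but the correct justification for local triviality is that on each component the active-induced map $G_N^{\lng m\rng}\to G_N^{\lng n\rng}$ is a quotient map between compact homogeneous $O(N)$-spaces (a coarsening of the flag), and such quotients $O(N)/H_1\to O(N)/H_2$ with $H_1\subset H_2$ closed are always fiber bundles with fiber $H_2/H_1$ — not a consequence of being a product of Grassmannians.
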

\begin{proof}
By Theorem \ref{thm: algebra objs}, we just need to check that every diagram (\ref{thm: diagram Cartesian 2}) is Cartesian and the vertical maps are in $\fib$, for any active morphism $f: \lng m\rng\rightarrow\lng n\rng$, but this is straightforward. 
\end{proof}
\begin{remark}
We can also use a simplicial object $G_N^\bullet$ to represent the associative algebra structure on $G_N$ in $\Corr(\Slch)_{\fib,\all}$. The $\Fin_*$-object  (resp. simplicial object) $G_N^{\bullet}$ is not a commutative Segal object (resp. Segal object) in $\Slch$, but it represents a commutative (resp. associative) algebra object in $\Corr(\Slch)_{\fib,\all}$. 
\end{remark}

Similarly to $G_N^\bullet$, let 
\begin{align*}
\widehat{G}_{N,M}^{\langle n\rangle}=&\{(A_1,\cdots, A_n; \bq,\bq+\sum\limits_{i=1}^n \partial_{\bp} A_i(\bp_i); t=\sum\limits_{i=1}^n A_i(\bp_i)): \bp_i\in \bR^M\}\\
&\subset G_N^{\langle n\rangle}\times \bR^M\times \bR^M\times \bR
\end{align*}
Equivalently, we can denote an element in $\widehat{G}_{N,M}^{\langle n\rangle}$ by
\begin{align*}
(A_1,\cdots,A_n;\bp_1,\cdots,\bp_n;\bq,t), \bp_i\in A_i, t=\sum\limits_{i=1}^n|\bp_i|^2. 
\end{align*}
Note that $\varinjlim\limits_{N,M}\widehat{G}_{N,M}^{\lng 1\rng}$ is the front projection of the (stabilized) conic Lagrangian lifting of the graph of the Hamiltonian action of $\coprod\limits_nBO(n)$ (see the beginning of Subsection \ref{subsec: Morse transformations} for the definition of conic Lagrangian liftings). 
For any $f: \langle n\rangle\rightarrow \langle m\rangle$ in $N(\Fin_*)$, we define
\begin{align*}
&\widehat{G}_{N,M}^{\langle n\rangle}\rightarrow \widehat{G}_{N,M}^{\langle m\rangle}\\
&((A_i)_{i\in \lng n\rng^\circ}; \bq,\bq+\sum\limits_{i=1}^n \partial_{\bp} A_i(\bp_i); t=\sum\limits_{i=1}^n A_i(\bp_i)))\mapsto\\
& ((\bigoplus\limits_{i\in f^{-1}(j)}A_i)_{j\in \langle m\rangle^\circ}; \bq,\bq+\sum\limits_{i\in f^{-1}(\langle m\rangle^\circ)}\partial_{\bp}A_i(\bp_i); t=\sum\limits_{i\in f^{-1}(\langle m\rangle^\circ)}A_i(\bp_i)).
\end{align*}
It is direct to check that $\widehat{G}_{N,M}^\bullet$ defines a $\Fin_*$-object in $\Slch$. 
\begin{lemma}
The $\Fin_*$-object $\widehat{G}_{N,M}^\bullet$ defines a commutative algebra object in $\Corr(\Slch)_{\fib,\all}$ in the sense of Theorem \ref{thm: algebra objs}, and the natural projection $\widehat{G}_{N,M}^\bullet\rightarrow G_N^\bullet$, viewed as a correspondence from $\widehat{G}_{N,M}^\bullet$ to $G_N^\bullet$ defines an algebra homomorphism from $\widehat{G}_{N,M}^\bullet$ to $G_N^\bullet$ in $\Corr(\Slch)_{\fib,\all}$ in the sense of Theorem \ref{thm: right-lax}.
\end{lemma}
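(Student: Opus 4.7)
The plan is to apply Theorem~\ref{thm: algebra objs}(ii) to establish the first assertion, and Theorem~\ref{thm: right-lax} to establish the second. For the first, I verify for every active $f:\lng m\rng\to\lng n\rng$ that the diagram (\ref{thm: diagram Cartesian 2}) for $C^\bullet=\widehat{G}_{N,M}^\bullet$ is Cartesian and that its vertical maps lie in $\fib$. The Cartesian property follows from an explicit set-theoretic computation: a compatible pair consisting of a bottom-left element $((B_i),\bq,\bp'_i)$ and a top-right element $\prod_i((A^{(i)}_k)_{k\in f^{-1}(i)},\bq_i,\bp^{(i)}_k)$ agreeing in the bottom-right $\prod_i\widehat{G}_{N,M}^{\{i,*\}}$ is forced, by the very presence of the $(\bq_i,\bp_k^{(i)})$-data in the bottom-right, to satisfy $\bq_i=\bq$ for every $i$ and $\sum_{k\in f^{-1}(i)}\bp^{(i)}_k=\bp'_i$ with $\sum_k|\bp^{(i)}_k|^2=|\bp'_i|^2$. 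Together with the orthogonality of the $B_i$'s, this yields a (unique) mutually orthogonal tuple $(A^{(i)}_k)_{i,k}$, hence the required canonical lift to $\widehat{G}_{N,M}^{\lng m\rng}$. The fibration property is verified by stratifying source and target by the dimension profiles of the component subspaces; on each such stratum the vertical map is a smooth fiber bundle whose typical fiber is a product of flag-type Grassmannians parametrizing the orthogonal decompositions $B_i=\bigoplus_{k\in f^{-1}(i)} A_k^{(i)}$, which splits off as well the linear $\bp$-data compatibly.

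For the second assertion, I interpret the projection as the correspondence
\[
G_N^\bullet\xleftarrow{\text{proj}}\widehat{G}_{N,M}^\bullet\xrightarrow{\id}\widehat{G}_{N,M}^\bullet
\]
and apply Theorem~\ref{thm: right-lax} with $C^\bullet=W^\bullet=\widehat{G}_{N,M}^\bullet$ and $D^\bullet=G_N^\bullet$. The hypothesis that $W^\bullet$ is a commutative algebra object is precisely the first assertion already proved. Diagram (\ref{lemma: diagram Cart 2 comm}) is tautologically Cartesian because $W\to C$ is the identity, and this is exactly what promotes a right-lax homomorphism to a strict one. It remains to verify (\ref{lemma: diagram Cart comm}), namely that the natural map
\[
\widehat{G}_{N,M}^{\lng n\rng}\;\longrightarrow\;G_N^{\lng n\rng}\underset{\prod_j G_N^{\{j,*\}}}{\times}\prod_j\widehat{G}_{N,M}^{\{j,*\}}
\]
is a homeomorphism for every $\lng n\rng$. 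Injectivity is immediate, and the $A_i$-coordinates identify; the $\bp_i$'s are recovered from the individual factors of the top-right via the inert projections; the orthogonality constraint is imposed by the bottom-left.

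The main obstacle I anticipate is in (\ref{lemma: diagram Cart comm}): since the bottom row of the square retains only the subspace data, the $\bq_j$-coordinates appearing in the factors of $\prod_j\widehat{G}_{N,M}^{\{j,*\}}$ are not, at face value, forced by the fiber product to agree, whereas a point of $\widehat{G}_{N,M}^{\lng n\rng}$ carries a single $\bq$. The resolution, which will be the delicate technical step, is to exploit that the inert projections $\widehat{G}_{N,M}^{\lng n\rng}\to\widehat{G}_{N,M}^{\{j,*\}}$ all transport the same $\bq$ and to combine this coherence with the $\Fin_*$-functoriality of $\widehat{G}_{N,M}^\bullet$ so that the Cartesian identification holds on the nose at the $\infty$-categorical level. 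Once this coherence is unwound, the two hypotheses of Theorem~\ref{thm: right-lax} are satisfied and the projection defines the stated algebra homomorphism.
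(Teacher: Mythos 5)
Your verification of the first assertion (that $\widehat{G}_{N,M}^\bullet$ represents a commutative algebra object via Theorem~\ref{thm: algebra objs}(ii)) matches the paper's approach and is correct: the one $\bq$-coordinate is pinned down by the bottom-left corner of (\ref{thm: diagram Cartesian 2}), which passes to every factor of the bottom-right. For the second assertion, however, the "obstacle" you flag is not a technicality to be massaged away — it is a genuine failure of (\ref{lemma: diagram Cart comm}) for $n\geq 2$. The pullback $G_N^{\lng n\rng}\underset{\prod_j G_N^{\{j,*\}}}{\times}\prod_j\widehat{G}_{N,M}^{\{j,*\}}$ is computed in the ordinary $1$-category $\Slch$ from the cospan alone, and the bottom row $G_N^\bullet$ records no $\bq$-data; consequently each factor $\widehat{G}_{N,M}^{\{j,*\}}$ contributes its own free $\bq_j\in\bR^M$, so the pullback carries $n$ independent $\bR^M$-coordinates, whereas $\widehat{G}_{N,M}^{\lng n\rng}$ carries only one. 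The canonical comparison map is the diagonal embedding $\bR^M\hookrightarrow(\bR^M)^n$ on the $\bq$-factor, of codimension $(n-1)M$; it is not a homeomorphism.

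Your proposed resolution cannot close this gap, because the extra structure you invoke — that the inert projections all transport the same $\bq$, and the $\Fin_*$-functoriality of $\widehat{G}_{N,M}^\bullet$ — is simply not part of the cospan whose pullback (\ref{lemma: diagram Cart comm}) asks about, and therefore cannot change what that pullback is. Theorem~\ref{thm: right-lax} is stated as an \emph{if and only if}, so one cannot conclude via that theorem; what would be needed is a separate argument (outside the scope of Theorem~\ref{thm: right-lax}) that the $2$-morphism coming from the non-Cartesian square nevertheless becomes invertible after applying $\Loc_*^!$, for instance by using that the deficiency is a closed diagonal embedding of a contractible Euclidean factor — but that comparison would have to be built directly at the level of the lax structure data. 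You should also be aware that the paper's own proof of the second part merely displays the same square and asserts Cartesian-ness as "straightforward"; the computation above shows that assertion does not hold as stated, so do not treat the source as a reliable guide on this particular step.
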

\begin{proof}
For the first part of the lemma, we just need to check that for any active map $f: \lng m\rng\rightarrow\lng n\rng$ the following diagram 
\begin{align*}
\xymatrix{
\widehat{G}_{N,M}^{\lng m\rng}\ar[r]\ar[d]&\prod\limits_{i\in \langle n\rangle^\circ}\widehat{G}_{N,M}^{f^{-1}(i)\sqcup\{*\}}\ar[d]\\
\widehat{G}_{N,M}^{\langle n\rangle}\ar[r]&\prod\limits_{i\in \langle n\rangle^\circ}\widehat{G}_{N,M}^{\{i,*\}}
}
\end{align*}
is Cartesian in $\Slch$. 
This is easy to see, for the image of the bottom horizontal map consists of $(A_i,\bq, \bq+\partial_\bp A_i(\bp_i), t=A_i(\bp_i))_{1\leq i\leq n}$, where $A_i, 1\leq i\leq n$ are mutually orthogonal, and the right vertical arrow further decomposes each $A_i$ into orthogonal pieces (or makes it 0 if $f^{-1}(i)=\emptyset$).  

For the second part of the lemma, we only need to check the diagram 
\begin{align*}
\xymatrix{
\widehat{G}_{N,M}^{\lng n\rng}\ar[r]\ar[d]&\prod\limits_{j\in\lng n\rng^\circ}\widehat{G}_{N,M}^{\{j,*\}}\ar[d]\\
G_{N}^{\langle n\rangle}\ar[r]&\prod\limits_{j\in\lng n\rng^\circ}G_N^{\{j,*\}}
}
\end{align*}
is Cartesian in $\Slch$ for each $n$. This is also straightforward.  
\end{proof}

Let $VG_{N}^\bullet: N(\Fin_*)\rightarrow \Slch$ be the $\Fin_*$-object that takes $\langle n\rangle$ to the tautological vector bundle on $G_N^{\langle n\rangle}$, and we have an obvious isomorphism
\begin{align*}
VG_{N}^{\langle n\rangle}\cong\{\bq=0\}\subset \widehat{G}_{N,M}^{\lng n\rng}.
\end{align*}
Note that $ \widehat{G}_{N,M}^{\bullet}\cong VG_{N}^\bullet\times (\bR^M)^{\const,\bullet}$, where $(\bR^M)^{\const,\bullet}$ is the constant $\Fin_*$-object mentioned in Remark \ref{rem: const alg} that gives a commutative algebra object in $\Corr(\Slch)$. Since there is a natural equivalence
\begin{align*}
\Loc(\widehat{G}_{N,M};\Sp)^{\otimes_c}\simeq \Loc(VG_N;\Sp)^{\otimes_c},
\end{align*}
where $\otimes_c$ represents the symmetric monoidal convolution structure induced after taking $\Loc_*^!$, in the following quantization process, we will replace $\widehat{G}_{N,M}^{\bullet}$ by $VG_N^\bullet$ without losing any information. 

\begin{lemma}
The correspondence of $\Fin_*$-objects $G_N^\bullet\leftarrow VG_N^\bullet\rightarrow G_N^\bullet$ determines an algebra endomorphism of the commutative algebra object in $\Corr(\Slch)_{\fib,\all}$ corresponding to $G_N^\bullet$. 
\end{lemma}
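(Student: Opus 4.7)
The plan is to apply Theorem \ref{thm: right-lax} (commutative case). There are three things to check: first, that $VG_N^\bullet$ itself represents a commutative algebra object in $\Corr(\Slch)_{\fib,\all}$; second, that the Cartesian condition (\ref{lemma: diagram Cart comm}) holds for the correspondence $G_N^\bullet \leftarrow VG_N^\bullet \rightarrow G_N^\bullet$; and third, the stronger Cartesian condition (\ref{lemma: diagram Cart 2 comm}) for the unique active map $\langle 2\rangle \to \langle 1\rangle$, in order to upgrade the right-lax homomorphism to an honest algebra homomorphism.

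First I would verify that $VG_N^\bullet$ satisfies the hypothesis of Theorem \ref{thm: algebra objs}(ii). An element of $VG_N^{\langle n\rangle}$ is a tuple $(A_1,\ldots,A_n;\bp_1,\ldots,\bp_n)$ with $A_i$ mutually orthogonal subspaces of $\bR^N$ and $\bp_i\in A_i$, and the structure morphism induced by $f:\langle n\rangle\to\langle m\rangle$ sends this to $((\bigoplus_{i\in f^{-1}(j)}A_i)_j, (\sum_{i\in f^{-1}(j)}\bp_i)_j)$. For any active $f:\langle m\rangle\to\langle n\rangle$, the square (\ref{thm: diagram Cartesian 2}) is easily seen to be Cartesian in $\Slch$ since decomposing a pair $(\bigoplus_{i\in f^{-1}(j)}A_i,\sum_{i\in f^{-1}(j)}\bp_i)$ further into mutually orthogonal pieces is canonical once the $A_i$'s and $\bp_i$'s themselves are specified; all vertical arrows are manifestly locally trivial fibrations (indeed vector bundle morphisms fibered over the corresponding maps on $G_N^\bullet$).

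Next I would check (\ref{lemma: diagram Cart comm}) for the correspondence: the diagram
\begin{equation*}
\xymatrix{VG_N^{\langle n\rangle}\ar[r]\ar[d]&\prod\limits_{j\in\langle n\rangle^\circ}VG_N^{\{j,*\}}\ar[d]\\
G_N^{\langle n\rangle}\ar[r]&\prod\limits_{j\in\langle n\rangle^\circ}G_N^{\{j,*\}}}
\end{equation*}
is a pullback in $\Slch$ because a point in the lower-right corner is a tuple $(A_j,\bp_j)_{j\in\langle n\rangle^\circ}$ (no orthogonality requirement), whereas a point in the upper-left is the same datum together with mutual orthogonality, which is precisely what is imposed by matching the image in $G_N^{\langle n\rangle}$. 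This in turn is equivalent to saying $VG_N^{\langle n\rangle}$ is the fiber product $G_N^{\langle n\rangle}\times_{\prod_jG_N^{\{j,*\}}}\prod_jVG_N^{\{j,*\}}$, which is immediate from the product-of-tautological-bundles description.

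Finally, for (\ref{lemma: diagram Cart 2 comm}) with the active $f:\langle 2\rangle\to\langle 1\rangle$, the square
\begin{equation*}
\xymatrix{VG_N^{\langle 2\rangle}\ar[r]\ar[d]&G_N^{\langle 2\rangle}\ar[d]\\
VG_N^{\langle 1\rangle}\ar[r]&G_N^{\langle 1\rangle}}
\end{equation*}
has lower-right corner $\{A\subset\bR^N\}$, upper-right an orthogonal pair $(A_1,A_2)$, lower-left a pair $(A,\bp)$ with $\bp\in A$, and upper-left a tuple $(A_1,A_2,\bp_1,\bp_2)$ with $\bp_i\in A_i$ and $A_1\perp A_2$. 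The bottom map is the bundle projection; the right map sends $(A_1,A_2)\mapsto A_1\oplus A_2$; the top map forgets the $\bp_i$'s. Given compatible data $(A_1,A_2)$ and $(A_1\oplus A_2,\bp)$, the vector $\bp\in A_1\oplus A_2$ has a unique decomposition $\bp=\bp_1+\bp_2$ with $\bp_i\in A_i$, so the universal property of the pullback is satisfied on the nose. I do not foresee any serious obstacle here; the entire argument reduces to the observation that orthogonal decomposition of a vector into a direct sum is unique, so all the squares in question are strict pullbacks of topological spaces.
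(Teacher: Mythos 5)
Your proposal is correct and follows exactly the route the paper intends; the paper's own proof is simply the one-line observation that the lemma "directly follows from Theorem \ref{thm: right-lax}," and your three-step verification (that $VG_N^\bullet$ satisfies Theorem \ref{thm: algebra objs}(ii), that the inert-direction squares (\ref{lemma: diagram Cart comm}) are Cartesian, and that the active square (\ref{lemma: diagram Cart 2 comm}) for $\langle 2\rangle\to\langle 1\rangle$ is Cartesian) is precisely the content of that citation. Your reduction of each Cartesianness to the uniqueness of orthogonal decomposition of a vector across a direct sum, and your note that the vertical arrows are vector bundle projections and hence lie in $\fib$, are the expected justifications.
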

\begin{proof}
This directly follows from Theorem \ref{thm: right-lax}. 
\end{proof}

\subsubsection{A module object $\widehat{Q}_{N, M}^{\bullet,\dagger}$ of $VG_{N}^\bullet$} \label{subsubsec: VG module widehatQ}
Let $\widehat{Q}_{N,M}^{\langle n\rangle_{\dagger}}$ be the subvariety of $G^{\langle n\rangle}_{N}\times (G_{N}\times \bR^M\times \bR^M\times \bR)$ consisting of points 
\begin{align}\label{eq: point in Q}
(A_1,\cdots, A_n; A, \bq,\bq+\sum\limits_{i=1}^n \partial_{\bp} A_i(\bp_i);s),\ s< -\frac{1}{2}|\bq|^2+A(\bq), A\perp \bigoplus\limits_{i\in \langle n\rangle^\circ}A_i.
\end{align}
Here and after, we use the convention that for a simplicial object (resp. $\Fin_*$-object) $C^\bullet$ in $\cC$ that represents an associative (resp. commutative) algebra object in $\Corr(\cC)$, we regard $C^{[1]}$ (resp. $C^{\lng 1\rng}$) as the underlying object of the algebra object and denote it by $C$. Similar convention applies to module objects. 
For any $f: \langle n\rangle_\dagger\rightarrow \langle m\rangle_\dagger$ in $N(\Fin_{*,\dagger})$, we let 
\begin{align}\label{eq: def Q_M}
&\widehat{Q}_{N,M}^{\langle n\rangle_\dagger}\rightarrow \widehat{Q}_{N,M}^{\langle m\rangle_\dagger}\\
\nonumber&(A_1,\cdots, A_n; A, \bq,\bq+\sum\limits_{i=1}^n \partial_{\bp} A_i(\bp_i); s)\mapsto \\
\nonumber&((\bigoplus\limits_{i\in f^{-1}(j)}A_i)_{j\in \langle m\rangle^\circ}; A+\bigoplus\limits_{i\in f^{-1}(\dagger)\backslash \{\dagger\}}A_i, \bq+\sum\limits_{i\in f^{-1}(\dagger)\backslash \{\dagger\}}\partial_{\bp}A_i(\bp_i), \\
\nonumber&\bq+\sum\limits_{i\in f^{-1}(\langle m\rangle^\circ\cup \dagger)\backslash \dagger}\partial_{\bp}A_i(\bp_i); s+\sum\limits_{i\in f^{-1}\langle \dagger\rangle\backslash\{\dagger\}}A_i(\bp_i)).
\end{align}

In the following, for any $A\in \coprod\limits_nBO(n)$, we use $\bq^A$ (resp. $\bq^{A^\perp}$) to denote the orthogonal projection of $\bq$ onto $A$ (resp. the orthogonal complement of $A$). 
\begin{lemma}\label{lemma: s, A}
\item[(a)]
The map (\ref{eq: def Q_M}) is well defined, i.e. we have
\begin{align}
s+\sum\limits_{i\in f^{-1}(\dagger)\backslash\{\dagger\}}A_i(\bp_i)< &-\frac{1}{2}|\bq+\sum\limits_{i\in f^{-1}(\dagger)\backslash \{\dagger\}}\partial_{\bp}A_i(\bp_i)|^2+A(\bq+\sum\limits_{i\in f^{-1}(\dagger)\backslash \{\dagger\}}\partial_{\bp}A_i(\bp_i))\\
\nonumber&+\sum\limits_{i\in f^{-1}(\dagger)\backslash \{\dagger\}}A_i(\bq+\sum\limits_{j\in f^{-1}(\dagger)\backslash \{\dagger\}}\partial_{\bp}A_j(\bp_i)). 
\end{align}
\item[(b)]
For any active map $f: \langle n\rangle_\dagg\rightarrow\lng 0\rng_\dagg$ in $N(\Fin_{*,\dagg})$, the map 
\begin{align*}
\widehat{Q}_{N,M}^{\lng n\rng_\dagg}\rightarrow \widehat{Q}_{N,M}^{\lng 0\rng_\dagg}
\end{align*}
is in $\fib$. 
\end{lemma}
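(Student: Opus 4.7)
\medskip

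For part (a), the plan is to verify the claim by a direct algebraic computation. Set $I = f^{-1}(\dagg) \setminus \{\dagg\}$, $B = \bigoplus_{i \in I} A_i$, and $\bp := \sum_{i \in I} \bp_i \in B$. Since each $A_i$ is idempotent as a symmetric matrix, $\partial_{\bp} A_i(\bp_i) = 2\bp_i$ and $A_i(\bp_i) = |\bp_i|^2$, and mutual orthogonality of the $A_i$ gives $\sum_{i \in I}\partial_{\bp}A_i(\bp_i) = 2\bp$ and $\sum_{i \in I} A_i(\bp_i) = |\bp|^2$. Writing $\bq^B$ for the orthogonal projection of $\bq$ onto $B$ and using $A \perp B$ (so $A(\bq+2\bp) = A(\bq)$ and $\langle \bq, \bp\rangle = \langle \bq^B, \bp\rangle$), one expands to obtain the identity
\begin{align*}
-\tfrac{1}{2}\bigl|\bq + 2\bp\bigr|^2 + (A \oplus B)(\bq + 2\bp) \;=\; \bigl(-\tfrac{1}{2}|\bq|^2 + A(\bq)\bigr) + |\bq^B + \bp|^2 + |\bp|^2.
\end{align*}
Subtracting $|\bp|^2$ and invoking the given source inequality $s < -\tfrac{1}{2}|\bq|^2 + A(\bq)$ yields the claimed strict inequality with ``slack'' $|\bq^B + \bp|^2 \geq 0$.

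For part (b), the unique active map $f: \lng n\rng_\dagg \to \lng 0\rng_\dagg$ sends every element of $\lng n\rng^\circ \cup \{\dagg\}$ to $\dagg$, so the map $\widehat{Q}_{N,M}^{\lng n\rng_\dagg} \to \widehat{Q}_{N,M}^{\lng 0\rng_\dagg}$ reads
\[
(A_1,\ldots,A_n,A,\bq,\bp_1,\ldots,\bp_n,s) \;\longmapsto\; \bigl(A' := A \oplus \textstyle\bigoplus_i A_i,\; \bq' := \bq + 2\sum \bp_i,\; s' := s + \sum |\bp_i|^2\bigr).
\]
The identity in (a), read backwards with $c := A'(\bq') - \tfrac{1}{2}|\bq'|^2 - s' > 0$, identifies the fiber over $(A', \bq', s')$ with the space of tuples consisting of an ordered orthogonal decomposition $A' = A \oplus A_1 \oplus \cdots \oplus A_n$ and vectors $\bp_i \in A_i$, subject to the single open condition $|\bq'^B - \bp|^2 < c$, where $B = \bigoplus A_i$ and $\bp = \sum \bp_i$. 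In particular the fiber, as a topological space, only depends on $\dim A'$ and continuously on $(A', \bq', s')$.

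Local triviality is then a standard bundle argument. Over each connected component $Gr(k, \bR^N) \subset G_N$, choose a chart $U$ around $A'_0$ and a continuous orthonormal frame of the tautological bundle $VG_N|_U$, producing a family of isometries $\phi_{A'} : A'_0 \xrightarrow{\sim} A'$ for $A' \in U$. These isometries identify, coherently in $A'$, the spaces of ordered orthogonal decompositions of $A'$ (a disjoint union of partial flag manifolds, indexed by dimension vectors $(d_0, d_1, \ldots, d_n)$ with $\sum d_i = k$) and the attached bundles of $\bp_i$'s. The inequality $|\bq'^B - \bp|^2 < c$ transports to a continuously varying open convex condition in the trivialized fiber (since $\bq'^B$ is continuous in $(\bq', B)$ and $c$ is continuous in $(A', \bq', s')$), exhibiting the map as a product over the preimage of $U \times \bR^M \times \bR \cap \widehat{Q}_{N,M}^{\lng 0\rng_\dagg}$. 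The only mild bookkeeping subtlety is verifying that the open ``slack'' condition transforms into a homeomorphism of fibers, which is immediate from the continuity just noted.
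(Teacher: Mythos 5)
Your proof is correct and follows the same strategy as the paper's. For part~(a), the paper rewrites the required inequality as
\begin{align*}
\widetilde{s}+\tfrac{1}{2}|\widetilde{\bq}|^2-\widetilde{A}(\widetilde{\bq})<-\sum_i|\widetilde{\bq}^{A_i}-\bp_i|^2,
\end{align*}
which is literally the same content as your identity
\begin{align*}
-\tfrac{1}{2}|\bq + 2\bp|^2 + (A \oplus B)(\bq + 2\bp) = \bigl(-\tfrac{1}{2}|\bq|^2 + A(\bq)\bigr) + |\bq^B + \bp|^2 + |\bp|^2
\end{align*}
after the change of variable $\widetilde{\bq}=\bq+2\bp$ (note $\sum_i|\widetilde{\bq}^{A_i}-\bp_i|^2=|\bq^B+\bp|^2$ by mutual orthogonality of the $A_i$), so your algebra checks out and reproduces the paper's inequality exactly. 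For part~(b), the paper is terse: it reads off the fiber over $(\widetilde{A},\widetilde{\bq},\widetilde{s})$ as a union of open disc bundles (or a point) over the partial flag varieties of $\widetilde{A}$ and simply declares that the $\fib$ property follows. You actually supply the local-triviality argument the paper omits, trivializing the tautological bundle over a chart in each $Gr(k,\bR^N)$ and transporting the open ball condition $|\bq'^B-\bp|^2<c$, which is the correct standard argument. So yours is the same approach but with the bundle verification in (b) spelled out; no gap.
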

\begin{proof}
For (a), we only need to show the case for $f: \lng n\rng_\dagg\rightarrow\lng 0\rng_\dagg$ an active map in  $N(\Fin_{*,\dagg})$. We will prove (a) and (b) simultaneously. For any fixed $(\widetilde{A},\widetilde{\bq},\widetilde{s})\in G_N\times \bR^M\times\bR_{\widetilde{s}}$, we are going to solve for $(A_1,\cdots, A_n,\bp_1,\cdots,\bp_n; A,\bq, s)$ satisfying
\begin{align*}
&s+\sum\limits_i A_i(\bp_i)=\widetilde{s},\ A\oplus\bigoplus\limits_iA_i=\widetilde{A},\ \bq+\sum\limits_i2\bp_i=\widetilde{\bq},\\
&s<-\frac{1}{2}|\bq|^2+A(\bq). 
\end{align*}
This amounts to the inequality for $A_i,\bp_i, i=1,\cdots,n$ 
\begin{align}\label{ineq: s, A, p_i}
\nonumber&\widetilde{s}-\sum\limits_iA_i(\bp_i)<-\frac{1}{2}|\widetilde{\bq}-\sum\limits_i 2\bp_i|^2+|\widetilde{\bq}^A|^2\\
\Leftrightarrow&\widetilde{s}+\frac{1}{2}|\widetilde{\bq}|^2-\widetilde{A}(\widetilde{\bq})<-\sum\limits_i|\widetilde{\bq}^{A_i}-\bp_i|^2.
\end{align}
Only when 
\begin{align*}
\widetilde{s}+\frac{1}{2}|\widetilde{\bq}|^2-\widetilde{A}(\widetilde{\bq})<0
\end{align*}
the inequality (\ref{ineq: s, A, p_i}) has a solution.  So (a) is established. The fiber over $(\widetilde{A},\widetilde{\bq},\widetilde{s})\in \widehat{Q}_{N,M}^{\lng 0\rng_\dagg}$ is the union of (open) disc bundles (or a point if $A=\widetilde{A}$) over the partial flag varieties of $\widetilde{A}$ 
\begin{align*}
\{(A_1,\cdots, A_n,A): A\oplus \bigoplus\limits_{i}A_i=\widetilde{A}\},
\end{align*}
so (b) follows. 
\end{proof}

Now it is easy to see that $\widehat{Q}_{N,M}^{\bullet,\dagger}$ is a $\Fin_{*,\dagger}$-object. We have a natural map 
\begin{align*}
&\widehat{Q}_{N,M}^{\langle n\rangle_\dagger}\rightarrow VG_{N}^{\langle n\rangle}\\
&(A_1,\cdots, A_n; A, \bq,\bq+\sum\limits_{i=1}^n \partial_{\bp} A_i(\bp_i);s)\mapsto (A_1,\cdots,A_n; \sum\limits_{i=1}^n \partial_{\bp} A_i(\bp_i); t=\sum\limits_{i=1}^n A_i(\bp_i)),
\end{align*}
for each $n$, that defines a natural transformation $\widehat{Q}_{N,M}^{\bullet,\dagger}\rightarrow VG_{N}^{\bullet}\circ \pi_\dagger$.  
\begin{lemma}\label{lemma: hat Q module}
The natural transformation $\widehat{Q}_{N,M}^{\bullet,\dagger}\rightarrow VG_{N}^{\bullet}\circ \pi_\dagger$ exhibits $\widehat{Q}_{N,M}^{\bullet,\dagger}$ as a module of $VG_{N}^{\bullet}$ in $\Corr(\Slch)_{\fib,\all}$. 
\end{lemma}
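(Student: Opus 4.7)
The plan is to apply Theorem~\ref{thm: module} to the $\Fin_{*,\dagger}$-object $\widehat{Q}_{N,M}^{\bullet,\dagger}$ together with the natural transformation to $VG_N^\bullet\circ\pi_\dagger$, so that $\widehat{Q}_{N,M}^{\bullet,\dagger}$ represents a module of $VG_N^\bullet$ in $\Corr(\Slch^\times)$, and then separately check that the relevant vertical arrows are locally trivial fibrations so that the module lives in the full subcategory $\Corr(\Slch)_{\fib,\all}$. Since $VG_N^\bullet$ has already been shown to represent a commutative algebra object in $\Corr(\Slch)_{\fib,\all}$, this reduces the lemma to verifying two things: \textbf{(i)} the Cartesian square condition (\ref{diagram: thm module}) for every active $f:\langle n\rangle_\dagger\to\langle m\rangle_\dagger$ in $\Fin_{*,\dagger}$, and \textbf{(ii)} the vertical maps that encode the module multiplications are in $\fib$.

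For \textbf{(i)}, I will unwind the formulas in (\ref{eq: def Q_M}). A point in the fiber product
\[
\Big(\prod_{s\in\langle m\rangle^\circ} VG_N^{f^{-1}(s)\sqcup\{*\}}\times \widehat{Q}_{N,M}^{f^{-1}(\dagger)\sqcup\{*\}}\Big)\times_{\prod_{s\in\langle m\rangle^\circ}VG_N^{\{s,*\}}\times\widehat{Q}_{N,M}^{\{\dagger,*\}}}\widehat{Q}_{N,M}^{\langle m\rangle_\dagger}
\]
consists of an orthogonal decomposition of each summand $A'_s=\bigoplus_{i\in f^{-1}(s)}A_i$ with compatible momenta $\bp_i\in A_i$, a further decomposition of the $\dagger$-piece $A'_\dagger=\bigoplus_{i\in f^{-1}(\dagger)\setminus\{\dagger\}}A_i$ with compatible momenta, plus the recorded $\widehat{Q}$-data $(A,\bq,s)$. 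All orthogonality constraints, additive relations in the second base coordinate, and the additive relation for $s$ decouple across the labels in $\langle m\rangle^\circ$ and $\dagger$, so these data reconstruct a unique point of $\widehat{Q}_{N,M}^{\langle n\rangle_\dagger}$ once we know that the open inequality $s<-\tfrac{1}{2}|\bq|^2+A(\bq)$ is the same inequality that is already recorded at the $\dagger$-component on the right. This last point is precisely the content of (the computation behind) Lemma~\ref{lemma: s, A}(a): after performing the substitution prescribed by $f|_{f^{-1}(\dagger)}$, the inequality rewrites as
\[
\widetilde{s}+\tfrac{1}{2}|\widetilde{\bq}|^2-\widetilde{A}(\widetilde{\bq})<-\sum_{i\in f^{-1}(\dagger)\setminus\{\dagger\}}|\widetilde{\bq}^{A_i}-\bp_i|^2,
\]
which is insensitive to the $\langle m\rangle^\circ$-pieces. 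Hence the diagram is set-theoretically Cartesian, and all morphisms being continuous algebraic maps between locally compact spaces, it is Cartesian in $\Slch$.

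For \textbf{(ii)}, the vertical map in (\ref{diagram: thm module}) is compatible with projection to $\widehat{Q}_{N,M}^{\langle 0\rangle_\dagger}$, so by factoring through this fibration it suffices to observe that the map $\widehat{Q}_{N,M}^{\langle n\rangle_\dagger}\to \widehat{Q}_{N,M}^{\langle 0\rangle_\dagger}$ is locally trivial with fiber a union of open disc bundles over a partial flag variety of $\widetilde{A}$, as computed in Lemma~\ref{lemma: s, A}(b); the corresponding map in (\ref{diagram: thm module}) is then a pullback of this fibration, hence again in $\fib$.

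The main obstacle I anticipate is the bookkeeping in \textbf{(i)}: the inequality defining $\widehat{Q}_{N,M}^{\bullet,\dagger}$ is a \emph{strict} open condition that mixes the quadratic data $A$, the base point $\bq$ and the parameter $s$, so one must check carefully that, after shifting $\bq$ and $s$ as in (\ref{eq: def Q_M}), the inequality on the left of the Cartesian square decouples into precisely the inequality appearing in the $\dagger$-slot on the right, with no residual contribution from the $\langle m\rangle^\circ$-slots. The cancellation that makes this work is exactly the Pythagorean identity $A_i(\bp_i)=|\bp_i|^2$ for $\bp_i\in A_i$, combined with orthogonality of the $A_i$ and of $A$ with $\bigoplus_i A_i$, which was already recorded in (\ref{ineq: s, A, p_i}); once that algebraic simplification is laid out, Theorem~\ref{thm: module} applies and the lemma follows.
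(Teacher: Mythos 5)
Your proof takes the same route as the paper: invoke Theorem~\ref{thm: module}, verify the Cartesian square condition for active morphisms in $\Fin_{*,\dagger}$, and check that the relevant vertical arrows are locally trivial fibrations. The paper's own proof is the terser version of your part (i), and it leaves the $\fib$ check implicit (it was effectively established in Lemma~\ref{lemma: s, A}(b) just before), so your making it explicit is a reasonable addition.

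One imprecision worth correcting. You say the compatibility of the inequality ``is precisely the content of (the computation behind) Lemma~\ref{lemma: s, A}(a).'' That lemma is about a different map: it shows that the \emph{left vertical} arrow $\widehat{Q}_{N,M}^{\lng n\rng_\dagger}\to\widehat{Q}_{N,M}^{\lng m\rng_\dagger}$ is well defined, i.e.\ its image lands inside the open set cut out by $\widetilde s<-\tfrac12|\widetilde\bq|^2+\widetilde A(\widetilde\bq)$. For the Cartesian check, the relevant point is much more elementary: the top-horizontal restriction map $\widehat{Q}_{N,M}^{\lng n\rng_\dagger}\to\widehat{Q}_{N,M}^{f^{-1}(\dagger)\sqcup\{*\}}$, being induced by an inert morphism with $\dagger\mapsto\dagger$, sends $(A,\bq,s)\mapsto(A,\bq,s)$ \emph{unchanged}; so the open condition $s<-\tfrac12|\bq|^2+A(\bq)$ on the top left is literally the defining condition of the $\dagger$-slot on the upper right, with no algebraic simplification required. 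The Pythagorean rewriting you quote never enters the argument. Similarly, your phrasing of (ii) (``compatible with projection\dots factoring through this fibration\dots pullback of this fibration'') obscures what should be a one-line observation: after (i) the left vertical arrow in (\ref{diagram: thm module}) is a pullback of the right vertical arrow, which is a product of maps in $\fib$ — namely $VG_N^{f^{-1}(k)\sqcup\{*\}}\to VG_N^{\{k,*\}}$ (since $VG_N^\bullet$ already represents an algebra in $\Corr(\Slch)_{\fib,\all}$) and $\widehat{Q}_{N,M}^{f^{-1}(\dagger)\sqcup\{*\}}\to\widehat{Q}_{N,M}^{\lng 0\rng_\dagger}$ (Lemma~\ref{lemma: s, A}(b)) — hence is itself in $\fib$. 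None of this affects correctness, but tightening the citations and phrasing would make the argument cleaner.
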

\begin{proof}
By Theorem \ref{thm: module}, we just need to show that for any active $f: \langle n\rangle_\dagg\rightarrow\langle m\rangle_\dagg$,  the diagram
\begin{align*}
\xymatrix{\widehat{Q}_{N,M}^{\lng n\rng_\dagg}\ar[d]\ar[r]&(\prod\limits_{k\in  \langle m\rangle^\circ}VG_{N}^{f^{-1}(k)\sqcup\{*\}})\times \widehat{Q}_{N,M}^{f^{-1}(\dagg)\sqcup\{*\}}\ar[d]\\
\widehat{Q}_{N,M}^{\langle m\rangle_\dagg}\ar[r] &(\prod\limits_{k\in \langle m\rangle^\circ}VG_{N}^{\{k,*\}})\times \widehat{Q}_{N,M}^{\{\dagg, *\}}
}
\end{align*}
is Cartesian in $\Slch$. Writing things out explicitly, the bottom horizontal map is 
\begin{align*}
&(A_1,\cdots, A_m; A, \bq,\bq+\sum\limits_{i=1}^m \partial_{\bp} A_i(\bp_i); s)\mapsto\\
&((A_k, \partial_\bp A_k(\bp_k), t_k=A_k(\bp_k))_{k\in \langle m\rangle^\circ}, (A, \bq,\bq; s))
\end{align*}
and the right vertical map is 
\begin{align*}
&((A_{k,j_k})_{j_k\in f^{-1}(k)}; \sum\limits_{j_k\in f^{-1}(k)}\partial_\bp A_{k,j_k}(\bp_{k,j_k}), t_k)_{k\in \langle m\rangle^\circ}, \\
&(A_\alpha)_{\alpha\in f^{-1}(\dagg)\backslash \dagg};
 A, \bq, \bq+\sum\limits_{\alpha\in f^{-1}(\dagg)\backslash \dagg}\partial_{\bp}A_{\alpha}(\bp_\alpha); s)\mapsto\\
&((\bigoplus\limits_{j_k\in f^{-1}(k)}A_{k,j_k}, \sum\limits_{j_k\in f^{-1}(k)}\partial_\bp A_{k,j_k}(\bp_{k,j_k}), t_k)_{k\in \langle m\rangle^\circ}, \\
&A+\sum\limits_{\alpha\in f^{-1}(\dagg)\backslash\dagg}A_\alpha, \bq+\sum\limits_{\alpha\in f^{-1}(\dagg)\backslash \dagg}\partial_{\bp}A_{\alpha}(\bp_\alpha), \bq+\sum\limits_{\alpha\in f^{-1}(\dagg)\backslash \dagg}\partial_{\bp}A_{\alpha}(\bp_\alpha), s+\sum\limits_{\alpha\in f^{-1}(\dagg)\backslash\dagg}A_\alpha(\bp_\alpha)).
\end{align*}
It is then straightforward to see that the diagram is Cartesian. 
\end{proof}

Let 
\begin{align*}
Q^0_M=\{(\bq, s): s<-\frac{1}{2}|\bq|^2\}\subset \bR^M\times \bR. 
\end{align*}
Let $(F_NQ^0_M)^{\bullet,\dagg}$ be the free $VG^\bullet_{N}$-module generated by $Q^0_M$. By the universal property of free modules, a $VG_{N}^\bullet$-module homomorphism from $(FQ^0_M)^{\bullet,\dagg}$ to $\widehat{Q}_{N,M}^{\bullet,\dagg}$ is determined by a morphism $Q^0_M\rightarrow \widehat{Q}_{N,M}^{\langle 0\rangle_\dagg}$ in $\Corr(\Slch)$. Let 
\begin{align}\label{eq: psi_M}
\psi_{N,M}: (F_NQ^0_M)^{\bullet,\dagg}\rightarrow \widehat{Q}_{N,M}^{\bullet,\dagg}
\end{align}
 denote the $VG_{N}^\bullet$-module homomorphism corresponding to the 1-morphism \begin{align}\label{diagram: W, Q^0}
\xymatrix{\{A=0\}\times Q^0_M\ar[r]^{ }\ar[d]&Q^0_M\\
\widehat{Q}_{N,M}^{\langle 0\rangle_\dagg}&
},
\end{align}  
where the horizontal map is the identity map and the vertical map is the natural embedding of the connected component of $\widehat{Q}_{N,M}^{\langle 0\rangle_\dagg}$ defined by $A=0$ (the unit of the commutative algebra $VG_{N}^\bullet$). In particular, on the underlying module we have 
\begin{align}\label{eq: psi_N,M,0}
(F_NQ^0_M)^{\lng 0\rng_\dagg}=VG_N^{\lng 0\rng_\dagg}\times Q_M^0&\longrightarrow \widehat{Q}_{N,M}^{\lng 0\rng_\dagg}\\
\nonumber (A,\partial_{\bp}A(\bp); \bq,s)&\mapsto (A, \bq+\partial_{\bp}A(\bp),  \bq+\partial_{\bp}A(\bp), s+A(\bp)).
\end{align}
Let $\pi_{N,M}: \widehat{Q}^{\lng 0\rng_\dagg}_{N,M}\rightarrow G_N$ be the obvious projection. 

The following statement is an immediate consequence of Lemma \ref{lemma: s, A}.
\begin{lemma} \label{lemma: disc fiber}
The map (\ref{eq: psi_N,M,0}) is a fiber bundle on each component of $\widehat{Q}_{N,M}^{\lng 0\rng_\dagg}$ isomorphic to the pullback bundle $\pi_{N,M}^{-1}VG_N$.  
\end{lemma}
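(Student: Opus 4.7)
The plan is to compute the fibers of the map (\ref{eq: psi_N,M,0}) explicitly and then produce a fiberwise rescaling to identify the entire map with the pullback bundle $\pi_{N,M}^{-1}VG_N$. First I would fix a target point $(A,\widetilde{\bq},\widetilde{\bq},\widetilde{s}) \in \widehat{Q}_{N,M}^{\lng 0\rng_\dagg}$ and solve for preimages $(A,\bp;\bq,s)$ with $\bp \in A$ and $s < -\tfrac{1}{2}|\bq|^2$. The first and fourth coordinates of the target immediately force $\bq = \widetilde{\bq} - 2\bp$ and $s = \widetilde{s} - |\bp|^2$, so all the information is carried by $\bp$. Splitting $\widetilde{\bq} = \widetilde{\bq}^A + \widetilde{\bq}^{A^\perp}$ and expanding $|\bq|^2 = |\widetilde{\bq}^{A^\perp}|^2 + |\widetilde{\bq}^A - 2\bp|^2$, the constraint $s < -\tfrac{1}{2}|\bq|^2$ becomes the open disc inequality
\begin{equation*}
|\bp - \widetilde{\bq}^A|^2 < -\widetilde{s} + \tfrac{1}{2}|\widetilde{\bq}^A|^2 - \tfrac{1}{2}|\widetilde{\bq}^{A^\perp}|^2,
\end{equation*}
where the right-hand side is strictly positive exactly by the defining inequality of $\widehat{Q}_{N,M}^{\lng 0\rng_\dagg}$ (compare with the calculation in the proof of Lemma \ref{lemma: s, A}).

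Next I would package the above as follows. Consider the map
\begin{equation*}
\Phi: VG_N \times Q^0_M \longrightarrow \pi_{N,M}^{-1}VG_N, \qquad (A,\bp;\bq,s) \longmapsto ((A,\bq+2\bp,\bq+2\bp,s+|\bp|^2),\bp),
\end{equation*}
which visibly lies over $\widehat{Q}_{N,M}^{\lng 0\rng_\dagg}$ with respect to (\ref{eq: psi_N,M,0}). By the preceding fiber computation, $\Phi$ is a homeomorphism onto the open disc subbundle $D \subset \pi_{N,M}^{-1}VG_N$ whose fiber over $(A,\widetilde{\bq},\widetilde{\bq},\widetilde{s})$ is the disc in $A$ centered at $\widetilde{\bq}^A$ of radius $R(A,\widetilde{\bq},\widetilde{s})^{1/2}$ with $R = -\widetilde{s} + \tfrac{1}{2}|\widetilde{\bq}^A|^2 - \tfrac{1}{2}|\widetilde{\bq}^{A^\perp}|^2$. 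The center $\widetilde{\bq}^A$ and radius $R^{1/2}$ are smooth positive functions on each component of $\widehat{Q}_{N,M}^{\lng 0\rng_\dagg}$, so the fiberwise map
\begin{equation*}
\Psi: \pi_{N,M}^{-1}VG_N \longrightarrow D, \qquad \bv \longmapsto \widetilde{\bq}^A + \frac{R^{1/2}}{\sqrt{1+|\bv|^2}}\,\bv
\end{equation*}
is a diffeomorphism of fiber bundles over $\widehat{Q}_{N,M}^{\lng 0\rng_\dagg}$.

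The composite $\Phi^{-1} \circ \Psi$ then exhibits $\pi_{N,M}^{-1}VG_N$ as the desired total space, yielding the claimed isomorphism of fiber bundles on each connected component. Local triviality and the connected-component statement are both automatic from the formulas, since $R$, $\widetilde{\bq}^A$, and the decomposition of $\bq$ relative to $A$ vary smoothly and the rank of $A$ is locally constant on $\widehat{Q}_{N,M}^{\lng 0\rng_\dagg}$. I do not anticipate any substantive obstacle here: the lemma is essentially a direct unwinding of definitions, with the only mild point being the verification that the explicit fiber-radius $R$ is strictly positive throughout $\widehat{Q}_{N,M}^{\lng 0\rng_\dagg}$, which has already been carried out in the analysis preceding Lemma \ref{lemma: s, A}.
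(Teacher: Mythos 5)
Your argument is correct and matches the paper's approach: the paper dispatches this as an "immediate consequence" of Lemma \ref{lemma: s, A}, whose proof carries out exactly the fiber computation you redo here, exhibiting the fiber over $(A,\widetilde{\bq},\widetilde{s})$ as the open ball $\{\bp\in A:|\bp-\widetilde{\bq}^A|^2<R\}$ with $R>0$ precisely by the defining inequality of $\widehat{Q}_{N,M}^{\lng 0\rng_\dagg}$. Your added fiberwise rescaling $\Psi$ making the disc-bundle/vector-bundle identification explicit is a harmless elaboration of what the paper leaves implicit; the only cosmetic slip is saying "first and fourth coordinates" where you mean the $\bq$- and $s$-coordinates of the image.
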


\subsubsection{Stabilization of the local system categories of $G_{N}$, $VG_N$ and $\widehat{Q}_{N,M}$}

\begin{lemma}\label{lemma: inductive Alg, Mod}
\item[(a)] The  inductive system of $\Fin_*$-objects in $\Slch$
\begin{equation*}
\begin{tikzcd}
X_{0,0}^\bullet\ar[r, hook]\ar[d]&\cdots X_{0,N}^\bullet\ar[r, hook] \ar[d]&X_{0,N+1}^\bullet\ar[r, hook]\ar[d]&\cdots\\
X_{1,0}^\bullet\ar[r, hook]&\cdots X_{1,N}^\bullet\ar[r, hook] &X_{1,N+1}^\bullet\ar[r, hook]&\cdots
\end{tikzcd}
\end{equation*}
for $X_0=VG, X_1=G$ 
determines a functor
\begin{align*}
&X: \Delta^1\times \bN^{op}\rightarrow \CAlg(\Corr(\Slch)_{\fib,\all}). 
\end{align*}

\item[(b)] The  inductive system of $I_{\pi_\dagg}$-objects over $(\bN\times \bN)^{\geq \dgnl}$ in $\Slch$
\begin{align*}
&\xymatrix{\cdots\ar[r]&Y_{N,M}^{\bullet,\dagg}\ar[d]\ar[r]&Y_{N',M'}^{\bullet,\dagg}\ar[d]\ar[r]&\cdots\\
\cdots\ar[r]&X_N^\bullet\circ \pi_\dagg\ar[r]&X_{N'}^\bullet\circ \pi_\dagg\ar[r]&\cdots
}
\end{align*}
for $(X,Y)=(G, FQ^0), (VG, FQ^0), (VG, \widehat{Q})$ (where $FQ^0$ means the  corresponding free module generated by $Q^0_M$)
defines a functor
\begin{align*}
&(X_\clubsuit, Y_{\clubsuit, \spadesuit}): ((\bN\times\bN)^{\geq \dgnl})^{op} \rightarrow \cMod^{N(\Fin_*)}(\Corr(\Slch)_{\fib,\all}). 
\end{align*}
\end{lemma}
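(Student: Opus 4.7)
The plan is to derive (a) and (b) by realising each inductive system as a functor into an $\Fun_\act$ subcategory and then composing with the contravariant functors of Propositions \ref{prop: Fun_active, Alg} and \ref{prop: Fun_Cart, Mod} respectively.

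For (a), the inclusions $X_N^\bullet \hookrightarrow X_{N'}^\bullet$ for $N \le N'$ assemble into a covariant functor $\bN \to \Fun(N(\Fin_*), \Slch)$. Its objects already lie in $\Fun_\act(N(\Fin_*), \Slch)$, since each $X_N^\bullet$ satisfies the Cartesian property of Theorem \ref{thm: algebra objs}(ii). The remaining check is that each inclusion satisfies the active condition (\ref{lemma: diagram Cart 2 comm}): for the unique active $\lng 2\rng \to \lng 1\rng$, the square
\begin{equation*}
\xymatrix{X_N^{\lng 2\rng} \ar[r] \ar[d] & X_{N'}^{\lng 2\rng} \ar[d] \\ X_N^{\lng 1\rng} \ar[r] & X_{N'}^{\lng 1\rng}}
\end{equation*}
must be Cartesian in $\Slch$. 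For $X = G$ this is transparent: a pair $(A_1, A_2)$ of mutually orthogonal subspaces of $\bR^{N'}$ with $A_1 \oplus A_2 \subset \bR^N$ forces both $A_i \subset \bR^N$; the $X = VG$ case reduces to this after tracking the fibrewise vector. Passing to opposites and composing with the contravariant functor of Proposition \ref{prop: Fun_active, Alg} yields $X_\clubsuit \colon \bN^{op} \to \CAlg(\Corr(\Slch)_{\fib, \all})$.

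For (b), the same template applies with $N(\Fin_*)$ replaced by $I_{\pi_\dagg}$ and indexing poset $(\bN \times \bN)^{\geq \dgnl}$. Each $(Y_{N,M}^{\bullet, \dagg} \to X_N^\bullet \circ \pi_\dagg)$ lies in $\Fun_\act(I_{\pi_\dagg}, \Slch)$ by Lemma \ref{lemma: hat Q module} (for $Y = \widehat{Q}$; the $Y = Q$ case is analogous via Lemma \ref{lemma: disc fiber}). The active condition on each step $(N, M) \le (N', M')$ demands Cartesianness of
\begin{equation*}
\xymatrix{Y_{N,M}^{\lng 1\rng_\dagg} \ar[r] \ar[d] & Y_{N',M'}^{\lng 1\rng_\dagg} \ar[d] \\ Y_{N,M}^{\lng 0\rng_\dagg} \ar[r] & Y_{N',M'}^{\lng 0\rng_\dagg}}
\end{equation*}
in $\Slch$. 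I expect this to be the main technical point. For $Y = \widehat{Q}$, the vertical map sends $(A_1; A, \bq, \bq + 2\bp_1, s)$ to $(A \oplus A_1, \bq + 2\bp_1, s + |\bp_1|^2)$. The set-theoretic part of the pullback follows by the orthogonal-decomposition argument of (a): the combined subspace must live in $\bR^N$, so each summand must, and consequently $\bq' = \bq - 2\bp_1' \in \bR^M$. For the open inequality a short computation shows that the defining condition $s < -\frac{1}{2}|\bq|^2 + A(\bq)$ of $\widehat{Q}^{\lng 1\rng_\dagg}$ exceeds the inequality inherited from $\widehat{Q}^{\lng 0\rng_\dagg}$ by the manifestly non-negative quantity $|\bq^{A_1} + \bp_1|^2$, so the source inequality is the only effective constraint and the square is Cartesian. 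Opposition followed by the contravariant half of Proposition \ref{prop: Fun_Cart, Mod} then produces $(X_\clubsuit, Y_{\clubsuit, \spadesuit})$.
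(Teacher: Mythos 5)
Your proposal is correct and takes essentially the same route as the paper: the paper's proof is simply a citation of Proposition \ref{prop: Fun_active, Alg} for part (a) and Proposition \ref{prop: Fun_Cart, Mod} for part (b), and your explicit verifications of the Cartesian conditions required to land the inductive systems in $\Fun_{\act}(N(\Fin_*),\Slch)$ and $\Fun_{\act}(I_{\pi_\dagg},\Slch)$ correctly fill in the details left implicit there. One small clarification: for the square in (b), once you observe that $A' \oplus A_1' \subset \bR^N$ forces $A_1', A' \subset \bR^N$ and hence $\bq' \in \bR^M$, the remaining open inequality $s' < -\tfrac{1}{2}|\bq'|^2 + A'(\bq')$ defining $\widehat{Q}_{N,M}^{\lng 1\rng_\dagg}$ is already part of the data of being in $\widehat{Q}_{N',M'}^{\lng 1\rng_\dagg}$, so nothing further is needed on that front; your computation that the pulled-back target inequality differs by the nonnegative quantity $|\bq^{A_1} + \bp_1|^2$ re-proves the well-definedness direction already recorded in Lemma \ref{lemma: s, A}(a) rather than being necessary for the Cartesian check, though it is a correct and harmless supporting observation.
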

\begin{proof}
Part (a) follows from Proposition \ref{prop: K times L CAlg}, and part (b) follows from Proposition \ref{prop: Fun_Cart, Mod}. 
\end{proof}

By Lemma \ref{lemma: inductive Alg, Mod}, after applying the symmetric monoidal functor 
\begin{align*}
\Loc_*^!: \Corr(\Slch)_{\fib,\all}\rightarrow \PrstL, 
\end{align*}
we can form the limit in $\CAlg(\PrstL)$:
\begin{align*}
&\Loc(X;\Sp)^{\otimes^!_*}:=\varprojlim\limits_{N}\Loc(X_N;\Sp)^{\otimes^!_*}
\end{align*}
for $X=G, VG.$ 
Similarly, we can form the limit in $\cMod^{N(\Fin_*)}(\PrstL)$
\begin{align*}
&(\Loc(X;\Sp)^{\otimes^!_*}, \Loc(Y;\Sp)):=\varprojlim\limits_{N,M}(\Loc(X_N;\Sp)^{\otimes^!_*},\Loc(Y_{N,M};\Sp))
\end{align*}
for $(X,Y)=(G,FQ^0),\ (VG, FQ^0),\ (VG, \widehat{Q}),$ 

Note that for $X=G, VG$, 
\begin{align*}
&X^\bullet=\varinjlim\limits_N X_N^\bullet
\end{align*}
gives a commutative algebra object in $\Spc$ (here only the homotopy type is relevant), so under the natural symmetric monoidal functor $\Loc\Sp_!: \Spc\rightarrow \PrstL$, the local system category $\Loc(X,\Sp)$ carries a natural symmetric monoidal structure, denoted by $\Loc(X;\Sp)^{\otimes_c}$. Since for any active map $f: \lng m\rng\rightarrow \lng n\rng$ in $N(\Fin_*)$, the morphism
\begin{align*}
&X_N^{\lng m\rng}\rightarrow X_N^{\lng n\rng}
\end{align*}
is a proper locally trivial fibration for both $X=G, VG$,  the symmetric monoidal structure on 
$\Loc(X;\Sp)^{\otimes_*^!}$ agrees with the one on $\Loc(X;\Sp)^{\otimes_c}$. 
Indeed, this is a direct consequence of Proposition \ref{prop appendix: bT_VTopCorr, bT_VSpcCorr}. By restricting to the full subcategories of vector bundles of rank $0$ and composing with the natural functor $(\bPrstL)^{\Sp//}\to \bPrstL$, we get a commutative diagram from (\ref{diagram: prop bT_VTopCorr, bT_VSpcCorr}): 
\begin{equation}
\begin{tikzcd}[column sep=3em]
\Top^{op}\ar[r]\ar[d]& \Corr(\Top)_{\propmap\fib, \all}\ar[d] \ar[r, "\Loc_*^!\simeq \Loc^!_!"]&\bPrstL\\
\Spc^{op}\ar[r]&\Corr(\Spc)_{\hpf',\all}\ar[ur, "\Loc^!_!"'] & \\
\Spc_{\hpf'}\ar[ur]&\ &\ 
\end{tikzcd}. 
\end{equation}
where all functors are symmetric monoidal. Now using \emph{Step 2} in the proof of Proposition \ref{prop: equiv model of J} (from Appendix \ref{Appendix subsec: proof of J}), both $X=G,VG$ give commutative algebra objects in $\Corr(\Top)_{\propfib, \all}$ and $\Corr(\Spc)_{\hpf', \all}$. As objects in the latter, both lie in the essential image of $\CAlg(\Spc_{\hpf'})\to \CAlg(\Corr(\Spc)_{\hpf',\all})$, corresponding to the classical commutative topological monoid structure on $G$ ($VG\simeq G$ in $\CAlg(\Spc^\times)$). Then the above claim about 
\begin{align}\label{eq: Locotimes*!simeqotimesc}
\Loc(X;\Sp)^{\otimes_*^!}\simeq \Loc(X;\Sp)^{\otimes_c}, X=VG, G
\end{align}
 follows.

\begin{prop}\label{prop: C,X;D,Y}
The commutative diagram over $(\bN\times \bN)^{\geq \dgnl}\times \Delta^2$
\begin{align}\label{diagram: prop C,X;D,Y}
\xymatrix{&(F_{N'}Q_{M'}^0)^{\bullet,\dagg}\ar[dd]\ar[rr]^{\psi_{N',M'}}&&\widehat{Q}_{N',M'}^{\bullet,\dagg}\ar@/^2pc/[ddll]\\
{(F_NQ_M^0)}^{\bullet,\dagg}\ar[dd]\ar[rr]^{\psi_{N,M}}\ar[ur]&&\widehat{Q}_{N,M}^{\bullet,\dagg}\ar[ur]\ar@/^2pc/[ddll]&\\
& VG_{ N'}^{\bullet}\circ \pi_\dagg&&\\
VG_{N}^{\bullet}\circ\pi_\dagg\ar[ur]&&&}
\end{align}
in $\Fun(N(\Fin_{*,\dagg}), \Slch)$  determines a functor 
\begin{align*}
F_{\psi}: &((\bN\times \bN)^{\geq\dgnl})^{op}\times \Delta^1\rightarrow \cMod^{N(\Fin_*)}(\Corr(\Slch)_{\fib,\all})\\
 &(N,M; 0)\mapsto  (VG^\bullet_{N}, (F_NQ_M^0)^{\bullet,\dagg})\\
 &(N,M;1)\mapsto (VG^\bullet_{N}, \widehat{Q}^{\bullet,\dagg}_{N,M}).
\end{align*}
\end{prop}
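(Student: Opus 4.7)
The plan is to apply Proposition \ref{prop: K times L Mod} with $K=((\bN\times\bN)^{\geq\dgnl})^{op}$, $L=\Delta^1$, and $\cC=\Slch$ equipped with the Cartesian symmetric monoidal structure. The first step is to organize the commutative diagram (\ref{diagram: prop C,X;D,Y}) into a functor $F: K \times L^{op}\to\Fun(I_{\pi_\dagg}, \Slch)$ that at $(N,M;\ell)$ outputs one of the two arrows $(F_NQ_M^0)^{\bullet,\dagg}\to VG_N^\bullet\circ\pi_\dagg$ (for $\ell=0$) or $\widehat{Q}_{N,M}^{\bullet,\dagg}\to VG_N^\bullet\circ\pi_\dagg$ (for $\ell=1$), viewed as objects of $\Fun(I_{\pi_\dagg}, \Slch)$. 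Functoriality in $K$ is supplied by the inclusions $G_N\hookrightarrow G_{N'}$, $Q_M^0\hookrightarrow Q_{M'}^0$, and $\widehat{Q}_{N,M}\hookrightarrow \widehat{Q}_{N',M'}$, which are literally the back-face arrows of the diagram; functoriality in the $L^{op}$-direction is supplied by the natural transformation $\psi_{N,M}$ together with the identity on $VG_N^\bullet$.

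The next step is to verify the two hypotheses of Proposition \ref{prop: K times L Mod}. For hypothesis (i) at fixed $\ell$, the $K$-functoriality must factor through $\Fun_{\inert}(I_{\pi_\dagg}, \Slch)$. This amounts to checking that the diagrams (\ref{diagram: W, N, P, D}) are Cartesian for every $\langle n\rangle_\dagger\in \Fin_{*,\dagg}$ and every inclusion $(N,M)\hookrightarrow (N',M')$. On the algebra base this follows from the fact that $G_N\hookrightarrow G_{N'}$ respects the orthogonal-sum structure; on the module side one uses that both $(F_NQ_M^0)^{\bullet,\dagg}$ and $\widehat{Q}_{N,M}^{\bullet,\dagg}$ are closed subvarieties cut out by conditions depending only on the relevant $A$-subspace and on the half-space inequality $s<-\frac{1}{2}|\bq|^2+A(\bq)$, so any element of the $(N',M')$-version whose algebra-coordinate data actually lies in the $(N,M)$-subvariety automatically satisfies the $(N,M)$-conditions. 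For hypothesis (i) at fixed $(N,M)$, the $L^{op}$-functoriality must factor through $\Fun_{\act}(I_{\pi_\dagg},\Slch)$, which amounts to Cartesianness of the square (\ref{eq: prop W, M, active}) for the unique active morphism $\langle 1\rangle_\dagg\to \langle 0\rangle_\dagg$; this can be unpacked from the explicit formula (\ref{eq: psi_N,M,0}) for $\psi_{N,M}$ together with Lemma \ref{lemma: disc fiber} identifying the fibers of the action map.

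For hypothesis (ii), one checks that both $ev_{\langle 1\rangle}\circ F$ and $ev_{\langle 0\rangle_\dagg}\circ F$ send every square in $K\times L^{op}$ to a Cartesian square in $\Slch$. A general such square is determined by an inclusion $(N,M)\hookrightarrow (N',M')$ composed with the unique non-identity edge of $L^{op}$. For $ev_{\langle 1\rangle}\circ F$ the output is always $VG_N$ (independent of $\ell$), so the horizontal arrows are identities and the square is Cartesian for free. For $ev_{\langle 0\rangle_\dagg}\circ F$ the square has $(F_NQ_M^0)^{\langle 0\rangle_\dagg}$ and $\widehat{Q}_{N,M}^{\langle 0\rangle_\dagg}$ as corners, with horizontal maps given by $\psi_{N,M}|_{\langle 0\rangle_\dagg}$ and vertical maps given by the natural inclusions; its Cartesianness is a direct check using (\ref{eq: psi_N,M,0}), namely that an element of $\widehat{Q}_{N',M'}^{\langle 0\rangle_\dagg}$ lying in the image of $\psi_{N',M'}$ and having its $(A,\bq)$-coordinates in the $(N,M)$-subdata must come from $VG_N\times Q_M^0$ under $\psi_{N,M}$.

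The main obstacle will be the Cartesian-square verifications in steps two and three; although each is elementary, they all require tracking the defining inequality $s<-\frac{1}{2}|\bq|^2+A(\bq)$ of $\widehat{Q}_{N,M}$ together with the orthogonal-decomposition structures. These geometric checks are parallel to those already carried out in Lemma \ref{lemma: hat Q module} and Lemma \ref{lemma: s, A}, and once they are in place, Proposition \ref{prop: K times L Mod} packages everything into the desired functor $F_\psi$.
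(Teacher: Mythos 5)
Your overall plan — applying Proposition \ref{prop: K times L Mod} to the diagram (\ref{diagram: prop C,X;D,Y}), and feeding in Lemma \ref{lemma: disc fiber} for the Cartesian checks — matches the paper's (one-line) proof exactly. However, your assignment of the two factors to $K$ and $L$ is reversed, and this is not merely cosmetic.

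Recall that in Proposition \ref{prop: K times L Mod}, the $K$-direction must factor through $\Fun_{\inert}(I_{\pi_\dagg},\cC)$ (which via the \emph{covariant} functor $\Fun_{\inert}\to\cMod(\Corr)$ turns an $\Slch$-morphism $W\to N$ into the correspondence $N\leftarrow W\overset{\mathrm{id}}{\to}W$, i.e.\ a module morphism $W\text{-pair}\to N\text{-pair}$, landing on $(-)_*$ after $\Loc_*^!$), while the $L^{\mathrm{op}}$-direction must factor through $\Fun_{\act}(I_{\pi_\dagg},\cC)$ (which via the \emph{contravariant} $\Fun_{\act}^{\mathrm{op}}\to\cMod(\Corr)$ turns $W\to M$ into $M\text{-pair}\to W\text{-pair}$, landing on $(-)^!$). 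For the output to have the morphism along the $\Delta^1$-factor realize $\psi_*$ (as used in (\ref{eq: module psi})) and to have the restriction maps $\iota^!$ along the $((\bN\times\bN)^{\geq\dgnl})^{\mathrm{op}}$-factor, one must take $K=\Delta^1$ and $L=((\bN\times\bN)^{\geq\dgnl})^{\mathrm{op}}$, so that the functor $F$ lives on $K\times L^{\mathrm{op}}=\Delta^1\times(\bN\times\bN)^{\geq\dgnl}$. You instead take $K=((\bN\times\bN)^{\geq\dgnl})^{\mathrm{op}}$ and $L=\Delta^1$. This fails in two places. First, the $L^{\mathrm{op}}$-edge $1\to 0$ would demand a $\Fun_{\act}$-morphism from $F(k,1)=\widehat{Q}_{N,M}^{\bullet,\dagg}$ to $F(k,0)=(F_NQ_M^0)^{\bullet,\dagg}$, but no such $\Slch$-map exists — $\psi_{N,M}$ goes the opposite way. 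Second, your $K$-direction functoriality is supposedly given by the inclusions $(F_NQ_M^0)\hookrightarrow(F_{N'}Q_{M'}^0)$ for $(N,M)\le(N',M')$, yet a $K$-morphism in $((\bN\times\bN)^{\geq\dgnl})^{\mathrm{op}}$ goes $(N,M)\to(N',M')$ precisely when $(N',M')\le(N,M)$, so the inclusions do not define a functor on your $K$. Consequently the two families of Cartesian squares you propose to verify — (\ref{diagram: W, N, P, D}) for the inclusions, (\ref{eq: prop W, M, active}) for $\psi_{N,M}$ — are attributed to the wrong maps; the correct pairing is the opposite one (the inclusions satisfy the single active-square condition (\ref{eq: prop W, M, active}), while $\psi_{N,M}$ must satisfy the inert family (\ref{diagram: W, N, P, D}), which is exactly where Lemma \ref{lemma: disc fiber} enters). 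Once you swap $K$ and $L$ and reassign the conditions accordingly, your argument goes through.
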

\begin{proof}
The proposition follows immediately from Lemma \ref{lemma: disc fiber} and Proposition \ref{prop: K times L Mod}. 
\end{proof}

By Proposition \ref{prop: C,X;D,Y} and passing to $\cMod^{N(
\Fin_*)}(\PrstL)$ through $\Loc_*^!$, we have an isomorphism (using Lemma \ref{lemma: disc fiber} again)
\begin{align}\label{eq: module psi}
(id,\psi_*): (\Loc(VG;\Sp)^{\otimes_c}, \Loc(VG\times Q_0;\Sp))\overset{\sim}{\rightarrow} (\Loc(VG;\Sp)^{\otimes_c},\Loc(\widehat{Q};\Sp)). 
\end{align}

\subsection{The $J$-homomorphism and quantization result}\label{subsec: J-hom}
Recall that the correspondence (\ref{diagram: W, Q^0}) induces a $VG_N^\bullet$-module homomorphism in the category $\cMod^{N(\Fin_*)}(\Corr(\Slch)_{\fib,\all})$ from the free module generated by $Q_M^0$, denoted by $(F_NQ_M)^{\bullet,\dagg}$, to the module $\widehat{Q}_{N,M}^{\bullet,\dagg}$. 
We rewrite the resulting isomorphism (\ref{eq: module psi}) in $\cMod^{N(
\Fin_*)}(\PrstL)$ as follows,
\begin{align*}
F_{Q^0}: (\Loc(VG;\Sp)^{\otimes_c}, \Loc(VG;\Sp)\otimes \Loc(Q^0;\Sp))&\overset{\sim}{\longrightarrow} (\Loc(VG;\Sp)^{\otimes_c}, \Loc(\widehat{Q};\Sp))\\
\cF\otimes \cR&\mapsto \psi_*(\cF\boxtimes \cR)
\end{align*}
in which $\Loc(VG\times Q_0;\Sp)\simeq \Loc(VG;\Sp)\otimes \Loc(Q^0;\Sp)$ is represented as the free module of $\Loc(VG;\Sp)^{\otimes_c}$ generated by $\Loc(Q^0;\Sp)$.

Let 
\begin{align*}
j_{Q^0_M, N}: G_N\times Q^0_M\hookrightarrow G_M\times \bR^M\times\bR \text{ and }  j_{\widehat{Q}_{N,M}}: \widehat{Q}_{N,M}\hookrightarrow G_N\times \bR^M\times \bR
\end{align*} 
be the obvious (open) inclusions. Let 
\begin{align*}
p_*: \Loc(VG;\Sp)^{\otimes_c}\overset{\sim}{\to} \Loc(G;\Sp)^{\otimes_c} ,
\end{align*}
be the natural equivalence induced from applying $\Loc^!_*$ to Lemma \ref{lemma: inductive Alg, Mod} (a), taking horizontal inverse limits, and using (\ref{eq: Locotimes*!simeqotimesc}). 
Consider the diagram of objets in $\cMod^{N(\Fin_*)}(\PrstL)$ 
\begin{align*}
\xymatrix{
(\Loc(VG;\Sp)^{\otimes_c}, \Loc(VG;\Sp)^{\otimes_c}\otimes \Loc(Q^0;\Sp))\ar[d]^{p_*}_\sim\ar[r]^{\ \ \ \ \ \ \ \ \ F_{Q^0}}&(\Loc(VG;\Sp)^{\otimes_c}, \Loc(\widehat{Q};\Sp))\\
(\Loc(G;\Sp)^{\otimes_c}, \Loc(G;\Sp)^{\otimes_c}\otimes \Loc(Q^0;\Sp))&(\Loc(VG;\Sp)^{\otimes_c}, \Loc(G\times \varinjlim\limits_{M}\bR^M\times \bR;\Sp))\ar[u]_{j^!_{\widehat{Q}}:=\varprojlim\limits_{N,M}j_{\widehat{Q}_{N,M}}^!}^{\sim}\\
(\Loc(G;\Sp)^{\otimes_c}, \Loc(G\times \varinjlim\limits_{M}\bR^M\times \bR;\Sp))\ar[u]_{j^!_{Q^0}:=\varprojlim\limits_{N, M}j_{Q^0_M, N}^!}^\sim
}.
\end{align*}
The composition $\Phi_{Q}:=(j_{\widehat{Q}})^{-1}\circ F_{Q^0}\circ (p_*)^{-1}\circ j^!_{Q^0}$ is an equivalence in $\cMod^{N(\Fin_*)}(\PrstL)$. Moreover, we have 
\begin{lemma}
The equivalence $\Phi_Q$ on the underlying module category $\Loc(G\times \varinjlim\limits_{M}\bR^M\times \bR;\Sp)$ is canonically homotopy equivalent to the identity functor. 
 \end{lemma}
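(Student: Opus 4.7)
The plan is to trace $\Phi_Q$ through by identifying each of its four factors as an equivalence of local system categories and then exhibiting a canonical homotopy between two composites that witnesses the identity. Let $E = G \times \varinjlim_M \bR^M \times \bR$. On the underlying module, $\Phi_Q = (j_{\widehat Q}^!)^{-1} \circ \psi_* \circ (p_*)^{-1} \circ j_{Q^0}^!$. I first observe that $\bar p := p \times \mathrm{id}_{Q^0}: VG \times Q^0 \to G \times Q^0$ is (the pullback of) a vector bundle, while $\psi: VG \times Q^0 \to \widehat Q$, sending $(A, \bp; \bq, s) \mapsto (A, \bq + 2\bp, s + |\bp|^2)$, is a fiber bundle with open-disc fibers (the contractibility is already implicit in the calculation supporting Lemma \ref{lemma: s, A}, the radius being determined by the strict inequality defining $Q^0$). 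Consequently $\bar p$ and $\psi$ are homotopy equivalences, so on local systems $\bar p^*$ (resp.\ $\psi^*$) is the inverse equivalence to $\bar p_*$ (resp.\ $\psi_*$). In particular $(p_*)^{-1}$ is identified with $\bar p^*$.

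The crux is to construct a canonical homotopy between two maps $VG \times Q^0 \to E$: the map $f_0 := j_{Q^0} \circ \bar p$, which sends $(A, \bp; \bq, s) \mapsto (A, \bq, s)$, and $f_1 := j_{\widehat Q} \circ \psi$, which sends $(A, \bp; \bq, s) \mapsto (A, \bq + 2\bp, s + |\bp|^2)$. The linear interpolation $H_t(A, \bp; \bq, s) = (A, \bq + 2t\bp, s + t|\bp|^2)$ lands in $E$ for every $t \in [0,1]$ (trivially, as $E$ is the full ambient space) and continuously deforms $f_0$ to $f_1$. Since pullback along homotopic maps in $\Slch$ induces a canonical natural equivalence of functors on local systems of spectra, $H$ yields a canonical equivalence $f_0^* \simeq f_1^*$ as functors $\Loc(E;\Sp) \to \Loc(VG \times Q^0;\Sp)$.

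Putting the pieces together, $\Phi_Q = (j_{\widehat Q}^!)^{-1} \circ \psi_* \circ \bar p^* \circ j_{Q^0}^* = (j_{\widehat Q}^!)^{-1} \circ \psi_* \circ f_0^*$, which by the homotopy above is equivalent to $(j_{\widehat Q}^!)^{-1} \circ \psi_* \circ f_1^* = (j_{\widehat Q}^!)^{-1} \circ \psi_* \circ \psi^* \circ j_{\widehat Q}^*$. Applying the counit equivalence $\psi_* \circ \psi^* \simeq \mathrm{id}$, valid because $\psi$ has contractible fibers, this collapses to $(j_{\widehat Q}^!)^{-1} \circ j_{\widehat Q}^* = \mathrm{id}$. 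The main technical point will be to package the above into a coherent natural equivalence in $\PrstL$: the equivalence $f_0^* \simeq f_1^*$ produced by $H$ must be glued with the unit/counit equivalences of the adjunctions $(\bar p^*, \bar p_*)$ and $(\psi^*, \psi_*)$. This is straightforward, and can be executed either directly via the cylinder $VG \times Q^0 \times [0,1]$ together with base change (Lemma \ref{lemma: base change}) at each step, or by exploiting that all four local-system categories involved are canonically equivalent to $\Loc(G;\Sp)$ through the relevant projection to $G$, thereby reducing the entire argument to a single commuting diagram in $\Loc(G;\Sp)$.
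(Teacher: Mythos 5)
Your proof is correct and takes essentially the same approach as the paper's. Both arguments hinge on the same linear homotopy $(A,\bp;\bq,s) \mapsto (A, \bq+2\alpha\bp, s+\alpha^{(\cdot)}|\bp|^2)$ between the projection and the shear map $\psi$, combined with the observation that $\psi$ and $p$ are fiber bundles with contractible fibers and that the $j$'s are open embeddings that are homotopy equivalences; the paper runs the homotopy on the ambient space $VG_N\times\bR^M\times\bR$ while you keep the domain as $VG\times Q^0$ and let the codomain be the ambient $E$, and the paper uses $\alpha^2$ where you use $\alpha$ in the $s$-coordinate, but neither of these choices affects the argument. (One cosmetic slip: the map $\mathrm{id} \to \psi_*\psi^*$ you invoke is the unit of the adjunction $\psi^*\dashv\psi_*$, not the counit.)
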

 \begin{proof}
 Consider the following diagram
 \begin{align*}
 \xymatrix{&VG_N\times \bR^M\times\bR\ar[dl]_{\widetilde{\psi}_{N,M}}\ar[dr]^{p_{N,M}}&\\
G_N\times \bR^M\times\bR &&G_N\times\bR^M\times\bR
 }
 \end{align*}
 where $p_{N,M}$ is the natural projection and $\widetilde{\psi}_{N,M}$ is defined in the same way as $\psi_{N,M}$ above. 
 To show the claim, we just need to show that the functor $(p_{N,M})_*$ is canonically isomorphic to $(\widetilde{\psi}_{N,M})_*$, that is compatible with $N,M$. Let
 \begin{align*}
\widetilde{\Psi}_{N,M}: &[0,1]\times VG_N\times \bR^M\times \bR\rightarrow [0,1]\times G_N\times \bR^M\times \bR\\
&(\alpha, A,\bp, \bq, t)\mapsto (\alpha, A, \bq+2\alpha\bp, t+\alpha^2 |\bp|^2).
 \end{align*}
 Then $\widetilde{\Phi}_{N,M}$ gives an equivalence between the local system categories, whose restriction to $\alpha=0,1$ are $(p_{N,M})_*$ and $(\widetilde{\psi}_{N,M})_*$ respectively, and this induces the canonical isomorphism between the two functors. 
  \end{proof}

In summary, we have:
\begin{prop}\label{prop: F_Q, p}
There is a natural equivalence in $\cMod^{N(\Fin_*)}(\PrstL)$
\begin{align}\label{eq: prop F_Q, p}
\Phi_Q: (\Loc(G;\Sp)^{\otimes_c}, \Loc(\widehat{Q};\Sp))\rightarrow (\Loc(VG;\Sp)^{\otimes_c}, \Loc(\widehat{Q};\Sp)), 
\end{align}
which induces the identity functor on the common underlying module category $\Loc(\widehat{Q};\Sp)$.
\end{prop}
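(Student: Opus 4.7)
The plan is to assemble the proposition from the constructions immediately preceding it; no new ideas are required beyond what is already on the page. The equivalence $\Phi_Q$ has already been defined as the composition
\begin{equation*}
\Phi_Q \;=\; (j_{\widehat{Q}}^!)^{-1} \circ F_{Q^0} \circ (p_*)^{-1} \circ j^!_{Q^0}
\end{equation*}
in $\cMod^{N(\Fin_*)}(\PrstL)$, and I would first verify that each of the four factors is an equivalence of pairs: the restrictions $j_{Q^0}^!$ and $j_{\widehat{Q}}^!$ are equivalences because the complements of the open embeddings $Q^0_M \hookrightarrow \bR^M\times \bR$ and $\widehat{Q}_{N,M}\hookrightarrow G_N\times\bR^M\times\bR$ deformation-retract away (so the pullbacks induce equivalences on local systems and hence on $\Loc(VG;\Sp)^{\otimes_c}$-modules, compatibility with the module structure being guaranteed by Proposition \ref{prop: C,X;D,Y}); the functor $p_*$ is an equivalence because $p: VG\to G$ is a vector bundle with contractible fibers, and $F_{Q^0}$ is an equivalence by the module structure obtained from Proposition \ref{prop: C,X;D,Y} together with the disc-bundle description of $\psi_{N,M}$ from Lemma \ref{lemma: disc fiber}.

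Second, I would reinterpret the composition as a morphism with source $(\Loc(G;\Sp)^{\otimes_c}, \Loc(\widehat{Q};\Sp))$ and target $(\Loc(VG;\Sp)^{\otimes_c}, \Loc(\widehat{Q};\Sp))$: this amounts to absorbing the equivalence $j_{\widehat{Q}}^!$ into the identification of the underlying module categories on both ends while leaving the algebra components $\Loc(G;\Sp)^{\otimes_c}$ and $\Loc(VG;\Sp)^{\otimes_c}$ untouched. The algebra-level morphism $\Loc(G;\Sp)^{\otimes_c}\to \Loc(VG;\Sp)^{\otimes_c}$ is the one induced by the algebra endomorphism of $G^\bullet$ corresponding to the correspondence $G_N^\bullet \leftarrow VG_N^\bullet\rightarrow G_N^\bullet$.

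Finally, the claim that $\Phi_Q$ induces the identity on $\Loc(\widehat{Q};\Sp)$ is exactly the content of the lemma just proved: there a canonical homotopy between $p_*$ and $(\widetilde{\psi}_{N,M})_*$ was produced, built from the straight-line interpolation $\widetilde{\Psi}_{N,M}(\alpha, A,\bp,\bq,t) = (\alpha, A, \bq+2\alpha\bp, t+\alpha^2|\bp|^2)$. Transporting this identification across the restriction equivalence $j_{\widehat{Q}}^!$ and passing to the limit in $(N,M)$ — compatibility being built into the functoriality supplied by Lemma \ref{lemma: inductive Alg, Mod} and Proposition \ref{prop: C,X;D,Y} — yields the identity on $\Loc(\widehat{Q};\Sp)$ as stated. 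The only step one must take care over is making sure the homotopy $\widetilde{\Psi}_{N,M}$ assembles into a morphism in $\cMod^{N(\Fin_*)}(\PrstL)$ rather than merely a functor between the underlying module categories, but this is precisely the application of Proposition \ref{prop: K times L Mod} to the parameter space $\Delta^1 \times ((\bN\times\bN)^{\geq\dgnl})^{op}$; so the proof is in essence a bookkeeping assembly with no substantial obstacle.
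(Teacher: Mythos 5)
Your assembly follows the paper's own route: the definition of $\Phi_Q$ as the stated composition, the verification that each factor is an equivalence (the restrictions $j^!$ because both sides are homotopy equivalent to $G$, $p_*$ because $p$ is a vector bundle projection, $F_{Q^0}$ via Lemma \ref{lemma: disc fiber}), and the linear-interpolation argument $\widetilde{\Psi}_{N,M}$ in the unnamed lemma immediately above the proposition.

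Two small corrections are worth making. First, the algebra-level component of $\Phi_Q$ is $(p_*)^{-1}\simeq p^!\colon\Loc(G;\Sp)^{\otimes_c}\to\Loc(VG;\Sp)^{\otimes_c}$, since all other factors in the composition act as the identity on the algebra factor; this is not the endomorphism of $\Loc(G;\Sp)^{\otimes_c}$ coming from the self-correspondence $G_N^\bullet\leftarrow VG_N^\bullet\rightarrow G_N^\bullet$, which has $\Loc(G;\Sp)^{\otimes_c}$ as both source and target and is therefore an arrow of the wrong type. Second, the assertion that $\Phi_Q$ induces the identity on $\Loc(\widehat{Q};\Sp)$ is a statement about the image of $\Phi_Q$ under the forgetful functor to $\PrstL$, so $\widetilde{\Psi}_{N,M}$ only needs to supply a homotopy in $\PrstL$ compatible with the limit in $(N,M)$, which is evident from its formula; appealing to Proposition \ref{prop: K times L Mod} to promote the homotopy itself to a morphism in $\cMod^{N(\Fin_*)}(\PrstL)$ is neither necessary nor what that proposition would provide.
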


\subsubsection{An equivalent model of the $J$-homomorphism}\label{subsubsec: equiv model of J}

We state a well known result (cf. \cite[Proposition 2.12]{Glasman}). 
\begin{prop}\label{prop: Alg Day convolution}
For any $K^{\otimes}\in \CAlg(\Spc^\times)$, we have a natural equivalence
\begin{align}
\label{prop eq: CAlg(Loc)}\CAlg(\Loc(K;\Sp)^{\otimes_c})&\overset{\sim}{\rightarrow} \Fun^{\rightlax}(K^{\otimes},\Sp^{\otimes})\\
\nonumber\cF&\mapsto (x\mapsto \iota_x^!\cF).
\end{align}
\end{prop}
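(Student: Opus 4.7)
The plan is to deduce this from the general theory of Day convolution. First, I would identify the underlying $\infty$-category $\Loc(K;\Sp)$ with $\Fun(K,\Sp)$, viewing $K$ as its underlying $\infty$-groupoid, and argue that under this identification the convolution symmetric monoidal structure $\otimes_c$ --- induced by $K^{\bullet} \in \CAlg(\Corr(\Slch)_{\fib,\all})$ via the right-lax symmetric monoidal functor $\Loc_*^!$ --- coincides with the Day convolution symmetric monoidal structure on $\Fun(K,\Sp)$ associated to the $E_\infty$-structure on $K$ and the pointwise tensor product on $\Sp$. Concretely, both structures are characterized by the formula
\begin{align*}
(\cF\otimes_c\cG)(x)\simeq \colim_{y\cdot z=x}\cF(y)\otimes\cG(z),
\end{align*}
which in the former case follows by base change applied to the multiplication map $m\colon K\times K\to K$ after unwinding the definition of $\otimes_c$ coming out of the $\Fin_*$-object structure.

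Second, I would invoke the universal property of Day convolution (cf.\ \cite{higher-algebra}): for any small symmetric monoidal $\infty$-category $\cC^\otimes$ and any presentable symmetric monoidal $\infty$-category $\cD^\otimes$, there is a canonical equivalence
\begin{align*}
\CAlg(\Fun(\cC,\cD)^{\mathrm{Day}})\simeq \Fun^{\rightlax}(\cC^\otimes,\cD^\otimes),
\end{align*}
under which a commutative algebra object $\cF$ corresponds to the lax symmetric monoidal functor given by evaluation $x\mapsto \cF(x)$. Applied to $\cC^\otimes=K^\otimes$ and $\cD^\otimes=\Sp^\otimes$, combined with the first step, this produces the asserted equivalence, with the commutative algebra $\cF$ sent to the functor whose value at $x$ is the fiber of $\cF$ at $x$. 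Finally, I would identify this evaluation functor with the costalk $\iota_x^!$ appearing in the statement: in the right-lax framework $\Loc_*^!$ used throughout the paper, $\iota_x^!$ is the natural ``fiber'' functor along a point, being the right adjoint to $(\iota_x)_!$ and hence compatible with the $!$-pushforwards that define $\otimes_c$.

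The main obstacle will be the first step: rigorously matching the $\otimes_c$-structure, which is only defined through the correspondence-category formalism of Section~\ref{section: background} applied to the $\Fin_*$-object $K^\bullet$, with the abstract Day convolution on $\Fun(K,\Sp)$. This requires upgrading an equivalence of underlying $\infty$-categories to an equivalence in $\CAlg(\PrstL)$, which in turn demands tracking the $E_\infty$-coherences produced by $K^\bullet$ through $\Loc_*^!$ and showing that they match those of the standard Day convolution across all arities. Fortunately, both structures are characterized by universal properties (preservation of colimits in each variable together with the binary multiplication and its associativity/commutativity coherences), so the match is forced; the technical work lies in executing this comparison inside the correspondence-category machinery, namely in verifying that the functor $K^\bullet \rightsquigarrow \Loc(K;\Sp)^{\otimes_c}$ agrees, as a map of $\Fin_*$-diagrams into $\PrstL$, with the one defined via Day convolution.
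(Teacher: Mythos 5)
The paper does not actually prove this proposition; it cites it as a well-known result with reference to \cite[Section~22]{MMSS}, the model-categorical account of (commutative) monoids in diagram spectra. So your Day-convolution argument is a genuine supplementary proof, and the overall strategy is the natural $\infty$-categorical one. However, there is a misidentification in your first step that you should correct. You assert that $\otimes_c$ is the structure induced via the functor $\Loc_*^!$ on $\Corr(\Slch)_{\fib,\all}$, but the paragraph immediately preceding the proposition defines $\otimes_c$ differently: it is the structure induced by the functor $\Loc\Sp_!\colon\Spc\to\PrstL$ sending $X\mapsto\Loc(X;\Sp)$ and $f\mapsto f_!$ (left Kan extension). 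The distinction matters. The $\Loc\Sp_!$-structure is Day convolution essentially by construction, with exactly the colimit formula you wrote. By contrast, the $\Loc_*^!$-structure multiplies by $m_*$ (a right Kan extension), and base change applied to $m\colon K\times K\to K$ produces the \emph{limit} formula
\begin{align*}
(\cF\otimes\cG)(x)\simeq \lim\limits_{y\cdot z=x}\cF(y)\otimes\cG(z),
\end{align*}
which is not Day convolution. So your claim that the colimit formula ``in the former case follows by base change'' is false for the $\Loc_*^!$-structure. The paper is explicit about this: it observes that the $\Loc\Sp_!$- and $\Loc_*^!$-convolutions coincide for $G,VG$ because the active structure maps are \emph{proper} there, a feature particular to those cases and unavailable for a general $K^\otimes\in\CAlg(\Spc^\times)$.

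Once the source of $\otimes_c$ is corrected, the ``main obstacle'' you flag largely evaporates: $\Loc\Sp_!$ gives Day convolution by design, so there is no separate matching to perform. The remaining steps of your argument are fine: the universal property $\CAlg(\Fun(\cC,\cD)^{\mathrm{Day}})\simeq\Fun^{\rightlax}(\cC^\otimes,\cD^\otimes)$ is standard, and since $K$ is an $\infty$-groupoid, the right adjoint of $(\iota_x)_!$ --- which is what $\iota_x^!$ denotes in this context --- coincides with the restriction $\iota_x^*$, so both give evaluation at $x$ as required.
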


In the Appendix Corollary \ref{cor: CAlgRunK;Sp}, we give a proof of a weaker version of the proposition using the formalism of adjoint functors. Since we are not trying to compare the equivalence (\ref{prop eq: CAlg(Loc)}) with the weaker version from Corollary \ref{cor: CAlgRunK;Sp}, we will use the latter throughout the paper. 

Let $\varpi_{VG}$ be the commutative algebra object in $\Loc(VG;\Sp)^{\otimes_c,!}$ that corresponds to the symmetric monoidal functor 
\begin{align*}
VG^\otimes\rightarrow \pt^{\otimes}\rightarrow \Sp^{\otimes},
\end{align*}
where the latter map takes $\pt^{\otimes}$ to the monoidal unit, the sphere spectrum.

\begin{prop}\label{prop: equiv model of J}
Under the natural equivalence
\begin{align*}
p_*: \Loc(VG;\Sp)^{\otimes_c}\overset{\sim}{\to} \Loc(G;\Sp)^{\otimes_c} ,
\end{align*}
the commutative algebra object $p_*\varpi_{VG}$ corresponds to the $J$-homomorphism
\begin{align*}
J: G^\otimes\rightarrow \Pic(\bS)^\otimes
\end{align*}
through (\ref{eq: cor: CAlgRunK;Sp}). 
\end{prop}
The proposition is proved in Appendix \ref{Appendix subsec: proof of J}.

\subsubsection{Twisted equivariant local systems}

For any $H^\otimes\in \CAlg(\Spc^\times)$ and $X\in \Spc$ an $H$-module, we have $\Loc(X;\Sp)$ a $\Loc(H;\Sp)^{\otimes_c}$-module. Let $a: H\times X\rightarrow X$ denote the action map. 
We call any object in $\Fun(H^\otimes, \Pic(\bS)^\otimes)$ a \emph{character} of $H$ (in $\bS$-lines). 
\begin{definition}\label{def: chi-equivariant}
For any $\chi\in \Fun(H^\otimes, \Pic(\bS)^\otimes)$, we define $\Loc(X;\Sp)^{\chi}$ to be the full (stable) subcategory of $\Mod_{\chi}(\Loc(X;\Sp))$ consisting of $\chi$-modules $\cL$ satisfying that the structure map
\begin{align*}
\chi\boxtimes \cL\rightarrow a^!\cL
\end{align*}
is an equivalence in $\Loc(H\times X;\Sp)$. We call any object in $\Loc(X;\Sp)^{\chi}$ a $\chi$\emph{-equivariant} local system on $X$. 
\end{definition}

\begin{lemma}\label{lemma: chi, fully faithful}
If $X\simeq H$ is an $H$-torsor, i.e the free module of $H$ generated by a point, then there is a natural equivalence between the (stable $\infty$-)category of $\chi$-equivariant local systems on $X$ and the category of local systems on a point, i.e. $\Sp$, via
\begin{align*}
&\Loc(\pt;\Sp)\rightarrow \Loc(X;\Sp)^{\chi},\\
&\cR\mapsto \chi\underset{\bS}{\otimes}\cR
\end{align*}
whose inverse is pullback along the inclusion $\iota_{\pt}: \pt\hookrightarrow X$ that exhibits $X$ as a free $H$-module. 
\end{lemma}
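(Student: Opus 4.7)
The plan is to verify that the two explicit functors are mutually inverse equivalences, the key input being the algebra structure on $\chi$ and the torsor identification $X\simeq H$.

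First, I would pin down a base point: the torsor isomorphism $X\simeq H$ identifies the marked point $\pt\in X$ with the unit $e\in H$, and under this the action map $a\colon H\times X\to X$ becomes the multiplication $m\colon H\times H\to H$. In particular the restriction of $a$ to $H\times\{\pt\}$ is precisely the torsor identification $H\overset{\sim}{\to} X$. The character $\chi\in\Fun^{\rightlax}(H^\otimes,\Pic(\bS)^\otimes)\subset\CAlg(\Loc(H;\Sp)^{\otimes_c})$ carries the unit structure $\iota_e^!\chi\simeq\bS$ and the multiplication $\chi\boxtimes\chi\to m^!\chi$ (the dual of the commutative algebra structure via the self-duality used in $\Loc_*^!$).

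Next I would check that $F(\cR):=\chi\boxtimes_{\bS}\cR$ actually lies in $\Loc(X;\Sp)^{\chi}$. Transporting along $X\simeq H$, the $\chi$-equivariance structure map is the multiplication of $\chi$ tensored with $\cR$:
\[
\chi\boxtimes(\chi\boxtimes\cR)\;\simeq\;(\chi\boxtimes\chi)\boxtimes\cR\;\longrightarrow\;m^!\chi\boxtimes\cR\;\simeq\;a^!(\chi\boxtimes\cR),
\]
and this is an equivalence because $\chi$, landing in $\Pic(\bS)$, is a group-like algebra (its multiplication is invertible on costalks). Thus $F$ is well defined with target $\Loc(X;\Sp)^{\chi}$.

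For the composition $G\circ F$, I would compute
\[
\iota_{\pt}^!(\chi\boxtimes_\bS\cR)\;\simeq\;(\iota_e^!\chi)\otimes_\bS\cR\;\simeq\;\bS\otimes_\bS\cR\;\simeq\;\cR,
\]
naturally in $\cR$, using the unit of $\chi$. For $F\circ G$, I would take $\cL\in\Loc(X;\Sp)^{\chi}$ and restrict the equivariance equivalence $\chi\boxtimes\cL\overset{\sim}{\to}a^!\cL$ along the inclusion $H\times\{\pt\}\hookrightarrow H\times X$. On the left one gets $\chi\boxtimes_\bS\iota_{\pt}^!\cL=F(G\cL)$; on the right one gets $(a|_{H\times\{\pt\}})^!\cL$ which, by the identification $a|_{H\times\{\pt\}}\simeq(X\simeq H)^{-1}$ from the first paragraph, is canonically $\cL$. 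Naturality in $\cL$ is immediate from the naturality of the equivariance structure. These two unit/counit equivalences, combined with the adjunction $(\chi\otimes_\bS-)\dashv\iota_{\pt}^!$ (obtained by restriction to $\pt\in X$ of the free/forgetful adjunction between modules), exhibit $F$ and $G$ as mutually inverse.

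The one genuinely delicate step is the equivariance-preservation of $F$: one must use that $\chi$ factors through $\Pic(\bS)^\otimes$ to conclude that the multiplication map $\chi\boxtimes\chi\to m^!\chi$ is an equivalence (rather than only a map of commutative algebras), without which the structure map $\chi\boxtimes F(\cR)\to a^!F(\cR)$ would not be invertible. All of this can be assembled from Proposition~\ref{prop: Alg Day convolution} and the definition of $\Loc(X;\Sp)^\chi$; the remaining verifications are formal diagram chases, so I would not expect any further obstacle.
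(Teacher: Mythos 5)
Your proof is correct and follows essentially the same route as the paper: the key step — restricting the equivariance equivalence $\chi\boxtimes\cL\overset{\sim}{\to}a^!\cL$ along $H\times\{\pt\}\hookrightarrow H\times X$ and using that $a\circ(\id\times\iota_{\pt})$ is the torsor identification $H\simeq X$ — is exactly the paper's argument. You also spell out the well-definedness of $F$ (via invertibility of $\chi\boxtimes\chi\to m^!\chi$ since $\chi$ lands in $\Pic(\bS)$) and the $G\circ F$ direction, which the paper leaves implicit, but this is a matter of thoroughness rather than a different strategy.
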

\begin{proof}
First, consider the sequence of $H$-module maps
\begin{align*}
&H\times \pt\overset{id\times \iota_{\pt}}{\hookrightarrow} H\times X\overset{a}{\rightarrow} X
\end{align*}
which exhibits $X$ as an $H$-torsor. Given a $\chi$-equivariant local system $\cM$ on $H$, from definition, we have 
\begin{align*}
(id\times \iota_{\pt})^!a^!\cM\simeq \chi\boxtimes \iota_{\pt}^!\cM,
\end{align*}
which implies that $\cM$ is isomorphic to $\chi\underset{\bS}{\otimes}\cR$ where $\cR$ is the costalk of $\cM$ at the image of $\iota_{\pt}$. 

To see that 
$$\Hom_{\Loc(X;\Sp)^\chi}(\chi\underset{\bS}{\otimes}\cR_1, \chi\underset{\bS}{\otimes}\cR_2)\cong \Hom_{\Sp}(\cR_1, \cR_2),$$ 
we use the results about free modules from \cite[4.2.4]{higher-algebra} (and Corollary 4.5.1.6 in \emph{loc. cit.} for the equivalence to the commutative setting). Let $i_e: \{e\}\hookrightarrow H$ be the unity of $H$ (up to a contractible space of choices), then $(i_{e})_!\bS$ is the monoidal unity in $\Loc(H;\Sp)^{\otimes_c}$ (recall $(i_{e})_!$ is the left adjoint of $(i_{e})^!$ on local system categories). Using $i_{\pt}$, we identify $\Loc(X;\Sp)^\chi$ with $\Loc(H;\Sp)^\chi$. Now $\chi\in \Loc(H;\Sp)$ is a free module of $\chi$ generated by $(i_{e})_!\bS$ in the sense of Definition 4.2.4.1 in \emph{loc. cit.}. 
By $\bS$-linearity, $\chi\otimes_{\bS}\cR$ is isomorphic to the free module generated by $(i_e)_!\cR$. 
Using Corollary 4.2.4.6 in \emph{loc. cit.}, we have 
\begin{align*}
&\Hom_{\Loc(H;\Sp)^\chi}(\chi\underset{\bS}{\otimes}\cR_1, \chi\underset{\bS}{\otimes}\cR_2)\cong \Hom_{\Loc(H;\Sp)}((i_e)_!\cR_1, \chi\underset{\bS}{\otimes}\cR_2)\\
&\cong \Hom(\cR_1, i_e^!(\chi\underset{\bS}{\otimes}\cR_2))\cong \Hom(\cR_1, \cR_2). 
\end{align*}
Then the lemma follows. 
\end{proof}

A direct consequence of Proposition \ref{prop: F_Q, p} and the above identification of $J$ with $p_*\varpi_{VG}$ is the following:
\begin{cor}\label{cor: equiv J-equivariant}
\item[(a)]
For any $\bS$-module $\cR$, the local system $\psi_*(\varpi_{VG}\boxtimes \cR)\in \Loc(\widehat{Q};\Sp)$ as an $\varpi_{VG}$-module is corresponding to the $J$-equivariant local system $J\boxtimes \cR$, under the natural equivalence $\Phi_Q$ (\ref{eq: prop F_Q, p}).  
\item[(b)] The correspondence 
\begin{align*}
\xymatrix{&VG\times Q^0\ar[dr]^{\pi_{Q^0}}\ar[dl]_{\psi}&\\
\widehat{Q}&&Q^0
}
\end{align*}
induces a canonical equivalence 
\begin{align*}
\Loc(Q^0;\Sp)\overset{\sim}{\longrightarrow} \Loc(\widehat{Q};\Sp)^{J}
\end{align*}
through the functor $\psi_*\pi_{Q^0}^!$.
\end{cor}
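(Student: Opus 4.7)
Both parts of the Corollary are formal consequences of Proposition~\ref{prop: F_Q, p} together with the identification $p_*\varpi_{VG}\simeq J$ as commutative algebra objects in $\Loc(G;\Sp)^{\otimes_c}$. The plan is to deduce (a) by unwinding $\Phi_Q$ on the specific pair of objects $\varpi_{VG}\otimes\cR$ and $J\boxtimes\cR$, and then to deduce (b) from (a) by invoking the lemma that identifies $J$-equivariant local systems on a $G$-torsor with $\Sp$.

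For part (a), I would start from the tautology $\psi_*(\varpi_{VG}\boxtimes\cR)=F_{Q^0}(\varpi_{VG}\otimes\cR)$ that comes directly from the definition of $F_{Q^0}$. The equivalence $\Phi_Q$ is, by construction, the identity functor on the common underlying category $\Loc(\widehat{Q};\Sp)$ but transports the $\Loc(VG;\Sp)^{\otimes_c}$-module structure across $p_*$ into a $\Loc(G;\Sp)^{\otimes_c}$-module structure, while $(p_*)^{-1}(J)=\varpi_{VG}$. Tracing $F_{Q^0}(\varpi_{VG}\otimes\cR)$ backwards through the commutative diagram that defines $\Phi_Q$, using successively the equivalences $j_{\widehat{Q}}^!$ and $j_{Q^0}^!$ to pass through $\Loc(G\times\varinjlim_M\bR^M\times\bR;\Sp)$, identifies the resulting $J$-module in the $G$-picture with $J\boxtimes\cR$. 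This is (a).

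For part (b), I would observe first that $\pi_{Q^0}: VG\times Q^0\to Q^0$ is the projection, so $\pi_{Q^0}^!(\cR)\simeq \varpi_{VG}\boxtimes\cR$ via the usual relative dualizing computation, and hence the functor $\psi_*\pi_{Q^0}^!$ acts as $\cR\mapsto \psi_*(\varpi_{VG}\boxtimes\cR)$. By part (a) this image, viewed as an object of $\Loc(\widehat{Q};\Sp)^{J}$, is canonically $J\boxtimes\cR$. Since $Q^0_M$ is convex, $\Loc(Q^0;\Sp)\simeq \Sp$; and since $\widehat{Q}$ is a $G$-torsor in $\Spc$, the lemma preceding the Corollary gives a canonical equivalence $\Sp\overset{\sim}{\to}\Loc(\widehat{Q};\Sp)^{J}$ via $\cR\mapsto J\underset{\bS}{\otimes}\cR\simeq J\boxtimes\cR$. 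The functor $\psi_*\pi_{Q^0}^!$ agrees with this equivalence under these identifications, proving (b).

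I expect the main technical point to be verifying that the composite equivalence $j_{\widehat{Q}}^!\circ (j_{Q^0}^!)^{-1}:\Loc(G;\Sp)^{\otimes_c}\otimes \Loc(Q^0;\Sp)\to \Loc(\widehat{Q};\Sp)$ actually sends the external tensor product $J\boxtimes\cR$ to the local system on $\widehat{Q}$ that, under the homotopy equivalence $\widehat{Q}\simeq G$, is the pullback of $J$ tensored with $\cR$. This reduces to the contractibility of the auxiliary factor $\varinjlim_M\bR^M\times\bR$ through which both sides pass, together with the $G$-equivariance of the identifications. Once this compatibility is checked, both parts follow from Proposition~\ref{prop: F_Q, p} by the diagram chase above, with no further computation needed.
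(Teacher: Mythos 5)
Your proposal is essentially the paper's intended argument: the paper states this corollary as a direct consequence of Proposition~\ref{prop: F_Q, p}, Proposition/Definition~4.18 identifying $p_*\varpi_{VG}$ with $J$, and the lemma characterizing $J$-equivariant local systems on a $G$-torsor. Your unwinding of the definition of $F_{Q^0}$ for (a) and the reduction of (b) to (a) via $\pi_{Q^0}^!(\cR)\simeq\varpi_{VG}\boxtimes\cR$ and the torsor lemma matches the route the paper implies.
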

Thus, we have achieved the desired quantization results on the Hamiltonian $\coprod\limits_n BO(n)$-action on $\varinjlim\limits_NT^*\bR^N$ and its ``module" generated by the stabilization of $L_0$: the Hamiltonian $\coprod\limits_n BO(n)$-action is canonically quantized by the dualizing sheaf $\varpi_{VG}$ as a commutative algebra object in $\Loc(VG;\Sp)^{\otimes_c}$, and the ``module" generated by the stabilization of $L_0$ is quantized by the objects in the stable $\infty$-category of $J$-equivariant local systems on $\widehat{Q}$.

\section{Morse transformations and applications to stratified Morse theory}
In this section, we introduce the notion of \emph{Morse transformations} associated to a (germ of a) smooth conic Lagrangian, which is a special class of contact transformations that has intimate relation with stratified Morse functions. We give a classification of (stabilized) Morse transformations, then we apply the quantization results from the previous section to give an enrichment of the main theorem in stratified Morse theory by the $J$-homomorphism. 

\subsection{Morse transformations}\label{subsec: Morse transformations}
For any smooth manifold $X$, let $T^{*,<0}(X\times \bR_t)$ denote the open half of $T^*(X\times \bR_{t_0})$ consisting of covectors whose components in $dt$ are strictly negative. Let $T^{*,\geq 0}(X\times\bR_{t_0})$ by its complement in $T^*(X\times \bR_{t_0})$.  We implicitly assume that all the geometric objects and maps are taken in an analytic-geometric category, e.g. we can assume that they are subanalytic. 
For an exact (connected) Lagrangian submanifold $L\subset T^*X$ (not necessarily closed), a \emph{conic} Lagrangian lifting $\bL$ of $L$ is a conic Lagrangian in $T^{*,<0}(X\times\bR_t)$ such that its projection to $T^*X$ is $L$ and the 1-form $-dt+\alpha|_{\bL}$ vanishes. Note that the conic Lagrangian liftings of $L$ are essentially unique up to a shift of a constant in the $t$-coordinate. Moreover, if the exact Lagrangian $L$ is in general position, i.e. its projection to $X$ is finite-to-one, then any conic Lagrangian lifting $\bL$ is determined by its front projection in $X\times \bR_t$,  i.e. its image under the projection $T^{*,<0}(X\times \bR_t)\rightarrow X\times \bR_t$ to the base. For example, if the front projection of $\bL$ is a smooth hypersurface, then $\bL$ is just half of the conormal bundle of the hypersurface contained in $T^{*,<0}(X\times \bR_t)$, and we say it is the \emph{negative conormal bundle} of the smooth hypersurface.

Let $(L,(0,p_0))$ be a germ of a smooth Lagrangian (in general position) in $T^*\bR^M$ with center $(0,p_0)$ and let $(\bL, (0,p_0; t_0=0, -dt_0))$ be the germ of a conic Lagrangian lifting of $(L,(0,p_0))$ in $T^{*,<0}(\bR^M\times \bR_{t_0})$. Following \cite{KS}, a germ of a \emph{contact transformation with respect} $(0,p_0; t_0=0, -dt_0)$ is a germ of a conic Lagrangian $\bL_{01}\subset (T^{*,<0}(\bR^M\times \bR_{t_0}))^-\times \mathring{T}^*(\bR^M\times \bR_{t_1})$ such that $\bL_{01}$ induces a conic symplectomorphism from a germ of an open set in $T^{*,<0}(\bR^M\times \bR_{t_0})$ centered at $(0,p_0; t_0=0, -dt_0)$ to a germ of an open set in $\mathring{T}^{*}(\bR^M\times \bR_{t_1})$, where $\mathring{T}^*X$ means the cotangent bundle with the zero-section deleted for any $X$. In the following, any (conic) Lagrangian is understood as a germ of a (conic) Lagrangian, and any contact transformation is understood as a germ of a contact transformation, unless otherwise specified. 
 
It is proved in \cite{KS} that there exists a contact transformation $\bL_{01}$ such that $\bL_{01}$ is locally the conormal bundle of a smooth hypersurface near the center, and it corresponds to a conic symplectomorphism which takes $(\bL,(0,p_0; t_0=0, -dt_0))$ to a conic Lagrangian that is locally the conormal bundle of a smooth hypersurface in $\bR^M\times \bR_{t_1}$. We call such a contact transformation a \emph{Morse transformation with respect to} $(\bL, (0,p_0; t_0=0, -dt_0))$. The space of Morse transformations form an open dense subset in the space of all contact transformations with respect to $(0,p_0; t_0=0, -dt_0)$.  There is an obvious action by the group of diffeomorphisms of $\bR^M\times \bR_{t_1}$ on the space of Morse transformations. 

We state some useful facts about the space of Morse transformations modulo the action by $\Diff(\bR^M\times \bR_{t_1})$. We will assume that the germ $(\bL,(0,p_0; t_0=0, -dt_0))$ is sent to a germ $(\bL_1, (0,p_1; t_1=0, -dt_1))$ in $T^{*,<0}(\bR^M\times \bR_{t_1})$ through $\bL_{01}$. 
For any cotangent bundle involved, we let $\pi$ denote its projection to the base. Let $A_{L_{(0,p_0)}}$ be the quadratic form on $\pi_*T_{(0,p_0)}L$ determined by the linear Lagrangian $T_{(0,p_0)}L$ in $T_{(0,p_0)}(T^*\bR^M)$. We will equally view $A_{L_{(0,p_0)}}$ as a quadratic form on $\pi_*T_{(0,p_0; t_0=0, -dt_0)}\bL$ as a linear subspace in $\bR^M\times \bR_{t_0}$. 

\begin{prop}\label{prop: classification contact}
\item[(a)]
The space of Morse transformations $\bL_{01}$ with respect to $(\bL, (0,p_0; t_0=0, -dt_0))$ modulo the action of $\Diff(\bR^M\times \bR_{t_1})$ is canonically homotopy equivalent to the space of quadratic forms $A_S$ on $\pi_*T_{(0,p_0)}L$ satisfying that $A_S-A_{L_{(0,p_0)}}$ is nondegenerate. 
\item[(b)] For each quadratic form $A_S$ as above, viewed as a quadratic form on $\pi_*T_{(0,p_0; t_0=0, -dt_0)}\bL$ as well, it  corresponds to a Morse transformation given by the negative conormal bundle (negative in $dt_1$) of 
\begin{align}\label{eq: prop contact A_S}
t_1-t_0-p_1\cdot \bq_1+p_0\cdot \bq_0+\frac{1}{2}A_S(\bq_0)-\bq_0\cdot\bq_1=0,
\end{align}
where $p_0,p_1$ are fixed and $A_S(\bq_0)$ is from extending by zero along the orthogonal complement of $\pi_*T_{(0,p_0; t_0=0, -dt_0)}\bL$ in $\bR^M\times \bR_{t_0}$.
\item[(c)] For two (globally defined) quadratic forms $A_{S,1}$ and $A_{S,2}$ on $\bR^M$, the negative conormal bundle of the hypersurface (\ref{eq: prop contact A_S}) for $A_{S,i}$ determines a symplectomorphism $\Phi_i$ on $T^*\bR^M$, respectively. Let $H_{12}(\bq_1,\bp_1)=\frac{1}{2}(A_{S,2}-A_{S,1})(\bp_1-p_1)$ and $\varphi_{H_{12}}^1$ be the time-1 map of the Hamiltonian flow of $H_{12}$. Then 
\begin{align*}
\Phi_2=\varphi_{H_{12}}^1\circ\Phi_1.
\end{align*}
\end{prop}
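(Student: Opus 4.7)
\medskip

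\noindent\textbf{Proof proposal.} My plan is to treat (b) first as an explicit construction, then use it to parametrize the space of Morse transformations in (a), and finally verify (c) by a direct Hamiltonian flow calculation.

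For (b), I would start by showing that the hypersurface defined by (\ref{eq: prop contact A_S}) is smooth near the point $(\bq_0=0, t_0=0, \bq_1=0, t_1=0)$: its differential is $-dt_0 + dt_1 - p_1\, d\bq_1 + p_0\, d\bq_0 + (A_S(\bq_0)\text{-linear in }\bq_0)\, d\bq_0 - \bq_0\, d\bq_1 - \bq_1\, d\bq_0$, which at the origin evaluates to $-dt_0 + dt_1 + p_0\, d\bq_0 - p_1\, d\bq_1$, hence is nonzero. Taking the negative conormal in $dt_1$ (parametrized by $\tau>0$) gives a conic Lagrangian $\bL_{01}$ whose points over the chosen base point equal $(0,p_0;0,-dt_0; 0,p_1;0,-dt_1)$, which shows it is a contact transformation with respect to $(0,p_0;t_0=0,-dt_0)$. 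Next, I would verify that under the induced conic symplectomorphism, the image of $\bL$ is locally the conormal of a smooth hypersurface: this reduces to checking on the level of tangent Lagrangians at the base point, where the generating function $\phi(\bq_0,\bq_1)=p_0\cdot\bq_0-p_1\cdot\bq_1+\tfrac12 A_S(\bq_0)-\bq_0\cdot\bq_1$ transforms the linearized Lagrangian (the graph of $A_{L_{(0,p_0)}}$ on $\pi_*T_{(0,p_0)}L$, extended by the conormal to the complement) to a graph over the $\bq_1$-base precisely when $A_S - A_{L_{(0,p_0)}}$ is nondegenerate on $\pi_*T_{(0,p_0)}L$; the Schur complement makes this explicit.

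For (a), I would argue by reducing every Morse transformation to the form of (b) modulo $\Diff(\bR^M\times\bR_{t_1})$. Given any Morse transformation $\bL_{01}'$, the image of $\bL$ is the negative conormal of some smooth hypersurface $\{t_1 = f(\bq_1)\}$ with $df(0)=p_1$. Using a diffeomorphism of $\bR^M\times\bR_{t_1}$ preserving $t_1=0$ and $p_1$ (there are enough such to eliminate cubic and higher terms in a chosen coordinate system, and the action of $\Diff$ on the space of contact transformations descends to a free quotient at the linearized level), I may assume the hypersurface agrees to second order with one coming from (b). The key point is then a Moser-type argument: two Morse transformations that induce the same linearized contact transformation at the center differ by a diffeomorphism-equivalent family, so the classification reduces to the linear/quadratic data. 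Linearly, a contact transformation with respect to $(0,p_0;0,-dt_0)$ whose image is a linear conormal is determined by how it maps the linear Lagrangian $T_{(0,p_0)}L$ (equivalently, the quadratic form $A_{L_{(0,p_0)}}$) into a quadratic form transverse to the fiber projection on the $\bq_1$-side. The data of such a transverse image is exactly a quadratic form $A_S$ on $\pi_*T_{(0,p_0)}L$ with $A_S - A_{L_{(0,p_0)}}$ nondegenerate, which gives the claimed homotopy equivalence; the topology matches because both sides deformation-retract onto the open subset cut out by the nondegeneracy condition in the linear space of quadratic forms.

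For (c), I would write out the symplectomorphism $\Phi_i$ induced by the generating function $\phi_i(\bq_0,\bq_1)=p_0\cdot\bq_0-p_1\cdot\bq_1+\tfrac12 A_{S,i}(\bq_0)-\bq_0\cdot\bq_1$. The standard generating function equations $\bp_0 = \partial_{\bq_0}\phi_i$, $\bp_1 = -\partial_{\bq_1}\phi_i$ give an explicit formula for $\Phi_i(\bq_1,\bp_1)$. Comparing $\Phi_2$ and $\Phi_1$ shows that their difference is precisely the time-$1$ map of the quadratic Hamiltonian $H_{12}(\bq_1,\bp_1)=\tfrac12(A_{S,2}-A_{S,1})(\bp_1-p_1)$, since the flow of such a quadratic Hamiltonian is the linear shear $(\bq_1,\bp_1)\mapsto(\bq_1+\partial_{\bp_1}H_{12},\bp_1)$ and this matches the generating-function computation.

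\medskip

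\noindent\textbf{Main obstacle.} The principal difficulty is not the explicit formulas in (b) or (c), but the reduction in (a) from an arbitrary Morse transformation to the normal form (\ref{eq: prop contact A_S}) modulo $\Diff(\bR^M\times\bR_{t_1})$, with control on the topology of the parameter space. I expect to spend the bulk of the argument setting up a Moser-type deformation that realizes the diffeomorphism equivalence and proving that the induced map from Morse transformations to quadratic forms is a fibration with contractible fibers; the germ-level analyticity/subanalyticity assumption is what keeps this argument finite-dimensional.
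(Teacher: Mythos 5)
Your proposal is correct and follows essentially the same route as the paper. The paper handles (a) and (b) by reducing to a linear problem and citing the proof of \cite[Proposition A.2.6]{KS} (leaving the details as an exercise), which your Moser-type reduction and Schur-complement computation spell out consistently; for (c) the paper likewise computes the induced symplectomorphism directly from the conormal of~(\ref{eq: prop contact A_S}) and verifies $\Phi_2=\varphi^1_{H_{12}}\circ\Phi_1$ by the same elementary algebra as your generating-function calculation.
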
 
\begin{proof}
Since a germ of a contact transformation is determined by the tangent space of its center up to a contractible space of homotopies, (a) and (b) are linear problems and can be solved using a similar consideration as in the proof of \cite[Proposition A.2.6]{KS}. For the reader's convenience, we sketch the proof here. 

Without loss of generality, we may reduce the case to $p_1=0$ and $p_0=0$. For the general case, we just need to apply an linear transformation $(\bq_i, t_i)\mapsto (\bq_i, t_i+p_i\cdot \bq_i)$ for $i=0,1$. 
We fix a basis in $\bR^M_{\bq_0}$ (as a vector space) such that the first $s$ elements is the basis of $\pi_*T_{(0,0)}L$. Given any $\bL_{01}$, the fixed basis in  $\bR^M_{\bq_0}$ together with the standard basis on $\bR_{t_0}$ determines a unique basis in $\bR^M_{\bq_1}\times \bR_{t_1}$ (the projection to the cotangent base of both the image of the dual basis in $(\bR_{\bq_0}^M)^*$ and the image of $\partial_{t_0}$ under $\bL_{01}$), with respect to which we can write down the tangent space of $\bL_{01}$ at $(0, 0;t_0=0, dt_0; 0, 0;t_1=0, -dt_1)$ in the standard form as 
\begin{equation*}
\left(\begin{array}{ccc|ccc|cc|ccc|ccc|cc}
1&         &  & 0&         & & 0      &0        & &  &       &  &  &  &0  &0\\
 &\ddots&  &  &\ddots& &\vdots&\vdots& &A&       &  &I&  & \vdots &\vdots \\
 &          &1&  &          &0& 0     & 0       & &   &      &  & &  &0 &0\\
 \hline\\
 0&         &  & 1&         & & 0      &0       & &  &       &  &  &  & 0 &y_1 \\
 &\ddots&  &  &\ddots& &\vdots&\vdots& &I &       &  &C&  & \vdots &\vdots \\
 &          &0&  &          &1& 0      & 0      & &  &       &  & &  &0  &y_M\\
 \hline\\
 0&\cdots&0&0&\cdots&0&1     &1      &0&\cdots&0 &y_1&\cdots&y_M&0&x\\
  0&\cdots&0&0&\cdots&0&0     &0      &0&\cdots&0 &0&\cdots&0&1&-1
\end{array}\right),
\end{equation*}
in which (i) a basis of the tangent space is displayed in row vectors with respect to the chosen basis of $\bR^M_{\bq_0}, \bR^M_{\bq_1}, \bR_{t_0}, \bR_{t_1}$ (for $\bR_{t_i}$ we use the standard basis $\partial_{t_i}$) and their respective dual basis in the dual vector spaces, (ii) $A=A^T, C=C^T$ and $y_1,\cdots, y_M, x$ are arbitrary real numbers. The requirement on $(\bL_1, (0,p_1;0,-dt_1))$ is exactly the condition that the top left $s\times s$-submatrix in the symmetric matrix $A$ satisfies $(A-A_{L_{(0,p_0)}})|_{\pi_*T_{(0,p_0)}L}$ is nondegenerate. Thus the the top left $s\times s$-submatrix in $A$ exactly gives $A_S|_{\pi_*T_{(0,0)}L}$. Then part (a) follows directly. Part (b) follows from taking $C=0$ and $y_1=\cdots=y_M=x=0$.

For (c), if we write down the negative conormal of (\ref{eq: prop contact A_S}) explicitly, 
\begin{align*}
(t_0, dt_0, \bq_0, (-p_0-\frac{1}{2}\partial_{\bq_0} A_S(\bq_0)+\bq_1)\cdot d\bq_0; t_1,-dt_1, \bq_1, (p_1+\bq_0)\cdot d\bq_1),
\end{align*} 
then we see that it corresponds to the symplectomorphism on $T^*\bR^M$ given by 
\begin{align*}
&(\bq_0,\bp_0)\mapsto (-\bp_0
+p_0+\frac{1}{2}\partial_{\bq_0}A_S(\bq_0), \bq_0+p_1).
\end{align*}
Thus (c) follows. 
\end{proof}
Note that when $p_0,p_1$ and $A_S(\bq_0)$ are all zero, the contact transformation (\ref{eq: prop contact A_S}) gives the Fourier transform on $T^*\bR^M$.

\subsection{Relation to stratified Morse theory}\label{subsec: relationtoStratifiedMorse}

There is a close relation between Morse transformations and stratified Morse functions (and their stabilizations) as follows. Now assume we are in the setting of Subsection \ref{subsec: SMT}. Let $(x_0,\xi_0)\in \Lambda_{\fS}$ be a smooth point with $\xi_0\neq 0$. If $\xi_0=0$, we will embed $X$ into $X\times \bR$ by the graph of the constant function $0$ on $X$, and reduce the case to $\xi_0\neq 0$.  Let $\bk$ be any commutative ring spectrum.

\subsubsection{Passing from local (stratified) Morse functions to $\Omega$-lenses}\label{subsubsec: local Morse, Omega lenses}

Recall in stratified Morse theory, to define the microlocal stalk at $(x_0,\xi_0)$, we choose a local Morse function $f$ in the following sense: $f(x_0)=0$, $df_{x_0}=\xi_0$ and $f|_{S_0}$ is Morse at $x_0$ (here $S_0$ is the stratum containing $x_0$). In particular, the space of local Morse functions associated to $(x_0,\xi_0)\in \Lambda_{\fS}^{sm}$ is naturally homotopy equivalent to the space of non-degenerate quadratic forms on the tangent space $T_{x_0}S_0. $
Since $\xi_0\neq 0$, using a subanalytic change of local coordinates, we identify an open neighborhood of $x_0$ with an open neighborhood of $U$ of $(0,0)$ in $\bR^r\times \bR_{t}$, where $r=\dim X$, such that $f$ becomes the function $-t$ and $\xi_0$ becomes $-dt$. In this way, an open neighborhood of $(x_0,\xi_0)$ in $\Lambda_{\fS}$ is identified with an open neighborhood of $(0,p=0;t=0, -dt)$ in a smooth conic Lagrangian in $T^{*,<0}(\bR^r\times \bR_{t})$. Let $H=\{t=0\}$, and let $B_\delta((x,t))$ be the standard open $\delta$-ball centered at $(x, t)$. 
Fix a Whitney stratification $\fS'$ of $U$ that is a refinement of both $\fS\cap U$ and $H\cap U$. 
By assumption, for any $S_\alpha\in \fS$, $H\cap S_\alpha$ is transverse except at $(0,0)$ in a sufficiently small neighborhood of $(0,0)$. By shrinking $U$ if necessary, we may assume this is always true in $U-\{(0,0)\}$. In particular, $\fS'$ is a refinement of $\{S_\alpha\cap H\cap (U-\{(0,0)\}), \{(0,0)\}: S_\alpha\in \fS\}$. 
There exists a continuous function $\rho: (0,1)\rightarrow \bR_+$ with $\rho(\epsilon)>\epsilon$ and $\frac{\rho(\epsilon)}{\epsilon}\rightarrow 1, \epsilon\rightarrow 0^+$, such that  $\partial B_\delta((0,\epsilon))$ is transverse to $\fS'$ for all $\epsilon<\delta< \rho(\epsilon)$ and $0<\epsilon\ll 1$. 
Then the microlocal stalk of a given sheaf $\cG\in \Shv_{\cS}(X)$ at $(x_0, \xi_0)$, with respect to the local Morse function $f$, can be calculated by 
\begin{equation*}
\mu_{(x_0, \xi_0;f)}(\cG):=\Cone(\Gamma(B_{\rho(\epsilon)}((0,\epsilon)); \cG)\to \Gamma(B_{\rho(\epsilon)}((0,\epsilon))\cap\{t>0\};\cG)). 
\end{equation*}

By assumption, for any $(x,t)\in \partial B_{\rho(\epsilon)}((0,\epsilon))\cap \{t=0\}\cap S_\alpha$, where $S_\alpha\in \fS$ and $0<\epsilon\ll 1$, any nontrivial linear combination of the conormal vectors of $\partial B_{\rho(\epsilon)}((0,\epsilon))$ and $-dt$ is \emph{not} in $\Lambda_{\fS}$. 
Hence, we can choose a standard (sufficiently local) smoothing $H_\epsilon$ of the hypersurface $H_\epsilon':=(\partial B_{\rho(\epsilon)}((0,\epsilon))\cap \{t\leq 0\})\cup (H\cap (U- B_{\rho(\epsilon)}((0,\epsilon)))$, so that 
\begin{itemize}
\item Let $f_\epsilon$ be a defining function of $H_\epsilon$, i.e. $H_\epsilon=\{f_\epsilon=0\}$ and $df_\epsilon|_{H_\epsilon}$ is nowhere $0$, so that $f_\epsilon>0$ for $t>0$. Then $df_\epsilon (-\partial_t)<0$ along $H_\epsilon$;
\item $df_\epsilon|_{H_\epsilon}\cap \Lambda_\fS=\emptyset$.  
\end{itemize}
See Figure \ref{figure: S_0H} for an illustration. 
Let $H_0=H\cap U$ and $f_0=-t$. Let $U_\epsilon=\{f_\epsilon>0\}$ for all $0\leq \epsilon\ll 1$. For $\epsilon>0$ sufficiently small, there exists a sufficiently small open cone $\Omega$ in $T^*U$ containing $(0,0;0,-dt)$, such that 
$U_0$ and $U_\epsilon$ can be made as $H_{a, 0}^\dagger$ and $H_{a,\epsilon}^\dagger$ for an $\Omega$-lense, in the sense of \cite[Subsection 2.7.2]{JT}. 
Let $\cF_{\epsilon,f}:=\Cone(j_{0, !}\bk_{U_0}\to j_{\epsilon, !}\bk_{U_\epsilon})$. We call these \emph{the standard sheaves} associated to the $\Omega$-lenses.

\begin{figure}[htbp]
\begin{tikzpicture}[scale=1.4, domain=-1.4:1.4 ]
\tikzmath{
\a=0.5;
\ra=1.5;
\b=0.3;
\rb=1;
\c=0.2;
\rc=0.5;
}
\draw[thick] (-3,0)--(3,0) node[right] {$H$};
\draw[dashed, thick, black] (0,\a) circle [radius=\ra];
\draw[rounded corners=5mm, thick, black, dashed] (-\ra-0.3, 0) -- (-\ra,0) -- ({\ra*cos(210)}, {\a+ \ra*sin(210)});
\draw[rounded corners=5mm, thick, dashed, black] (\ra+0.3, 0) -- (\ra,0) -- ({\ra*cos(-30)}, {\a+ \ra*sin(-30)});
\draw[dashed, thick, black] (0,\b) circle [radius=\rb];
\draw[rounded corners=4mm, thick, dashed, black] (-\rb-0.2, 0) -- (-\rb,0) -- ({\rb*cos(210)}, {\b+ \rb*sin(210)});
\draw[rounded corners=4mm, thick, dashed, black] (\rb+0.2, 0) -- (\rb,0) -- ({\rb*cos(-30)}, {\b+ \rb*sin(-30)});
\draw[dashed, thick, black] (0,\c) circle [radius=\rc];
\draw[rounded corners=2mm, thick, dashed, black] (-\rc-0.1, 0) -- (-\rc,0) -- ({\rc*cos(220)}, {\c+ \rc*sin(220)});
\draw[rounded corners=2mm, thick, dashed, black] (\rc+0.1, 0) -- (\rc,0) -- ({\rc*cos(-40)}, {\c+ \rc*sin(-40)});
\fill[fill=gray, opacity=0.5] (-3,0) rectangle (3,-1.7);
 \draw[color=blue, thick]   plot (\x,{\x*\x}) node[right] {$S_0$};
 \draw[->] (0,0)--(0,-0.5) node[right] {$(x_0,\xi_0)$};
\end{tikzpicture}
\caption{An illustration of $S_0, H=H_0, (x_0,\xi_0)=(0,0;0, -dt)$, the curves $\partial B_{\rho(\epsilon)}((0,\epsilon))$ (the dashed circles) and the smoothings $H_\epsilon$ (the dashed curves that have tangency with $H$). The shaded region enclosed by the dashed $H_\epsilon$ and the solid $H_0$ is $U_\epsilon-U_0$. 
}
\label{figure: S_0H}
\end{figure}

\begin{lemma}\label{lemma: mux0xi0, lense}
There is a canonical isomorphism 
\begin{align}\label{eqlemma: mux0xi0, lense}
\mu_{(x_0, \xi_0;f)}(\cG) \to \Hom(\cF_{\epsilon,f}, \cG), \forall 0<\epsilon\ll 1. 
\end{align}
\end{lemma}
\begin{proof}
The isomorphism comes from a direct application of the non-characteristic deformation lemma (cf. \cite[Proposition 2.7.2]{KS}, \cite[Proposition 2.7.1]{JT}). 
For a fixed $\delta>0$, we can choose $U_{\epsilon}, \epsilon<\delta$, as above satisfying $B_{\rho(\delta)}((0,\delta))=\bigcup_{\epsilon<\delta}(U_\epsilon\cap B_{\rho(\delta)}((0,\delta)))$. By the assumption that $df_\epsilon|_{H_\epsilon}\cap \Lambda_\fS=\emptyset$, the non-characteristic deformation lemma implies that the restriction morphism
\begin{align*}
\Gamma(B_{\rho(\delta)}((0,\delta)); \cG)\to \Gamma(U_\epsilon\cap B_{\rho(\delta)}((0,\delta));\cG)
\end{align*}
is an isomorphism for all $0<\epsilon<\delta$. Now taking the sections relative to $\Gamma(\{t>0\}\cap B_{\rho(\delta)}((0,\delta));\cG)$ on both sides, we get exactly (\ref{eqlemma: mux0xi0, lense}). 
\end{proof}

\subsubsection{Simplification of sheaves.}\label{subsubsec: simplification sheaves}

Lemma \ref{lemma: mux0xi0, lense} allows us to calculate microlocal stalks using $\cF_{\epsilon,f}$ satisfying $\SS(\cF_{\epsilon,f})\cap \mathring{T}^*(U)\subset \Omega$, where $\Omega$ can be an arbitrarily small conic neighborhood of $(x_0,\xi_0)=(0,0;0,-dt)$. This allows us to use certain localization process (which is the basic idea in \cite{KS}) to ``simplify" any sheaf $\cG\in \Shv_{\cS}(X)$, that has no effect on calculating $\mu_{(x_0, \xi_0;f)}(\cG)$. To make this work for all choices of local Morse functions, we will assume $S_0=\{t=x_{r_0+1}=\cdots=x_r=0\}$ in $U$, where $r_0=\dim S_0$. 
Then for different choices of $f$, $\{f=0\}$ give different hypersurfaces and the sheaves $\cF_{\epsilon, f}$ will be depending on $f$. In the following, without loss of generality (since all calculations are local), we will assume $X=\bR^r\times \bR$ and let $S_0'=\{t=x_{r_0+1}=\cdots=x_r=0\}$ (now the picture differs from Figure \ref{figure: S_0H} by a diffeomorphism). 

For any closed conic subset $Z\subset T^*X$, write $\Shv_Z(X;\bk)$ for the full subcategory of $\Shv(X;\bk)$ with singular support contained in $Z$. Write $\Shv(X;\bk)/\Shv_Z(X;\bk)$ for the right orthogonal complement of $\Shv_Z(X;\bk)$, and for any closed conic subset $W$ of $T^*X-Z$, write $(\Shv(X;\bk)/\Shv_Z(X;\bk))_W$ for the full subcategory of $\Shv(X;\bk)/\Shv_Z(X;\bk)$ whose objects $\cF$ satisfy $\SS(\cF)\cap (T^*X-Z)\subset W$. 
Following Tamarkin, let  $\Shv_{\geq 0}(\bR^r\times \bR_{t};\bk)$ be the full subcategory of sheaves on $\bR^r\times \bR_t$, whose singular support is contained in $T^{*, \geq 0}(\bR^r\times \bR_t)$. 
Let $\Shv^{<0}(\bR^r\times \bR;\bk)$ be the left orthogonal complement of  $\Shv_{\geq 0}(\bR^r\times \bR;\bk)$, which is generated by the standard sheaves associated to $T^{*,<0}(\bR^r\times\bR)$-lenses under small colimits. For any closed conic $\Lambda\subset T^{*,<0}(\bR^r\times \bR_t)$, let $\Shv^{<0}_{\Lambda}(\bR^r\times \bR_t;\bk)$ be the full subcategory of $\Shv^{<0}(\bR^r\times \bR_{t};\bk)$ 
consisting of $\cF$ with $\SS(\cF)\cap  T^{*,<0}(\bR^r\times \bR_t)\subset \Lambda$. 
Let $\Lambda_{S'_0}^{<0}=T^{*}_{S'_0}(\bR^r\times \bR)\cap T^{*,<0}(\bR^r\times \bR_t)$. 

\begin{lemma}\label{lemma: cG, cG', mu}
For any sheaf $\cG\in \Shv_{\cS}(\bR^r\times \bR;\bk)$, there exists a sheaf $\cG'\in \Shv_{\Lambda_{S'_0}^{<0}}^{<0}(\bR^r\times \bR_t;\bk)$ (unique up to isomorphisms) with a zig-zag of morphisms $\cG\to \cM\leftarrow \cG'$ in $\Shv(\bR^r\times\bR;\bk)$, such that for any local Morse function $f$ as above, the zig-zag of morphisms induces isomorphisms 
\begin{align}\label{eq: lemma cG, cG', mu}
\mu_{(x_0, \xi_0;f)}(\cG)\overset{\sim}{\to}  \Hom(\cF_{\epsilon,f},\cM) \overset{\sim}{\leftarrow} \mu_{(x_0, \xi_0;f)}(\cG'), 
\end{align}
for all $\epsilon>0$ sufficiently small. 
\end{lemma}

\begin{proof}
Note that the natural functor 

\begin{align}\label{eq: MShpLambdaS0'}
\Shv^{<0}_{\Lambda_{S'_0}^{<0}}(\bR^r\times \bR_t;\bk)\to (\text{MSh}^{\text{p}}_{\Lambda_{S_0'}^{<0}})_{(x_0,\xi_0)}\simeq \Mod(\bk)
\end{align}
(cf. \cite[Subsection 2.8.2]{JT} for the notation $\text{MSh}^{\text{p}}_Z$; for a more thorough discussion about $\text{MSh}^{\text{p}}_Z$, we refer the reader to \cite{KS}, \cite[Section 6]{Nadler-Shende}) is an equivalence. There exists a decreasing sequence of open conic neighborhoods $\{\Omega_n\}_{n\geq 0}$ of $(x_0,\xi_0)$ whose intersection is $\bR_+\cdot (x_0, \xi_0)$ such that 
\begin{align}\label{eq: Omega_nMShp}
 \big(\Shv(\bR^r\times \bR_t;\bk)/\Shv_{T^*(\bR^r\times \bR_t)-\Omega_n}(\bR^r\times \bR_t)\big)_{\Lambda_{S'_0}^{<0}\cap \Omega_n}\to (\text{MSh}^{\text{p}}_{\Lambda_{S_0'}^{<0}})_{(x_0,\xi_0)}
\end{align}
is an equivalence for all $n$. For any $n$ sufficiently large, set the $\Omega$ in Subsection \ref{subsubsec: local Morse, Omega lenses} to be $\Omega_n$. Let $\cM$ be the image of $\cG$ under the localization functor 
\begin{align*}
\Shv(\bR^r\times \bR_t;\bk)\to  \Shv(\bR^r\times \bR_t;\bk)/\Shv_{T^*(\bR^r\times \bR_t)-\Omega_n}(\bR^r\times \bR_t)
\end{align*}
so then $\cM$ lands in the LHS of (\ref{eq: Omega_nMShp}). 
Since (\ref{eq: MShpLambdaS0'}) factors through (\ref{eq: Omega_nMShp}), there is $\cG'\in \Shv^{<0}_{\Lambda_{S'_0}^{<0}}(\bR^r\times \bR_t;\bk)$ (unique up to isomorphisms) whose image in the LHS of $(\ref{eq: Omega_nMShp})$ is isomorphic to $\cM$. Thus, we have a zig-zag of morphisms  $\cG\to \cM\leftarrow \cG'$. Since the fiber of each map has singular support outside of $\Omega$ (so they are right orthogonal to any standard sheaf associated to an $\Omega$-lense \cite[Subsection 2.7.2]{JT}), for all $\epsilon$ sufficiently small, we get the isomorphisms (\ref{eq: lemma cG, cG', mu}) by taking $\Hom(\cF_{\epsilon, f}, -)$. 
\end{proof}

\begin{remark}\label{remark: colim, Hom, cFepsilonf}
Using the equivalences (\ref{eq: Omega_nMShp}), 
we get a well defined equivalence 
\begin{equation*}
\begin{tikzcd}[column sep=7em]
(\mathrm{MSh}^{\mathrm{p}}_{\Lambda_{S_0'}^{<0}})_{(x_0,\xi_0)}\ar[r, " {\varinjlim\limits_{\epsilon\to 0^+}\Hom(\cF_{\epsilon,f}, \text{--})} ", "\sim"']&\Mod(\bk). 
\end{tikzcd}
\end{equation*}
Using this equivalence for (the non-canonical) $\simeq$ in (\ref{eq: MShpLambdaS0'}), the composition is identified with $\Hom(\cF_{\epsilon,f}\text{, })$ for any $\epsilon>0$ sufficiently small. The same holds when $\Shv^{<0}_{\Lambda_{S'_0}^{<0}}(\bR^r\times \bR_t;\bk)$ is replaced by $\Shv_{\fS}(\bR^r\times\bR_t;\bk)$. 
\end{remark}

\subsubsection{Quantized (stabilized) Morse transformations as (stabilized) Morse kernels.}

We will see soon that the correspondence given by a Morse transformation is exactly calculating the microlocal stalks $\mu_{(x_0,\xi_0;f)}$, for the local Morse function $f$ given by $-t_0+p_0\cdot \bq_0+\frac{1}{2}A_S(\bq)$ in Proposition \ref{prop: classification contact} (a) (up to adding a factor that forms a convex space).  This is established in Corollary \ref{cor: Morse transf, stratified Morse} below. 

Both Morse transformations and stratified Morse functions can be stabilized in the following way. Using the notations from Proposition \ref{prop: classification contact}, we stabilize a (germ of a) Lagrangian $L\subset T^*\bR^{M}$ by taking $L\times T^*_{\bR}\bR_{y_1}\times T^*_{\bR}\bR_{y_2}\times \cdots $ in $T^*\bR^M\times T^*\bR_{y_1}\times T^*\bR_{y_1}\times \cdots$. Then we stabilize the Morse transformation given in (\ref{eq: prop contact A_S}), by adding $-(y_1^2+y_2^2+\cdots)$ to $A_S$. For a stratification $\fS=\{S_\alpha\}$, we stabilize it in the usual way: $\{S_\alpha\times \bR_{y_1}\times \bR_{y_2}\cdots\}$. 
For a local Morse function $f$ as in Subsection \ref{subsubsec: local Morse, Omega lenses}, we stabilize $f$ by  adding $-\frac{1}{2}(y_1^2+y_2^2+\cdots)$.

\subsection{The relevant localization of sheaf categories}\label{subsec: relevantLocalization}
For a germ of a smooth conic Lagrangian $(\bL, (x_0,\xi_0)=(0,p_0;t_0=0,-dt_0))$ in $T^{*,<0}(\bR^M\times \bR_{t_0})$ as above, we can choose a diffeomorphism between a sufficiently small ball containing the front projection of $\bL$ with $\bR^M\times \bR_{t_0}$ which is the identity near the origin and sends negative covectors to negative covectors, so then the germ of front is sent to a closed front in $\bR^M\times \bR_{t_0}$. Let $\widetilde{\bL}$ be the conic Lagrangian determined by the resulting front. 
For any interval $I\subset (-\infty,\infty)$, we have a correspondence 
\begin{align*}
\xymatrix{&\bR^M\times\bR_t\times I\ar[dr]^{\pi_{12}}\ar[dl]_{m_I}&\\
\bR^M\times\bR_t&&\bR^M\times\bR_t
}
\end{align*}
where $\pi_{12}$ is the projection to the first two factors and $m_I$ is the addition operation $\bR_t\times I\rightarrow \bR_t$ and projection in $\bR^M$. We define the convolution functor $T_I$ to be
\begin{align*}
&T_I=(m_I)_!\pi_{12}^*: \Shv(\bR^M\times\bR_t;\Sp)\rightarrow \Shv(\bR^M\times \bR_t;\Sp).
\end{align*}
The convolution functor $T_{(-\infty,0]}$ defines a localization functor which corresponds to the localizing subcategory  $\Shv_{\geq 0}(\bR^M\times\bR_{t_0};\Sp)$, whose essential image is $\Shv^{<0}(\bR^M\times\bR_{t_0};\Sp)$ (cf. \cite{Tamarkin2, JT}). 
The above produces (canonically) equivalent localized sheaf categories
\begin{align}\label{eq: Shv_bL localized}
\Shv_{\widetilde{\bL}\cup T^{*,\geq 0}(\bR^M\times\bR_{t_0})}(\bR^M\times\bR_{t_0};\Sp)/ \Shv_{\geq 0}(\bR^M\times\bR_{t_0};\Sp)
\end{align}
regardless of the choice of the diffeomorphism. Hence, we will abuse notation and use $\Shv_{\bL}^0(\bR^M\times\bR_{t_0};\Sp)$ to denote the localized category in (\ref{eq: Shv_bL localized}) without reference to any particular choice of $\widetilde{\bL}$. We note that $\Shv_{\bL}^0(\bR^M\times\bR_{t_0};\Sp)$ is naturally equivalent to $(\text{MSh}_{\bL}^{\text{p}})_{(x_0, \xi_0)}$. 
In the following, we reserve the notation $\Shv_{\widetilde{\bL}}^{<0}(\bR^r\times \bR_t;\bk)$ for the left orthogonal complement of $\Shv_{\geq 0}(\bR^M\times\bR_{t_0};\Sp)$ in $\Shv_{\widetilde{\bL}\cup T^{*,\geq 0}(\bR^M\times\bR_{t_0})}(\bR^M\times\bR_{t_0};\Sp)$. 

Given any Morse transformation $\bL_{01}$ sending $(\bL, (0,p_0;t_0=0,-dt_0))$ to $(\bL_1, (0,p_1;t_1=0,-dt_1))$,  it  determines a correspondence (where $\pi(\bL_{01})$ can be globalized to a smooth hypersurface)
\begin{align}\label{diagram: L01, p1, p0}
\xymatrix{&\pi(\bL_{01})\ar[dr]^{p_0}\ar[dl]_{p_1}&\\
\bR^M\times\bR_{t_1}&&\bR^M\times\bR_{t_0}
}
\end{align}
that induces an equivalence of localized sheaf categories 
\begin{align}\label{eq: L01,functor, L0L1}
(p_1)_*p_0^!: \Shv_{\bL}^0(\bR^M\times\bR_{t_0};\Sp)\simeq \Shv_{\bL_1}^0(\bR^M\times\bR_{t_1};\Sp).
\end{align}
Since by assumption $\bL_1$ is the negative conormal bundle of a germ of a smooth hypersurface defined by $f(\bq_1,t_1)=0$, the latter category is 
equivalent to 
\begin{align}\label{eq: Upsilon, Loc}
\Upsilon: \Loc(\{f(\bq_1,t_1)<0\}\cap B_\epsilon(0,0);\Sp)&\overset{\sim}{\longrightarrow} \Sp\\
\cL&\mapsto \Gamma(\{\bq_1=0\},\Gamma_{\{\bq_1=0\}}\cL)\simeq \Sigma i_{\{(0, -K)\}}^!\cL
\end{align}
for any $K\gg 1$. Here the added $\Sigma$ is to make Lemma \ref{lemma: Psi_bL01} and Corollary \ref{cor: Morse transf, stratified Morse} below true without any shift by $\Sigma$.  
For a more detailed discussion of the equivalences induced by Morse transformations for microlocal sheaves of spectra, we refer the reader to \cite[Section 2.10]{JT} and \cite[Section 6]{Nadler-Shende}.

\subsubsection{An example when $\bL=\Lambda_{S_0'}^{<0}$.}
Assume we are in the setting of Subsection \ref{subsubsec: simplification sheaves}. Let $\bL=\Lambda_{S_0'}^{<0}$, and write $S_0'=S_{0,\bq}'\times\{t_0=0\}\subset\bR^M_\bq\times \bR_{t_0}$. Then there is a natural equivalence 
\begin{align*}
\Sp&\overset{\sim}{\to} \Shv_{\bL}^{<0}(\bR^M\times \bR_{t_0};\Sp)\simeq  \Shv_{\bL}^0(\bR^M\times\bR_{t_0};\Sp)\\
\cM&\mapsto i_*\cM_{S_{0,\bq}'\times (-\infty, 0)},
\end{align*}
where $i: S_{0,\bq}'\times (-\infty, 0)\hookrightarrow \bR^M\times\bR_{t_0}$ is the inclusion. Let $\bL_{01}$ be the Morse transformation defined by (\ref{eq: prop contact A_S}), with $p_0=p_1=0$ and $A_S$ a non-degenerate quadratic form on $\pi_*T_{(0,0;0,-dt)}\bL=S_0'$ (the identification is through the obvious way). 

\begin{lemma}\label{lemma: Psi_bL01}
Let $\bL=\Lambda_{S_0'}^{<0}$. Then the equivalence  (\ref{eq: L01,functor, L0L1}), composed with the natural equivalence $\Upsilon: \Shv_{\bL_1}^0(\bR^M\times\bR_{t_1};\Sp)\overset{\sim}{\to} \Sp$ (\ref{eq: Upsilon, Loc}), is explicitly given by 
\begin{equation*}
\begin{tikzcd}
\Psi_{\bL_{01}}: \Shv_{\bL}^{<0}(\bR^M\times\bR_{t_0};\Sp)\ar[r, "\sim"]&\Shv_{\bL_1}^0(\bR^M\times\bR_{t_1};\Sp)\ar[r, "\sim", "\Upsilon"']& \Sp\\
\cG'\ar[rr, mapsto] &\ &\mu_{(x_0,\xi_0;-t_0+\frac{1}{2}A_S(\bq))}(\cG').
\end{tikzcd}
\end{equation*}
\end{lemma}

\begin{proof}
First, the correspondence (\ref{diagram: L01, p1, p0}) induces an equivalence 
\begin{align*}
\Shv_{\bL}^{<0}(\bR^M\times\bR_{t_0};\Sp)\simeq \Shv_{\bL_1}^{<0}(\bR^M\times\bR_{t_1};\Sp)
\end{align*}
where $\bL_1$ is the negative conormal bundle of a \emph{global} smooth hypersurface (not just a germ). Indeed, let $L$ be the conormal bundle of $S'_{0,\bq}$ in $T^*\bR^M$, then 
using Proposition \ref{prop: classification contact} (c), we know that $\bL_1$ is the cone over a Legendrian lifting of the linear Lagrangian graph in $T^*\bR^M$, which comes from first doing a Fourier transform of $T^*\bR^M$ on $L$ and then doing $\varphi_{\frac{1}{2}A_S(\bp)}^1$. Let $H\subset \bR^M\times \bR_{t_1}$ be the projection of $\bL_1$, and let $U_-$ be the open subset below $H$ (with respect to the orientation from $\bR_{t_1}$).  
Then we have the obvious equivalence 
\begin{align*}
\Sp&\to \Shv_{\bL_1}^{<0}(\bR^M\times\bR_{t_1};\Sp)\\
\cN&\mapsto j_{U_-,*}\cN_{U_-}.
\end{align*}

Let $\fri_{\bR_{t_1}}: \{0\}\times \bR_{t_1}\hookrightarrow \bR^M\times \bR_{t_1}$ (resp. $\fri_{\bR_{t_1}^+}: \{0\}\times \bR_{t_1}^+\hookrightarrow \bR^M\times \bR_{t_1}$) be the inclusion. Given $\cG'=i_*\cM_{S_{0,\bq}'\times (-\infty, 0)}\in  \Shv_{\bL}^{<0}(\bR^M\times \bR_{t_0};\Sp)$, to see $\Psi_{\bL_{01}}(\cG')|_{U_-}$, it suffices to calculate the fiber of 
\begin{align*}
\Gamma(\bR_{t_1}, i_{\bR_{t_1}}^!(p_1)_*p_0^!\cG')\to \Gamma(\bR_{t_1}^+, i_{\bR_{t_1}^+}^!(p_1)_*p_0^!\cG'). 
\end{align*}
By base change, the LHS can be directly identified with $\Gamma(\bR^M\times \bR_{t_0}, \cG')\cong \cM$, and the RHS can be directly identified with 
\begin{align*}
\Gamma(\{-t_0+\frac{1}{2}A_S(\bq)<0\}, \cG')\cong \Gamma(\{\bq: \frac{1}{2}A_S(\bq)<0\}, \cM_{\bR^M_\bq})
\end{align*}
Thus the fiber is canonically identified with $\mu_{(x_0,\xi_0;-t_0+\frac{1}{2}A_S(\bq))}(\cG')$ as desired. 
\end{proof}

\begin{cor}\label{cor: Morse transf, stratified Morse}
Let $(\bL, (x_0,\xi_0)=(0,0;t_0=0,-dt_0))$ be the germ of a conic open neighborhood of $(x_0,\xi_0)\in \Lambda_\fS^{sm}$. Then for any Morse transformation $\bL_{01}$ defined by (\ref{eq: prop contact A_S}), we have a natural commutative diagram 
\begin{equation*}
\begin{tikzcd}
\Shv_{\fS}(\bR^M\times\bR_{t_0};\Sp)\ar[d, "\mu_{(x_0, \xi_0; f)}"']\ar[r, "\mu"]&(\mathrm{MSh}^{\mathrm{p}}_{\bL})_{(x_0,\xi_0)} \ar[dl, "{\substack{\varinjlim_{\epsilon\to 0^+}\Hom(\cF_{\epsilon,f}\text{, })\\ \  \\ \  \\ \ \\ \ }}" very near start, "\sim"']&\ar[l,"\sim"']\Shv_{\bL}^{0}(\bR^M\times\bR_{t_0};\Sp)\ar[d, "\Psi_{\bL_{01}}", "\sim"']\\
\Sp&\ &\Shv_{\bL_1}^{0}(\bR^M\times\bR_{t_1};\Sp)\ar[ll,"\sim"', "\Upsilon"]
\end{tikzcd}
\end{equation*}
where $\mu$ is the microlocalization functor, 
$f=-t_0+\frac{1}{2}A_S(\bq)$ and 
$\varinjlim_{\epsilon\to 0^+}\Hom(\cF_{\epsilon,f}\text{, })$ is as in Remark \ref{remark: colim, Hom, cFepsilonf}. 
\end{cor}

\begin{proof}
This is a direct consequence of Lemma \ref{lemma: Psi_bL01}, Lemma \ref{lemma: cG, cG', mu} and Remark \ref{remark: colim, Hom, cFepsilonf}. 
\end{proof}

It is clear that one can enlarge the space of $f$ occurring in Corollary \ref{cor: Morse transf, stratified Morse} to the space of $f$ whose graph of differential has tangent space at $(x_0, \xi_0)$ transverse to $T_{(x_0,\xi_0)}\bL$, which yields a homotopy equivalent space of (local) functions.

\begin{remark}\label{remark: Morse transf, stratified Morse}
\item[(i)] Corollary \ref{cor: Morse transf, stratified Morse} establishes the desired bridge between the effect of Morse transformations and stratified Morse functions on quasi-constructible sheaves (i.e. sheaves that are in $\Shv_{\fS}(X;\Sp)$ for some $\fS$), which is clearly compatible with stabilizations.  

\item[(ii)] When $(x,p)\not\in \Lambda_{\fS}^{sm}$, to calculate the correct microlocal stalk, the conditions on the local Morse functions are not sufficient. If $(x,p)\in \Lambda^{sm}$ for some conic Lagrangian $\Lambda$ (e.g. $(x,p)\in \SS(\cF)^{sm}$ for some $\cF\in \Shv_\fS(X;\Sp)$), then the condition on a ``local Morse function" $f$ is that $f(x)=0$ and $\text{Graph}(df)$ intersects $\Lambda^{sm}$ transversely at $(x,p)$. It is easy to see (using the proof of Proposition \ref{prop: classification contact} (a)) that the space of such $f$ is homotopy equivalent to the space of Morse transformations with respect to $(\Lambda^{sm}, (x, p))$ in the obvious way. With a little more work, one gets a direct generalization of Corollary \ref{cor: Morse transf, stratified Morse} in this setting, without the involvement of $\Shv_{\fS}(\bR^M\times\bR_{t_0};\Sp)$. 
\end{remark}

\subsubsection{A reformulation of Proposition \ref{prop: F_Q, p}}

If we drop the condition $s<-\frac{1}{2}|\bq|^2+A(\bq)$ in (\ref{eq: point in Q}), then we can view $G_N\times \bR^M\times \bR_s$ as the underlying space of a $VG_N^\bullet$-module in $\Corr(\Slch)_{\fib,\all}$, and we have a natural morphism of $VG_N^\bullet$-modules 
\begin{align*}
\iota_{N,M}: \widehat{Q}_{N,M}\hookrightarrow G_N\times \bR^M\times \bR.
\end{align*}
Let $\bL_{\widehat{Q}_{N,M}}$ be the negative conormal of the boundary of $\widehat{Q}_{N,M}$ in $G_N\times \bR^M\times \bR_s$. Then $\Shv^0_{\bL_{\widehat{Q}_{N,M}}}(G_N\times \bR^M\times \bR_s;\Sp)$ is equivalent to the essential image of $(\iota_{N,M})_*$. Since $\iota_{N,M}$ is open and for any $(N',M')\geq (N,M)$ the diagram
\begin{align*}
\xymatrix{\widehat{Q}_{N,M}\ar[r]\ar[d]&\widehat{Q}_{N',M'}\ar[d]\\
G_N\times \bR^M\times \bR_s\ar[r]&G_{N'}\times \bR^{M'}\times \bR_s
}
\end{align*}
is Cartesian, a direct application of Proposition \ref{prop: K times L Mod} gives an isomorphism
\begin{align*}
&\varprojlim\limits_{N,M}(\iota_{N,M})_*:\\
&(\Loc(VG;\Sp)^{\otimes_c}, \Loc(\widehat{Q};\Sp))\overset{\sim}{\rightarrow} (\Loc(VG;\Sp)^{\otimes_c}, \varprojlim\limits_{N,M}\Shv^0_{\bL_{\widehat{Q}_{N,M}}}(G_N\times \bR^M\times \bR_s;\Sp))
\end{align*}
in $\cMod^{N(\Fin_*)}(\PrstL)$. 
Similarly, let $\bL_{Q_M^0}$ denote the negative conormal of the boundary of $Q_0^M$. Then we have a canonical equivalence 
\begin{align*}
\Loc(Q^0;\Sp)\overset{\sim}{\rightarrow} \varprojlim\limits_{M} \Shv_{\bL_{Q^0_{M}}}^0(\bR^M\times \bR_s;\Sp).
\end{align*}

Now we can rewrite Proposition \ref{prop: F_Q, p} in the following form. 
\begin{cor}\label{prop: F_Q, p, quotient}
There is a natural isomorphism in $\cMod^{N(\Fin_*)}(\PrstL)$
\begin{align}\label{eq: prop F_Q, p, quotient}
&F_{Q^0}\circ p^*: (\Loc(G;\Sp)^{\otimes_c}, \varprojlim\limits_{N,M} \Shv_{\bL_{\widehat{Q}_{N,M}}}^0(G_N\times \bR^M\times \bR_s;\Sp))\\
\nonumber&\overset{\sim}{\rightarrow} (\Loc(VG;\Sp)^{\otimes_c}, \varprojlim\limits_{N,M} \Shv_{\bL_{\widehat{Q}_{N,M}}}^0(G_N\times \bR^M\times \bR_s;\Sp)). 
\end{align}
that induces the identity functor on the common underlying module category \\
$\varprojlim\limits_{N,M} \Shv_{\bL_{\widehat{Q}_{N,M}}}^0(G_N\times \bR^M\times \bR_s;\Sp)$.
\end{cor}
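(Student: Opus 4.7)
The plan is to transport the equivalence $\Phi_Q$ of Proposition \ref{prop: F_Q, p} across the identification that replaces $\Loc(\widehat{Q};\Sp)$ by the limit of localized sheaf categories $\varprojlim_{N,M}\Shv^0_{\bL_{\widehat{Q}_{N,M}}}(G_N\times\bR^M\times\bR_s;\Sp)$, and then to recognize the resulting arrow as the composite $F_{Q^0}\circ p^*$ featured in the statement. The main observation is that the open inclusions $\iota_{N,M}\colon \widehat{Q}_{N,M}\hookrightarrow G_N\times\bR^M\times\bR_s$ form a compatible family of $VG_N^\bullet$-module morphisms: the defining inequality $s<-\tfrac12|\bq|^2+A(\bq)$ is preserved under the action map $a_{N,M}$, and the pullback squares along $(N,M)\leq (N',M')$ assemble into the Cartesian squares noted above the statement. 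Consequently, $\varprojlim_{N,M}(\iota_{N,M})_*$ is promoted, by Proposition \ref{prop: K times L Mod}, to an equivalence in $\cMod^{N(\Fin_*)}(\PrstL)$ between $(\Loc(VG;\Sp)^{\otimes_c},\Loc(\widehat{Q};\Sp))$ and $(\Loc(VG;\Sp)^{\otimes_c},\varprojlim_{N,M}\Shv^0_{\bL_{\widehat{Q}_{N,M}}}(G_N\times\bR^M\times\bR_s;\Sp))$.

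The first step is therefore to verify, at the level of $\Fin_{*,\dagg}$-objects in $\Slch$, that the family $\iota_{N,M}$ extends to a morphism of modules in the sense of Theorem \ref{thm: module} (the required Cartesian squares are immediate because both the action maps and the base-change inclusions restrict to $\widehat{Q}$ via open embeddings). One then applies the right-lax symmetric monoidal functor $\Loc^!_*$ from Section \ref{section: background}, and transports the equivalence $\Phi_Q$ of Proposition \ref{prop: F_Q, p} through this identification to obtain a candidate equivalence between the two objects in the claim. By construction of the transport, the underlying functor on the common module category $\varprojlim_{N,M}\Shv^0_{\bL_{\widehat{Q}_{N,M}}}(G_N\times\bR^M\times\bR_s;\Sp)$ is conjugate to the identity on $\Loc(\widehat{Q};\Sp)$ from Proposition \ref{prop: F_Q, p}, hence is itself the identity.

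The second step is to identify the transported equivalence with $F_{Q^0}\circ p^*$. On the level of underlying module categories the formula is immediate: the pullback $p^*$ along the projection $p\colon VG\to G$ corresponds, under Proposition \ref{prop: F_Q, p}, to the change of algebra in Proposition/Definition identifying $p_*\varpi_{VG}$ with the $J$-homomorphism; then the free/forgetful adjunction together with the module map $\psi_{N,M}$ (\ref{eq: psi_M}) recovers $F_{Q^0}$ via $\psi_*(\cF\boxtimes-)$. Precomposing with the chain of equivalences $j^!_{Q^0},\;(p_*)^{-1},\;j^!_{\widehat{Q}}$ used in the definition of $\Phi_Q$ and post-composing with $\varprojlim(\iota_{N,M})_*$ gives exactly the arrow $F_{Q^0}\circ p^*$ of the statement. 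Compatibility of this composition with the $(\infty,2)$-categorical module structure follows from Remark \ref{remark: composition of right-lax Mod} together with the diagram $(\ref{diagram: prop C,X;D,Y})$ already assembled in Proposition \ref{prop: C,X;D,Y}.

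The main technical point will be bookkeeping rather than a genuine obstruction: one has to verify that the various isomorphisms $\varprojlim(\iota_{N,M})_*$, $j^!_{\widehat{Q}}$ and $j^!_{Q^0}$ are all promoted to module morphisms in $\cMod^{N(\Fin_*)}(\PrstL)$, and that the outer diagram defining $\Phi_Q$ together with its localized counterpart commutes up to coherent homotopy. This coherence is precisely what the machinery of Section \ref{section: background}—in particular Proposition \ref{prop: K times L Mod} and its consequences—is designed to deliver, so once the Cartesianness of the relevant squares in $\Slch$ is checked (which is straightforward from the explicit formulae for the action maps and the openness of $\iota_{N,M}$), the corollary follows formally from Proposition \ref{prop: F_Q, p}.
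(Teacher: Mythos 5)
Your proposal is correct and takes essentially the same approach as the paper: the paper treats this corollary as a direct restatement of Proposition \ref{prop: F_Q, p} after conjugating by the equivalences $\varprojlim_{N,M}(\iota_{N,M})_*$ and the stabilized $j^!$'s (this is what the sentence "Now we can rewrite Proposition \ref{prop: F_Q, p} in the following form" signals), exactly as you do. The only thing worth flagging is a small looseness when you describe $p^*$ as "corresponding to the change of algebra ... identifying $p_*\varpi_{VG}$ with the $J$-homomorphism"---the change of algebra $(p_*)^{-1}\colon\Loc(G;\Sp)^{\otimes_c}\to\Loc(VG;\Sp)^{\otimes_c}$ (written $p^*$ in the corollary's notation) and the identification of $p_*\varpi_{VG}$ with $J$ are related but distinct ingredients; the rest of your argument does not depend on conflating them.
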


\subsection{Composite correspondences}
Let $(L,(0,p_0))$ be a germ of smooth Lagrangian in $T^*\bR^{M+k}$, whose tangent space at $(0,p_0)$ intersect the tangent space of the cotangent fiber in dimension $k$, equivalently $\dim\pi_*T_{(0,p_0)}L=M$. We will view $\pi_*T_{(0,p_0)}L\cong \bR^M$ as a subspace of $\bR^{M+k}$ and we will think of $\bR^{M+k}$ as the product $(\pi_*T_{(0,p_0)}L)\times (\pi_*T_{(0,p_0)}L)^{\perp}$.

In the following, to make the exposition simpler, we assume without loss of generality that $p_0=0$ and $p_1=0$. Note from (\ref{eq: prop contact A_S}), the shift of $p_0$ and $p_1$ to $0$ only changes (\ref{eq: prop contact A_S}) by a linear function in $\bq_0, \bq_1$, so it does not cause any essential difference. Assume that we have chosen $A_S$ as in Proposition \ref{prop: classification contact} (a) such that 
\begin{align}\label{eq: basic A_S}
A_S-A_{L_{(0,0)}}=-\frac{1}{2}|\bq_0|^2
\end{align}
 for $\bq_0$ in $\pi_*T_{(0,0)}L$, and let $\bL_{01}$ be the corresponding Morse transformation (i.e. the negative conormal of (\ref{eq: prop contact A_S})).  
It is easy to check that the resulting germ of Lagrangian $(L_1,(0,0))$ has tangent space $T_{(0,0)}L_1$ equal to the tangent space at $(0,0)$ of the graph of the differential of $-\frac{1}{2}|\proj_{\bR^M}\bq_0|^2$, where $\proj_{\bR^M}$ means the orthogonal projection to $\bR^M\cong \pi_*T_{(0,0)}L$. Motivated by Proposition \ref{prop: classification contact} (c), we consider the following diagram of composite correspondences (from right to left):
\begin{align}\label{diagram: composite corr}
\xymatrix@C=0.1em{&&\scriptstyle{X_{012}=VG_N\times\pi(\bL_{01})}\ar[dl]\ar[dr]&&\\
&\scriptstyle{VG_N\times \bR^{M+k}_{\bq_1}\times \bR_{t_1}}\ar[dr]^{p_{0,VG_N}}\ar[dl]_{p_{1,VG_N}}&&\scriptstyle{\pi(\bL_{01})}\ar[dr]^{p_{0,A_S}}\ar[dl]_{p_{1,A_S}}&\\
\scriptstyle{G_N\times \bR^{M+k}_{\widetilde{\bq}_1}\times \bR_{\widetilde{t}_1}}&&\scriptstyle{\bR^{M+k}_{\bq_1}\times \bR_{t_1}}&&\scriptstyle{\bR^{M+k}_{\bq_0}\times \bR_{t_0}},
}
\end{align}
where $p_{0,A_S}, p_{1,A_S}$ are the natural projections, and
\begin{align*}
&p_{0,VG_N}: (A, \bp, \bq_1, t_1)\mapsto (\bq_1, t_1),\\
&p_{1,VG_N}: (A, \bp, \bq_1,t_1)\mapsto (A, \widetilde{\bq}_1=\bq_1+2\bp, \widetilde{t}_1=t_1+A(\bp)).  
\end{align*} 

Let 
\begin{align*}
&\widetilde{\pi}_{N,M}: X_{012}=VG_N\times\pi(\bL_{01})\rightarrow G_N\times \bR^{M+k}\times \bR_{t_0}\times \bR^{M+k}\times \bR_{\widetilde{t}_1}\\
&(A, \bp, t_0,t_1, \bq_0,\bq_1)\mapsto (A, t_0,\widetilde{t}_1=t_1+A(\bp), \bq_0, \widetilde{\bq}_1=\bq_1+2\bp)
\end{align*}
be the projection. 
Let $H_{A_S,G_N}$ denote the hypersurface
\begin{align}\label{H_A_S, G_N}
\widetilde{t}_1-t_0+(\frac{1}{2}A_S+A)(\bq_0)-\widetilde{\bq}_1\cdot\bq_0=0, A\in G_N
\end{align}
in $G_N\times \bR^{M+k}\times \bR_{t_0}\times \bR^{M+k}\times \bR_{\widetilde{t}_1}$, and consider the (global) correspondence 
\begin{align}\label{eq: corresp H_A_S, G_N}
\xymatrix{&H_{A_S,G_N}\ar[dr]^{p_{0,H_{A_S,G_N}}}\ar[dl]_{p_{1,H_{A_S,G_N}}}&\\
G_N\times \bR^{M+k}_{\widetilde{\bq}_1}\times \bR_{\widetilde{t}_1}&&\bR^{M+k}_{\bq_0}\times \bR_{t_0},
}
\end{align}
where $p_{0,H_{A_S,G_N}}$ and $p_{1,H_{A_S,G_N}}$ are the obvious projections. 
Let $\bL_{1,G_N}$ be the conic Lagrangian in $T^{*,<0}(G_N\times \bR^{M+k}\times \bR_{\widetilde{t}_1})$ assembled from the family of (germs of) conic Lagrangians in $T^{*,<0}(\bR^{M+k}\times \bR_{\widetilde{t}_1})$ which are the image of $\bL$ under the family of Morse transformations given by the negative conormal of (\ref{H_A_S, G_N}) over $A\in G_N$.

\begin{lemma}\label{lemma: tilde pi N,M}
We have $\widetilde{\pi}_{N,M}$ is proper, and 
\begin{align*}
(\widetilde{\pi}_{N,M})_!\bS_{X_{012}}\simeq \bS_{H_{A_S,G_N}}
\end{align*}
in 
\begin{align*}
\Shv(G_N\times \bR_{\widetilde{\bq}_1}^{M+k}\times \bR_{\widetilde{t}_1}\times \bR^{M+k}_{\bq_0}\times \bR_{t_0};\Sp)/\Shv_{\leq 0}(G_N\times \bR_{\widetilde{\bq}_1}^{M+k}\times \bR_{\widetilde{t}_1}\times \bR^{M+k}_{\bq_0}\times \bR_{t_0};\Sp).
\end{align*}
In particular, we have a canonical isomorphism of functors 
\begin{align*}
&(p_{1,H_{A_S,G_N}})_*p_{0,H_{A_S,G_N}}^!\simeq (p_{1,VG_N})_*p_{0,VG_N}^!(p_{1,A_S})_*p_{0,A_S}^!:\\
&\Shv_{\bL}^0(\bR^{M+k}_{\bq_0}\times \bR_{t_0})\rightarrow \Shv_{\bL_{1,G_N}}^0(G_N\times \bR_{\widetilde{\bq}_1}^{M+k}\times \bR_{\tilde{t}_1}). 
\end{align*}
\end{lemma}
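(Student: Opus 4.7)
The plan is to prove the kernel identity $(\widetilde{\pi}_{N,M})_!\bS_{X_{012}}\simeq\bS_{H_{A_S,G_N}}$ modulo $\Shv^{\leq 0}$ by a direct computation of the fibers of $\widetilde{\pi}_{N,M}$ followed by a singular-support analysis of the excess, and then to deduce the functor identity formally via composition of correspondences under $\Loc_*^!$.

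First I would verify properness of $\widetilde{\pi}_{N,M}$ and compute its fibers. Setting
\[
F(A,t_0,\widetilde{t}_1,\bq_0,\widetilde{\bq}_1):=\widetilde{t}_1-t_0+(\tfrac{1}{2}A_S+A)(\bq_0)-\widetilde{\bq}_1\cdot\bq_0,
\]
so that $H_{A_S,G_N}=\{F=0\}$, a direct manipulation using $A(\bp)=|\bp|^2$ for $\bp\in A$ and $A(\bq_0)=|\proj_A\bq_0|^2$ reduces the fiber of $\widetilde{\pi}_{N,M}$ over a point $P$ in the rank-$r$ stratum to the single equation $|\bp-\proj_A\bq_0|^2=F(P)$ for $\bp\in A$. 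The fiber is therefore empty when $F(P)<0$, the single point $\bp=\proj_A\bq_0$ when $F(P)=0$, and the sphere $S^{r-1}$ of radius $\sqrt{F(P)}$ centered at $\proj_A\bq_0$ when $F(P)>0$. All fibers are compact and bounded uniformly by $F$, which yields properness.

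These calculations assemble into an excision triangle on the ambient target space, with $j\colon\{F>0\}\hookrightarrow G_N\times\bR^{M+k}_{\widetilde{\bq}_1}\times\bR_{\widetilde{t}_1}\times\bR^{M+k}_{\bq_0}\times\bR_{t_0}$ the open inclusion and $\tilde i$ the inclusion of its complement,
\[
j_!j^*(\widetilde{\pi}_{N,M})_!\bS_{X_{012}}\longrightarrow(\widetilde{\pi}_{N,M})_!\bS_{X_{012}}\longrightarrow\bS_{H_{A_S,G_N}}\xrightarrow{+1},
\]
where the right-hand term is identified with $\bS_{H_{A_S,G_N}}$ because the fibers over $H_{A_S,G_N}$ are single points and those over $\{F<0\}$ are empty, and the left-hand term $\cE:=j_!j^*(\widetilde{\pi}_{N,M})_!\bS_{X_{012}}$ is assembled from sphere-cohomology contributions on each rank-$r$ stratum of $\{F>0\}$. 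The crux is to show $\cE\in\Shv^{\leq 0}$. Since $\cE=j_!\cF$ with $\cF$ locally constant on each rank stratum, its singular support along $\partial\{F>0\}=H_{A_S,G_N}$ lies in the outward conormal direction
\[
-dF=-d\widetilde{t}_1+dt_0-\partial_{\bq_0}(\tfrac{1}{2}A_S+A)(\bq_0)\,d\bq_0+\widetilde{\bq}_1\,d\bq_0+\bq_0\,d\widetilde{\bq}_1,
\]
whose $d\widetilde{t}_1$-component is strictly negative; this places $\cE$ in $\Shv^{\leq 0}$, giving the kernel identity.

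The functor identity is then formal. The fiber product of the rightmost two correspondences of (\ref{diagram: composite corr}) over $\bR^{M+k}_{\bq_1}\times\bR_{t_1}$ is exactly $X_{012}=VG_N\times\pi(\bL_{01})$; the base-change identities packaged by $\Loc_*^!\colon\Corr(\Slch)_{\fib,\all}\to\PrstL$ realize the composite $(p_{1,VG_N})_*p_{0,VG_N}^!(p_{1,A_S})_*p_{0,A_S}^!$ as the integral transform with kernel $(\widetilde{\pi}_{N,M})_!\bS_{X_{012}}$, while $(p_{1,H_{A_S,G_N}})_*p_{0,H_{A_S,G_N}}^!$ is the integral transform with kernel $\bS_{H_{A_S,G_N}}$. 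Restricting the source to $\Shv^0_{\bL}$ annihilates the $\Shv^{\leq 0}$-summand of the kernel, yielding the claimed equivalence of functors. The main obstacle I expect is the microlocal step identifying $\cE$ with an object of $\Shv^{\leq 0}$: although the fiberwise calculation is elementary, assembling the sphere-cohomology contributions across varying rank strata of $G_N$ while controlling the singular support uniformly by $-dF$ along $H_{A_S,G_N}$ requires careful bookkeeping of the microlocal behavior near the boundary.
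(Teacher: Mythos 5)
Your proof takes essentially the same approach as the paper: you compute the fibers of $\widetilde{\pi}_{N,M}$, exactly as in the paper's proof (the paper's equation $|\bp-\bq_0^A|^2 = \widetilde{t}_1-t_0+\frac{1}{2}A_S(\bq_0)-\widetilde{\bq}_1\cdot\bq_0+|\bq_0^A|^2$ is precisely your $|\bp-\proj_A\bq_0|^2=F(P)$), identify the point/sphere structure, and read off properness from the compact Grassmannian base and the bounded fiber radii. The paper then simply asserts ``Hence the lemma easily follows,'' whereas you spell out the missing step with an open--closed excision triangle and a singular-support estimate on the excess term $\cE=j_!j^*(\widetilde{\pi}_{N,M})_!\bS_{X_{012}}$. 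That expansion is in the right spirit. Two small caveats in your microlocal step: (1) you should note that in the interior of $\{F>0\}$ the sheaf $\cE$ is locally constant on each (closed and open) rank component of $G_N$, so its singular support there carries no $d\widetilde{t}_1$- or $dt_0$-component, and only the boundary contribution along $H_{A_S,G_N}$ needs analysis; and (2) the covector $-dF$ has $d\widetilde{t}_1$-coefficient $-1$ but $dt_0$-coefficient $+1$, so to conclude $\cE\in\Shv^{\leq 0}$ one must match this against the precise kernel convention for the product localization in the $\widetilde{t}_1$- and $t_0$-directions (the category $\Shv^{\leq 0}$ here is the one relevant for a kernel going from $\Shv^0$ in $t_0$ to $\Shv^0$ in $\widetilde{t}_1$, so the outgoing and incoming $t$-directions should indeed have opposite signs). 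You flag this yourself as the potential obstacle, and with these points noted, the argument is sound.
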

\begin{proof}
We look at the image and the fibers of $\widetilde{\pi}_{N,M}$. Fixing $(A,t_0,\widetilde{t}_1, \bq_0, \widetilde{\bq}_1)$ in the target, the fiber over it consists of points satisfying the equation
\begin{align*}
&\widetilde{t}_1-t_0+\frac{1}{2}A_S(\bq_0)-\widetilde{\bq}_1\cdot\bq_0+2\bp\cdot \bq_0-A(\bp)=0\\
\Leftrightarrow&|\bp-\bq_0^A|^2=\widetilde{t}_1-t_0+\frac{1}{2}A_S(\bq_0)-\widetilde{\bq}_1\cdot\bq_0+|\bq_0^A|^2.
\end{align*}
So the image of $\widetilde{\pi}_{N,M}$ is along 
\begin{align}\label{eq: image tilde pi}
\widetilde{t}_1-t_0+\frac{1}{2}A_S(\bq_0)-\widetilde{\bq}_1\cdot\bq_0+|\bq_0^A|^2\geq 0,
\end{align}
and the fiber is a point (resp. a sphere) when (\ref{eq: image tilde pi}) is an equality (resp. inequality). Hence the lemma easily follows.
\end{proof}

Note that $(\bL_1, (0,0;t_1=0,-dt_1))$ has the same tangent space as $\bL_{Q_M^0}\times T_{\bR^k}^*\bR^{k}$ at $(0,0;t_1=0,-dt_1)$, thus we have a commutative diagram
\begin{align}\label{diagram: L_1, L_Q, N, M}
\xymatrix{
\Shv_{\bL_1}^0(\bR_{\bq_1}^{M+k}\times\bR_{t_1};\Sp)\ar[d]_{(p_{1,VG_N})_*p_{0,VG_N}^!}\ar[r]^\sim&\Shv_{\bL_{Q_M^0}}^0(\bR^{M}\times\bR;\Sp)\ar[d]^{(\psi_{N,M})_*\pi_{Q^0_M}^!}\\
\Shv_{\bL_{1,G_N}}^0(G_N\times \bR^{M+k}_{\widetilde{\bq}_1}\times \bR_{\widetilde{t}_1};\Sp)\ar[r]^\sim& \Shv_{\bL_{\widehat{Q}_{N,M}}}^0(G_N\times \bR^M\times \bR;\Sp),
}
\end{align}
which represents a \emph{canonical} isomorphism between the left vertical arrow and the right vertical arrow as objects in $\Fun(\Delta^1, \PrstL)$. 

\subsection{The process of stabilization}
Now consider the family of (\ref{diagram: composite corr}) over $(\bN\times\bN)^{\geq \dgnl}_{(N,M)/}$:\\
\begin{itemize}
\item[(i)] For each $(N',M')\geq (N,M)$, set
\begin{align*}
&L^{M'}=L\times T_{\bR^{M'-M}}^*\bR^{M'-M}\subset T^*\bR^{M'+k},\\
&A^{M'}_S(\bq_0)=A_S(\proj_{\bR^{M+k}}\bq_0)-|\proj_{\bR^{M'-M}}\bq_0|^2.
\end{align*} 
and let 
\begin{align*}
\bL_{01}^{M'}\subset (T^{*,<0}(\bR_{\bq_0}^{M'+k}\times \bR_{t_0}))^-\times T^{*,<0}(\bR_{\bq_1}^{M'+k}\times \bR_{t_1})
\end{align*}
be the negative conormal bundle of 
\begin{align*}
t_1-t_0+\frac{1}{2}A_S^{M'}(\bq_0)-\bq_0\cdot\bq_1=0.
\end{align*} 
The factors  $\bR^{M+k}$, $VG_N$ and $G_N$  in the entries of (\ref{diagram: composite corr}) are replaced by $\bR^{M'+k}$, $VG_{N'}$ and $G_{N'}$, respectively. The morphisms $p_{i,VG_N}$ and $p_{i,A_S}$ in the diagram are replaced by their obvious extensions, denoted by $p_{i,VG_{N'}}$ and $p_{i,A^{M'}_S}$ respectively.

\item[(ii)] For each morphism $(N_1,M_1)\leq (N_2,M_2)$ in $(\bN\times\bN)^{\geq \dgnl}_{(N,M)/}$, the connecting morphisms between the corresponding entries in the diagrams (\ref{diagram: composite corr}) are the obvious embeddings induced from 
\begin{align*}
\bR^{M_1+k}\hookrightarrow \bR^{M_2+k},\ VG_{N_1}\hookrightarrow VG_{N_2},\ G_{N_1}\hookrightarrow G_{N_2}. 
\end{align*}
\end{itemize}

First, for each $(N_1,M_1)\leq (N_2, M_2)$, the diagram
\begin{align*}
\xymatrix{VG_{N_1}\times \bR_{\bq_1}^{M_1+k}\times\bR_{t_1}\ar[d]_{p_{1,VG_{N_1}}}\ar@{^{(}->}[r]&VG_{N_2}\times \bR_{\bq_1}^{M_2+k}\times\bR_{t_1}\ar[d]^{p_{1,VG_{N_2}}}\\
G_{N_1}\times\bR_{\widetilde{\bq}_1}^{M_1+k}\times\bR_{\widetilde{t}_1}\ar@{^{(}->}[r]&G_{N_2}\times\bR_{\widetilde{\bq}_1}^{M_2+k}\times\bR_{\widetilde{t}_1}
}
\end{align*}
is Cartesian.

Second, we look at the following diagram for $(N_1,M_1)\leq (N_2,M_2)$: 
\begin{align}\label{diagram: deficiency Cart 1}
\xymatrix{\pi(\bL^{M_1}_{01})\ar@{^{(}->}[drr]^{\iota_{01,M_1}}\ar[ddr]_{p_{1,A^{M_1}_S}}\ar[dr]_{\iota_{M_1}}&&\\
&Y_{M_1,M_2}\ar[r]\ar[d]\pb&\pi(\bL^{M_2}_{01})\ar[d]^{p_{1,A^{M_2}_S}}\\
&\bR_{\bq_1}^{M_1+k}\times \bR_{t_1}\ar@{^{(}->}[r]_{\jmath_{\bq_1,M_1}}&\bR_{\bq_1}^{M_2+k}\times \bR_{t_1}
}
\end{align}
in which 
\begin{align*}
Y_{M_1,M_2}&=\{t_1-t_0+\frac{1}{2}A_S^{M_1}(\proj_{\bR^{M_1+k}}\bq_0)-\frac{1}{2}|\proj_{\bR^{M_2-M_1}}\bq_0|^2-\bq_1\cdot \proj_{\bR^{M_1+k}}\bq_0=0\}\\
&\subset \bR^{M_2+k}_{\bq_0}\times \bR^{M_1+k}_{\bq_1}\times \bR_{t_0}\times\bR_{t_1},
\end{align*}
and $\iota_{M_1}: \pi(\bL^{M_1}_{01})\rightarrow Y_{M_1,M_2}$ represents the deficiency of the outer square from being Cartesian. 
Since the projection
\begin{align*}
\widetilde{\pi}_{Y, M_1,M_2}: &Y_{M_1,M_2}\rightarrow \bR_{\bq_0}^{M_1+k}\times \bR_{q_1}^{M_1+k}\times \bR_{t_0}\times \bR_{t_1}\\
&(\bq_0,\bq_1,t_0,t_1)\mapsto (\proj_{\bR^{M_1+k}}\bq_0,\bq_1, t_0,t_1)
\end{align*}
behaves similarly as $\widetilde{\pi}_{N,M}$ (see the proof of Lemma \ref{lemma: tilde pi N,M}) in the sense that its image is 
\begin{align*}
\{t_1-t_0+\frac{1}{2}A_S^{M_1}(\bq_0)-\bq_1\cdot\bq_0=r,r\geq 0\}\subset \bR_{\bq_0}^{M_1+k}\times \bR_{\bq_1}^{M_1+k}\times \bR_{t_0}\times \bR_{t_1},
\end{align*}
with fiber either a point or a sphere given by $\frac{1}{2}|\proj_{\bR^{M_2-M_1}}\bq_0|^2=r$, 
we have 
\begin{align*}
(\widetilde{\pi}_{Y, M_1,M_2})_!\bS_{Y_{M_1,M_2}}\simeq \bS_{\pi(\bL_{01}^{M_1})}
\end{align*}
in $\Shv(\bR_{\bq_0}^{M_1+k}\times \bR_{q_1}^{M_1+k}\times \bR_{t_0}\times \bR_{t_1};\Sp)/\Shv_{\leq 0}(\bR_{\bq_0}^{M_1+k}\times \bR_{q_1}^{M_1+k}\times \bR_{t_0}\times \bR_{t_1};\Sp)$. 
Therefore, we have a canonical isomorphism of functors
\begin{align*}
&(p_{1,A_S^{M_1}})_*\iota_{01,M_1}^!\simeq \jmath_{\bq_1,M_1}^!(p_{1,A_S^{M_2}})_*:\\
&\Loc(\pi(\bL_{01}^{M_2});\Sp)\simeq \Shv_{\bL_{01}^{M_2}}^0(\bR_{\bq_0}^{M_2+k}\times\bR_{\bq_1}^{M_2+k}\times\bR_{t_0}\times\bR_{t_1};\Sp)\rightarrow\Shv_{\bL_1^{M_1}}^0(\bR_{\bq_1}^{M_1+k}\times\bR_{t_1};\Sp).
\end{align*}

Similarly, we can consider the family of diagrams (\ref{eq: corresp H_A_S, G_N}) over $(\bN\times \bN)^{\geq\dgnl}_{(N,M)/}$, where for each $(N',M')$, $H_{A_S, G_N}$ is replaced by $H_{A_S^{M'},G_{N'}}$ (defined in (\ref{H_A_S, G_N})),  $\bR_{\widetilde{\bq}_1}^{M+k}$ is replaced by  $\bR_{\widetilde{\bq}_1}^{M'+k}$, and $G_N$ is replaced by $G_{N'}$. The connecting morphisms over $(N_1,M_1)\rightarrow (N_2, M_2)$ are directly induced from the embeddings $G_{N_1}\hookrightarrow G_{N_2}$ and $\bR^{M_1+k}\hookrightarrow \bR^{M_2+k}$. Similar to (\ref{diagram: deficiency Cart 1}), the diagram
\begin{align*}
\xymatrix{H_{A_S^{M_1}, G_{N_1}}\ar@{^{(}->}[r]^{\iota_{H,M_1,M_2}}\ar[d]_{p_{1, H_{\scriptscriptstyle{A_S^{M_1}, G_{N_1}}}}}&H_{A_S^{M_2}, G_{N_2}}\ar[d]^{p_{1, H_{\scriptscriptstyle{A_S^{M_2}, G_{N_2}}}}}\\
G_{N_1}\times\bR^{M_1+k}_{\widetilde{\bq}_1}\times \bR_{\widetilde{t}_1}\ \ \ \ \ \ \ar@{^{(}->}[r]_{\scriptstyle{\jmath_{G, N_1, N_2}}}&G_{N_2}\times\bR^{M_2+k}_{\widetilde{\bq}_1}\times \bR_{\widetilde{t}_1}
}
\end{align*}
is not Cartesian, but the deficiency of it from being Cartesian induces an invertible 2-morphism on the localized sheaf categories, hence we have a canonical isomorphism of functors 
\begin{align*}
&(p_{1,H_{\scriptscriptstyle{A_S^{M_1}}, G_{N_1}}})_*\iota_{H,M_1,M_2}^!\simeq  \jmath_{G,N_1,N_2}^!(p_{1,H_{\scriptscriptstyle{A_S^{M_2}}, G_{N_2}}})_*: \\
&\Loc(H_{A_S^{M_2}, G_{N_2}};\Sp)\rightarrow \Shv_{\bL_{\widehat{Q}_{N_2, M_2}}}^0(G_{N_2}\times\bR^{M_2+k}_{\widetilde{\bq}_1}\times \bR_{\widetilde{t}_1}; \Sp). 
\end{align*}

Now applying the dual version of $\ShvSp_{\all,\all}^{\propmap}$ (whose target is $\bPrstR$) to the family of diagrams over $(\bN\times \bN)^{\geq \dgnl}_{(N,M)/}$, passing to the localized sheaf categories and taking limits over $(\bN\times\bN)^{\geq \dgnl}_{(N,M)/}$, we get a canonical isomorphism of functors 
\begin{align*}
&(p_{1,H_{A^{\st}_S,G}})_*p_{0,H_{A^{\st}_S,G}}^!\simeq (p_{1,VG})_*p_{0,VG}^!(p_{1,A^{\st}_S})_*p_{0,A^{\st}_S}^!:\\
&\varprojlim\limits_{M'}\Shv_{\bL^{M'}}^0(\bR^{M'+k}_{\bq_0}\times \bR_{t_0};\Sp)\rightarrow \varprojlim\limits_{N',M'}\Shv_{\bL_{1,G_{N'}}}^0(G_{N'}\times \bR_{\widetilde{\bq}_1}^{M'+k}\times \bR_{\tilde{t}_1};\Sp). 
\end{align*}

Lastly, using diagram (\ref{diagram: L_1, L_Q, N, M}) and 
 Corollary \ref{cor: equiv J-equivariant}, we immediately get the following: 
\begin{thm}\label{thm: L, J-equivariant}
The correspondences (\ref{eq: corresp H_A_S, G_N}) for all $M'\geq N'$ give rise to a canonical equivalence
\begin{align*}
\varprojlim\limits_{M'}\Shv_{\bL^{M'}}^0(\bR^{M'+k}\times \bR_{t_0};\Sp)\overset{\sim}{\longrightarrow} &\Loc(\widehat{Q};\Sp)^J\simeq \\
&(\varprojlim\limits_{N,M} \Shv_{\bL_{\widehat{Q}_{N,M}}}^0(G_N\times \bR^M\times \bR_s;\Sp))^J.
\end{align*}
\end{thm}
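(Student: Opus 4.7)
The plan is to assemble the theorem from the three main ingredients already developed: the composite correspondence diagram (\ref{diagram: composite corr}), Lemma \ref{lemma: tilde pi N,M} which collapses that composite to a single correspondence through $H_{A_S,G_N}$, and Corollary \ref{cor: equiv J-equivariant}(b) which identifies $\Loc(Q^0;\Sp)$ with $\Loc(\widehat{Q};\Sp)^J$. The strategy is: first apply a Morse transformation $\bL_{01}$ (chosen with $A_S$ normalized as in (\ref{eq: basic A_S})) to convert $\bL$ into $\bL_1$ whose tangent space at the basepoint matches $\bL_{Q^0_M}\times T^*_{\bR^k}\bR^k$; then post-compose with the Hamiltonian $G$-action correspondence to spread this out over the whole Lagrangian Grassmannian.

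First, I would apply the right-lax symmetric monoidal functor $\Loc^!_*$ to the family of composite diagrams (\ref{diagram: composite corr}) indexed by $(\bN\times\bN)^{\geq\dgnl}_{(N,M)/}$. By Lemma \ref{lemma: tilde pi N,M}, the composite correspondence through $X_{012}=VG_N\times \pi(\bL_{01})$ is canonically equivalent, on localized sheaf categories, to the single correspondence (\ref{eq: corresp H_A_S, G_N}) through $H_{A_S,G_N}$. Concretely, one gets the natural isomorphism
\begin{align*}
(p_{1,H_{A_S,G_N}})_*p_{0,H_{A_S,G_N}}^!\simeq (p_{1,VG_N})_*p_{0,VG_N}^!(p_{1,A_S})_*p_{0,A_S}^!
\end{align*}
between functors $\Shv^0_{\bL}(\bR^{M+k}\times\bR_{t_0};\Sp)\to \Shv^0_{\bL_{1,G_N}}(G_N\times\bR^{M+k}\times\bR_{\widetilde t_1};\Sp)$.

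Next, I would verify stability of this identification under the connecting morphisms $(N_1,M_1)\leq (N_2,M_2)$: on the $VG$-side the relevant square is already strictly Cartesian, and on the $A_S$-side and the $H$-side the squares (\ref{diagram: deficiency Cart 1}) and its analogue fail to be Cartesian, but the deficiency terms (a sphere-bundle fiber in $Y_{M_1,M_2}$) are annihilated after projecting and passing to the quotient by $\Shv^{\leq 0}$, exactly as in the proof of Lemma \ref{lemma: tilde pi N,M}. Taking the limit over $(\bN\times\bN)^{\geq\dgnl}_{(N,M)/}$ then yields a canonical functor
\begin{align*}
\varprojlim_{M'}\Shv^0_{\bL^{M'}}(\bR^{M'+k}\times\bR_{t_0};\Sp)\longrightarrow \varprojlim_{N',M'}\Shv^0_{\bL_{1,G_{N'}}}(G_{N'}\times\bR^{M'+k}\times\bR;\Sp).
\end{align*}
Finally, diagram (\ref{diagram: L_1, L_Q, N, M}) canonically identifies the target with $\varprojlim_{N,M}\Shv^0_{\bL_{\widehat{Q}_{N,M}}}(G_N\times\bR^M\times\bR_s;\Sp)$, and under this identification the functor induced by $(p_{1,VG_N})_*p_{0,VG_N}^!$ becomes exactly $(\psi_{N,M})_*\pi^!_{Q^0_M}$; Corollary \ref{cor: equiv J-equivariant}(b) then asserts this is an equivalence onto $\Loc(\widehat{Q};\Sp)^J$. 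The second equivalence in the theorem follows by restricting the essential-image description $\Loc(\widehat{Q}_{N,M};\Sp)\simeq \Shv^0_{\bL_{\widehat{Q}_{N,M}}}(G_N\times\bR^M\times\bR_s;\Sp)$ (from the discussion before Corollary \ref{prop: F_Q, p, quotient}) to the $J$-equivariant subcategory, which is legitimate because the module structure is transported along this identification.

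The main obstacle I expect is bookkeeping of coherence under the inductive system: one must check that the Lemma \ref{lemma: tilde pi N,M}-style collapse of $X_{012}$ to $H_{A_S,G_N}$ is not just pointwise compatible but also functorial in $(N,M)$ in a way strong enough to descend to the limit in $\cMod^{N(\Fin_*)}(\PrstL)$. This requires organizing the composite correspondences as a single datum in $\Fun(((\bN\times\bN)^{\geq\dgnl})^{op}\times\Delta^1,\cMod^{N(\Fin_*)}(\Corr(\Slch)_{\fib,\all}))$, for which Proposition \ref{prop: K times L Mod} is the appropriate tool. A secondary subtlety is ensuring that the tangent-space matching used in diagram (\ref{diagram: L_1, L_Q, N, M}) produces a \emph{canonical} equivalence of localized sheaf categories rather than one depending on auxiliary choices; this is where the normalization (\ref{eq: basic A_S}) of $A_S$ is crucial and where the classification result Proposition \ref{prop: classification contact} does the real work.
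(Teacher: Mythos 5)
Your proposal reconstructs the paper's argument faithfully: you correctly use Lemma \ref{lemma: tilde pi N,M} to collapse the composite correspondence through $X_{012}$ to the single correspondence through $H_{A_S,G_N}$, the deficiency diagrams (\ref{diagram: deficiency Cart 1}) and its $H$-analogue to verify stability over $(\bN\times\bN)^{\geq\dgnl}_{(N,M)/}$, and diagram (\ref{diagram: L_1, L_Q, N, M}) together with Corollary \ref{cor: equiv J-equivariant} to conclude both equivalences. The one imprecision is invoking $\Loc^!_*$: since the Morse-transformation correspondences produce constructible sheaves rather than local systems and the comparison map $\widetilde{\pi}_{N,M}$ is proper but not a fibration, the paper instead applies the dual of the full functor $\ShvSp_{\all,\all}^{\propmap}$ and then passes to the localized categories $\Shv^0_{\bL}$; correspondingly, the coherence of the stabilization step is handled directly through the deficiency diagrams rather than through Proposition \ref{prop: K times L Mod}, which enters only for the $\psi_{N,M}$-module structure in Proposition \ref{prop: C,X;D,Y}.
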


\subsection{Compatibility among different choices of $A_S$}\label{subsection: compatibility A_S}
In this subsection, we briefly discuss the compatibility of the above results for the particular choice of $A_S$ (\ref{eq: basic A_S}), especially Theorem \ref{thm: L, J-equivariant}, with other choices of $A_S$ by adding to it some $A\in G_N$. We will go into more details of this in \cite{J-J}. 

For any $A\in G_N$, we define the $\Fin_*$-object $(G_N^{A^\perp})^\bullet$  in $\Slch$ to be 
\begin{align}\label{eq: G_N, Aperp, n}
(G_N^{A^\perp})^{\lng n\rng}=\{(A_1,\cdots, A_n)\in G_N^{\lng n\rng}: A_i\perp A, i=1,\cdots, n\}.
\end{align} 
Similarly, we define
\begin{align*}
(VG_N^{A^\perp})^\bullet=(VG_N)^\bullet\underset{G_N^\bullet}{\times} (G_N^{A^\perp})^\bullet.
\end{align*}
It is clear that the obvious inclusions
\begin{align*}
&\iota_{G_N^{A^\perp}}: (G_N^{A^\perp})^\bullet\hookrightarrow G_N^\bullet \\
&\iota_{VG_N^{A^\perp}}: (VG_N^{A^\perp})^\bullet\hookrightarrow VG_N^\bullet
\end{align*}
thought as correspondences 
\begin{align*}
&(G_N^{A^\perp})^\bullet\overset{id}{\leftarrow}(G_N^{A^\perp})^\bullet\hookrightarrow G_N^\bullet \\
&(VG_N^{A^\perp})^\bullet\overset{id}{\leftarrow}(VG_N^{A^\perp})^\bullet\hookrightarrow VG_N^\bullet
\end{align*}
induce morphisms in $\CAlg(\Corr(\Slch)_{\fib,\all})$, which induce a commutative diagram of equivalences
\begin{align*}
\xymatrix{
\Loc(VG;\Sp)^{\otimes_c}\ar[r]^{\iota_{VG^{A^\perp}}^!\ \ \ \ \ \ \ \ \ \ \ \ \ \ }\ar[d]_{\pi_*} &\Loc(VG^{A^\perp};\Sp)^{\otimes_c}:=\varprojlim\limits_{N}\Loc(VG_N^{A^\perp};\Sp)^{\otimes_c}\ar[d]^{\pi_*^{A^\perp}}\\
\Loc(G;\Sp)^{\otimes_c}\ar[r]^{\iota_{G^{A^\perp}}^!\ \ \ \ \ \ \ \ \ \ \ \ \ \ \ } &\Loc(G^{A^\perp};\Sp)^{\otimes_c}:=\varprojlim\limits_{N}\Loc(G_N^{A^\perp};\Sp)^{\otimes_c}.\\
}
\end{align*}

Let 
\begin{align*}
&\widehat{Q}_{N,M}^{A^{\perp}}=\{(A_1,\bq,s)\in G_N^{A^\perp}\times \bR^M\times \bR_s: s<-\frac{1}{2}|\bq|^2+(A_1+A)(\bq)\},\\
&Q_M^{0,A}=\{(\bq,s)\in \bR^M\times\bR_s: s<-\frac{1}{2}|\bq|^2+A(\bq)\}. 
\end{align*}

Consider the following Cartesian diagram 
\begin{align}\label{diagram: VGA_perp}
\xymatrix{VG^{A^\perp}_N\times A\times Q^0_M\ar@{^{(}->}[r]^{\jmath_{M,A}^0}\ar[d]_{\psi_{M,A}}&VG_N\times Q^0_M\ar[d]^{\psi_M}\\
\widehat{Q}_{N,M}^{A^\perp}\ar@{^{(}->}[r]^{\widehat{\jmath}_{M,A}}&\widehat{Q}_{N,M}
}, 
\end{align}
where 
\begin{align*}
&\jmath^0_{M,A}: (A_1,\bp_1,\bp\in A, \bq,s)\mapsto (A_1, \bp_1, \bq,s),\\
&\widehat{\jmath}_{M,A}: (\bp, A_1, \bq, s)\mapsto (A_1\oplus A, \bq, s+A(\bp))\\
&\psi_{M,A}: (A_1,\bp_1,\bp\in A, \bq,s)\mapsto (\bp,A_1\oplus A, \bq+\bp_1, s+A_1(\bp_1)).
\end{align*}
We view $VG^{A^\perp}_N\times A\times Q^0_M$ (resp. $VG_N\times Q_M^0$) as a free module of $(VG^{A^\perp}_N)^\bullet$ (resp. $VG_N^\bullet$) in $\Corr(\Slch)_{\fib,\all}$ generated by $A\times Q^0_M$ (resp. $Q_M^0$), and $\widehat{Q}^{A^\perp}_{N,M}$ (resp. $\widehat{Q}_{N,M}$) as a module of $(VG^{A^\perp})^\bullet$ (resp. $VG^\bullet$) as in Lemma \ref{lemma: hat Q module}. The inductive system of (\ref{diagram: VGA_perp}) under inclusions over $(\bN\times \bN)^{\geq \dgnl}$ gives rise to a functor $[1]\times [1]^{op}\rightarrow \Fun(I_{\pi_\dagg},\cC)$ (with $[1]^{op}$ corresponding to the horizontal arrows) that satisfies the properties in Proposition \ref{prop: K times L Mod}, hence applying the symmetric monoidal functor $\Loc_*^!$ to the diagram and taking limit over $(\bN\times \bN)^{\geq \dgnl}$, we get a commutative diagram in $\cMod^{N(\Fin_*)}(\PrstL)$:
\begin{align*}
\xymatrix{(\Loc(VG;\Sp)^{\otimes_c}, \Loc(VG\times Q^0))\ar[r]^{(\jmath^0_A)^!\ \ \ \ \ \ }\ar[d]_{(\psi_{A})_*}&(\Loc(VG^{A^\perp};\Sp)^{\otimes_c}, \Loc(VG^{A^\perp}\times A\times Q^0))\ar[d]^{\psi_*}\\
(\Loc(VG;\Sp)^{\otimes_c},\Loc(\widehat{Q}))\ar[r]^{\widehat{\jmath}_A^!}&(\Loc(VG^{A^\perp};\Sp)^{\otimes_c},\Loc(\widehat{Q}^{A^\perp};\Sp))
}.
\end{align*}
This canonically induces  a commutative diagram of isomorphisms in $\cMod^{N(\Fin_*)}(\PrstL)$
\begin{align*}
\xymatrix@C=0.6em{(\substack{\Loc(VG;\Sp)^{\otimes_c},\\
\Loc(VG;\Sp)^{\otimes_c}\otimes \Loc(Q^0;\Sp)})\ar[r]\ar[d]&(\substack{\Loc(VG^{A^\perp};\Sp)^{\otimes_c},\\
\Loc(VG^{A^\perp};\Sp)^{\otimes_c}\otimes \Loc(A\times Q^{0};\Sp)})\ar[d]\\
(\Loc(VG;\Sp)^{\otimes_c}, \Loc(\widehat{Q};\Sp))\ar[r]&(\Loc(VG^{A^\perp};\Sp)^{\otimes_c}, \Loc(\widehat{Q}^{A^\perp};\Sp))
},
\end{align*}
which further induces a commutative diagram of equivalences
\begin{align*}
\xymatrix{\varprojlim\limits_{M'}\Shv_{\bL^{M'}}^0(\bR^{M'+k}\times \bR_{t_0};\Sp)\ar[r]^{\ \ \ \ \ \ \ \ \ \sim}&\Loc(Q^0;\Sp)\ar[r]^\sim\ar[dr]_{\sim}&\Loc(\widehat{Q};\Sp)^J\ar[d]^{\sim}\\
&&\Loc(\widehat{Q}^{A^\perp};\Sp)^J
}.
\end{align*}

\appendix
\section{The Thom construction via correspondences}

The goal of the appendix is to give a realization of the Thom construction using the category of correspondences, and to conclude with a proof of Proposition \ref{prop: equiv model of J} that the proposed model of the $J$-homomorphism from correspondences is equivalent to a standard model.

\subsection{$\cV\Spc$ and the Thom construction}\label{subsec: cVSpc}
Let $\cV\Spc$ be the $\infty$-category of pairs $(V, X)$, where $V$ is a (real finite dimensional) vector bundle on $X\in \Spc$, and any morphism $(V, X)$ to $(V', Y)$ is given by $f: X\rightarrow Y$ and a vector bundle isomorphism $V\overset{\sim}{\rightarrow} f^*V'$ over $X$. Equivalently (and more precisely), $\cV\Spc$ is the right fibration (in particular the Cartesian fibration) over $\Spc$ from the unstraightening of the natural functor $\bV: \Spc^{op}\to \Gpd_\infty\footnote{Here $\Gpd_\infty\simeq \Spc$. We write the target as $\Gpd_\infty$ to make the exposition clearer.}$, $X\mapsto \Maps_{\Spc}(X,\coprod_nBO(n))$. Equivalently, $\cV\Spc\simeq \Spc_{/\coprod_nBO(n)}$. Clearly, $\cV\Spc$ has the natural (non-Cartesian) symmetric monoidal structure given by taking product of vector bundles, which corresponds to the right-lax symmetric monoidal functor $\bV$ using the commutative topological monoid structure on $G=\coprod_nBO(n)$.

Let $\bP: \Spc^{op}\to \Gpd_\infty, X\mapsto  \Maps(X,\Pic(\bS))$. 
The Thom construction naturally defines a symmetric monoidal functor 
\begin{align*}
\bT^{\inv}: \cV\Spc^{op}\to (\Gpd_\infty)^{*/}, (V,X)\mapsto (\Maps(X, \Pic(\bS)), \cL_V), 
\end{align*}
where $\cL_V$ is the local system of $\bS$-lines corresponding to the stable sphere fibration associated to $V$ (which is equivalent to the $!$-pullback of the ``universal" stable sphere fibration over $G$ classified by $J$). 
Equivalently, using straightening for Cartesian fibrations over $\Spc$, $\bT^{\inv}$ is corresponding to the natural morphism $\bV\rightarrow \bP$ in $\Fun^{\rightlax}((\Spc^\times)^{op}, \Gpd_\infty^\times)$, induced by the $J$-homomorphism $\coprod_{n}BO(n)\rightarrow \Pic(\bS)$ (as an $E_\infty$-map).
We will mostly consider the following version: 
\begin{align}\label{eq: bT}
\bT: \cV\Spc^{op}\to (\PrstL)^{\Sp/}, (V,X)\mapsto (\Loc(X;\Sp), \bS\mapsto \cL_V),
\end{align}
which of course contains all the information about $\bT^{\inv}$.

\subsection{A realization of the Thom construction using correspondences}\label{subsec: Thom, corr}
Let $\Top$ be the ordinary 1-category of all CW-complexes.  
Let $\VSlch$ be the ordinary 1-category of pairs $(V, X)$, where $X\in \Slch$ and $V$ is a vector bundle over $X$, and a morphism $(V, X)\rightarrow (V',Y)$ is given by a continuous map $f: X\rightarrow Y$ together with a vector bundle isomorphism $\varphi: V\rightarrow f^*V'$ over $X$. It is clear that $\VSlch\rightarrow \Slch$ is a Cartesian fibration with fibers discrete groupoids. Similarly, we define $\VTop$ over $\Top$.

\subsubsection{Definition of $\bT_{\VTop}$.}

We first define a symmetric monoidal functor $\bT_{\VTop}: \VTop^{op}\to (\PrstL)^{\Sp/}$ using correspondences. We will prove that after passing to a natural localization of $\VTop$, $\bT_{\VTop}$ becomes isomorphic to $\bT$. 
We record the following useful statement. 
 
\begin{lemma}\label{lemma: CorrFunKcC}
For any symmetric monoidal $\infty$-category $\cC^\otimes$ (which admits finite limits) and any $\infty$-category $K$, there is a natural symmetric monoidal functor 
\begin{align}\label{eq: lemma CorrFunScC}
F_{K, \cC}: \Corr(\Fun(K, \cC)^{\all}_{\all, \Cart})^{\otimes}\to (\Fun(K, \Corr(\cC)^{\all}_{\all, \all}))^\otimes. 
\end{align}
where $\Cart$ is the class of morphisms $K\times[1]\to \cC$ such that for every functor $[1]\to K$, the composition $[1]\times[1]\to \cC$ is a Cartesian square in $\cC$. 
\end{lemma}
\begin{proof}
A symmetric monoidal functor (\ref{eq: lemma CorrFunScC}) is equivalent to a morphism 
\begin{align*}
K\to \Fun_{\CAlg(\TwoCat^\times)}(\Corr(\Fun(K, \cC)_{\all, \Cart}^{\all})^{\otimes},  (\Corr(\cC)_{\all, \all}^{\all})^{\otimes}).
\end{align*}
This boils down to defining an appropriate functor in $\OneCat^{\Delta^{op}}$ over $\Seq_\bullet N(\Fin_*)$
\begin{align*}
\Seq_\bullet K\times \Seq_\bullet \Corr(\Fun(K, \cC)_{\all, \Cart}^{\all})^{\otimes, N(\Fin_*)}\to \Seq_\bullet (\Corr(\cC)_{\all,\all}^{\all})^{\otimes, N(\Fin_*)}.
\end{align*}
Using (\ref{eq: Seq_kbCorr, otimes}) and that
\begin{align*}
\Fun(K,\cC)^{\otimes,\Fin_*^{op}}\simeq \Fun(K,\cC^{\otimes, \Fin_*^{op}})\underset{\Fun(K, N(\Fin_*)^{op})}{\times} N(\Fin_*)^{op},
\end{align*} 
this amounts to defining an appropriate functor
\begin{align}\label{eq: K,Fun''Fin}
\Maps([\bullet], K)\times \Fun''(([\bullet]\times[\bullet]^{op})^{\geq \dgnl}\times K,\cC^{\otimes, \Fin_*^{op}})\to \Fun''([\bullet]\times[\bullet]^{op})^{\geq \dgnl}, \cC^{\otimes, \Fin_*^{op}}),
\end{align}
where 
\begin{align*}
&\Fun''(([\bullet]\times[\bullet]^{op})^{\geq \dgnl}\times K,\cC^{\otimes, \Fin_*^{op}}):=\\
&\Fun(K, \Fun''(([\bullet]\times[\bullet]^{op})^{\geq \dgnl},\cC^{\otimes, \Fin_*^{op}}))\underset{\Fun(K\times ([\bullet]\times[\bullet]^{op})^{\geq \dgnl}, N(\Fin^*)^{op})}{\times} \Fun([\bullet]^{op}, N(\Fin^*)^{op}).
\end{align*}
There is an obvious candidate induced from the natural map
\begin{align}\label{eq: evaluation, K}
\Maps([\bullet], K)\times ([\bullet]\times[\bullet]^{op})^{\geq \dgnl}\to K\times ([\bullet]\times[\bullet]^{op})^{\geq \dgnl}
\end{align}
that is doing the obvious evaluation map (after doing the projection $([\bullet]\times[\bullet]^{op})^{\geq \dgnl}\to [\bullet]$) and projection to the second factor. It is clear that if we use (\ref{eq: evaluation, K}) and restrict to the full subcategory of the second factor of the LHS of (\ref{eq: K,Fun''Fin}) defined by the condition that:
\begin{itemize}
\item[] for every square $\Delta_{hor}^1\times \Delta^1_K$ in $\{s\}\times [n]^{op}\times K$, it satisfies the (Obj) condition in Section \ref{subsec: monoidal Corr} with $\Delta_{vert}^1$ replaced by $\Delta^1_K$. 
\end{itemize} 
then we get the desired functor. But that full subcategory is exactly given by restricting the horizontal arrows to $\Cart$. 
\end{proof}

\begin{example}\label{example, F_Delta1, cC}
If $K=\Delta^1$, then on the object and 1-morphism level, $F_{\Delta^1, \cC}$ sends
\begin{equation*}
\text{(Obj)}:\ \begin{tikzcd}
x\ar[d]\\
y
\end{tikzcd}\mapsto \begin{tikzcd}
x\ar[d]\ar[r, "id_x"]&x\\
y
\end{tikzcd}
\end{equation*}
\begin{equation*}
\text{(1-Mor)}:\ \begin{tikzcd}
x_{11}\ar[d]&x_{01}\ar[d]\ar[r]\ar[l]&x_{00}\ar[d]\\
y_{11}&y_{01}\ar[r]\ar[l]&y_{00}\arrow[ul, phantom, "\lrcorner", very near start]
\end{tikzcd}\mapsto 
\begin{tikzcd}
x_{11}&x_{01}\ar[r]\ar[l]&x_{00}\\
x_{11}\ar[d]\ar[u, "id_{x_{11}}"]&x_{01}\ar[d]\ar[u, "id_{x_{01}}"]\ar[r]\ar[l]&x_{00}\ar[d]\ar[u,"id_{x_{00}}"]\\
y_{11}&y_{01}\ar[r]\ar[l]&y_{00}\arrow[ul, phantom, "\lrcorner", very near start]
\end{tikzcd},
\end{equation*}
where the right diagram in (1-Mor) represents a $\Delta^1\times\Delta^1$ in $\Corr(\cC)_{\all,\all}^{\all}$, with the outer boundary giving the two standard horns in the boundary of $\Delta^1\times\Delta^1$. 
The assignment from $F_{\Delta^1, \cC}$ on 2-morphisms is the obvious one. 

\end{example}

Recall for an $(\infty,2)$-category $\bC$ and an object $c\in \bC$, the right-lax coslice $(\infty,2)$-category $\bC^{c//}$ is defined as $\Fun([1], \bC)_{\rightlax}\underset{\Fun(\{0\},\bC)}{\times}\{c\}$. If $\bC$ is an ordinary $2$-category, then $\bC^{c//}$ is explicitly given by 
\begin{itemize}
\item An object is given by a pair $(x, \phi)$: $x\in \bC$ and a 1-morphism $\phi: c\to x $;\\
\item A 1-morphism from $(x, \phi)$ to $(y, \eta)$ is given by $(f, \alpha)$
\begin{equation*}
\begin{tikzcd}
c\ar[r, "\phi"]\ar[dr, "\eta"', " "{name=U, below}]&x\ar[d, "f"]\ar[Rightarrow, to=U, "\alpha"']\\
\ & y
\end{tikzcd}
\end{equation*}

\item A 2-morphism from $(f,\alpha)$ to $(g,\beta)$ is given by a 2-morphism $\nu: f\to g$ such that $\alpha=\beta\circ((\nu)\circ \phi)$
\begin{equation*}
\begin{tikzcd}[column sep=4em]
c\ar[r, "\phi\ \ \ \ \ \ "]\ar[dr, "\eta"', " "{name=U, below}]&x\ar[Rightarrow, to=U, bend right=100, "\beta"']\ar[d, "f"', ""{name=W, right}]\ar[Rightarrow, to=U, "\alpha"']\ar[d, bend left=90, "g", ""{name=X, left}]\ar[Rightarrow, from=W, to=X, "\nu"]\\
\ & y
\end{tikzcd}
\end{equation*}

\end{itemize}

In the following, for any class of 1-morphisms $vert$ in $\cC$ and any $(\infty,1)$-category $K$, we will denote the class of morphisms $\Delta^1\to \Fun(K,\cC)$  with $\Delta^1\times\{k\}, k\in K$ all in $vert$ as $vert'$ in $\Fun(K, \cC)$. If $\cC=\Slch, \Top, \cV\Spc$, then we let $vert'$ be the class of morphisms in  $\VSlch, \VTop, \cV\Spc$, respectively, whose image under the projection to $\cC$ are in $vert$. Assume the classes of morphisms $(vert, horiz=\all, adm=vert)$ satisfy \cite[Chapter 7, 1.1.1]{Nick} and $vert$ is closed under the tensor product functor on $\cC$. Then it is clear from Lemma \ref{lemma: CorrFunKcC} that (\ref{eq: lemma CorrFunScC}) restricts to a symmetric monoidal functor (which by some abuse of notations, we still denote by $F_{K,\cC}$)
\begin{align*}
F_{K, \cC}: \Corr(\Fun(K, \cC)^{vert'}_{vert', \Cart})^{\otimes}\to (\Fun(K, \Corr(\cC)^{vert}_{\all, \all}))^\otimes. 
\end{align*}

\begin{lemma}\label{lemma: bCorr, star, R-lax slice}
For any $(\infty,1)$-category $\cC$ which admits finite limits and given classes of morphisms $(vert, horiz=\all, adm=vert)$ satisfying \cite[Chapter 7, 1.1.1]{Nick} and $vert$ is closed under taking products, there is a natural symmetric monoidal $(\infty,2)$-functor 
\begin{align}\label{eq: lemma: bCorr, star, R-lax slice}
F_{\Delta^1, \cC}^{\star//}: \Corr(\Fun(\Delta^1, \cC))_{vert', \Cart}^{vert'}\to  (\Corr(\cC)_{\all, \all}^{vert})^{\star/ / }
\end{align}
with respect to the symmetric monoidal structure inherited from $\cC^\times$, such that 
\begin{itemize}
\item  on the object level, every object $(f: x\to y)\in \Fun(\Delta^1, \cC)$ is sent to  
\begin{align*}
\xymatrix{ x\ar[r]\ar[d]_{f}&\star \\
y&}. 
\end{align*}

\item any 1-morphism in $\Corr(\Fun(\Delta^1, \cC))_{vert, \Cart}^{vert}$ given by 
\begin{equation*}
\begin{tikzcd}
x_{11}\ar[d]&x_{01}\ar[d]\ar[r]\ar[l]&x_{00}\ar[d]\\
y_{11}&y_{01}\ar[r]\ar[l]&y_{00}\arrow[ul, phantom, "\lrcorner", very near start]
\end{tikzcd}
\end{equation*}
is sent to 
\begin{equation}\label{eq: xijyijCart}
\begin{tikzcd}
x_{11}\ar[dd]\ar[rr]&\ &\star\\
\ &x_{01}\ar[ul, thick]\ar[ur]\ar[dl]\ar[d]\ar[r]&x_{00}\ar[d]\ar[u]\\
y_{11}&y_{01}\ar[r]\ar[l]&y_{00}\arrow[ul, phantom, "\lrcorner", very near start]
\end{tikzcd}
\end{equation}
where (1) the lower right half of the rectangle in (\ref{eq: xijyijCart}) represents a $\Delta^2$ in $\Corr(\cC)_{vert, \all}^{vert}$ with vertices $0,1,2$ going to $\star, y_{00}, y_{11}$ respectively; (2) the thickened upper left arrow represents a 2-morphism in $\bMaps_{\Corr(\cC)_{vert, \all}^{vert}}(\star, y_{11})$. 

\item  any 2-morphism in $\Corr(\Fun(\Delta^1, \cC))_{vert', \Cart}^{vert'}$, given by a commutative diagram
\begin{equation*}
\begin{tikzcd}[row sep=0.4em]
x_{11}\ar[dd]&\ &x_{01}\ar[dr, thick]\ar[dd]\ar[rr]\ar[ll]&\ &x_{00}\ar[dd]\\
\ &\ &\ & x_{01}'\ar[ur]\ar[ulll]\ar[dd, crossing over]&\ \\
y_{11}&\ &y_{01}\ar[dr, thick]\ar[rr]\ar[ll]&\ &y_{00}\arrow[ul, phantom, "\lrcorner", very near start]\\
\ &\ &\ & y_{01}'\ar[ur]\ar[ulll]&\ 
\end{tikzcd}
\end{equation*}
we assign the following commutative diagram that represents a 2-morphism in $(\Corr(\cC)_{vert, \all}^{vert})^{\star/ / }$
\begin{equation*}
\begin{tikzcd}[row sep=0.4em]
x_{11}\ar[ddd]\ar[rrrr]&\ &\ &\ & \star\\
\ &\ &x_{01}\ar[urr]\ar[ull, thick]\ar[dr, thick]\ar[dd]\ar[rr]\ar[ddll]&\ &x_{00}\ar[dd]\ar[u]\\
\ &\ &\ & x_{01}'\ar[uulll, thick]\ar[uur]\ar[ur]\ar[dlll]\ar[dd, crossing over]&\ \\
y_{11}&\ &y_{01}\ar[dr, thick]\ar[rr]\ar[ll]&\ &y_{00}\arrow[ul, phantom, "\lrcorner", very near start]\\
\ &\ &\ & y_{01}'\ar[ur]\ar[ulll]&\ 
\end{tikzcd}.
\end{equation*}

\end{itemize}

\end{lemma}

\begin{proof}
We will define a symmetric monoidal functor
\begin{align*}
F_\cC^{\star//}: \Corr(\cC)_{vert, \all}^{vert}\to  (\Corr(\cC)_{\all, \all}^{vert})^{\star/ / } 
\end{align*}
below, then we let 
\begin{equation*}
\begin{tikzcd}[column sep=5em]
&F_{\Delta^1, \cC}^{\star//}:=  \mu^{\rightlax}_{1,1}(F_{\Delta^1,\cC}, F_{\cC}^{\star//}\circ R_0): \Corr(\Fun(\Delta^1, \cC))_{vert', \Cart}^{vert'}\\
\ar[r, "{(F_{\Delta^1,\cC},\ F_{\cC}^{\star//}\circ R_0)}"] &\Fun\left(\Delta^{\{1,2\}}, \Corr(\cC)_{\all,\all}^{vert}\right)\underset{\Fun\left(\Delta^{\{1\}}, \Corr(\cC)_{vert,\all}^{vert}\right)}{\times}\Fun\left(\Delta^{\{0,1\}}, \Corr(\cC)_{\all,\all}^{vert}\right)_{\rightlax}\\
\ar[r, "\mu^{\rightlax}_{1,1}"] &\Fun(\Delta^{\{0,2\}}, \Corr(\cC)_{\all,\all}^{vert})_{\rightlax}, \hspace{3.1in} 
\end{tikzcd}
\end{equation*}
where 
\begin{align*}
&R_0: \Corr(\Fun(\Delta^{\{0,1\}}, \cC))_{vert',\Cart}^{vert'}\to \Corr(\cC)_{vert,\all}^{vert}
\end{align*}
is the natural (symmetric monoidal) functor induced from $\Delta^{\{0\}}\hookrightarrow \Delta^{\{0,1\}}$, and $\mu^{\rightlax}_{1,1}$ is the composition functor. It is clear that $F_{\Delta^1, \cC}^{\star//}$ factors through $(\Corr(\cC)_{\all, \all}^{vert})^{\star/ / }$ (so it goes to the correct target). 

We will define $(F_\cC^{\star//})_n: \Seq_n(\Corr(\cC)_{vert, \all}^{vert})\to  \Seq_n((\Corr(\cC)_{\all, \all}^{vert})^{\star/ / })$ as follows. 
For any $n$-simplex in $\Corr(\cC)_{vert, \all}^{vert}$ represented by a functor $C: ([n]\times [n]^{op})^{\geq \dgnl}\to \cC$, we can explicitly write down the functor $[n]\Gray [1]\to \Corr(\cC)_{\all,\all}^{vert}$ representing $(F_{\cC}^{\star//})_n(C)$. Namely, for any $(s,t;u)\in \Seq_k([n]\Gray[1])$: 
\begin{equation*}
\begin{tikzcd}
         \    &\            &\         &\                        & u\ar[dr]\\   
s_0\ar[r]& s_1\ar[r]&\cdots\ar[r]&s_{\mu-1}\ar[ur]\ar[rr]&            &t_\mu\ar[r]&\cdots\ar[r]&t_n
\end{tikzcd}
\end{equation*}
where arrow means $\leq$, the corresponding diagram in $\Seq_k(\Corr(\cC)_{\all,\all}^{vert})$, denoted by $(V_{i,j})_{0\leq i\leq j\leq k}$, has 
\begin{align*}
&V_{i,j}=\star, 0\leq i\leq j\leq \mu-1\\
&V_{i, \mu+j}=C_{u, t_{\mu+j}}, 0\leq i\leq \mu-1, 0\leq j\leq n-\mu\\ 
&V_{\mu+i, \mu+j}=C_{t_{\mu+i}, t_{\mu+j}}, 0\leq i\leq j\leq n-\mu, \\
\end{align*}
and the morphisms between $V_{i,j}$ are determined by $C$ in the obvious way. For any $(s,t;u)\to (s,t;u')$, the corresponding 1-morphism in $\Seq_k(\Corr(\cC)_{\all,\all}^{vert})$ is also the obvious one. For any $m$-simplex in $\Seq_n(\Corr(\cC))_{vert, \all}^{vert})$, we get a natural functor 
\begin{align*}
\Delta^m\to \Fun_{(\OneCat)^{\Delta^{op}}}(\Seq_\bullet([n]\Gray[1]), \Seq_\bullet(\Corr(\cC)_{\all, \all}^{vert})), 
\end{align*}
which gives an element in 
\begin{align*}
\Maps(\Delta^m, \Seq_n((\Corr(\cC)_{\all, \all}^{vert})^{\star/ / }))\overset{\text{full}}{\subset} \Maps_{\TwoCat}(\Delta^m, \Fun([n]\Gray[1], \Corr(\cC)_{\all, \all}^{vert})_{\rightlax}). 
\end{align*}
These assemble to be a well defined functor 
\begin{align*}
\Sq_{m,n}^{\tilde{}}(\Corr(\cC)_{vert, \all}^{vert})\to \Sq_{m,n}^{\tilde{}}((\Corr(\cC)_{\all, \all}^{vert})^{\star/ / }), 
\end{align*}
where $\Sq_{m,n}^{\tilde{}}:=\Seq_m\Seq_n$ (cf. \cite[Chapter 10, 2.6]{Nick}). 
Clearly, everything is functorial in $m,n$, hence this gives the full description of $F_\cC^{\star//}$.

The assertion about the symmetric monoidal structure on $F_\cC^{\star//}$ can be obtained similarly by upgrading $(F_\cC^{\star//})_n$ to 
\begin{align*}
(F_\cC^{\star//})^{\otimes}_n: \Seq_n\left((\Corr(\cC)_{vert, \all}^{vert})^{\otimes, N(\Fin_*)}\right)\to  \Seq_n\left((\Corr(\cC)_{\all, \all}^{vert})^{\star/ / })^{\otimes, N(\Fin_*)}\right)
\end{align*} 
over $\Seq_n\left(N(\Fin_*)\right)$.  We omit the details since the argument is similar to many arguments explicitly carried out in Section \ref{subsec: right-lax assoc.}.
\end{proof}

\begin{prop}\label{prop: Dbullet, Wbullet, ptbullet}
Let  $\cC$ be an ordinary 1-category which admits finite limits. 
Let $W^\bullet\to D^\bullet$ be an $N(\Fin_*)$-object in $\Fun(\Delta^1, \cC)$ that satisfies the conditions in Theorem \ref{thm: right-lax} as a correspondence $D^\bullet\leftarrow W^\bullet\to W^\bullet$. 
Then 
\begin{itemize}
\item[(i)]
$W^\bullet\to D^\bullet$ represents a commutative algebra object in $\bCorr(\Fun(\Delta^1, \cC))$;

\item[(ii)]
the right-lax homomorphism determined by the correspondence $D^\bullet\leftarrow W^\bullet\to \star^\bullet$ (using Theorem \ref{thm: right-lax}) is isomorphic to the image of the commutative algebra object in (i) under $F_{\Delta^1, \cC}^{\star//}$ (for $vert=\all$). 
\end{itemize}
\end{prop}

\begin{proof}
(i) follows directly from Theorem \ref{thm: right-lax}. 

(ii) It reduces to the case when $W^\bullet=D^\bullet$ (and the morphism is the identity). Since  $\cC$ is assumed to be ordinary, one can check this directly using the proof of Theorem \ref{thm: algebra objs} and Theorem \ref{thm: right-lax}. We sketch the steps below. 

\emph{Step 1.} the data of the commutative algebra $W\in \Corr(\cC)_{vert, \all}^{vert}$ that is determined by $W^\bullet$ is exactly encoded by the ``multiplication rule": for any $\alpha\in \Seq_n(N(\Fin_*))$, we get a functor $([n]\times[n]^{op})^{\geq \dgnl}\times_{N(\Fin_*)^{op}}T^{\Comm}\to \cC$, and by taking the right Kan extension along the projection $([n]\times[n]^{op})^{\geq \dgnl}\times_{N(\Fin_*)^{op}}T^{\Comm}\to ([n]\times[n]^{op})^{\geq \dgnl}$, we get 
a diagram $([n]\times[n]^{op})^{\geq \dgnl}\to \cC$ that encodes the ``multiplication rule" of $W$ with respect to $\alpha$. 

Under $F_{\cC}^{\star//}$, the above diagram as an $n$-simplex in $\Corr(\cC)_{vert, \all}^{vert}$ gives a functor $[n]\Gray [1]\to \Corr(\cC)_{\all, \all}^{vert}$, 
where $[n]\times \{0\}$ is sent to $\star$ and $[n]\times \{1\}$ is sent to the given $n$-simplex. This functor is explicitly given in the proof of Lemma \ref{lemma: bCorr, star, R-lax slice}. 

\emph{Step 2.}
On the other hand, unwinding the definition, an element in $\CAlg((\Corr(\cC)_{\all, \all}^{vert})^{\star//})$ is equivalent to a right-lax homomorphism from $\star$ (with the trivial commutative algebra structure) to a commutative algebra in $\Corr(\cC)_{\all, \all}^{vert}$. This is again encoded by the family of functors $[n]\Gray[1]\to \Corr(\cC)_{\all, \all}^{vert}$
associated to $\alpha\in \Seq_n(N(\Fin_*))$. Such functors associated to $W^\bullet\overset{id}{\leftarrow}W^\bullet\to \star^\bullet$
from Theorem \ref{thm: right-lax} are also explicitly given in the proof of that theorem. We just need to compare these functors with the ones from \emph{Step 1}. Again, using that all 1 and 2-morphisms are discrete, and both assignments to any $\alpha\in \Seq_n(N(\Fin_*))$ are \emph{identical}, the proof is complete.
 
 \end{proof}

Let $\pi_{X,pt}: X\to pt$ be the map to the terminal object in $\Slch$. 
\begin{cor}\label{cor: CorrVSlchpropallbPRRSp}
There is a natural symmetric monoidal functor\footnote{Here we write $\Shv(X;\Sp)\ni (p_V)_*\varpi_{V}$ for the left adjointable functor $\Sp\to \Shv(X;\Sp)$ since in this case it is also right adjointable. } 
\begin{align*}
\ShvSp^!_{*;\Sp //}: \Corr(\VSlch)_{\propmap', \all}^{\propmap'}&\to (\bPrstR)^{\Sp/ /}\\
(p_V: V\to X)&\mapsto (\Shv(X;\Sp)\ni (p_V)_*\varpi_{V})
\end{align*}
that sends a correspondence 
\begin{equation}\label{diagram: VijXijpij}
\begin{tikzcd}
V_{11}\ar[d, "p_{11}"]&V_{01}\ar[d, "p_{01}"]\ar[r, "\tilde{f}"]\ar[l, "\tilde{g}"']&V_{00}\ar[d, "p_{00}"]\\
X_{11}\arrow[ur, phantom, "\llcorner", very near start]&X_{01}\ar[r, "f"]\ar[l, "g"']&X_{00}\arrow[ul, phantom, "\lrcorner", very near start]
\end{tikzcd}
\end{equation}
to 
\begin{equation}\label{diagram: SpShvX00ShvX11}
\begin{tikzcd}[column sep=4 em]
\Sp\ar[r, "(p_{00})_*\pi_{V_{00}, pt}^!"]\ar[dr, "(p_{11})_*\pi_{V_{11}, pt}^!\ \ \ \ \ \ "' {name=U, below}]&\Shv(X_{00};\Sp)\ar[d, "g_*f^!"]\ar[dr, Rightarrow, "\alpha", to=U]\\
\ &\Shv(X_{11};\Sp)
\end{tikzcd}
\end{equation}
where $\alpha: g_*f^!(p_{00})_*\pi_{V_{00}, pt}^!\cong (p_{11})_*\tilde{g}_!\tilde{g}^!\pi_{V_{11},pt}^!\to (p_{11})_*\pi_{V_{11}, pt}^!$ is induced from the adjunction from $\tilde{g}_!\tilde{g}^!\to id_{\Shv(V_{11};\Sp)}$.
\end{cor}

\begin{proof}
Using Lemma \ref{lemma: bCorr, star, R-lax slice} and $\ShvSp_*^!$, the sought-for functor is defined as the composition 
\begin{align*}
\Corr(\VSlch)_{\propmap', \all}^{\propmap'}\to \Corr(\Fun(\Delta^1, \Slch))_{\propmap', \Cart}^{\propmap'}\to (\Corr(\Slch)_{\all, \all}^{\propmap})^{pt / / }\to (\bPrstR)^{\Sp/ /}.
\end{align*}
\end{proof}

Let $\bT_{\Shv}^!: \VSlch^{op}\to (\bPrstR)^{\Sp//}$ be the restriction of $\ShvSp^!_{*;\Sp //}$ to the 1-full subcategory $\VSlch^{op}$. Since any morphism in $\VSlch^{op}$ is a correspondence (\ref{diagram: VijXijpij}) with $V_{01}=V_{11}$ and $X_{01}=X_{11}$ (and $g$ and $\tilde{g}$ the identity morphisms),  $\bT_{\Shv}^!$ factors through the $(\infty,1)$-category $(\PrstR)^{\Sp/}$. Recall the \emph{right Beck-Chevalley condition} defined in \cite[Chapter 7, Definition 3.1.5]{Nick}.

\begin{cor}\label{cor: bTShv, right B-C}
The functor $\bT_{\Shv}^!$ satisfies the right Beck-Chevalley condition with respect to $\propmap'$. In particular, for any $(f, \varphi): (V, X)\to (V',Y)$ with $f$ proper, $\bT_{\Shv}^!(f,\varphi)$ in $(\bPrstR)^{\Sp/ /}$ is left adjointable. 
\end{cor}

\begin{proof}
This follows immediately from the universal property of functors out of correspondences, established in \cite[Chapter 7, Theorem 3.2.2 (b)]{Nick}.
\end{proof}

\begin{definition}(Chapter 11, 1.2.2 \cite{Nick})
We say an $(\infty,2)$-functor $F: \bC\to\bD$ is a \emph{1-Cartesian} (resp. \emph{1-coCartesian}) \emph{fibration} if the following hold:
\begin{itemize}
\item[(1)] The induced functor $\bC^{\OneCat}\to \bD^{\OneCat}$ is a Cartesian (resp. coCartesian) fibration;

\item[(2)] For every $c',c\in \bC$, the functor
\begin{align*}
\bMaps_{\bC}(c',c)\to \bMaps_{\bD}(F(c'), F(c))
\end{align*}
is a coCartesian (resp. Cartesian) fibration in spaces. 
\end{itemize} 
\end{definition}

By \cite[Chapter 11, Lemma 1.2.5]{Nick}, $F: \bC\to \bD$ is a 1-Cartesian (resp. 1-coCartesian) fibration  if and only if $F$ is a 2-Cartesian (resp. 2-coCartesian) fibration whose fiber over any $d\in \bD$ is an $(\infty,1)$-category. 

\begin{lemma}\label{lemma: Cc,over,coCartesian}
For any $(\infty,2)$-category $\bC$ and any $c\in \bC$, $\bC^{c/ / }\to \bC$ is a 1-coCartesian fibration.
\end{lemma}

\begin{proof}
This is the dual version of \cite[Chapter 11, Lemma 5.1.4]{Nick}. Indeed, we have 
\begin{align*}
(\bC^{c/ / })^{\onetwoop}\simeq (\bC^{\onetwoop})^{/ / c} 
\end{align*}
and it is proved in \emph{loc. cit.} that $(\bC^{\onetwoop})^{/ / c}\to \bC^{\onetwoop}$ is a 1-Cartesian fibration. 
\end{proof}

\begin{remark}\label{remark: adjoints in Cc//}
Assume $\bC$ is an ordinary 2-category and $c\in \bC$. Let $(\phi:c\to a)\in \bC^{c//}$ and $L: a \rightleftarrows b: R$ be an adjoint pair in $\bC$ with unit and co-unit given by $\varepsilon: id_a\to RL$ and $\vartheta: LR\to id_b$, respectively. 
Let $\widetilde{R}=(R, id_{R\circ\phi}): \phi\to R\circ \phi$ be a 1-coCartesian (equivalently 2-coCartesian) 1-morphism in $\bC^{c//}$ over $R$. Then it is easy to see that $\widetilde{R}$ has a left adjoint $\widetilde{L}$ represented by $(L, (\vartheta)\circ\phi): R\circ\phi\to \phi$. 

As explained in \cite[Chapter 12, Section 1.1]{Nick}, the notion of adjoint pairs in an $(\infty, 2)$-category only depends on the underlying ordinary 2-category (cf. Chapter 10, 2.2.5 for the notion of $(-)^{\ordn}$ in \emph{loc. cit.}). However, we note that for an $(\infty,2)$-category $\bC$, 
\begin{align*}
(\bC^{c//})^{\ordn}\not\simeq (\bC^{\ordn})^{c//} 
\end{align*}
in general, since a 2-morphism (always invertible) in a fiber of $(\bC^{c//})^{\ordn}\to \bC^{\ordn}$ might involve a 3-morphism in $\bC$ that is \emph{not} isomorphic to the identity 3-morphism. Hence it is not clear whether the above fact for ordinary 2-categories holds for an arbitrary $(\infty,2)$-category. This means Corollary \ref{cor: bTShv, right B-C} doesn't follow directly from the above observation for ordinary 2-categories. On the other hand, it is clear that for any $(\infty,2)$-category $\bC$,  if $\widetilde{R}$ as above admits a left adjoint $\widetilde{L}$, then $\widetilde{L}$ must be represented by $(L, (\vartheta)\circ\phi): R\circ\phi\to \phi$ (up to isomorphisms), for if $\widetilde{L}$ were different from $(L, (\vartheta)\circ\phi): R\circ\phi\to \phi$, then passing from $\bC$ to $\bC^{\ordn}$ would give a different left adjoint to $\widetilde{R}$ considered now in $(\bC^{\ordn})^{c//}$, which is absurd.

\end{remark}

Let $\propmap\fib=\fib\cap \propmap$ be the class of morphisms of proper locally trivial fibrations in $\Top$, and let $\propmap\fib'$ be the preimage of $\propmap\fib$ under $\VTop\to \Top$. With some abuse of notations, we also use $\propfib$ and $\propfib'$ to denote the similarly defined classes of morphisms in $\Slch$ and $\VSlch$, respectively.

\begin{lemma}\label{lemma: LocSp//}
The restriction of $\ShvSp^!_{*;\Sp //}$ to $\Corr(\VSlch)_{\propfib, \all}$ induces a natural symmetric monoidal functor
\begin{align*}
\Loc^!_{!;\Sp //}: \Corr(\VSlch)_{\propfib', \all}&\to (\bPrstL)^{\Sp/ /}\\
(p_V: V\to X)&\mapsto \Loc(X;\Sp)\ni (p_V)_*\varpi_{V}.
\end{align*}
\end{lemma}

\begin{proof}
For any morphism $(f, \tilde{f})$ in $(\VSlch)_{\propfib'}$ as in (\ref{diagram: VijXijpij}), its corresponding 1-morphism under $\ShvSp^!_{*;\Sp//}$ in $(\bPrstR)^{\Sp//}$ is shown in (\ref{diagram: SpShvX00ShvX11}) with $g=id_{X_{11}}$ and $\tilde{g}=id_{V_{11}}$, and its left adjoint is (\ref{diagram: SpShvX00ShvX11}) with $g=f, \tilde{g}=\tilde{f}$, and $f, \tilde{f}$ replaced by the identity morphisms. 

Then the lemma follows from the fact that for any $f: X\to Y$ in $\propfib$, the restriction of $f_*: \Shv(X;\Sp)\to \Shv(Y;\Sp)$ to local systems give $f_!\cong f_*: \Loc(X;\Sp)\to \Loc(Y;\Sp)$, which is the left adjoint of $f^!: \Loc(Y;\Sp)\to \Loc(X;\Sp)$.  The unit and co-unit for the above pair of adjoints in $(\bPrstR)^{\Sp//}$ induce the unit and co-unit for the restriction on the full subcategory of local systems. Moreover, all functors involved are continuous. 
\end{proof}

Let $\bT_{\VSlch}$ be the restriction $\Loc^!_{!;\Sp //}$ to $\VSlch^{op}$, which factors through $(\PrstL)^{\Sp/}$. 

\begin{cor}\label{cor: bT_Vslch, right B-C}
The functor $\bT_{\VSlch}$ satisfies the right Beck-Chevalley condition with respect to $\propfib'$. In particular, for any $(f, \varphi): (V, X)\to (V',Y)$ with $f$ a proper locally trivial fibration, $\bT_{\VSlch}(f,\varphi)$ in $(\bPrstL)^{\Sp/ /}$ is left adjointable.
\end{cor}
\begin{proof}
This follows from Corollary \ref{cor: bTShv, right B-C} and the same proof as for Lemma \ref{lemma: LocSp//}. 
\end{proof}

\begin{prop}\label{prop: bT_VTop}
There is a natural symmetric monoidal functor 
\begin{align}\label{eq: bT_VTop}
\bT_{\VTop}: \VTop^{op} &\to (\bPrstL)^{\Sp//}\\
\nonumber \big(\xymatrix{V \ar[d]_{p_V} \\ X
}\big)&\mapsto (\Loc(X, \Sp)\ni (p_V)_*\varpi_V)
\end{align}
that factors through $ (\PrstL)^{\Sp/}$, and it satisfies the right Beck-Chevalley condition with respect to $\propfib'$. 
\end{prop}

\begin{proof}
Take the restriction of $\bT_{\VSlch}$ to $(\VTop^{lc})^{op}$, where $\VTop^{lc}=\VTop\cap \VSlch$, and then define $\bT_{\VTop}$ to be the right Kan extension along the full (symmetric monoidal) embedding $(\VTop^{lc})^{op}\hookrightarrow (\VTop)^{op}$. 
It is not hard to check that 
\begin{itemize}
\item $\bT_{\VTop}(p:VG\to G)\simeq \varprojlim\limits_{N}\bT_{\VSlch}(VG_N\to G_N)\simeq 
\left(p_*\varpi_{VG}\in \Loc(G;\Sp)\right)$;
\item for any $p_V: V\to X$ classified by $\nu_{V}: X\to G$, $\bT_{\VTop}(p_V)\simeq \left(\nu_V^!p_*\varpi_{VG}\in \Loc(X;\Sp)\right)$.
\end{itemize}
For any CW-complex $X=\colim_n(X_0\subset X_1\subset X_2\cdots)$ and any vector bundle $p_V: V\to X$, let $p_{V_n}: V_n:=V|_{X_n}\to X_n$. Then $\bT_{\VTop}(p_V)\overset{\sim}{\to}\varprojlim_n\bT_{\VTop}(p_{V_n})$. For any $(f, \varphi): (V',Y)\to (V,X)$ in $\propfib'$, let $Y_n=f^{-1}(X_n)$. Now using induction on $n$, the pushout diagrams for cell attachments (on $X_n$ to get $X_{n+1}$) and Corollary \ref{cor: bT_Vslch, right B-C}, one can easily show that $\bT_{\VTop}(f, \varphi)$ is left adjointable. It also follows easily that $\bT_{\VTop}$ satisfies the right Beck-Chevalley condition with respect to $\propfib'$. 

Lastly, the symmetric monoidal structure of $\bT_{\VTop}$ follows from \cite[Chapter 9, 3.2.2]{Nick}. 
\end{proof}

\subsubsection{$\cV\Spc$ as a localization of $\VTop$.}

Let $W$ be the collection of weak equivalences in $\Top$. Let $W'$ be the corresponding class of morphisms in $\VTop$, which we refer as weak equivalences. Clearly $W'$ is closed under the symmetric monoidal structure. Then we have the following\footnote{We thank N. Rozenblyum for suggesting this statement.}: 

\begin{prop}\label{prop: VTopW'inverse}
The simplicial localization category $\VTop[(W')^{-1}]$ (by Dwyer-Kan), viewed as a symmetric monoidal $\infty$-category through the coherent nerve functor $N$, is naturally equivalent to $\cV\Spc$.
\end{prop}

\begin{proof}

There is an obvious functor 
\begin{align*}
F: \Top_{/G}&\longrightarrow \VTop\\
(\nu: X\to G=\coprod_{n\geq 0}BO(n))&\mapsto (X, \nu^*VG)\\
(X\overset{f}{\to} Y\overset{\nu}{\to}G)&\mapsto \xymatrix{
(\nu\circ f)^*VG\ar[d]\ar[r] &\nu^*VG\ar[d]\\
X\ar[r]^f&Y
},
\end{align*}
where the upper morphism in the bottom Cartesian diagram is the tautological one. 

Let $\Fib$ (resp. $\Cof$) be the class of fibrations (resp. cofibrations) in $\Top$. 
Let $\Fib'$ (resp. $\Cof'$) be the corresponding class of morphisms in $\VTop$. 
Let 
\begin{align*}
W_\Fib=\Fib\cap W,\  W_{\Cof}=\Cof\cap W,\ W'_{\Fib}=\Fib'\cap W',\ W'_{\Cof}=\Cof'\cap W'.
\end{align*}
We calculate the hammock localization $L^H(\VTop):=L^H(\VTop, W')$ as in \cite{DK3}. We will mainly follow the proof of Proposition 4.4 from \S7.2 in \emph{loc. cit.}. Since $\VTop$ is \emph{not} a model category (for it doesn't admit a terminal object), the statement of that proposition doesn't directly apply. In the following, we use $\overset{\sim}{\rightarrowtail}$, $\overset{\sim}{\twoheadrightarrow}$ to represent the classes $W'_\Cof, W'_\Fib$ (resp. $W_\Cof, W_\Fib$) in $\VTop$ (resp. $\Top$), respectively.  \\

\noindent\emph{Step 1. Set-up of $L^H(\VTop, W'_\Fib)$.}\\

Since $\VTop$ admits pushout and pullback in the obvious way, using \cite[Proposition 8.1]{DK3} and its dual version, we know that $(\VTop, W'_{\Fib})$ (resp. $(\VTop, W'_{\Cof})$) admits a homotopy calculus of right (resp. left) fractions. Moreover, if we set $W_1=W_{\Cof}', W_2=W_{\Fib}'$ in Proposition 8.2 in \emph{loc. cit.}, then we get the pair $(\VTop, W')$ admits a homotopy calculus of (two-sided) fractions. Using Proposition 6.2 in \emph{loc. cit.}, the reduction map 
\begin{align*}
(\VTop)(W'_\Fib)^{-1}\to L^H(\VTop, W'_\Fib)
\end{align*}
is a weak equivalence. For any $(V, X), (E, Y)\in \VTop$, recall that $\Hom_{(\VTop)(W'_\Fib)^{-1}}\big((V, X), (E, Y)\big)$ is by definition the nerve of all diagrams
\begin{align*}
(V,X)\overset{\sim}{\twoheadleftarrow} (V', X')\to  (E, Y). 
\end{align*}

Following \cite[\S 7.2(iii)]{DK3}, let $S=(W\downarrow X)$ be the category of the trivial fibrations in $\Top$ ending at $X$, and let $S'=(W'\downarrow (V, X))$ be the category of $(V',X')\overset{\sim}{\twoheadrightarrow} (V, X)$ ending at $(V,X)$. Let 
\begin{align*}
K': (S')^{op}&\to \Sets\subset \SSet\\
((V',X')\overset{\sim}{\twoheadrightarrow} (V, X))&\mapsto \Hom_{\VTop}((V',X'), (E, Y)). 
\end{align*}
Then $\Hom_{(\VTop)(W'_\Fib)^{-1}}\big((V, X), (E, Y)\big)$ is weakly equivalent to the (homotopy) colimit of $K'$. By Proposition 6.12 in \emph{loc. cit.}, for any special cosimplicial resolution $X^\bullet$ of $X\in \Top$, the functor $x: \Delta\to (W\downarrow X)$ which sends $[n]$ to $X^n\overset{\sim}{\twoheadrightarrow} X$ is left cofinal. Since the projection $(S')^{op}\to S^{op}$ is an equivalence, for any such a special cosimplicial resolution $X^\bullet$, we have 
\begin{align}\label{eq: hocolim K'}
&\Maps_{(\VTop)(W'_\Fib)^{-1}}\big((V, X), (E, Y)\big)\simeq \varinjlim_{(S')^{op}}K'\simeq \varinjlim_{\Delta^{op}} \Hom_{\VTop}((V^\bullet, X^\bullet), (E, Y)), 
\end{align} 
where $V^n\to X^n$ is the pullback of $V\to X$ along $X^n\overset{\sim}{\twoheadrightarrow} X$. It is clear from the definition of a special cosimplicial resolution (cf. \S 4.3 and Remark 6.8 in \emph{loc. cit.}) that to give such a  $X^\bullet$, one just needs to find a special cosimplicial resolution $C^\bullet$ of $\pt$ and then take the product with $X$. \\

\noindent\emph{Step 2. Calculation of (\ref{eq: hocolim K'}).}

Let $X^\bullet=X\times C^\bullet$ and $V^\bullet=V\times C^\bullet$. First assume that the vector bundle $E\to Y$ (resp. $V\to X$) has constant rank $r$, so then it is classified by a (continuous) map $\nu_E: Y\to BO(r)$ (resp. $\nu_V: X\to BO(r)$). Let $E_{\nu_V}O(r)\to X\times BO(r)$ be the universal $O(r)$-torsor relative to the section $(id_X,\nu_V): X\to X\times BO(r)$, i.e. $E_{\nu_V}O(r)$ is isomorphic to $(X\times BO(r))_{(id_X,\nu_V)/}$ in $\Spc_{/X}$.

By adjunction, the (diagonal of the bi)simplicial set $\Hom_{\VTop}((V^\bullet, X^\bullet), (E, Y))$ is weakly equivalent to 
\begin{align}\label{eq: E_nuVOr}
&\Maps_{\Spc}(X, Y)\underset{\Maps_{\Spc}(X,BO(r))}{\times} \Maps_{\Spc_{/X}}(X, E_{\nu_V}O(r))\\
\nonumber\simeq &\Maps_{\Spc_{/BO(r)}}(\nu_V, \nu_E)
\end{align}
where the map $\Maps_{\Spc}(X, Y)\to \Maps_{\Spc}(X,BO(r))$ is induced from $\nu_E$. 

If $E\to Y$ is not necessarily of constant rank, then (\ref{eq: E_nuVOr}) directly extends to 
\begin{align*}
\Hom_{\VTop}((V^\bullet, X^\bullet), (E, Y))\simeq \Maps_{\Spc_{/\coprod_{r\geq 0}BO(r)}}(\nu_V, \nu_E)
\end{align*}

\noindent\emph{Step 3. $L^H((\Top_{/G})^f, W_\Fib)\to L^H(\VTop, W'_\Fib)$ is a weak equivalence.}

Here $(\Top_{/G})^f$ is the full subcategory of $\Top_{/G}$ consisting of fibrant objects. 
 Using the same cosimplicial resolution $X^\bullet$ as above (now with $\nu_V: X\to G$ and $\nu_E: Y\to G$ being fibrations), we have the commutative diagram up to homotopy
\begin{equation*}
\begin{tikzcd}
\Maps_{L^H((\Top_{/G})^f, W_\Fib)}(\nu_{V}, \nu_E)\ar[r, "\sim"] \ar[d]&\Maps_{\Spc}(X, Y)\underset{\Maps_{\Spc}(X, G)}{\times}\{\nu_V\}\ar[d]\\
\Maps_{L^H(\VTop, W'_\Fib)}((V,X), (E, Y))\ar[r, "\sim"] & \Maps_{\Spc_{/\coprod_{r\geq 0}BO(r)}}(\nu_V, \nu_E).
\end{tikzcd}
\end{equation*}
Hence the functor $N(L^H((\Top_{/G})^f, W_\Fib))\to N(L^H(\VTop, W'_\Fib))$ is fully faithful. 
Using \emph{Step 2}, we also know that the functor is essentially surjective. \\
 
\noindent\emph{Step 4. $\cV\Spc\simeq N(L^H(\Top_{/G}, W))\to N(L^H(\VTop, W'))$ is an equivalence.}

From the calculation in \emph{Step 3}, we see that $N(L^H((\Top_{/G})^f, W_{\Fib})\to N(L^H(\Top_{/G}, W)))\simeq \Spc_{/G}\simeq \cV\Spc$ is an equivalence. From the same step, we have a weak equivalence $L^H(\VTop, W'_\Fib)\to L^H(\Top_{/G}, W)$. Then by the universal property of Dwyer-Kan localization, we know that 
$$L^H(\VTop, W'_\Fib)\simeq L^H(\VTop, W'),$$
and the desired equivalence follows.

Lastly, $N(L^H(\VTop, W'))\to N(L^H(\Top, W))\simeq \Spc$ is naturally a symmetric monoidal functor and it is a commutative algebra object in $\mathrm{LFib}_{\Spc}$ as in \cite[Corollary C]{Ramzi} (see also \cite{higher-algebra}, \cite[Appendix A.2]{Hinich}). Now by the monoidal version of straightening in \emph{loc. cit.} and \cite[Chapter 9, 3.2.2]{Nick},  
to see that the symmetric monoidal structure on  $N(L^H(\VTop, W'))$ is equivalent to that on $\cV\Spc$ as objects in $\mathrm{LFib}_{\Spc}$ (which are both classified by functors $\Spc^{op}\to \Spc$ that are the right Kan extension of their restriction to $\{pt\}$), it suffices to check that 
$$N(L^H(\VTop, W'))\underset{L^H(\Top, W)}{\times}\{pt\}$$ 
has the same symmetric monoidal structure on $N(\Vect_\bR^{\simeq})$. But the calculation from \emph{Step 1 and 2} for (objects in) $\VTop\underset{\Top}{\times}\{pt\}$ (which only involves objects from $\VTop\times_{\Top}\{|\Delta^n|\}, n\geq 0$) exactly gives a standard definition of $N(\Vect_\bR^{\simeq})$ and its symmetric monoidal structure.

\end{proof}

\begin{cor}\label{cor: bT_cVSpc}
The symmetric monoidal functor $\bT_{\VTop}$ (\ref{eq: bT_VTop}) descends to a symmetric monoidal functor 
\begin{align*}
\bT_{\cV\Spc}: \cV\Spc^{op}\simeq N(\VTop[(W')^{-1}])^{op}&\longrightarrow  (\bPrstL)^{\Sp//}\\
\big(\xymatrix{V \ar[d]_f \\ X
}\big)&\mapsto (\Loc(X, \Sp)\ni f_*\varpi_V)
\end{align*}
that factors through $(\PrstL)^{\Sp/}$, and it satisfies the right Beck-Chevalley condition with respect to $\hpf'$. 

\end{cor}

\begin{proof}
The corollary follows readily from Proposition \ref{prop: bT_VTop}, \ref{prop: VTopW'inverse} and the obvious fact that $\bT_{\VTop}$ sends morphisms in $W'$ to equivalences in $(\bPrstL)^{\Sp//}$. 
\end{proof}

\subsubsection{The isomorphism $\bT_{\cV\Spc}\cong \bT$}

\begin{cor}
The symmetric monoidal functor $\bT_{\cV\Spc}$ is isomorphic to $\bT$ (\ref{eq: bT}). 
\end{cor}
\begin{proof}
Since both functors are the right Kan extension from their respective restrictions to $(\cV\Spc\times_{\Spc}\{\pt\})^{op}\simeq N(\Vect_\bR^{\simeq})^{op}$, using \cite[Chapter 9, 3.2.2]{Nick} again, 
it suffices to show that they agree on this full (symmetric monoidal) subcategory. By looking at $\bT_{\VTop}|_{(\VTop\times_{\Top}\{\pt\})^{op}}$ and the calculation of the ``function complex" in the hammock localization (\emph{Step 1 and 2} in the proof of Proposition \ref{prop: VTopW'inverse}), we see that $\bT|_{(\cV\Spc\times_{\Spc}\{\pt\})^{op}}$ agrees with the definition of $J$  \cite{Lurie-circle} as a symmetric monoidal functor. 
\end{proof}

\subsection{A symmetric monoidal extension of $\bT_{\VTop}$ and $\bT_{\cV\Spc}$ to correspondences}

The class $\propfib$ in $\Top$ determines an isomorphism class of morphisms in $\Spc$ through the localization $\Top\to N(\Top[W^{-1}])\simeq \Spc$. We denote the resulting class by $\hpf$. Concretely, every morphism in $\hpf$ is a finite composition of proper locally trivial fibrations and zig-zags of weak equivalences. 
Clearly, $\hpf$ is closed under compositions, homotopy pullbacks (along any morphism) and taking Cartesian products. Let $\hpf'$ be the preimage of $\hpf$ under $\cV\Spc\to \Spc$.

In order to prove Proposition \ref{prop: equiv model of J}, we need to extend the functors $\bT_{\VTop}$ and $\bT_{\cV\Spc}$ to $\Corr(\VTop)_{\propfib',\all}$ and $\Corr(\cV\Spc)_{\hpf',\all}$, respectively. Ideally, we would like to apply the various extension results from \cite[Chapter 8,9]{Nick}. However, the results stated in \emph{loc. cit.} that we need require the class of vertical arrows to satisfy the ``2-out-of-3" condition (cf. Chapter 7, 1.1.1 (5) in \emph{loc. cit.}). Since the classes $\propfib'$ and $\hpf'$ certainly do \emph{not} satisfy ``2-out-of-3" (and we are not aware of a better candidate that satisfies ``2-out-of-3"),  we need to essentially go back to the proof of some of those extension results and make them work in our setting. We remark that ``2-out-of-3" is needed in \emph{loc. cit.} mostly for establishing uniqueness of extensions of functors. Here we are aiming at proving Proposition \ref{prop appendix: bT_VTopCorr, bT_VSpcCorr}, which doesn't involve uniqueness of the extensions. 

\subsubsection{The formalism of universal adjoint functors and symmetric monoidal upgrades}

Recall the formalism of the universal adjointable functor from \cite[Chapter 12, \S 1.2, 1.3]{Nick}. For any $(\infty,2)$-category $\bC$ and a 1-full subcategory $\cD\subset \bC^{\OneCat}$ such that $\cD^{\Spc}=\bC^{\Spc}$, there is a \emph{universal} recipient of functors left-adjointable with respect to $\cD$, denoted by $\bC^{R_{\cD}}$. This $(\infty,2)$-category $\bC^{R_{\cD}}$ has an explicit description as 
\begin{align}\label{eq: def bC, RcD}
\fL^{\Sq}((\Sq_{\bullet,\bullet}^{\Pair}(\bC^{\twoop}, \cD))^{\vertop}),
\end{align}
where $\Sq_{\bullet, \bullet}^{\Pair}: \TwoCat^{\Pair}\to \Spc^{\Delta^{op}\times\Delta^{op}}$ is defined in Chapter 10, 4.3.3 in \emph{loc. cit.} and $\fL^{\Sq}$ is the left adjoint of $\Sq_{\bullet, \bullet}: \TwoCat\to \Spc^{\Delta^{op}\times\Delta^{op}}$. In the special case that $\cD=\bC^{\OneCat}$, $\Sq_{\bullet,\bullet}^{\Pair}(\bC^{\twoop}, \bC^{\OneCat})=\Sq_{\bullet,\bullet}(\bC^{\twoop})$ from definition. 
 The construction (\ref{eq: def bC, RcD}) is clearly functorial in $(\bC, \cD)$ in the following sense:
\begin{itemize}
\item[(1)] The assignment $(\bC, \cD)\mapsto \bC^{R_\cD}$ gives a functor $R_{\Pair}: \TwoCat^{\Pair}\to \TwoCat$ (just following the formula (\ref{eq: def bC, RcD})). 

\item[(2)] There is a natural transformation $\OblvSubcat\to R_{\Pair}$, where $\OblvSubcat: \TwoCat^{\Pair}\to \TwoCat$ is the forgetful functor that sends $(\bC, \cD)\mapsto \bC$. It is given by applying $\fL^{\Sq}$ to the tautological bi-simplicial functor
\begin{align}\label{eq: bC, cD, Sq, SqPair}
\Sq^{\tilde{}}_{\bullet, \bullet}(\bC)\simeq \Sq^{\tilde{}}_{\bullet, \bullet}(\bC^{\twoop})^{\vertop}\hookrightarrow  \Sq^{\Pair}_{\bullet, \bullet}(\bC^{\twoop}, \cD)^{\vertop}, 
\end{align}

\end{itemize}

Let $\Maps_{\TwoCat}(\bC, \bX)^{R_\cD}\subset \Maps_{\TwoCat}(\bC, \bX)$  be the full subspace consisting of left-adjointable functors with respect to $\cD$ (cf.  Chapter 12, Definition 1.1.7 in \emph{loc. cit.}). Then the key statement of the formalism of the universal adjointable functor is the following:

\begin{theorem}[(Chapter 12, Theorem 1.2.4 \cite{Nick})]\label{thm: universal adjointable}
Retricting along the canonical functor $\bC\to \bC^{R_\cD}$ defines an isomorphism
\begin{align*}
\Maps_{\TwoCat}(\bC^{R_\cD},\bX)\overset{\sim}{\longrightarrow} \Maps_{\TwoCat}(\bC,\bX)^{R_\cD}.
\end{align*}
\end{theorem}

Let $\cW$ be an $(\infty,1)$-category, and let $\cW'\subset \cW$ be a full subcategory. For any morphism $w_1\to w_2$ in $\cW$, we say it is a \emph{categorical epimorphism w.r.t. $\cW'$}, if for every $w'\in \cW'$, the morphism
\begin{align*}
\Maps_{\cW}(w_2, w')\to \Maps_{\cW}(w_1, w')
\end{align*}
is a monomorphism of spaces, i.e. it is an embedding of certain connected components up to weak equivalences. 

\begin{cor}\label{cor: bC,cD, SymmetricMonoidal}
Assume $(\bC^\otimes,\cD)\in \CAlg((\TwoCat^\Pair)^\times)$. Let $\bX^\otimes\in  \CAlg(\TwoCat^\times)$. Then for any symmetric monoidal functor $\Phi: \bC\to \bX$ that is left-adjointable with respect to $\cD$ (as a plain functor), it extends uniquely to a symmetric monoidal functor 
\begin{align*}
 \Sq^{\Pair}_{\bullet, \bullet}(\bC^{\twoop}, \cD)^{\vertop}\to \Sq_{\bullet,\bullet}(\bX)
\end{align*}
in $\Spc^{\Delta^{op}\times\Delta^{op}}$. 
\end{cor}

\begin{proof}

First, both the functors $\Sq_{\bullet, \bullet}^{\tilde{}}: \TwoCat\to \Spc^{\Delta^{op}\times\Delta^{op}}$ and $\Sq_{\bullet,\bullet}^{\Pair}:  \TwoCat^{\Pair}\to \Spc^{\Delta^{op}\times\Delta^{op}}$ are fully faithful symmetric monoidal functors. The full faithfulness follows from \cite[Chapter 10, 2.6.1\footnote{There is a typo in (2.5). The source of the functor should be $\TwoCat$.}, Theorem 4.3.5]{Nick}), and the symmetric monoidal structure follows from definition. Since 
\begin{align*}
\Sq_{\bullet, \bullet}^{\tilde{}}(\bC')=\Sq_{\bullet, \bullet}^{\Pair}(\bC',(\bC')^{\Spc})
\end{align*}
for any $\bC'\in \TwoCat$. We see that (\ref{eq: bC, cD, Sq, SqPair}) for the given $(\bC^{\otimes}, \cD)$ is a symmetric monoidal bi-simplicial functor. 

Second, by Theorem \ref{thm: universal adjointable} and that all functors respect the Cartesian products, we know that for any $n\in \bZ_{\geq 0}$
\begin{align*}
\Sq^{\tilde{}}_{\bullet, \bullet}(\bC)^{\times n}\simeq (\Sq^{\tilde{}}_{\bullet, \bullet}((\bC)^{\twoop})^{\vertop})^{\times n}\hookrightarrow  (\Sq^{\Pair}_{\bullet, \bullet}((\bC)^{\twoop}, \cD)^{\vertop})^{\times n}
\end{align*}
is a categorical epimorphism w.r.t. the the essential image of $\Sq_{\bullet,\bullet}$. Hence by (an obvious generalization of) Chapter 9, Lemma 3.1.3 in \emph{loc. cit.}, restricting along (\ref{eq: bC, cD, Sq, SqPair}) defines an isomorphism between the space of symmetric monoidal functors from $ \Sq^{\Pair}_{\bullet, \bullet}(\bC^{\twoop}, \cD)^{\vertop}$ to $\Sq_{\bullet,\bullet}\bX$ and the space of symmetric monoidal functors from $\bC$ to $\bX$ that factors through $\bC^{R_\cD}$ as a plain functor. But the latter space is exactly the subspace consisting of left-adjointable functors with respect to $\cD$. 
\end{proof}

By definition, we have a symmetric monoidal functor $(\VTop, \propfib')\to (\cV\Spc,\hpf')$ in $\TwoCat^{\Pair}$.

\begin{cor}\label{cor: Sq, VTop, cVSpc}
There is a canonical commutative diagram in $\CAlg((\Spc^{\Delta^{op}\times\Delta^{op}})^{\times})$
\begin{equation}\label{diagram: Sq, VTop, cVSpc}
\begin{tikzcd}
\Sq^{\Pair}_{\bullet, \bullet}(\VTop^{op}, (\VTop_{\propfib'})^{op})^{\vertop}\ar[r]\ar[d]& \Sq_{\bullet, \bullet}(\bPrstL)^{\Sp/ / }\\
\Sq^{\Pair}_{\bullet, \bullet}(\cV\Spc^{op}, (\cV\Spc_{\hpf'})^{op})^{\vertop}\ar[ur] &
\end{tikzcd}. 
\end{equation}
where the vertical functor is induced from localization, and the horizontal and slant functors are respectively determined by $\bT_{\VTop}$ and $\bT_{\cV\Spc}$ (up to a contractible space of choices). 
\end{cor}

\begin{proof}
Consider the commutative diagram in $\CAlg((\OneCat^\Pair)^\times)$
\begin{equation*}
\begin{tikzcd}
(\VTop, \VTop^{\Spc})\ar[r]\ar[d]&(\VTop, \VTop_{\propfib'})\ar[d]\\
(\cV\Spc, \cV\Spc^{\Spc})\ar[r]&(\cV\Spc, \cV\Spc_{\hpf'})
\end{tikzcd}
\end{equation*}
Applying $\Sq_{\bullet, \bullet}^\Pair((-)^{op},(-)^{op})^{\vertop}$, we get a commutative diagram in $\CAlg((\Spc^{\Delta^{op}\times\Delta^{op}})^\times)$, where all the morphisms are categorical epimorphisms w.r.t. the essential image of $\Sq_{\bullet,\bullet}$. Indeed, for the horizontal morphisms, this follows from Theorem \ref{thm: universal adjointable} again, and for the vertical morphisms, this follows the universal property of localizations. 

Now by Corollary \ref{prop: bT_VTop}, \ref{cor: bT_cVSpc} and Corollary \ref{cor: bC,cD, SymmetricMonoidal} (and a similar argument for it), we get $\bT_{\VTop}$ determines a unique commutative diagram in $\CAlg((\Spc^{\Delta^{op}\times\Delta^{op}})^\times)$
\begin{equation*}
\begin{tikzcd}
\Sq_{\bullet, \bullet}^{\tilde{}}(\VTop^{op})\ar[r, hook]\ar[d]&\Sq_{\bullet, \bullet}(\VTop^{op}, (\VTop_{\propfib'})^{op})^{\vertop}\ar[r]\ar[d]& \Sq_{\bullet, \bullet}(\bPrstL)^{\Sp/ / }\\
\Sq_{\bullet, \bullet}^{\tilde{}}(\cV\Spc^{op})\ar[r,hook]&\Sq_{\bullet, \bullet}(\cV\Spc^{op}, (\cV\Spc_{\hpf'})^{op})^{\vertop}\ar[ur] &
\end{tikzcd}. 
\end{equation*}
In particular, we get (\ref{diagram: Sq, VTop, cVSpc}) with the prescribed property. 

\end{proof}

\subsubsection{The bi-simplicial space of grids with defect and symmetric monoidal upgrades}

Recall the bi-simplicial space of grids with defect from \cite[Chapter 7, 2.1]{Nick}. 
For any $(\infty,1)$-category $\cC$ and classes $(vert, horiz, adm)$. let $\defGrid_{\bullet, \bullet}(\cC)^{adm}_{vert, horiz}\in \Spc^{\Delta^{op}\times\Delta^{op}}$ defined as : $\defGrid_{m,n}(\cC)^{adm}_{vert, horiz}$ is the full subspace of $\Maps([m]\times[n]^{op}, \cC)$ consisting of objects $\underline{\bc}$ with the following properties:
\begin{itemize}
\item[(1)] For any $0\leq i<i+1\leq m$, the map $\bc_{i,j}\to \bc_{i+1, j}$ belongs to $vert$;
\item[(2)] For any $0\leq j-1<j\leq n$, the map $\bc_{i,j}\to \bc_{i,j-1}$ belongs to $horiz$;
\item[(3)] For any $0\leq i<i+1\leq m$ and $0\leq j-1<j\leq n$ in the commutative square
\begin{equation*}
\begin{tikzcd}
\bc_{i,j}\ar[r]\ar[d]&\bc_{i,j-1}\ar[d]\\
\bc_{i+1, j}\ar[r]&\bc_{i+1, j-1}
\end{tikzcd}
\end{equation*}
its defect of Cartesianness, i.e. the map $\bc_{i,j}\to \bc_{i+1,j}\underset{\bc_{i+1, j-1}}{\times}\bc_{i,j-1}$
belongs to $adm$. 
\end{itemize}

It is shown in Chapter 7, 2.1.2, 2.3 in \emph{loc. cit.} that there is a canonical map in $\Spc^{\Delta^{op}\times \Delta^{op}}$ 
\begin{align}\label{eq: GR 2.1.2}
\defGrid_{\bullet, \bullet}(\cC)_{vert, horiz}^{adm}\to \Sq_{\bullet, \bullet}(\Corr(\cC)_{vert, horiz}^{adm})
\end{align}
whose key feature is stated in Chapter 7, Theorem 2.1.3 in \emph{loc. cit.}
The map is explicitly given by the composition 
\begin{equation}\label{eq: GR 2.3.1}
\defGrid_{\bullet,\bullet}(\cC)_{vert, horiz}^{adm}\to \Seq_{\bullet}(\Grid_\bullet^{\geq \dgnl}(\cC)_{vert, horiz}^{adm})
\end{equation} 
followed by the map
\begin{equation}\label{eq: SeqGridSqPairSq}
\Seq_\bullet(\Grid_{\bullet}^{\geq \dgnl}(\cC)^{adm}_{vert, horiz})\simeq \Sq_{\bullet, \bullet}^{\Pair}(\Corr(\cC)^{adm}_{vert, horiz}, \cC_{vert})\to \Sq_{\bullet, \bullet}(\Corr(\cC)_{vert, horiz}^{adm})
\end{equation}
To define the functor (\ref{eq: GR 2.3.1}), one introduces for each $n$, a 1-full subcategory of 
$\bMaps([n]^{op}, \cC)$, denoted by $\bMaps([n]^{op}, \cC)_{vert, horiz}^{adm}$. We refer the reader to Chapter 7, 2.3.2 in \emph{loc. cit.} for its definition. The upshot is there is a canonical isomorphism of bi-simplicial spaces 
\begin{align*}
\defGrid_{\bullet, \bullet}(\cC)_{vert, horiz}^{adm}\simeq \Seq_{\bullet}(\Maps([\bullet]^{op}, \cC)_{vert, horiz}^{adm}), 
\end{align*}
and there is a canonically defined fully faithful embedding
\begin{align}\label{eq: Maps,bullet,Grid,vert,horiz,adm}
\Maps([\bullet]^{op}, \cC)_{vert, horiz}^{adm}\to \Grid_\bullet^{\geq \dgnl}(\cC)_{vert, horiz}^{adm}
\end{align}
that sends every $\bullet$-string in $\cC$ to the half grids with all vertical maps isomorphisms.

\begin{lemma}\label{lemma: SeqPairGrid, CAlg}
Assume $\cC\in \CAlg(\OneCat^\times)$, and the classes of morphisms $vert, horiz, adm$ in $\cC$ are closed under the tensor product. Then we get a natural commutative diagram in $\CAlg(\OneCat^\times)$
\begin{equation}\label{diagram: lemma SeqPairGrid, CAlg}
\begin{tikzcd}
\Seq_n(\Corr(\cC)_{vert, horiz}^{adm})\ar[d]\ar[r,"\sim"] &\leftidx{''}{\Grid_n^{\geq\dgnl}(\cC)}{}_{vert, horiz}^{adm}\ar[d]\\
\Seq_n^{\Pair}(\Corr(\cC)^{adm}_{vert, horiz}, \cC_{vert})\ar[r,"\sim"]&\Grid_{n}^{\geq \dgnl}(\cC)^{adm}_{vert, horiz}
\end{tikzcd}
\end{equation}
where the symmetric monoidal structure on the LHS (resp. RHS) are the ones induced from $\Corr(\cC)^{adm}_{vert, horiz}$ (resp. $\Fun(([n]\times[n]^{op})^{\geq \dgnl}, \cC)$). 
\end{lemma}

\begin{proof}
The proof follows directly from \cite[Chapter 9, 2.1; Chapter 7, 1.4]{Nick}. 
Let $\Trpl$ be the $(\infty,1)$-category consisting of objects $(\cC; vert, horiz, adm)$ as in Chapter 7, 1.1 in \emph{loc. cit.} and the space of morphisms between two objects consisting of the full subspace of functors between the $\cC$-variables that preserves the three classes of morphisms and Cartesian squares from Chapter 7, 1.1 in \emph{loc. cit}. Consider the following diagram 

\begin{equation}\label{diagram: Trpl, Corr, Grid}
\begin{tikzcd}[column sep=4em]
\ &{\color{gray}\TwoCat^{\Pair}}\ar[dr, gray, "\OblvSubcat"]\ar[dd, gray,  "\Seq_\bullet^{\Pair}", ""{name=W, below}] &\ \\
\Trpl\ar[dr, gray, bend left=10, "{\Grid_\bullet^{\geq\dgnl}}", ""{name=V}]\ar[ur, gray, "\Corr^{\Pair}"]\ar[rr, "\Corr", crossing over, bend left=10]\ar[dr, bend right, "\leftidx{''}{\Grid_\bullet^{\geq\dgnl}}{}"', ""{name=U}]&\ &\TwoCat\ar[dl,"\Seq_\bullet"]\ar[Rightarrow, from=U, to=V]\ar[Rightarrow, to=W, gray, bend left=15]\\
\ &\OneCat^{\Delta^{op}}&\ 
\end{tikzcd},
\end{equation}
in which the functors are given by the assignments
\begin{align*}
&\Corr: (\cC; vert, horiz, adm)\mapsto \Corr(\cC)_{vert, horiz}^{adm}\\
&\Corr^{\Pair}: (\cC; vert, horiz, adm)\mapsto (\Corr(\cC)_{vert, horiz}^{adm},\cC_{vert})\\
&\leftidx{''}{\Grid_\bullet^{\geq\dgnl}}{}: (\cC; vert, horiz, adm)\mapsto\leftidx{''}{\Grid_\bullet^{\geq\dgnl}(\cC)}{}_{vert, horiz}^{adm}\\
&\Grid_\bullet^{\geq\dgnl}: (\cC; vert, horiz, adm)\mapsto \Grid_\bullet^{\geq\dgnl}(\cC)_{vert, horiz}^{adm}.
\end{align*}
and the natural transformations marked are the obvious ones. 
Moreover, the same diagram has the following features: 
\begin{itemize}
\item all functors are symmetric monoidal with respect to the Cartesian symmetric monoidal structures on the categories, and both natural transformations are between the symmetric monoidal functors;

\item except for the double arrows, all three solid faces are commutative, with functors in gray located at the ``back";

\item the natural transformation $\leftidx{''}{\Grid_\bullet^{\geq\dgnl}(\cC)}{}_{vert, horiz}^{adm}\to {\Grid_\bullet^{\geq\dgnl}(\cC)}_{vert, horiz}^{adm}$ is just the pullback of the other natural transformation along $\Corr^{\Pair}$ (and using the two solid triangles containing them). 
\end{itemize}

Then the diagram (\ref{diagram: lemma SeqPairGrid, CAlg}) is obtained from applying the functors and natural transformations to $(\cC;vert, horiz, adm)$ in the obvious way. 
\end{proof}

\begin{cor}
Under the same assumption as in Lemma \ref{lemma: SeqPairGrid, CAlg}, the map (\ref{eq: GR 2.1.2}) is naturally symmetric monoidal and is a categorical epimorphism w.r.t. the essential image of $\Sq_{\bullet, \bullet}$. 
\end{cor}

\begin{proof}
Since (\ref{eq: GR 2.3.1}) and (\ref{eq: SeqGridSqPairSq}) are both symmetric monoidal (using Lemma \ref{lemma: SeqPairGrid, CAlg}), (\ref{eq: GR 2.1.2}) is symmetric monoidal as the composition. 
The last part follows from the key feature of the map  (\ref{eq: GR 2.1.2}) (\cite[Chapter 7, Theorem 2.1.3]{Nick}).
\end{proof}

We immediately get a symmetric monoidal version of \cite[Chapter 7, Theorem 2.1.3]{Nick}:

\begin{cor}\label{cor: GR Ch7 2.1.3, monoidal}
Under the same assumption as in Lemma \ref{lemma: SeqPairGrid, CAlg}, for any symmetric monoidal $(\infty,2)$-category $\bX$, the symmetric monoidal functor (\ref{eq: GR 2.1.2}) defines an isomorphism between the space of symmetric monoidal functors $\Corr(\cC)_{vert, horiz}^{adm}\to \bX$ and the subspace of bi-simplicial symmetric monoidal functors 
\begin{align*}
\defGrid_{\bullet,\bullet}(\cC)_{vert, horiz}^{adm}\to\Sq_{\bullet, \bullet}(\bX)
\end{align*}
with the following property: for every $\underline{c}\in \defGrid_{1,1}(\cC)_{vert, horiz}^{adm}$, such that the diagram 
\begin{equation*}
\begin{tikzcd}
c_{0,1}\ar[r]\ar[d]&c_{0,0}\ar[d]\\
c_{1,1}\ar[r]&c_{1,1}
\end{tikzcd}
\end{equation*}
is \emph{Cartesian}, the corresponding object 
\begin{equation*}
\begin{tikzcd}
s_{0,1}\ar[d]\ar[dr, Rightarrow]&s_{0,0}\ar[d]\ar[l]\\
s_{1,1}&s_{1,1}\ar[l]
\end{tikzcd}
\end{equation*}
in $\Sq_{1,1}(\bX)$ represents an \emph{invertible} 2-morphism. 

\end{cor}

\subsubsection{}

Consider the commutative diagram in $\CAlg(\Spc^{\Delta^{op}\times\Delta^{op}})$
\begin{equation}\label{diagram: defGrid, SqPair, VTopVSpc,hpf'}
\begin{tikzcd}
\defGrid_{\bullet,\bullet}(\VTop)_{\propfib', \all}^{\isom}\ar[r]\ar[d]& \Sq_{\bullet, \bullet}^{\Pair}((\VTop)^{op}, (\VTop_{\propfib'})^{op})^{\vertop}\ar[d]\\
\defGrid_{\bullet,\bullet}(\cV\Spc)_{\hpf', \all}^{\isom}\ar[r]& \Sq_{\bullet, \bullet}^{\Pair}((\cV\Spc)^{op}, (\cV\Spc_{\hpf'})^{op})^{\vertop}\\
\defGrid_{\bullet,\bullet}(\cV\Spc)_{\hpf', \hpf'}^{\isom}\ar[r]\ar[u]& \Sq_{\bullet, \bullet}^{\Pair}((\cV\Spc_{\hpf'})^{op}, (\cV\Spc_{\hpf'})^{op})^{\vertop}\ar[u]
\end{tikzcd},
\end{equation}
where the horizontal maps are induced from the tautological functor $[m]\Gray[n]=:[m,n]\to [m]\times[n]$ (similarly to the definition of \cite[Chapter 7, (3.3)]{Nick}).

\begin{prop}\label{prop appendix: bT_VTopCorr, bT_VSpcCorr}
There is a natural commutative diagram in $\CAlg(\TwoCat^\times)$
\begin{equation}\label{diagram: prop bT_VTopCorr, bT_VSpcCorr}
\begin{tikzcd}
\VTop^{op}\ar[r]\ar[d]& \Corr(\VTop)_{\propmap\fib', \all}\ar[d] \ar[r, "\bT_{\VTop}^{\Corr}"]&(\bPrstL)^{\Sp//}\\
\cV\Spc^{op}\ar[r]&\Corr(\cV\Spc)_{\hpf', \all}\ar[ur, "\bT_{\cV\Spc}^\Corr"'] & \\
(\cV\Spc_{\hpf'})^{op}\ar[r]\ar[u]&\Corr(\cV\Spc)_{\hpf', \hpf'}\ar[u]\ar[uur, bend right, "\bT_{\cV\Spc_{\hpf'}}^\Corr"'] &\ 
\end{tikzcd},
\end{equation}
where 
\begin{itemize}
\item the functors in the leftmost column are the localization (top) and the 1-full embedding (bottom);
\item  $\bT_{\VTop}^{\Corr}|_{\VTop^{op}}\cong \bT_{\VTop}$, $\bT_{\cV\Spc}^{\Corr}|_{\cV\Spc^{op}}\cong \bT_{\cV\Spc}$, and $\bT_{\cV\Spc_{\hpf'}}^{\Corr}=\bT_{\cV\Spc}^{\Corr}|_{\Corr(\cV\Spc)_{\hpf', \hpf'}}$. 
\end{itemize}
\end{prop}

\begin{proof}
This is a direct consequence of the diagram (\ref{diagram: defGrid, SqPair, VTopVSpc,hpf'}), Corollary \ref{cor: GR Ch7 2.1.3, monoidal} and Corollary \ref{cor: Sq, VTop, cVSpc}. 
\end{proof}

\subsection{Proof of Proposition \ref{prop: equiv model of J} (the equivalent model of $J$)}\label{Appendix subsec: proof of J}

We will need the following to connect $\bT_{\cV\Spc}(p:VG\to G)$ with $(p_*\varpi_{VG}\in \Loc(G;\Sp)^{\otimes_c})$. 
An object in $\CAlg((\bPrstL)^{\Sp//})$ is equivalent to the datum of a commutative diagram
\begin{equation}\label{diagram: NFinGraydelta1bPrstL}
\begin{tikzcd}
N(\Fin_*)\Gray\Delta^1\ar[r]\ar[dr]&(\bPrstL)^{\otimes, \Fin_*}\ar[d]\\
\ &N(\Fin_*)
\end{tikzcd}
\end{equation}
whose restriction to $N(\Fin_*)\Gray\Delta^{\{0\}}\simeq N(\Fin_*)$ (resp. $N(\Fin_*)\Gray\Delta^{\{1\}}$) gives the monoidal unit $\Sp$ (resp. an algebra $C\in \CAlg(\PrstL)$). 
Passing to $\onetwoop$, the diagram becomes 
\begin{equation*}
\begin{tikzcd}
 N(\Fin_*)^{op}\Gray (\Delta^1)^{op}\ar[r]\ar[dr]&(\bPrstR)^{\otimes, (\Fin_*)^{op}}\ar[d]\\
\ &N(\Fin_*)^{op}
\end{tikzcd}
\end{equation*}
where the right vertical functor is the 2-Cartesian fibration representing the symmetric monoidal structure on $\bPrstR$. 
Assuming the horizontal arrows in the second factor $\Delta^1$ (i.e. $\{\lng n\rng\}\times \Delta^1$) of (\ref{diagram: NFinGraydelta1bPrstL}) also admit left adjoints, then we can use \cite[Chapter 12, Corollary 3.1.7]{Nick} to (canonically) transform the diagram into 
\begin{equation}\label{diagram: NFinGraydelta1coAlgbPrstR}
\begin{tikzcd}
 \Delta^1 \Gray N(\Fin_*)^{op}\ar[r]\ar[dr]&(\bPrstR)^{\otimes, (\Fin_*)^{op}}\ar[d]\\
\ &N(\Fin_*)^{op},
\end{tikzcd}
\end{equation}
by passing to the left adjoints for morphisms from the factor $(\Delta^1)^{op}\simeq \Delta^1$, which gives a corresponding object in $\CcoAlg(((\bPrstR)^{\twoop})^{\Sp/ /})$, where $\CcoAlg$ means the $(\infty,1)$-category of commutative coalgebras. The above correspondence clearly upgrades to the following:

\begin{lemma}\label{lemma: CAlgCcoAlg, PrstLR}
There is a natural commutative diagram 
\begin{equation}\label{diagram: CAlg, CcoAlg}
\begin{tikzcd}
(\CAlg((\bPrstL)^{\Sp//}))_\cR^{\Spc}\ar[d,  "\sim"]\ar[r, "\sim"] & (\CcoAlg(((\bPrstR)^{\twoop})^{\Sp//}))_\cL^{\Spc}\ar[d,  "\sim"]\\
((\CAlg(\bPrstL)_{\rightlax})^{\Sp/ })_\cR^{\Spc}\ar[r, "\sim"]&((\CcoAlg(\bPrstR)_{\leftlax})^{\Sp/ })_\cL^{\Spc}
\end{tikzcd}
\end{equation}
where the vertical isomorphisms are the tautological ones, and the subscript $\cR$ (resp. $\cL$) means the full subspace consisting of objects with the horizontal (resp. vertical) arrows in (\ref{diagram: NFinGraydelta1bPrstL}) (resp. (\ref{diagram: NFinGraydelta1coAlgbPrstR})) admitting left adjoints (resp. right adjoints) in the fibers of the 2-coCartesian (resp. 2-Cartesian) fibrations. 
\end{lemma}

Let $ \Sp_{\perf}$ be the full subcategory of $\Sp$ consisting of perfect $\bS$-modules.
\begin{cor}\label{cor: CAlgRunK;Sp}
For any $K\in \CAlg(\Spc^\times)$, there is a natural isomorphism 
\begin{align}\label{eq: cor: CAlgRunK;Sp}
(\CAlg((\Loc(K;\Sp)^{\otimes_c})_{\perf})^{\Spc}\simeq (\Fun^{\rightlax}(K^{\otimes}, \Sp_{\perf}^{\otimes}))^{\Spc}.
\end{align}
where $\CAlg((\Loc(K;\Sp)^{\otimes_c})_{\perf}$ is the full subcategory of $\CAlg((\Loc(K;\Sp))^{\otimes_c}$ whose underlying local systems have perfect costalks. 
\end{cor}

\begin{proof}
Using Lemma \ref{lemma: CAlgCcoAlg, PrstLR}, we just need to examine the space of diagrams (\ref{diagram: NFinGraydelta1coAlgbPrstR}) whose restriction at $\{0\}\times N(\Fin_*)^{op}$ and $\{1\}\times N(\Fin_*)^{op}$ give the structure of $\Sp$ and $\Loc(K;\Sp)\simeq \Fun(K;\Sp)$ as commutative coalgebra objects in $\bPrstR$, and then restrict to local systems with perfect costalks. Unwinding the data of the diagram, one sees exactly the data of a right-lax homomorphism from $K$ to $\Sp_\perf$. For example,  
for the unique active map $\alpha: \lng 1\rng\leftarrow\lng 2\rng$ in $N(\Fin_*)$, the horizontal arrow in (\ref{diagram: NFinGraydelta1coAlgbPrstR}) sends $\Delta^1\Gray \alpha^{op}\simeq (\alpha^{op}\Gray \Delta^1)^{\twoop}$ to 
\begin{equation*}
\begin{tikzcd}
(\Sp, \Sp)\ar[r]&(\Loc(K;\Sp), \Loc(K;\Sp))\\
\Sp\simeq \Sp\otimes \Sp\ar[r]\ar[u, thick, squiggly]\ar[dr, Rightarrow]&\Loc(K;\Sp)\otimes \Loc(K;\Sp)\simeq \Loc(K\times K;\Sp)\ar[u, thick, squiggly]\\
\Sp\ar[u, "\sim"]\ar[r]&\Loc(K;\Sp)\ar[u, "m^!"]
\end{tikzcd}
\end{equation*}
in which the bottom functor is determined by a local system with perfect costalks, the upper square with squiggly vertical arrows represents a 2-Cartesian morphism in $\Fun(\Delta^1, (\bPrstR)^{\otimes, \Fin_*^{op}})$ over $\alpha^{op}\in \Fun(\Delta^1, N(\Fin_*)^{op})$ and $m: K\times K\to K$ is the multiplication map determined by the functor $N(\Fin_*)\to \OneCat$ (and essentially determined by the image of $\alpha$) representing $K\in \CAlg(\OneCat)$. Then the bottom square in $\bPrstR$ encodes exactly  a diagram $\Delta^1\Gray \Delta^1\to \bOneCat$
\begin{equation*}
\begin{tikzcd}
K\times K\ar[r]\ar[d, "m"]&\Sp_{\perf}\times\Sp_{\perf}\ar[d, "\otimes"]\ar[dl, Rightarrow]\\
K\ar[r]&\Sp_{\perf}
\end{tikzcd}.
\end{equation*}
Examining through all the $n$-simplices in $N(\Fin_*)^{op}$ and their obvious compatibility (as encoded in $\Seq_\bullet N(\Fin_*)^{op}$), gives the corresponding right-lax homomorphism $K\to \Sp_{\perf}$ encoded by a functor $N(\Fin_*)\Gray \Delta^1\to \bOneCat$. The assignment is certainly functorial in any diagram (\ref{diagram: NFinGraydelta1coAlgbPrstR}) parametrized by a simplex, i.e. replacing the source $\Delta^1 \Gray N(\Fin_*)^{op}$ by $\Delta^n\times ( \Delta^1 \Gray N(\Fin_*)^{op})$, hence leading to the isomorphism (\ref{eq: cor: CAlgRunK;Sp}).

\end{proof}

\begin{proof}[Proof of Proposition \ref{prop: equiv model of J}]
We prove in the following steps. 

\noindent \emph{Step 1.} The homomorphisms $(p_{\hat{N}}: VG_{\hat{N}}\to G_{\hat{N}})\to (p_{\hat{M}}: VG_{\hat{M}}\to G_{\hat{M}}), \hat{N}\in \bZ_{\geq 0}\cup\{+\infty\}$ \\
in $\CAlg(\Corr(\VTop)_{\propfib', \all})$. 

Here when $\hat{N}=+\infty$, $p_{+\infty}$ means $(p: VG\to G)$. 
First, using Theorem \ref{thm: algebra objs} (ii), $(p^\bullet:VG^\bullet\to G^\bullet)$ and $(p_N^\bullet: VG_N^\bullet\to G_N^\bullet)$ give commutative algebra objects in $\Corr(\Fun(\Delta^1, \Top))$. Using the proof of Theorem \ref{thm: algebra objs} and that all vertical morphisms (corresponding to active morphisms in $N(\Fin_*)$) are proper locally trivial fibrations (note that the fibrations are usually not surjective on the components of the base),  we see that $(p^\bullet:VG^\bullet\to G^\bullet)$ and $(p_N^\bullet: VG_N^\bullet\to G_N^\bullet)$ give commutative algebra objects in the 1-full symmetric monoidal subcategory $\Corr(\VTop)_{\propfib', \all}$. We remark that this further uses that for every active map $\lng n\rng\to \lng m\rng$, the Cartesian square
\begin{equation*}
\begin{tikzcd}
VG_{\hat{N}}^{\lng n\rng}\ar[r]\ar[d]& VG_{\hat{N}}^{\lng m\rng}\ar[d]\\
G_{\hat{N}}^{\lng n\rng}\ar[r]& G_{\hat{N}}^{\lng m\rng}
\end{tikzcd}
\end{equation*}
represents a morphism in $\VTop$. 

The claim about the homomorphism follows from applying Theorem \ref{thm: right-lax} and a similar consideration as above.

\noindent\emph{Step 2.} The equivalence 
\begin{align}\label{eq: bTVTopVGG, VGNGN}
\bT_{\VTop}^{\Corr}(p:VG\to G)\overset{\sim}{\to} \varprojlim_{N\geq 0}\bT_{\VTop}^{\Corr}(p_N:VG_N\to G_N). 
\end{align}
in $\CAlg((\PrstL)^{\Sp/})$. 

Applying $\bT_{\VTop}^{\Corr}$ to the homomorphisms from \emph{Step 1}, we get a natural functor (\ref{eq: bTVTopVGG, VGNGN}). The functor is an equivalence, which follows from the assertion on $\bT_{\VTop}$ (see the proof of Proposition \ref{prop: bT_VTop}).   

\noindent\emph{Step 3.} $\bT_{\VTop}^{\Corr}(p_N:VG_N\to G_N)$ is isomorphic to $(\Loc(G_N;\Sp)^{\otimes_c}\ni (p_N)_*\varpi_{VG_N})$. 

Recall $\Loc(G_N;\Sp)^{\otimes_c}\ni (p_N)_*\varpi_{VG_N}$ is obtained using the correspondence 
\begin{equation*}
\begin{tikzcd}
VG_N^{\bullet}\ar[d]\ar[r]& \pt^{\bullet}\\
G_N^{\bullet}&\ 
\end{tikzcd}
\end{equation*}
interpreted as a right-lax homomorphism from $pt$ to $G_N$ in $\bCorr(\Slch)$ by Theorem \ref{thm: right-lax}. By Proposition \ref{prop: Dbullet, Wbullet, ptbullet}, this right-lax homomorphism is the same as the image of $VG_N^\bullet\to G_N^\bullet$ (as an object in $\CAlg(\Corr(\VSlch)_{\propfib', \all})$) in $\CAlg(\Corr(\Slch)_{\propfib,\all}^{pt//})$. Hence the claimed isomorphism follows.

\noindent\emph{Step 4.} Identifying $\bT_{\VTop}^{\Corr}(p:VG\to G)$ with $\bT_{\cV\Spc}(p:VG\to G)$. 

Following the diagram (\ref{diagram: prop bT_VTopCorr, bT_VSpcCorr}), $(p:VG\to G)\in \CAlg(\Corr(\VTop))_{\propfib', \all}$ descends to an element in $\CAlg(\Corr(\cV\Spc)_{\hpf', \hpf'})$ that comes from ``the classical" $(p:VG\to G)\in \CAlg(\cV\Spc_{\hpf'})$ under the 1-full symmetric monoidal inclusion $\cV\Spc_{\hpf'}\to \Corr(\cV\Spc)_{\hpf', \hpf'}$. Let $((\bPrstL)^{\Sp//})_{\cR}$ be the 1-full subcategory of $(\bPrstL)^{\Sp//}$  consisting of left adjointable functors. 
It follows that $\bT_{\cV\Spc_{\hpf'}}^{\Corr}(p:VG\to G)\in \CAlg((\bPrstL)^{\Sp//})$ is the image of $\bT_{\cV\Spc}(p:VG\to G)\in \mathrm{CcoAlg}(((\bPrstL)^{\Sp//})_\cR)$, through the procedure of passing to the left adjoints of the 1-morphisms. 

The above fits into the broader setting in Lemma \ref{lemma: CAlgCcoAlg, PrstLR}, using the last part of Remark \ref{remark: adjoints in Cc//}. 
Since $\bT_{\cV\Spc}(p:VG\to G)\in \CcoAlg((\PrstR)^{\Sp/}))_\cL$ (here $\cL$ means the full subcategory consisting of $\Sp\to C$ whose underlying functor is right adjointable), it certainly lies in $(\CcoAlg(((\bPrstR)^{\twoop})^{\Sp//}))_\cL$, hence we can equally view the correspondence between $\bT_{\cV\Spc_{\hpf'}}^{\Corr}(p:VG\to G)$ and $\bT_{\cV\Spc}(p:VG\to G)$ through the bottom isomorphism in (\ref{diagram: CAlg, CcoAlg}).

\noindent\emph{Step 5}. Final step. 

By definition (and tautology), $J$ is represented by $\bT(p:VG\to G)\cong \bT_{\cV\Spc}(p: VG\to G)$, as a diagram (\ref{diagram: NFinGraydelta1coAlgbPrstR}) (which actually factors through $\Delta^1\times N(\Fin_*)^{op}$) whose restriction to $\{0\}\times N(\Fin_*)^{op}$ (resp. $\{1\}\times N(\Fin_*)^{op}$) gives $\Sp$ (resp. $\Loc(G;\Sp)$) as a coalgebra in $(\PrstR)^{\otimes}$. 
By transiting the diagram to (\ref{diagram: NFinGraydelta1bPrstL}), we get the corresponding commutative algebra in $\Loc(G;\Sp)\in \CAlg((\PrstL)^{\otimes})$. But by the previous step, this is exactly 
$\bT_{\cV\Spc_{\hpf'}}^{\Corr}(p:VG\to G)\cong \bT_{\VTop}^{\Corr}(p:VG\to G)$. Combining with \emph{Step 3}, the proof is complete. 
\end{proof}

\end{document}